\documentclass[a4paper, 11pt,reqno]{amsart}
\usepackage[top = 1in, bottom = 1in, left=1.1in, right=1.1in]{geometry}

\usepackage[utf8]{inputenc}
\usepackage[T1]{fontenc}

\usepackage{amssymb,amsmath,amsthm,graphicx,xcolor,mathtools,mathrsfs,amsfonts}
\usepackage{hyperref}
\hypersetup{colorlinks, linkcolor={red!50!black}, citecolor={green!50!black}, urlcolor={blue!50!black}}

\usepackage[
backend=biber,
natbib=true,
style=alphabetic,
isbn=false,
maxbibnames=9
]{biblatex}
\usepackage{csquotes}
\renewbibmacro{in:}{}
\DeclareFieldFormat{pages}{#1}

\DeclareFieldFormat{title}{\mkbibquote{#1}}
\DeclareFieldFormat[misc]{date}{Preprint (#1)}

\renewbibmacro*{doi+eprint+url}{
	\iftoggle{bbx:url}
	{\iffieldundef{doi}{\usebibmacro{url+urldate}}{}}{}
	\newunit\newblock
	\iftoggle{bbx:eprint}
	{\usebibmacro{eprint}}
	{}
	\newunit\newblock
	\iftoggle{bbx:doi}
	{\printfield{doi}}
	{}
}

\usepackage{mathtools}
\usepackage{stmaryrd}
\usepackage[capitalize,nameinlink,noabbrev]{cleveref} 
\makeatletter
\AtBeginDocument{
  \let\fixcref@old@optarg\refstepcounter@optarg
  \def\refstepcounter@optarg[#1]#2{%
    \fixcref@old@optarg[#1]{#2}%
    \@namedef{cref@#2@alias}{#1}}%
}
\makeatother
\crefname{notation}{Notation}{Notations} 
  
\usepackage{enumitem}
\usepackage{dsfont} 

\newcommand*{\mc}[1]{\mathcal{#1}}
\newcommand*{\ms}[1]{\mathsf{#1}}
\newcommand*{\mo}[1]{\mathbf{#1}}

\newcommand{\N}{\mathbb{N}}
\newcommand{\Nn}{\mathbb{N}_0}
\newcommand{\Z}{\mathbb{Z}}

\newcommand{\R}{\mathbb{R}}
\newcommand{\Rp}{\R_{\geq0}}
\newcommand{\Rpp}{\R_{>0}}
\newcommand{\nnset}[1]{{\llbracket #1\rrbracket}}

\newcommand{\lab}{\left[} 
\newcommand{\rab}{\right]} 
\newcommand{\ab}[1]{\lab #1 \rab} 
\newcommand{\abOf}[1]{\!\ab{#1}} 
\newcommand{\lb}{\left(} 
\newcommand{\rb}{\right)} 
\newcommand{\br}[1]{\lb #1 \rb} 
\newcommand{\brOf}[1]{\!\br{#1}} 
\newcommand{\abs}[1]{\left| #1 \right|} 
\newcommand{\normof}[1]{\Vert#1\Vert}
\newcommand{\normOf}[1]{\left\Vert#1\right\Vert}
\newcommand{\innerProduct}[2]{\left\langle#1\,,\, #2\right\rangle}
\newcommand{\ip}[2]{\innerProduct{#1}{#2}}
\renewcommand{\Pr}{\mathbf{P}} 

\newcommand{\PrOf}[1]{\Pr\brOf{#1}}
\newcommand{\E}{\mathbf{E}} 
\newcommand{\Eof}[1]{\E[#1]}

\newcommand{\EOf}[1]{\E\abOf{#1}}
\newcommand{\V}{\mathbf{V}} 
\newcommand{\Vof}[1]{\V[#1]}
\newcommand{\VOf}[1]{\V\abOf{#1}}

\newcommand{\Cov}{\mathbf{Cov}} 
\newcommand{\Covof}[1]{\Cov(#1)}
\newcommand{\CovOf}[1]{\Cov\brOf{#1}}
\newcommand{\Cor}{\mathbf{Cor}} 

\newcommand{\CorOf}[1]{\Cor\brOf{#1}}
\newcommand{\opNormof}[1]{\Vert#1\Vert_{\mathsf{op}}}
\newcommand{\setByEleInText}[2]{\{#1 \mid #2\}}
\newcommand{\tr}{^{\!\top}\!} 
\newcommand{\pr}{^\prime}
\newcommand{\compl}{^\mathsf{c}}%
\newcommand{\ind}{\mathds{1}}
\newcommand{\indOf}[1]{\ind_{\!#1}}%
\DeclareMathOperator{\landauO}{\mathcal{O}}%
\newcommand{\eqcm}{\,,} 
\newcommand{\eqfs}{\,.}
\newcommand{\dl}{\mathrm{d}} 
\renewcommand{\subset}{\subseteq}
\DeclareMathOperator*{\argmin}{arg\,min}

\newcommand{\ol}[2]{\overline{#1#2}}
\newcommand{\pwa}{\phi}
\newcommand{\setC}{\mc C}
\newcommand{\diam}{\mathsf{diam}}
\newcommand{\Op}{\mathbf{O}_{\Pr}}
\newcommand{\CAT}{\mathsf{CAT}}

\newcommand{\Vf}{\mathbf{V}_{\mathbf{F}}} 
\newcommand{\Vfof}[1]{\Vf[#1]}
\newcommand{\VfOf}[1]{\Vf\abOf{#1}}

\newcommand{\Vm}{\mathbf{V}_{\mathbf{M}}} 
\newcommand{\Vmof}[1]{\Vm[#1]}
\newcommand{\VmOf}[1]{\Vm\abOf{#1}}
\newcommand{\covQ}{\operatorname{cov}_{\!\mc Q}}
\newcommand{\covQof}[1]{\covQ(#1)}
\DeclareMathOperator{\trace}{trace} 

\newcommand{\traceOf}[1]{\trace\brOf{#1}}

\newcommand{\mubar}{\bar\mu}
\newcommand{\mbar}{\bar m}
\newcommand{\muhat}{\hat\mu_n}
\newcommand{\mhat}{\hat m_n}

\newcommand{\VmsOf}[2]{\Vm^{#1}\abOf{#2}}

\newcommand{\sigmax}[1]{\sigma^{\max}_{#1}}

\makeatletter
\renewcommand{\paragraph}{\@startsection{paragraph}{4}%
  \z@{.5\linespacing\@plus.7\linespacing}{-.5em}%
  {\normalfont\itshape}}
\makeatother
\setlist[enumerate]{font=\upshape}

\AtBeginEnvironment{tabular}{\footnotesize}
\defbibheading{bibliography}[\refname]{\section*{#1}}

\theoremstyle{plain}
\newtheorem{theorem}{Theorem}[section]
\newtheorem{corollary}[theorem]{Corollary}
\newtheorem{proposition}[theorem]{Proposition}
\newtheorem{lemma}[theorem]{Lemma}
\theoremstyle{definition}
\newtheorem{definition}[theorem]{Definition}
\newtheorem{example}[theorem]{Example}
\newtheorem{notation}[theorem]{Notation}
\newtheorem{remark}[theorem]{Remark}

\begin{document}

\setcounter{tocdepth}{2}

\title[Fréchet Means of Dependent Data]{Finite-Sample Bounds for Fréchet Means of Dependent Data in Metric Spaces with Curvature Bounded Above}

\author[Z. F. Rico]{Zoraida F. Rico}
\address{Department of Decision Sciences, Bocconi University}
\email{\href{mailto:zoraida.rico@unibocconi.it}{zoraida.rico@unibocconi.it}}
\urladdr{\url{https://orcid.org/0009-0002-4972-909X}}

\author[C. Sch\"otz]{Christof Sch\"otz}
\address{Department of Aerospace and Geodesy, TUM School of Engineering and Design, Technical University of Munich}
\email{\href{mailto:christof.schoetz@tum.de}{christof.schoetz@tum.de}}
\urladdr{\url{https://orcid.org/0000-0003-3528-4544}}

\subjclass[2020]{Primary 62R20; Secondary 30L15, 62G05, 62M10}
\keywords{Fréchet mean, Hadamard space, $\CAT(\kappa)$ space, power Fréchet mean, dependent data, finite-sample bound, exchangeable pair, transport sensitivity}

\begin{abstract}

We prove finite-sample risk bounds for empirical quadratic and power Fréchet means in geodesic metric spaces with curvature bounded above for observations with arbitrary joint law. We compare the sample with exchangeable copies in which one observation is redrawn independently of the original sample and the others move to preserve the joint law. These displacements control the error, with no entropy or dimension assumptions. For quadratic Fréchet means, this yields a general ``coupling bound'' in any separable complete $\CAT(\kappa)$ space, where for $\kappa>0$ the observations must lie in a closed convex set of circumradius below $\pi/(2\sqrt{\kappa})$.
This bound is exact on the real line for nonnegatively correlated Gaussian observations and for comonotone functions of a common factor.
We apply it in two settings. Under local dependence, the resulting risk bound grows linearly with the average size of the dependency neighborhoods. Under transport sensitivity and for bounded metric variances, summable sensitivities of the conditional laws to a change in one observation preserve the parametric mean squared error rate, while nonsummable polynomial decay gives explicit slower rates.
We verify transport sensitivity for stationary reversible Markov chains with Wasserstein-contractive kernels and for bi-Lipschitz images of Gaussian sequences. In the more challenging setting of heavy-tailed, dependent data with possibly infinite variance, we consider power Fréchet means of order $\alpha\in(1,2)$. Under nonpositive curvature, we bound their $L^\alpha$ risk assuming only finite $\alpha$-th moments.

\end{abstract}

\maketitle


\section{Introduction}\label{sec:intro}

The accuracy of an average depends on both the variability of the individual observations and the dependence among them.
For square-integrable observations $X_1,\dots,X_n$ in a Hilbert space, the mean squared error of the sample mean $\overline X_n:=n^{-1}\sum_{i=1}^nX_i$ is an average of covariances:
\begin{equation}\label{eq:intro:covariance}
    \EOf{\normOf{\overline X_n-\EOf{\overline X_n}}^2}
    =\frac{1}{n^2}\sum_{i,k=1}^n\traceOf{\CovOf{X_i,X_k}}
    \eqfs
\end{equation}
The diagonal terms ($i=k$) give the error for independent observations with the same marginal laws; the off-diagonal terms ($i\neq k$) can increase or decrease it.

Fréchet means extend averaging to metric spaces by minimizing an average squared distance~\cite{Frechet1948RandomElements-AIHP}.
For random variables $Y_1,\dots,Y_n$ with finite second moments in a metric space $(\mc Q,d)$, the population and the empirical Fréchet mean are
\begin{equation}\label{eq:intro:means}
    \mbar=\argmin_{q\in\mc Q}\frac{1}{n}\sum_{i=1}^n\EOf{d(Y_i,q)^2},
    \qquad
    \mhat=\argmin_{q\in\mc Q}\frac{1}{n}\sum_{i=1}^n d(Y_i,q)^2
    \eqfs
\end{equation}
In a Hilbert space, they reduce to the expectation and the sample mean in \eqref{eq:intro:covariance}.
Fréchet means provide location parameters for observations such as phylogenetic trees, positive definite matrices, and probability distributions, equipped with appropriate metrics~\cite{BilleraEtAl2001PhylogeneticTrees-AAM, BhatiaHolbrook2006MatrixMeans-LAA,PanaretosZemel2020Wasserstein-Springer}.
Such observations are often dependent. Random objects recorded over time form time series~\cite{ZhuMueller2023Autoregressive-JRSSB,BulteSorensen2024Autoregressive-arxiv}, measurements at nearby locations are spatially dependent, and Markov chain Monte Carlo produces dependent samples, for instance of phylogenetic trees~\cite{MageeEtAl2024Phylogenetic-BA}. This dependence affects the precision of the empirical Fréchet mean. For these nonlinear averages, however, the linear expansion in \eqref{eq:intro:covariance} is unavailable. We therefore ask which metric properties of the joint law yield explicit finite-sample bounds on the accuracy of averaging.

We address this question through couplings that replace one observation by an independent draw while preserving the joint law of the sample.
Under dependence, preserving this law generally requires other observations to move as well. Their displacements control the risk of the empirical mean. We develop this principle for quadratic Fréchet means in Hadamard spaces, extend it to spaces with a positive upper curvature bound, and obtain bounds for power Fréchet means under weaker moment assumptions.

\subsection{A coupling principle}\label{ssec:intro:contrib}

We first consider quadratic Fréchet means of observations $\mo Y=(Y_1,\dots,Y_n)$ in a separable Hadamard space $(\mc Q,d)$, that is, a complete geodesic space of nonpositive curvature~\cite{Sturm2003NPC-ContempMath}, in which the means in \eqref{eq:intro:means} exist and are unique. This setting includes infinite-dimensional Hilbert spaces and metric trees, and does not require smoothness or local coordinates.
We assume finite second moments, but neither independence nor identical marginal laws.

For independent observations, a direct route to a risk bound is \emph{replace-one stability}~\cite{BousquetElisseeff2002Stability-JMLR,ShalevShwartzEtAl2010Learnability-JMLR}, which has been used for Fréchet means in~\cite{Escande2024Minimizers-JMLR,BrunelSerres2026FiniteSample-arxiv,Schoetz2026Transformed-arxiv,Schoetz2026Weighted-arxiv}.
A \emph{variance inequality}~\cite{Sturm2003NPC-ContempMath,Schoetz2025VarianceInequalities-EJP} bounds the risk $\Eof{d(\mhat,\mbar)^2}$ by the expected difference between the objectives in \eqref{eq:intro:means} at $\mhat$.
After replacing $Y_i$ by an independent copy $Y_i\pr$, a \emph{quadruple inequality}~\cite{Sturm2003NPC-ContempMath,Schoetz2024Quadruple-MIA,Schoetz2026Trapezoid-arxiv} bounds this difference by the expected product of $d(Y_i,Y_i\pr)$ and the movement of the empirical mean due to this replacement.
A \emph{contraction property}~\cite[Theorem~6.3]{Sturm2003NPC-ContempMath} bounds this movement by $d(Y_i,Y_i\pr)/n$, yielding
\begin{equation}\label{eq:intro:independent}
    \EOf{d(\mhat,\mbar)^2} \leq \frac{1}{n^2}\sum_{i=1}^n\VmOf{Y_i}
    \eqcm
\end{equation}
where $\Vmof{Y_i}=\frac{1}{2}\Eof{d(Y_i,Y_i\pr)^2}$ is the metric variance, which equals $\trace(\Covof{Y_i,Y_i})$ in Hilbert spaces; there, \eqref{eq:intro:independent} is an equality by \eqref{eq:intro:covariance}.
These arguments rest on two properties of the replacement.
First, $Y_i\pr$ is independent of $\mhat$, so the population objective at $\mhat$ can be expressed through $Y_i\pr$.
Second, swapping $Y_i$ and $Y_i\pr$ leaves the joint law of the original and the modified sample unchanged, which permits a symmetrization.

Under dependence, a one-coordinate replacement generally cannot have both properties.
An independent draw of $Y_i$ changes the joint law of the sample, whereas a draw from the conditional law of $Y_i$ given the other observations preserves this law but is not independent of them.
We retain the independent draw and let the remaining coordinates move so that the swap symmetry holds again.
The result is an \emph{exchangeable copy with $i$-independence}, a sample $\mo Y^i=(Y_1^i,\dots,Y_n^i)$ such that $(\mo Y,\mo Y^i)\stackrel d=(\mo Y^i,\mo Y)$ and $Y_i^i\perp\mo Y$ (\cref{def:dep:exchangeable}).
Exchangeability here is a property of the coupling, not an assumption on the observations. An independent copy of the entire sample is always admissible, but to obtain useful error bounds, we want to find copies that move few coordinates or move them little.

We call this extension of the replace-one argument the \emph{compensated replace-one stability technique}.
It yields the coupling bound of \cref{thm:2fmh:dependent},
\begin{equation}\label{eq:intro:coupling}
    \EOf{d(\mhat,\mbar)^2} \leq \frac{1}{2n^2}\sum_{i,k=1}^n \EOf{d(Y_i,Y_i^i)\,d(Y_k,Y_k^i)}\eqfs
\end{equation}
The diagonal terms form the right-hand side of \eqref{eq:intro:independent}, which is recovered for independent observations by leaving the other coordinates unchanged.
The off-diagonal terms are a nonnegative penalty for the additional movements required by the chosen coupling.

In a Hilbert space, \eqref{eq:intro:coupling} is a metric form of \eqref{eq:intro:covariance}.
Let $\mo X^i=(X_1^i,\dots,X_n^i)$ be an exchangeable copy of $\mo X=(X_1,\dots,X_n)$ with $i$-independence.
Then $\Eof{\langle X_i-X_i^i\,,\, X_k-X_k^i\rangle}=2\trace(\Covof{X_i,X_k})$ (\cref{sssec:2fmh:illu:general}) and \eqref{eq:intro:covariance} and \eqref{eq:intro:coupling} become
\begin{align*}
    \EOf{\normOf{\overline X_n-\EOf{\overline X_n}}^2}
    &= \frac1{2n^2}\sum_{i,k=1}^n \EOf{\ip{X_i-X_i^i}{X_k-X_k^i}}
    \\
    &\leq \frac1{2n^2}\sum_{i,k=1}^n \EOf{\normof{X_i-X_i^i}\,\normof{X_k-X_k^i}}
    \eqfs
\end{align*}
So \eqref{eq:intro:coupling} in a Hilbert space replaces each inner product by the product of norms, which in a metric space becomes $d(Y_i,Y_i^i)\,d(Y_k,Y_k^i)$.
The diagonal terms ($i=k$) are unchanged, but the off-diagonal terms ($i\neq k$) become nonnegative; unlike in \eqref{eq:intro:covariance}, they can therefore only increase the bound above the independent-sample bound \eqref{eq:intro:independent}.
Still, the bound is exact in natural dependent settings: suitable copies achieve equality for real jointly Gaussian observations with nonnegative covariances (\cref{prp:2fmh:gaussian-1d}), and an independent copy of the entire sample already does so for comonotone functions of a common real factor (\cref{prp:real:monotone-factor}).

\subsection{Results}\label{ssec:intro:consequences}

We make the coupling bound explicit under two forms of dependence and extend it to positive curvature and to power Fréchet means.

\paragraph*{Local dependence}
Suppose that $Y_i$ is independent of the observations outside a neighborhood $N_i\ni i$, as for spatial data with finite-range dependence.
Then an exchangeable copy $\mo Y^i$ with $i$-independence can be chosen to move only the coordinates in $N_i$ (\cref{prp:dep:neighborhood-copy}).
Denoting the average neighborhood size as $\bar N$, \cref{thm:2fmh:neighborhood} gives
\begin{equation*}
    \EOf{d(\mhat,\mbar)^2} \leq \sqrt2\,\max_{i}\VmOf{Y_i}\,\frac{\bar N}{n}
    \eqfs
\end{equation*}

\paragraph*{Transport sensitivity}
Suppose that the conditional laws of the remaining sample given $Y_i=y$ and $Y_i=y\pr$ can be coupled so that each $Y_k$ moves by at most $b_{ik}\,d(y,y\pr)$ in expectation (\cref{def:dep:transport}).
Then \cref{thm:2fmh:decl} gives
\begin{equation*}
    \EOf{d(\mhat,\mbar)^2} \leq \frac{1}{n^2}\sum_{i=1}^n\VmOf{Y_i}\sum_{k=1}^n b_{ik} \eqfs
\end{equation*}
With bounded metric variances, bounded row sums of $(b_{ik})$ preserve the rate $n^{-1}$, and nonsummable polynomial decay of $b_{ik}$ in $\lvert i-k\rvert$ gives explicit slower rates, which fractional Gaussian noise attains (\cref{cor:2fmh:decl:profile} and the discussion after it).
Transport sensitivity holds for Gaussian sequences, their bi-Lipschitz images, and stationary reversible Markov chains whose kernel contracts the $1$-Wasserstein distance by a factor $\rho<1$ (\cref{ssec:dep:examples}); for such chains, the bound is at most $\frac{1+\rho}{1-\rho}\Vmof{Y_1}/n$, which is the independent-data bound \eqref{eq:intro:independent} for the effective sample size $n(1-\rho)/(1+\rho)$ (see the discussion after \cref{cor:2fmh:decl:profile}).
For the refresh chain, which keeps its current value with probability $\rho$ and otherwise draws a fresh value from its marginal law, the transport-sensitivity bound is exact in Hilbert spaces (\cref{prp:2fmh:refresh}).
Without reversibility, the condition can fail: an $\ms{AR}(1)$ process with Rademacher innovations and parameter $\rho\in(0,1/2)$ has no summable sensitivities, and our bounds do not attain its risk rate $n^{-1}$ (\cref{rem:dep:ar1,rem:2fmh:decl:limits}).

\paragraph*{Positive curvature}
Let $\mc Q$ be a separable complete $\CAT(\kappa)$ space with $\kappa>0$, and let the observations take values in a closed convex set of circumradius $\varrho<\pi/(2\sqrt\kappa)$.
Then \cref{thm:pos:dependent} gives
\begin{equation*}
    \EOf{d(\mhat,\mbar)^2} \leq \frac{C_\Delta^4}{2n^2}\sum_{i,k=1}^n \EOf{d(Y_i,Y_i^i)\,d(Y_k,Y_k^i)}
    \eqcm
\end{equation*}
where $C_\Delta:=\Delta/\sin\Delta$ with $\Delta:=2\varrho\sqrt\kappa$.
Thus \eqref{eq:intro:coupling} and its consequences above hold up to curvature-dependent factors, which tend to one as $\kappa\to0$ (\cref{sec:pos}).

\paragraph*{Power Fréchet means}
Replacing $d^2$ in \eqref{eq:intro:means} by $d^\alpha$ with $1<\alpha<2$ defines power Fréchet means, again denoted by $\mbar$ and $\mhat$.
For observations with an arbitrary joint law and finite $\alpha$-th moments in a Hadamard space, \cref{thm:power:dep} gives
\begin{equation*}
    \EOf{d(\mhat,\mbar)^\alpha}^{\frac1\alpha}
    \leq
    c_\alpha\,\sigma_\alpha^{\frac{2-\alpha}\alpha}\br{\frac{1}{n^2}\sum_{i,k=1}^n\EOf{d(Y_i,Y_i^i)^{\frac\alpha2}\,d(Y_k,Y_k^i)^{\frac\alpha2}}^{\frac{2\alpha-2}\alpha}}^{\frac{1}{2}}
    \eqcm
\end{equation*}
where $\sigma_\alpha:=n^{-1}\sum_{j=1}^n\Eof{d(Y_j,\mbar)^\alpha}$ and $c_\alpha$ depends only on $\alpha$; at $\alpha=2$, the bound has the form of \eqref{eq:intro:coupling}.
This bound thus combines a broad metric space setting, arbitrary dependence, and heavy tails with possibly infinite variance.
Suitably adapted results also hold under local dependence and transport sensitivity (\cref{thm:power:structured,thm:power:decl}).

\subsection{Related work}\label{ssec:intro:related}

The study of Fréchet means in the finite-sample setting complements their classical asymptotic theory~\cite{Ziezold1977ExpectedFigures-Reidel, BhattacharyaPatrangenaru2003LargeSample-AOS}.
For independent observations, a common route to convergence rates uses empirical-process bounds under entropy conditions \cite{Schoetz2019ConvergenceRates-EJS, AhidarCoutrixEtAl2020Barycenters-PTRF}.
These conditions can restrict the applicability of the resulting bounds in infinite-dimensional spaces.
Other approaches obtain dimension-free rates under additional geometric assumptions~\cite{LeGouicEtAl2023FastConvergence-JEMS} or use stability arguments~\cite{Escande2024Minimizers-JMLR, BrunelSerres2026FiniteSample-arxiv, Schoetz2026Transformed-arxiv,Schoetz2026Weighted-arxiv}.
Our compensated replace-one stability technique extends these arguments to dependent observations. As in the independent case, we compare two fitted means directly rather than control the empirical objective uniformly over the parameter space. We do not impose entropy conditions, and the bounds have no explicit dependence on the dimension of the space.
For independent observations with a common Fréchet mean, the bound \eqref{eq:intro:independent} is contained in the proofs of \cite[Theorems~3 and~4]{BrunelSerres2026FiniteSample-arxiv}.
For observations whose Fréchet means may differ, the same bound, with the mixture mean as the target, is given in \cite[Theorem~3.1]{Schoetz2026Weighted-arxiv}.

The main tools from the literature are quadruple and variance inequalities. In Hadamard spaces, quadruple inequalities are available for $d^2$~\cite{Reshetnyak1968InextensibleMappings-SMJ,Sturm2003NPC-ContempMath} and $d^\alpha$~\cite{Schoetz2019ConvergenceRates-EJS,Schoetz2024Quadruple-MIA}, and variance inequalities for quadratic~\cite{Sturm2003NPC-ContempMath} and power Fréchet means~\cite{Schoetz2025VarianceInequalities-EJP}. Under a positive curvature upper bound, the quadruple inequality for $d^2$ is shown in~\cite{Schoetz2026Trapezoid-arxiv} and the variance inequality for quadratic Fréchet means in~\cite{BrunelSerres2026FiniteSample-arxiv,Schoetz2026Weighted-arxiv}.

For dependent observations, ergodic theorems for nonlinear means are available in nonpositively curved spaces and more general metric settings~\cite{Austin2011ErgodicTheorem-JTA,Navas2013ErgodicTheorem-ETDS,AntezanaEtAl2023InductiveMeans-ETDS,Jaffe2024InfiniteDimensions-arxiv}, including for Fréchet mean sets of Markov chains on finite metric spaces~\cite[Theorem~4.7]{EvansJaffe2024MeanSets-Bernoulli}.
The $L^2$ limit points of Fréchet means of dependent observations in Hadamard spaces are identified in~\cite[Theorem~2]{KoestenbergerStark2025SignalRecovery-JMVA}. For time series in Hadamard spaces generated by iterated random functions, Bult\'e and S{\o}rensen~\cite[Theorem~4.1]{BulteSorensen2024Autoregressive-arxiv} obtain root-$n$ consistency of the quadratic Fréchet mean under geometric moment contraction, second moments, and a local entropy condition.

Coupling-based notions of dependence incorporate metric information, as in the $\tau$-coefficients of~\cite{DedeckerPrieur2005Dependence-PTRF} and geometric moment contraction~\cite{WuShao2004IteratedRandom-JAP}.
Our transport-sensitivity matrix $\mo b$ is also related to interdependence and coupling matrices used in covariance and concentration inequalities; see \cref{ssec:dep:relations} for a comparison of their conditioning structures.

Exchangeable pairs are a standard tool in Stein's method~\cite{Chatterjee2007Concentration-PTRF}.
For dependent observations, a common construction is to resample one coordinate from its conditional law given the others, leaving them unchanged. Our construction instead redraws one coordinate from its marginal law, independently of the original sample, and adjusts the remaining coordinates so that the two samples form an exchangeable pair.
Stability-based generalization bounds for mixing samples~\cite{MohriRostamizadeh2010Stability-JMLR} discard observations near the replaced coordinate and approximate the remaining blocks by independent ones, with an error controlled by mixing coefficients.
Our couplings preserve the joint law of the sample exactly, and thus avoid this approximation error.

Power Fréchet means with $\alpha<2$ are robust to heavy tails \cite{Schoetz2019ConvergenceRates-EJS,Schoetz2026Transformed-arxiv,Schoetz2026Weighted-arxiv}: our bounds for them require only finite $\alpha$-th moments. In linear spaces, robust mean estimators have been analyzed under dependence~\cite{Zhang2021RobustEstimation-SS,XuEtAl2026MedianOfMeans-SLADS,WhitehouseEtAl2026HeavyTailed-SPA}.
In Banach spaces~\cite{Minsker2015GeometricMedian-Bernoulli} and in metric spaces~\cite{KimEtAl2025Robust-AISTATS,YunPark2023Exponential-Bernoulli}, median aggregation and generalized median-of-means give exponential guarantees from independent inputs; robust sub-Gaussian estimation of Fréchet means is studied in~\cite{BartlEtAl2025SubGaussian-arxiv}.

\subsection{Outline}\label{ssec:intro:outline}

\Cref{sec:pre} fixes the setting and notation. \Cref{sec:dep} introduces exchangeable copies, dependency neighborhoods, and transport sensitivity, constructs copies for several classes of processes, and relates the conditions to other notions of dependence. \Cref{sec:2fmh} develops the quadratic theory in Hadamard spaces and its Hilbert-space comparisons. \Cref{sec:pos} treats quadratic Fréchet means under positive curvature, and \cref{sec:power} develops the bounds for power Fréchet means in Hadamard spaces. Technical proofs and detailed computations for the examples are collected in the appendix.

\section{Setting and notation}\label{sec:pre}

For $n\in \N$, define $\nnset n := \{1, \dots, n\}$, and let $\Nn := \N\cup\{0\}$. For a finite set $A$, $\# A$ denotes its cardinality.
For $a,b\in\R$, $a\vee b := \max\{a,b\}$. For random variables $X,Y$, we denote by $\mc L(X)$ the law of $X$, $X\stackrel{d}{=}Y$ if $\mc L(X)=\mc L(Y)$, and $X\perp Y$ if $X$ and $Y$ are independent; $\mc L(X\mid Y=y)$ denotes a (fixed version of a) regular conditional distribution of $X$ given $Y=y$; iid abbreviates independent and identically distributed. Constants denoted by $c_\alpha$ are positive and finite and depend only on $\alpha$; their value may change from one occurrence to the next. We use the conventions $0^0:=1$ and $0\cdot\infty:=0$.

\paragraph*{Metric spaces and transport}
Let $(\mc Q,d)$ be a metric space.
For $q,p\in\mc Q$, we use the short notation $\ol qp := d(q,p)$.
Throughout, $\mc Q$ is complete and separable, that is, a Polish space; in \cref{sec:2fmh,sec:power} it is a Hadamard space and in \cref{sec:pos} a complete $\CAT(\kappa)$ space, see \cite{BridsonHaefliger1999NPC-Springer,Sturm2003NPC-ContempMath}.
Polishness avoids measurability issues and guarantees the existence of the regular conditional distributions used below.

Let $\mc P(\mc Q)$ be the set of Borel probability measures on $\mc Q$.
For $s\in\Rpp$, let $\mc P^s(\mc Q) \subseteq \mc P(\mc Q)$ be the set of $\mu$ with $\int \ol yq^s \dl \mu(y) < \infty$ for one, and hence all, $q\in\mc Q$; we then say that $\mu$, and any random variable $Y$ with law $\mc L(Y)=\mu$, has a \emph{finite $s$-th moment}.
For $\mu,\nu\in\mc P(\mc Q)$, let $\Pi(\mu,\nu)$ be the set of probability measures on $\mc Q^2$ with marginals $\mu$ and $\nu$, and, for $s\in\Rpp$, define the \emph{$s$-Wasserstein cost}
\begin{equation}\label{eq:pre:wasserstein}
    W_s(\mu,\nu) := \inf_{\pi\in\Pi(\mu, \nu)} \int \ol yz^s\, \dl\pi(y,z)
    \eqfs
\end{equation}
In particular, $W_1$ is the $1$-Wasserstein distance; for $s\geq1$, $W_s^{1/s}$ is the $s$-Wasserstein distance, and for $s\in(0,1]$, $W_s$ is the $1$-Wasserstein distance with respect to the metric $d^s$.

\paragraph*{Means}
Let $\alpha\in(1,2]$ and $\mu\in\mc P^{\alpha-1}(\mc Q)$.
Fix a reference point $o\in\mc Q$.
Any minimizer of
\begin{equation}\label{eq:pre:mean}
    q\mapsto\int\brOf{\ol yq^\alpha-\ol yo^\alpha}\dl\mu(y)
\end{equation}
is called an \emph{$\alpha$-Fréchet mean} of $\mu$; the integrand is $\mu$-integrable because $\lvert \ol yq^\alpha-\ol yo^\alpha\rvert\leq\alpha(\ol yo^{\alpha-1}+\ol oq^{\alpha-1})\ol oq$, and the set of minimizers does not depend on $o$.
For $\alpha=2$ we speak of the ($2$-)Fréchet mean or quadratic Fréchet mean, and for $\alpha\in(1,2]$ of the power Fréchet mean of order $\alpha$ or the $\alpha$-Fréchet mean.
In all settings of this paper, the minimizer exists and is unique; see \cref{ssec:2fmh:pre,ssec:pos:pre,ssec:power:prelim}.
We denote it by $m_\alpha(\mu)$.
If $\mu\in\mc P^\alpha(\mc Q)$, then $m_\alpha(\mu)$ minimizes $q\mapsto\int\ol yq^\alpha\dl\mu(y)$.
If $\mc Q$ is a Hilbert space, $m_2(\mu)$ is the expectation of $\mu$.

\paragraph*{Variances}
For $s\in\Rpp$ and $\mu\in\mc P(\mc Q)$, we call
\begin{equation}\label{eq:pre:variances}
    \Vf^s(\mu) := \inf_{q\in\mc Q}\int \ol yq^s\dl\mu(y)
    \quad\text{and}\quad
    \Vm^s(\mu) := \frac{1}{2}\iint \ol y{y\pr}^s\dl\mu(y)\dl\mu(y\pr)
\end{equation}
the \emph{Fréchet $s$-variance} and the \emph{metric $s$-variance} of $\mu$, both with values in $[0,\infty]$.
Following \cite{DubeyMueller2020TimeVarying-JRSSB,SongMueller2026Dispersion-JASA,Schoetz2026Weighted-arxiv}, $\Vf:=\Vf^2$ is the \emph{Fréchet variance} and $\Vm:=\Vm^2$ the \emph{metric variance}.
For a random variable $Y$, we define $\Vf^{s}[Y]:=\Vf^s(\mc L(Y))$ and $\Vm^{s}[Y]:=\Vm^s(\mc L(Y))$. Then, $\Vm^{s}[Y]=\frac{1}{2}\Eof{\ol Y{Y\pr}^s}$ for an independent copy $Y\pr$ of $Y$. In addition, if $\mc L(Y)\in\mc P^2(\mc Q)$ has a $2$-Fréchet mean $m$, $\Vfof{Y}=\Eof{\ol Ym^2}$.
Square brackets indicate that the argument is a random variable, which enters only through its law.

\paragraph*{Sampling notation}
Throughout, $\mo Y=(Y_1,\dots,Y_n)$ is an $n$-tuple of $\mc Q$-valued random variables with an arbitrary joint law and marginal laws $\mu_i:=\mc L(Y_i)$, $i\in\nnset n$.
We define
\begin{equation}\label{eq:pre:sampling}
    \mubar := \frac{1}{n}\sum_{i=1}^n\mu_i
    \eqcm\quad
    \muhat := \frac{1}{n}\sum_{i=1}^n\delta_{Y_i}
    \eqcm\quad
    \mbar := m_\alpha(\mubar)
    \eqcm\quad
    \mhat := m_\alpha(\muhat)
    \eqcm
\end{equation}
where $\alpha=2$ in \cref{sec:2fmh,sec:pos} and $\alpha\in(1,2)$ in \cref{sec:power}.
The target $\mbar$ is the $\alpha$-Fréchet mean of the average marginal law; it is the common mean if all $\mu_i$ share one.
When, for $i\in\nnset n$, a second $n$-tuple $\mo Y^i=(Y_1^i,\dots,Y_n^i)$ has been chosen (\cref{sec:dep}), we define $\muhat^i:=\frac{1}{n}\sum_{k=1}^n\delta_{Y_k^i}$ and $\mhat^i:=m_\alpha(\muhat^i)$.

\cref{tab:pre:symbols} lists the recurring symbols.

\begin{table}[ht]
\caption{Recurring symbols}\label{tab:pre:symbols}
\begin{tabular}{@{}ll@{}}
\hline
$\ol qp$, $W_s(\mu,\nu)$ & distance, $s$-Wasserstein cost; \eqref{eq:pre:wasserstein}\\
$m_\alpha(\mu)$ & $\alpha$-Fréchet mean; \eqref{eq:pre:mean}\\
$\Vf^s$, $\Vm^s$; $\Vf$, $\Vm$ & Fréchet and metric $s$-variance; \eqref{eq:pre:variances}\\
$\mu_i$, $\mubar$, $\muhat$, $\mbar$, $\mhat$ & marginals, mixture, empirical measure, target, estimator; \eqref{eq:pre:sampling}\\
$\mo Y^i$, $\muhat^i$, $\mhat^i$ & copy with $i$-independence, its empirical measure and mean; \cref{def:dep:exchangeable}, \cref{sec:pre}\\
$N_i$, $N_{\max}$, $\bar N$ & dependency neighborhoods and their sizes; \cref{def:dep:neighborhood}, \eqref{eq:dep:L}\\
$\mo b=(b_{ik})$ & transport-sensitivity coefficients; \cref{def:dep:transport}\\
$a_n(\rho)$ & refresh-chain risk factor; after \cref{prp:2fmh:refresh}\\
$\kappa$, $\setC$, $\varrho$, $\Delta$, $C_\Delta$ & curvature bound, data domain, circumradius, curvature parameters; \eqref{eq:pos:Delta}\\
$\pwa$, $\sigma_s$, $\sigmax s$, $V^{\max}_s$ & exponent, moments, and metric variances for power means; \eqref{eq:power:alpha-phi}, \eqref{eq:power:sigma-def}, \cref{not:power:standing}\\
$\Phi$, $D_n$, $D_n^\ast$, $\omega_n$ & conversion map, displacement moments, fluctuation; \eqref{eq:power:dep:notation}\\
$\psi_\alpha$ & adapted loss; \eqref{eq:power:loss}\\
$\mc B_1,\dots,\mc B_\chi$ & partition into mutually independent families; \cref{ssec:power:strucdep}\\
$r_n$ & normalized maximal row sum of $b_{ik}^{\alpha-1}$; \cref{ssec:power:decl}\\
\hline
\end{tabular}
\end{table}

\section{Dependence through couplings}\label{sec:dep}

All risk bounds of \cref{sec:2fmh,sec:pos,sec:power} compare the sample $\mo Y$ with a second sample in which one observation has been redrawn independently.
For independent observations, one simply replaces that observation.
Under dependence, such a replacement changes the joint law, and the remaining observations have to move as well to restore it; how far they move measures the dependence.
This section introduces the resulting couplings (\cref{ssec:dep:copies}) and two ways of constructing them with small displacements, dependency neighborhoods (\cref{ssec:dep:neighborhood}) and transport sensitivity (\cref{ssec:dep:transport}).
We then verify transport sensitivity for several classes of processes (\cref{ssec:dep:examples}) and compare it with other notions of dependence (\cref{ssec:dep:relations}).
In this section, $(\mc Q,d)$ is a Polish space; no curvature bound is needed.

\subsection{Exchangeable copies}\label{ssec:dep:copies}

\begin{definition}\label{def:dep:exchangeable}
    Let $n\in\N$.
    Let $\mo Y := (Y_1, \dots, Y_n)$ be an $n$-tuple of random variables $Y_i$ with values in $\mc Q$.
    Let $i \in \nnset n$.
    An \emph{exchangeable copy of $\mo Y$ with $i$-independence} is a second $n$-tuple $\mo Y^i = (Y_1^i, \dots, Y_n^i)$ such that $(\mo Y, \mo Y^i) \stackrel{d}{=} (\mo Y^i, \mo Y)$ and $Y_i^i$ is independent of $\mo Y$.
    In this case, we call $(\mo Y, \mo Y^i)$ an \emph{exchangeable pair of $n$-tuples with $i$-independence}.
\end{definition}

Exchangeability is a property of the coupling of $\mo Y$ with $\mo Y^i$, not an assumption on the observations.
Such copies exist for every joint law: an independent copy of $\mo Y$ is admissible.
It moves every coordinate, however, and the bounds below are informative only for copies that move few coordinates, or move them little.
For independent observations, there is a canonical copy that moves only the $i$-th coordinate: let $Y_1\pr,\dots,Y_n\pr$ be independent, independent of $\mo Y$, with $Y_j\pr\stackrel d=Y_j$, and set
\begin{equation}\label{eq:dep:replacement}
    Y_j^i := \begin{cases}
        Y_j, & j\neq i,\\
        Y_i\pr, & j=i.
    \end{cases}
\end{equation}
Then $\mo Y^i$ is an exchangeable copy of $\mo Y$ with $i$-independence.
This coordinate replacement underlies the stability arguments for independent data \cite{Escande2024Minimizers-JMLR,BrunelSerres2026FiniteSample-arxiv,Schoetz2026Weighted-arxiv}.

Under dependence, an exchangeable copy with $i$-independence redraws $Y_i$ and moves the other coordinates to restore the joint law, in a way that treats the two samples symmetrically.
The displacements $\ol{Y_k}{Y_k^i}$, $k\neq i$, then quantify how strongly the $i$-th observation influences the rest of the sample, in the metric of $\mc Q$.
The risk bounds of \cref{sec:2fmh,sec:pos,sec:power} are expressed through mixed moments of the form $\Eof{\ol{Y_i}{Y_i^i}^{r}\,\ol{Y_k}{Y_k^i}^{s}}$.
For instance, for a jointly Gaussian real sequence, the copy $Y_k^i:=Y_k-\beta_{ik}(Y_i-Y_i^i)$ with the regression coefficients $\beta_{ik}:=\Covof{Y_i,Y_k}/\Vof{Y_i}$ is admissible (\cref{prp:2fmh:gaussian-1d}), and its displacements are $\lvert \beta_{ik}\rvert\,\lvert Y_i-Y_i^i\rvert$; the transport-sensitivity coefficients of \cref{def:dep:transport} generalize $\lvert \beta_{ik}\rvert$.
The next two subsections give general constructions.

\subsection{Dependency neighborhoods}\label{ssec:dep:neighborhood}

The first construction applies under local dependence, encoded by dependency neighborhoods in the sense of \cite[Definition~3.5]{Ross2011Stein-PS}; this is a form of the local-dependence condition (LD1) of \cite{ChenShao2004LocalDependence-AOP}.

\begin{definition}\label{def:dep:neighborhood}
    Let $\mo Y=(Y_1,\dots,Y_n)$ and $N_i\subseteq\nnset n$ for each $i\in\nnset n$.
    The set $N_i$ is a \emph{dependency neighborhood} of $Y_i$ if $i\in N_i$ and $Y_i$ is independent of $(Y_j)_{j\in N_i\compl}$, where $N_i\compl=\nnset n\setminus N_i$.
\end{definition}

The following coupling lemma underlies the copies under local dependence.

\begin{lemma}[Proof in Appendix \ref{proof_lmm:aux:couple:triple}]\label{lmm:aux:couple:triple}
    Let $(A,B,C)$ be random variables taking values in Polish spaces $S_A, S_B, S_C$, respectively, and assume that $A$ and $C$ are independent.
    If $A$ or $B$, or both, are absent, they are replaced by constants in one-point spaces.
    Then there exists a triple $(A\pr,B\pr,C\pr)$, defined on a possibly extended probability space, such that
    \begin{enumerate}[label=(\roman*)]
        \item $(A,B,C,A\pr,B\pr,C\pr) \stackrel{d}{=} (A\pr,B\pr,C\pr,A,B,C)$
        \item $A\pr = A$ almost surely,
        \item $B$ and $B\pr$ are independent with identical distribution conditionally on $A$, and
        \item $C\pr$ is independent of $(A,B,C)$.
    \end{enumerate}
\end{lemma}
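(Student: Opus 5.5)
We construct the triple explicitly from regular conditional distributions and then check the symmetry (i). Since $S_A,S_B,S_C$ are Polish, there exist regular conditional distributions $\kappa_{B}(a,c,\cdot):=\mc L(B\mid A=a,C=c)$, as well as $\mc L(A)$ and $\mc L(C)$. On an extended probability space, we first draw $C\pr$ with law $\mc L(C)$ independently of $(A,B,C)$. We then set $A\pr:=A$, and, conditionally on $(A,B,C,C\pr)$, we draw $B\pr$ from $\kappa_B(A,C\pr,\cdot)$, using an auxiliary uniform variable independent of everything else. Properties (ii) and (iv) hold by construction. The degenerate cases, where $A$ or $B$ is a constant, are covered by the same construction with trivial kernels.

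For (i), we write down the joint law of $(A,B,C,A\pr,B\pr,C\pr)$ explicitly. Let $\mu_A=\mc L(A)$ and $\mu_C=\mc L(C)$. By independence of $A$ and $C$, we have $\mc L(A,B,C)(\dl a,\dl b,\dl c)=\mu_A(\dl a)\mu_C(\dl c)\kappa_B(a,c,\dl b)$. Hence the six-tuple has law
\begin{equation*}
    \mu_A(\dl a)\,\delta_a(\dl a\pr)\,\mu_C(\dl c)\,\mu_C(\dl c\pr)\,\kappa_B(a,c,\dl b)\,\kappa_B(a,c\pr,\dl b\pr)
    \eqfs
\end{equation*}
This expression is invariant under the swap $(a,b,c)\leftrightarrow(a\pr,b\pr,c\pr)$, because $a=a\pr$ on the support of $\delta_a(\dl a\pr)$ and the remaining factors are symmetric in $(b,c)\leftrightarrow(b\pr,c\pr)$. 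This gives (i). The key point is that independence of $A$ and $C$ is exactly what makes the law of $(A,B,C)$ factor as $\mu_A\otimes\mu_C$ times a kernel, which is what allows $C$ to be redrawn freely while $A$ is held fixed.

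For (iii), we condition on $A=a$. The pairs $(B,C)$ and $(B\pr,C\pr)$ are then independent, each with law $\mu_C(\dl c)\kappa_B(a,c,\dl b)$. In particular, $B$ and $B\pr$ are conditionally independent given $A$ and identically distributed, both with law $\mc L(B\mid A=a)$. The expected obstacle is only measure-theoretic bookkeeping: the kernel is defined only $\mu_A\otimes\mu_C$-a.e., and the identity must be verified on product rectangles and then extended by a $\pi$-$\lambda$ argument. This causes no real difficulty, since every statement is an identity of measures obtained by integrating the displayed density against rectangles.
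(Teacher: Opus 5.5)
Your proof is correct and is essentially the paper's argument. The paper writes the same six-tuple law as $\mu(\dl a)\,\delta_a(\dl a\pr)\,K(\dl b,\dl c\mid a)\,K(\dl b\pr,\dl c\pr\mid a)$, with $K$ the conditional law of $(B,C)$ given $A$; this coincides with your displayed measure, because independence of $A$ and $C$ gives $K(\dl b,\dl c\mid a)=\mu_C(\dl c)\,\kappa_B(a,c,\dl b)$ for $\mu_A$-a.e.\ $a$. The only difference is bookkeeping: you factor through $\kappa_B$ up front, so (iv) holds by construction and the extension is built sequentially, whereas the paper realizes the law by disintegrating it with respect to the first three coordinates and then verifies (iv) from the $C$-marginal of $K(\cdot\mid a)$.
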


\begin{proposition}[Copies under local dependence]\label{prp:dep:neighborhood-copy}
    Let $N_i$ be a dependency neighborhood of $Y_i$.
    Then, on a possibly extended probability space, there exists an exchangeable copy $\mo Y^i$ of $\mo Y$ with $i$-independence such that
    \begin{enumerate}[label=(\roman*)]
        \item $Y_k^i=Y_k$ almost surely for every $k\in N_i\compl$;
        \item $Y_i^i$ is an independent copy of $Y_i$;
        \item for every $k\in N_i\setminus\{i\}$, the variables $Y_k$ and $Y_k^i$ are conditionally independent and identically distributed given $(Y_j)_{j\in N_i\compl}$; in particular, $Y_k^i\stackrel d=Y_k$.
    \end{enumerate}
\end{proposition}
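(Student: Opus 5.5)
We apply \cref{lmm:aux:couple:triple} with
\[
A:=(Y_j)_{j\in N_i\compl},\qquad B:=(Y_k)_{k\in N_i\setminus\{i\}},\qquad C:=Y_i .
\]
These take values in finite products of $\mc Q$, which are Polish. If $N_i\compl$ or $N_i\setminus\{i\}$ is empty, the corresponding variable is replaced by a constant, as the lemma allows. By \cref{def:dep:neighborhood}, $Y_i$ is independent of $(Y_j)_{j\in N_i\compl}$, so $A$ and $C$ are independent, which is the hypothesis of the lemma.

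The lemma gives a triple $(A\pr,B\pr,C\pr)$ on an extended space. We define $\mo Y^i$ by reading off its coordinates in the same way $\mo Y$ is read off from $(A,B,C)$:
\[
Y_j^i:=A\pr_j\ (j\in N_i\compl),\qquad Y_k^i:=B\pr_k\ (k\in N_i\setminus\{i\}),\qquad Y_i^i:=C\pr .
\]
This assembly map is a fixed measurable bijection. Hence $(\mo Y,\mo Y^i)$ is the image of $(A,B,C,A\pr,B\pr,C\pr)$ under the map applied to each triple, and $(\mo Y^i,\mo Y)$ is the image of $(A\pr,B\pr,C\pr,A,B,C)$ under the same map. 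Property~(i) of the lemma therefore transfers directly to $(\mo Y,\mo Y^i)\stackrel d=(\mo Y^i,\mo Y)$.

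Property~(iv) of the lemma says $C\pr$ is independent of $(A,B,C)$, that is, $Y_i^i\perp\mo Y$. So $\mo Y^i$ is an exchangeable copy with $i$-independence.

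We now check the three claimed properties.

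\textbf{Claim (i).} Property~(ii) of the lemma, $A\pr=A$ almost surely, gives $Y_k^i=Y_k$ almost surely for $k\in N_i\compl$.

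\textbf{Claim (ii).} $Y_i^i$ is independent of $\mo Y$, and exchangeability forces $C\pr\stackrel d=C$. So $Y_i^i$ is an independent copy of $Y_i$.

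\textbf{Claim (iii).} Property~(iii) of the lemma says $B$ and $B\pr$ are conditionally independent and identically distributed given $A$. Projecting to the coordinate $k$ preserves both conditional independence and equality of conditional laws. Hence $Y_k$ and $Y_k^i$ are conditionally iid given $(Y_j)_{j\in N_i\compl}$. Integrating out the conditioning yields $Y_k^i\stackrel d=Y_k$.

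The only point requiring care is bookkeeping. We must verify that exchangeability of the triples transfers to the reassembled tuples, which holds because the same deterministic map is applied to both halves. We must also handle the degenerate cases $N_i=\{i\}$ and $N_i=\nnset n$ through the lemma's convention for absent variables. When $N_i=\{i\}$, $B$ is absent and the construction reduces to the replacement~\eqref{eq:dep:replacement}. When $N_i=\nnset n$, $A$ is absent, so (iii) becomes unconditional independence, and the construction is an independent copy of the sample.
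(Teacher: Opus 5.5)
Your proposal is correct and follows essentially the same route as the paper's proof. You make the same choice of $A=(Y_j)_{j\in N_i\compl}$, $B=(Y_j)_{j\in N_i\setminus\{i\}}$, $C=Y_i$ in \cref{lmm:aux:couple:triple}, reassemble $\mo Y^i$ from $(A\pr,B\pr,C\pr)$, and read off items (i)--(iii) from the lemma's properties, with $Y_i^i\stackrel d=Y_i$ coming from exchangeability. Your extra remarks on transferring exchangeability through the assembly map and on the degenerate cases $N_i=\{i\}$ and $N_i=\nnset n$ are accurate.
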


\begin{proof}
    Define $A := (Y_j)_{j\in N_i\compl}$, $B := (Y_j)_{j\in N_i\setminus\{i\}}$, and $C := Y_i$.
    By assumption, $A$ and $C$ are independent.
    \cref{lmm:aux:couple:triple} gives $(A\pr,B\pr,C\pr)$ such that $(A,B,C,A\pr,B\pr,C\pr) \stackrel{d}{=} (A\pr,B\pr,C\pr,A,B,C)$, $A\pr=A$ almost surely, $B$ and $B\pr$ are conditionally iid given $A$, and $C\pr \perp (A,B,C)$.
    Define an $n$-tuple $\mo Y^i=(Y_1^i, \dots, Y_n^i)$ by
    \begin{equation*}
        A\pr =: (Y_j^i)_{j\in N_i\compl}\eqcm\quad B\pr =: (Y_j^i)_{j\in N_i\setminus\{i\}} \eqcm\quad  C\pr =: Y_i^i
        \eqfs
    \end{equation*}
    Exchangeability of $(\mo Y,\mo Y^i)$ is the first property of the lemma, and $Y_i^i=C\pr$ is independent of $\mo Y$, so $\mo Y^i$ is an exchangeable copy of $\mo Y$ with $i$-independence.
    Items (i)--(iii) are the remaining properties of the lemma; $Y_i^i\stackrel d=Y_i$ follows from exchangeability.
\end{proof}

Only the coordinates in $N_i$ move, so the dependence enters the risk bounds through the sizes $\#N_i$, which we summarize by
\begin{equation}\label{eq:dep:L}
    N_{\max} := \max_{i\in\nnset n}\#N_i
    \eqcm\quad
    \bar N := \frac{1}{n}\sum_{i=1}^n\#N_i\leq N_{\max}
    \eqfs
\end{equation}

\begin{remark}[Related local-dependence conditions]\label{rem:dep:neighborhood-related}
    Call a graph $G$ on $\nnset n$ a \emph{dependency graph} for $\mo Y$ if $(Y_i)_{i\in I}$ and $(Y_j)_{j\in J}$ are independent whenever $I$ and $J$ are disjoint and no edge of $G$ joins them \cite{BaldiRinott1989DependencyGraphs-AOP}.
    The closed neighborhoods of a dependency graph are dependency neighborhoods, so $N_{\max}$ is at most one plus the maximum degree of the graph.
    For $\ell\in\Nn$, call $\mo Y$ \emph{$\ell$-dependent} if $(Y_i)_{i\in I}$ and $(Y_j)_{j\in J}$ are independent whenever $I,J\subseteq\nnset n$ are nonempty with $\min\setByEleInText{\lvert i-j\rvert}{i\in I,\ j\in J}>\ell$; this is $m$-dependence in the terminology of \cite[Definition~1]{HoeffdingRobbins1948CLT-DMJ}.
    The band graph, with edges $\{i,j\}$ for $0<\lvert i-j\rvert\leq\ell$, is a dependency graph precisely when $\mo Y$ is $\ell$-dependent, and then $N_i=\setByEleInText{j\in\nnset n}{\lvert i-j\rvert\leq\ell}$ and $\#N_i\leq2\ell+1$.
    In the opposite direction, $j\notin N_i$ forces $Y_i\perp Y_j$, so $\#\setByEleInText{j\in\nnset n}{Y_i\not\perp Y_j}\leq\#N_i$.

    Neither implication reverses, as Bernstein's example shows; see, e.g., \cite{Stoyanov2013Counterexamples-Dover}.
    Let $Y_1,Y_2$ be independent Rademacher variables and $Y_3:=Y_1Y_2$.
    The three variables are pairwise independent, yet each is a function of the other two: the count above equals $1$, the smallest dependency neighborhoods have $\#N_i=2$, and every dependency graph for $\mo Y$ has a closed neighborhood of size $3$.
    Dependency neighborhoods therefore ask for more than a bound on the number of dependent pairs and less than a dependency graph: they use the independence of $Y_i$ from $(Y_j)_{j\in N_i\compl}$ as a whole, and never independence between two such families.
\end{remark}

\subsection{Transport sensitivity}\label{ssec:dep:transport}

Under local dependence, each $Y_i$ is independent of the family
$(Y_j)_{j\notin N_i}$. We now allow dependence across the whole sample and quantify how the conditional law of the remaining observations varies with the value of $Y_i$. The condition below requires a single coupling of the conditional laws given $Y_i=y$ and $Y_i=y\pr$ that satisfies coordinatewise Lipschitz-type transport bounds. The coefficients $b_{ik}$ in these bounds may decay with $\lvert i-k\rvert$, but no such decay is required in the definition.

\begin{definition}\label{def:dep:transport}
    Let $\mo Y = (Y_1,\dots,Y_n)$ be an $n$-tuple of $\mc Q$-valued random variables, let $s\in(0,1]$, and let $\mo b = (b_{ik})_{i,k\in\nnset n}\in\Rp^{n\times n}$ with $b_{ii}=1$.
    We define $\mo Y_{-i} := (Y_k)_{k\neq i}$, and fix a regular conditional distribution $K_i(y):=\mc L(\mo Y_{-i}\mid Y_i=y)$.
    We call $\mo Y$ \emph{$\mo b$-sensitive at exponent $s$} if for every $i\in\nnset n$ there is a probability kernel $(y,y\pr)\mapsto\pi^i_{y,y\pr}$ from $\mc Q^2$ to $\mc Q^{n-1}\times\mc Q^{n-1}$ such that, for $\mu_{i}\otimes\mu_{i}$-almost every $(y,y\pr)$, $\pi^i_{y,y\pr}$ is a coupling of $K_i(y)$ and $K_i(y\pr)$, $\pi^i_{y\pr,y}$ is the transpose of $\pi^i_{y,y\pr}$ (its image under exchanging the two blocks), and
    \begin{equation}\label{eq:dep:transport}
        \br{\int \ol{z_k}{z_k\pr}^{\,s}\;\dl\pi^i_{y,y\pr}(z,z\pr)}^{1/s}
        \;\leq\;
        b_{ik}\,\ol y{y\pr}
        \eqcm\quad k\neq i
        \eqfs
    \end{equation}
\end{definition}

For $s=1$, this says that displacing the $i$-th observation by $\varepsilon$ displaces the $k$-th one by at most $b_{ik}\varepsilon$ on average, and a single coupling achieves this for all $k$ simultaneously.
The symmetry requirement is no restriction: if $\pi^i$ satisfies the other requirements, then so does $\frac{1}{2}(\pi^i_{y,y\pr}+(\pi^i_{y\pr,y})\tr)$, which is symmetric, because the left-hand side of \eqref{eq:dep:transport}, raised to the power $s$, is linear in the coupling.
For Gaussian vectors, the operator norms of the regression matrices are admissible, and at every exponent they are the smallest admissible values (\cref{prp:dep:gaussian}).
By Lyapunov's inequality, $\mo b$-sensitivity at exponent $s$ implies $\mo b$-sensitivity at every exponent $s\pr\in(0,s]$, with the same couplings and the same matrix $\mo b$; smaller exponents are thus weaker requirements.
The results for $2$-Fréchet means use the exponent $s=1$, and those for power Fréchet means of order $\alpha\in(1,2)$ the exponent $s=\alpha/2$.
The couplings yield exchangeable copies whose displacements are controlled by $\mo b$.

\begin{proposition}[Copies under transport sensitivity]\label{prp:dep:transport-copy}
    Let $\mo Y$ be $\mo b$-sensitive at exponent $s\in(0,1]$ and let $i\in\nnset n$.
    Then, on a possibly extended probability space, there exists an exchangeable copy $\mo Y^i$ of $\mo Y$ with $i$-independence such that $Y_i^i$ is an independent copy of $Y_i$ and, for every $r\geq0$, every $s\pr\in(0,s]$, and every $k\in\nnset n$,
    \begin{equation}\label{eq:dep:mixed-moment}
        \EOf{\ol{Y_i}{Y_i^i}^{\,r}\,\ol{Y_k}{Y_k^i}^{\,s\pr}}
        \;\leq\;
        b_{ik}^{\,s\pr}\,\EOf{\ol{Y_i}{Y_i^i}^{\,r+s\pr}}
        \eqfs
    \end{equation}
    Both sides may be infinite.
\end{proposition}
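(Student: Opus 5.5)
The plan is to build the copy explicitly from the kernel $\pi^i$ of \cref{def:dep:transport}. On an extended probability space, draw $Y_i^i$ independent of $\mo Y$ with law $\mu_i$. Conditionally on $(\mo Y, Y_i^i)$, draw $\mo Z'$ from the conditional law of the second block under $\pi^i_{Y_i,Y_i^i}$, given that the first block equals $\mo Y_{-i}$. That is, disintegrate $\pi^i_{y,y\pr}(\dl z,\dl z\pr)=K_i(y)(\dl z)\,\kappa_{y,y\pr}(z,\dl z\pr)$, which is possible since $\mc Q^{n-1}$ is Polish and the first marginal of $\pi^i_{y,y\pr}$ is $K_i(y)$. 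Then set $\mo Y^i_{-i}:=\mo Z'$. The measurability of $(y,y\pr,z)\mapsto\kappa_{y,y\pr}(z,\cdot)$ follows from the standard disintegration of the kernel $(y,y\pr)\mapsto\pi^i_{y,y\pr}$ jointly in its parameters; I will cite this rather than reprove it.

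\emph{Joint law and exchangeability.} By construction, the joint law of $(Y_i,Y_i^i,\mo Y_{-i},\mo Y^i_{-i})$ is
\[
\mu_i(\dl y)\,\mu_i(\dl y\pr)\,\pi^i_{y,y\pr}(\dl z,\dl z\pr).
\]
This holds because $\mo Y_{-i}$ given $Y_i=y$ has law $K_i(y)$, $Y_i^i\perp\mo Y$, and the second block is then drawn from $\kappa$. Swapping $(y,z)\leftrightarrow(y\pr,z\pr)$ maps this measure to $\mu_i(\dl y\pr)\mu_i(\dl y)(\pi^i_{y\pr,y})\tr$. By the transpose symmetry in \cref{def:dep:transport}, which holds $\mu_i\otimes\mu_i$-a.e. and so suffices, this is again the same measure. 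Hence $(\mo Y,\mo Y^i)\stackrel d=(\mo Y^i,\mo Y)$. Moreover $Y_i^i$ is independent of $\mo Y$ with law $\mu_i$, so $\mo Y^i$ is an exchangeable copy with $i$-independence.

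\emph{Moment bound.} For $k=i$ the inequality is an equality, since $b_{ii}=1$. For $k\neq i$, condition on $(Y_i,Y_i^i)=(y,y\pr)$. Given this, $(\mo Y_{-i},\mo Y^i_{-i})\sim\pi^i_{y,y\pr}$, so
\[
\EOf{\ol{Y_k}{Y_k^i}^{s\pr}\mid Y_i=y,Y_i^i=y\pr}=\int\ol{z_k}{z_k\pr}^{s\pr}\dl\pi^i_{y,y\pr}.
\]
Since $s\pr\le s$, Lyapunov's inequality bounds this by $\bigl(\int\ol{z_k}{z_k\pr}^{s}\dl\pi^i_{y,y\pr}\bigr)^{s\pr/s}$, and \eqref{eq:dep:transport} bounds that by $b_{ik}^{s\pr}\ol y{y\pr}^{s\pr}$. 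Multiplying by $\ol y{y\pr}^r$ and integrating against $\mu_i\otimes\mu_i$ via the tower property gives \eqref{eq:dep:mixed-moment}. All integrands are nonnegative, so infinite values cause no trouble (with $0\cdot\infty=0$).

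The main obstacle is the exchangeability step. It requires care that the constructed joint law really factors as $\mu_i\otimes\mu_i\otimes\pi^i$, that is, that sampling the first block from the actual data rather than from a fresh draw of $K_i(Y_i)$ gives the same law. This follows because $K_i$ is a regular version of the conditional law of $\mo Y_{-i}$ given $Y_i$, and $Y_i^i$ is independent of $\mo Y$. The a.e. caveats should be handled by modifying $\pi^i$ on a null set, for instance replacing it by the product coupling there.
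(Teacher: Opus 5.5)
Your proposal is correct and follows the paper's proof. The paper likewise uses the joint law $\mu_i(\dl y)\,\mu_i(\dl y\pr)\,\pi^i_{y,y\pr}(\dl z,\dl z\pr)$, derives exchangeability from the symmetry of $\mu_i\otimes\mu_i$ and the transpose property, and gets the moment bound by conditioning on $(Y_i,Y_i^i)$ and applying Lyapunov's inequality. The only difference is cosmetic: you build the copy on the extended space by disintegrating $\pi^i_{y,y\pr}$ along its first block, whereas the paper first specifies the joint law and then disintegrates it with respect to the first sample.
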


\begin{proof}
Let $K_i$ and $\pi^i$ be the conditional laws and coupling
kernel from \cref{def:dep:transport}. We first specify the joint law of the two samples. Take $Y_i,Y_i^i$ independently with law $\mu_i$ and, conditionally on $(Y_i,Y_i^i)=(y,y\pr)$, draw $(\mo Y_{-i},\mo Y^i_{-i})\sim\pi^i_{y,y\pr}$.

For $\mu_i\otimes\mu_i$-almost every $(y,y\pr)$, the first marginal of $\pi^i_{y,y\pr}$ is $K_i(y)$. Thus $\mo Y$ has the prescribed law, and $\mo Y_{-i}$ is conditionally independent of $Y_i^i$ given $Y_i$. Since $Y_i$ and $Y_i^i$ are independent, this gives $Y_i^i\perp\mo Y$. The second marginal is $K_i(y\pr)$, so
$\mo Y^i\stackrel d=\mo Y$.

Moreover, $\mu_i\otimes\mu_i$ is symmetric and $\pi^i_{y\pr,y}=(\pi^i_{y,y\pr})\tr$ almost everywhere. Hence the constructed pair is exchangeable. As in the proof of \cref{lmm:aux:couple:triple},
disintegrating this joint law with respect to its first sample realizes $\mo Y^i$ on an extension of the probability space carrying the given $\mo Y$.

It remains to prove \eqref{eq:dep:mixed-moment}. Fix $r\geq0$ and $s\pr\in(0,s]$. For $k\neq i$, Lyapunov's inequality and
\eqref{eq:dep:transport} give, almost surely,
$(\int f^{s\pr}\dl\pi^i_{y,y\pr})^{1/s\pr} \leq(\int f^{s}\dl\pi^i_{y,y\pr})^{1/s}$ for $f=\ol{z_k}{z_k\pr}$. 
For $k=i$, the claim \eqref{eq:dep:mixed-moment} is an identity with $b_{ii}=1$.
    For $k\neq i$, note that $\ol{Y_i}{Y_i^i}^{\,r}$ is measurable with respect to $\sigma(Y_i,Y_i^i)$, whereas, given $(Y_i,Y_i^i)=(y,y\pr)$, the pair $(\mo Y_{-i},\mo Y^i_{-i})$ has law $\pi^i_{y,y\pr}$.
    Hence
    \begin{align*}
        \EOf{\ol{Y_i}{Y_i^i}^{\,r}\,\ol{Y_k}{Y_k^i}^{\,s\pr}}
        =
        \EOf{\ol{Y_i}{Y_i^i}^{\,r}\;\EOf{\ol{Y_k}{Y_k^i}^{\,s\pr}\mid Y_i,Y_i^i}} \leq
        b_{ik}^{\,s\pr}\,\EOf{\ol{Y_i}{Y_i^i}^{\,r+s\pr}}
        \eqfs
    \end{align*}
\end{proof}

For quadratic Fréchet means, we only need the case $s=1$ and $r=s\pr=1$ of \eqref{eq:dep:mixed-moment}.
Since $Y_i^i$ is an independent copy of $Y_i$, this case gives $\Eof{\ol{Y_i}{Y_i^i}\,\ol{Y_k}{Y_k^i}}\leq 2b_{ik}\Vmof{Y_i}$.

\subsection{Examples}\label{ssec:dep:examples}

\begin{proposition}[Gaussian vectors]\label{prp:dep:gaussian}
Let $p\in\N$, and let $\mo Y=(Y_1,\dots,Y_n)$ be jointly Gaussian in $(\R^p)^n$ with $\Sigma_{ik}:=\Covof{Y_i,Y_k}\in\R^{p\times p}$.
Then $\mo Y$ is $\mo b$-sensitive at every exponent $s\in(0,1]$, with $b_{ii}=1$ and $b_{ik}=\opNormof{\Sigma_{ki}\Sigma_{ii}^{+}}$ for $k\neq i$, where $\Sigma_{ii}^{+}$ is the Moore--Penrose
pseudoinverse of $\Sigma_{ii}$. At each exponent $s\in(0,1]$, these off-diagonal coefficients are the smallest admissible ones.
If $\Sigma_{ii}$ is invertible, then $\Sigma_{ii}^{+}=\Sigma_{ii}^{-1}$.
\end{proposition}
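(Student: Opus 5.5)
The plan is to construct the couplings explicitly by a synchronous, translation coupling of the Gaussian conditional laws, and then to prove minimality with a linear test function. Fix $i$ and write $\beta_{ki}:=\Sigma_{ki}\Sigma_{ii}^{+}$.

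\emph{Conditional laws.} For jointly Gaussian vectors the regular conditional distribution has the form
\[
K_i(y)=\mc L\bigl((\EOf{Y_k}+\beta_{ki}(y-\EOf{Y_i}))_{k\neq i}+\mo G\bigr)\eqcm
\]
where $\mo G$ is a centered Gaussian vector whose law does not depend on $y$. This holds for $\mu_i$-almost every $y$, namely for $y-\EOf{Y_i}$ in the range of $\Sigma_{ii}$, which carries $\mu_i$. The pseudoinverse handles singular $\Sigma_{ii}$, because $\Sigma_{ki}(I-\Sigma_{ii}^+\Sigma_{ii})=0$: the kernel of $\Sigma_{ii}$ is also contained in the kernel of $\Sigma_{ki}$, by positive semidefiniteness of the joint covariance.

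\emph{Coupling.} Define $\pi^i_{y,y\pr}$ as the law of $(z,z\pr)$ with $z_k=\EOf{Y_k}+\beta_{ki}(y-\EOf{Y_i})+G_k$ and $z\pr_k=\EOf{Y_k}+\beta_{ki}(y\pr-\EOf{Y_i})+G_k$, using the same $\mo G$ in both blocks.
\begin{itemize}
\item It is measurable in $(y,y\pr)$.
\item It is a coupling of $K_i(y)$ and $K_i(y\pr)$.
\item Swapping $y$ and $y\pr$ gives the transpose.
\item $z_k-z\pr_k=\beta_{ki}(y-y\pr)$ deterministically.
\end{itemize}
Hence for every $s\in(0,1]$,
\[
\Bigl(\int\normof{z_k-z_k\pr}^s\dl\pi^i_{y,y\pr}\Bigr)^{1/s}=\normof{\beta_{ki}(y-y\pr)}\le\opNormof{\beta_{ki}}\normof{y-y\pr}\eqcm
\]
so \eqref{eq:dep:transport} holds with $b_{ik}=\opNormof{\Sigma_{ki}\Sigma_{ii}^+}$.

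\emph{Minimality.} This is the main step, and the subtle point is that the constraint is only required for $\mu_i\otimes\mu_i$-almost every pair. Suppose some coupling satisfies \eqref{eq:dep:transport} with a coefficient $b$. For any coupling of $K_i(y)$ and $K_i(y\pr)$, Jensen's inequality for the convex norm gives
\[
\int\normof{z_k-z_k\pr}\dl\pi\ge\normof{\textstyle\int(z_k-z_k\pr)\dl\pi}=\normof{\beta_{ki}(y-y\pr)}\eqfs
\]
For $s<1$ this does not apply directly, since $x\mapsto\normof{x}^s$ is not convex. Instead I would use that the $k$-th marginals of $K_i(y)$ and $K_i(y\pr)$ are translates $\nu$ and $\nu(\cdot-a)$ of one Gaussian law, with $a=\beta_{ki}(y-y\pr)$. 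The aim is to show that every coupling of a measure with its translate by $a$ has $\int\normof{z-z\pr}^s\ge\normof{a}^s$ asymptotically. A simpler route is scaling: plug in pairs $(y,y\pr)$ with $y-y\pr=t v$, where $v$ lies in the range of $\Sigma_{ii}$, $t\to\infty$, and $y,y\pr$ stay in the support. For any coupling,
\[
\int\normof{z-z\pr}^s\ge\normof{t\beta_{ki}v}^s-\int\normof{(z-m)-(z\pr-m\pr)}^s\eqcm
\]
where $m,m\pr$ are the means, and the last term is bounded, for a suitably chosen coupling, by a constant independent of $t$. To make this rigorous for an arbitrary coupling, I would instead lower bound the $W_s$ cost, which is the infimum over all couplings, by the $1$-Lipschitz test function $z\mapsto\min(\langle u,z\rangle^+,M)$ in the metric $d^s$, or via tail probabilities. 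Dividing by $t^s$ and letting $t\to\infty$ then gives $b\ge\normof{\beta_{ki}v}/\normof{v}$.

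Two further points complete the argument. Because the bound only needs to hold almost everywhere, I would use continuity in $(y,y\pr)$ of the translated conditional laws to pass from almost every pair to every pair in the support. Maximizing over $v$ in the range of $\Sigma_{ii}$ gives $\opNormof{\beta_{ki}}$, since $\beta_{ki}$ vanishes on the kernel of $\Sigma_{ii}$. I expect this large-$t$ argument for $s<1$ to be the main obstacle. The final claim, that $\Sigma_{ii}^+=\Sigma_{ii}^{-1}$ for invertible $\Sigma_{ii}$, is standard.
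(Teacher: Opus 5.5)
Your construction is the paper's. You use the regression matrices $\Sigma_{ki}\Sigma_{ii}^{+}$, the kernel inclusion $\ker\Sigma_{ii}\subseteq\ker\Sigma_{ki}$, and the synchronous coupling that shares the Gaussian residual, so that $z_k-z_k'=\Sigma_{ki}\Sigma_{ii}^{+}(y-y')$ deterministically. Your Jensen argument for minimality at $s=1$ is also correct. It is slightly cleaner than the paper's argument at that exponent, because the means of the $k$-th marginals differ by exactly $\beta_{ki}(y-y')$ and no scaling is needed.

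\textbf{The gap: minimality for $s<1$.} You leave this case unresolved, although you already write the right inequality, with $z,z'$ the $k$-th coordinates:
\[
\int\|z-z'\|^s\,\dl\pi\ge\|t\beta_{ki}v\|^s-\int\|(z-m)-(z'-m')\|^s\,\dl\pi .
\]
You then say the last term is bounded only ``for a suitably chosen coupling''. Minimality must be shown for the given admissible coupling, so this qualifier leaves the argument incomplete.

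\textbf{The missing observation.} The bound holds for every coupling. Under any coupling $\pi$ of $K_i(y)$ and $K_i(y')$, both $z-m$ and $z'-m'$ have the law of $G_k$. Subadditivity of $x\mapsto x^s$ then gives
\[
\int\|(z-m)-(z'-m')\|^s\,\dl\pi\le\int\|z-m\|^s\,\dl\pi+\int\|z'-m'\|^s\,\dl\pi=2\,\mathbf{E}\|G_k\|^s .
\]
This constant does not depend on $t$, on $(y,y')$, or on $\pi$. The paper closes the proof exactly this way:
\begin{itemize}
\item it obtains $\|\beta_{ki}u\|^s\le b^s\|u\|^s+C$ with $u=y-y'$ for $\mu_i\otimes\mu_i$-almost every $(y,y')$;
\item it extends this to all $u\in\operatorname{ran}\Sigma_{ii}$ by continuity, because $y-y'\sim N(0,2\Sigma_{ii})$ has full support there;
\item it replaces $u$ by $\lambda u$, divides by $\lambda^s$, and lets $\lambda\to\infty$.
\end{itemize}

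\textbf{Your fallback.} The Kantorovich-duality route is not correct as stated. For $s<1$, the function $z\mapsto\min(\langle u,z\rangle^+,M)$ is not $1$-Lipschitz for $d^s$: with $\|u\|=1$ and $M\ge2$, it changes by $2>2^s$ between $0$ and $2u$. It is only $M^{1-s}$-Lipschitz, or you would need to use its $s$-th power instead. Either repair could be carried through, but as written the case $s<1$ is not proved.
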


\begin{proof}
Fix $i\in\nnset n$. We first construct a coupling of the conditional samples. For each $k\neq i$, define $B_{ki}:=\Sigma_{ki}\Sigma_{ii}^{+}$ and $R_k:=Y_k-B_{ki}Y_i$. If $v\in\ker\Sigma_{ii}$, then $v\tr Y_i$ is almost surely constant, so $\Sigma_{ki}v=0$. Thus $\ker\Sigma_{ii}\subseteq\ker\Sigma_{ki}$, and $B_{ki}\Sigma_{ii}=\Sigma_{ki}$. Then, $\Covof{R_k,Y_i}
    =\Sigma_{ki}-B_{ki}\Sigma_{ii}=0$

By joint Gaussianity, the residual vector $(R_k)_{k\neq i}$
is independent of $Y_i$. We then use the conditional laws $\mc L(\mo Y_{-i}\mid Y_i=y)=\mc L\brOf{(R_k+B_{ki}y)_{k\neq i}}$. Couple the laws at $y$ and $y\pr$ using the same residual
vector. This defines a measurable coupling kernel, whose transpose
is obtained by interchanging $y$ and $y\pr$. For every $k\neq i$, the displacement satisfies $\normof{z_k-z_k\pr} = \normof{B_{ki}(y-y\pr)} \leq \opNormof{B_{ki}}\normof{y-y\pr}$. This pointwise bound proves \eqref{eq:dep:transport} at every exponent $s\in(0,1]$.
It remains to prove minimality. Let $s\in(0,1]$, and let $\mo b$ be admissible at exponent $s$ with couplings $\pi^i_{y,y\pr}$.
For $\mu_{i}\otimes\mu_{i}$-almost every $(y,y\pr)$, under $\pi^i_{y,y\pr}$ both $U:=z_k-B_{ki}y$ and $U\pr:=z_k\pr-B_{ki}y\pr$ have the law of $R_k$.

Since $z_k-z_k\pr=B_{ki}(y-y\pr)+U-U\pr$, the triangle inequality and subadditivity of $x\mapsto x^s$ give
\[\normof{B_{ki}(y-y\pr)}^s \leq \int\normof{z_k-z_k\pr}^s\,
\dl\pi^i_{y,y\pr}(z,z\pr)+C \leq \widetilde b_{ik}^{\,s}\normof{y-y\pr}^s+C,
\]
where $C:=2\EOf{\normof{R_k-\Eof{R_k}}^s}<\infty$ does not depend on $y,y\pr$ or on the coupling.

The difference of two independent draws from $\mu_i$ has law $N(0,2\Sigma_{ii})$, whose support is $\operatorname{ran}\Sigma_{ii}$. By continuity, the preceding inequality therefore gives $ \normof{B_{ki}u}^s \leq  \widetilde b_{ik}^{\,s}\normof u^s+C$ for every $u\in\operatorname{ran}\Sigma_{ii}.$
Replacing $u$ by $\lambda u$, dividing by $\lambda^s$, and letting $\lambda\to\infty$, we obtain
$\normof{B_{ki}u} \leq \widetilde b_{ik}\normof u$  for every 
$u\in\operatorname{ran}\Sigma_{ii}$.  Finally, $B_{ki}$ vanishes on
$\ker\Sigma_{ii}=(\operatorname{ran}\Sigma_{ii})^\perp$,
so its operator norm is determined by its restriction
to $\operatorname{ran}\Sigma_{ii}$. Hence $\widetilde b_{ik}\geq\opNormof{B_{ki}}$, as claimed.
\end{proof}

\begin{proposition}[Bi-Lipschitz images]\label{prp:dep:link}
    Let $s\in(0,1]$, let $\mc H$ be a separable Hilbert space, let $\mc K\subseteq\mc H$ be closed, and let $\mo Z=(Z_1,\dots,Z_n)$ be a $\mc K$-valued tuple that is $\mo b$-sensitive at exponent $s$.
    Let $g\colon\mc K\to\mc Q$ satisfy $\lambda\normof{z-z\pr}\leq \ol{g(z)}{g(z\pr)}\leq \Lambda\normof{z-z\pr}$ for all $z,z\pr\in\mc K$ and some $0<\lambda\leq \Lambda$.
    Then $\mo Y := (g(Z_t))_{t\in\nnset n}$ is $\mo b\pr$-sensitive at exponent $s$, where $b\pr_{ii}:=1$ and $b\pr_{ik}:=\tfrac{\Lambda}{\lambda}b_{ik}$ for $k\neq i$.
\end{proposition}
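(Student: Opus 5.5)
The plan is to push the couplings for $\mo Z$ forward through $g$. The one real issue is that the conditional laws of $\mo Y_{-i}$ given $Y_i$ must be expressed through those of $\mo Z_{-i}$ given $Z_i$.

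First, $g$ is bi-Lipschitz, hence injective and continuous. Its inverse $g^{-1}\colon g(\mc K)\to\mc K$ is $\lambda^{-1}$-Lipschitz. Because $\mc K$ is closed in a complete space, $g(\mc K)$ is complete and therefore closed in $\mc Q$. By Lusin--Souslin, $g$ is a Borel isomorphism onto its image. Consequently $\sigma(Y_i)=\sigma(Z_i)$, and a regular conditional distribution of $\mo Y_{-i}$ given $Y_i=y$ is
$$K^Y_i(y):=(g^{\otimes(n-1)})_\#K^Z_i(g^{-1}(y))\quad\text{for } y\in g(\mc K).$$
Since $\mu_i^Y$ is concentrated on $g(\mc K)$, the values off $g(\mc K)$ can be fixed arbitrarily, for example as a constant law. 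Changing $K_i$ on a null set is harmless: all requirements in \cref{def:dep:transport} are imposed only $\mu_i\otimes\mu_i$-almost everywhere, and any two versions agree $\mu_i$-almost everywhere.

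Second, for $y,y\pr\in g(\mc K)$ I define
$$\pi^{i,Y}_{y,y\pr}:=(g^{\otimes(n-1)}\times g^{\otimes(n-1)})_\#\,\pi^{i,Z}_{g^{-1}(y),g^{-1}(y\pr)},$$
with an arbitrary symmetric coupling elsewhere, such as the product of the two laws. Measurability of this kernel follows from measurability of $g$, of $g^{-1}$ on the Borel set $g(\mc K)$, and of the pushforward operation. The null set on which $\pi^{i,Z}$ fails its properties is a $\mu_i^Z\otimes\mu_i^Z$-null set. Its image under $g\times g$ is $\mu_i^Y\otimes\mu_i^Y$-null, because $\mu_i^Y=g_\#\mu_i^Z$ and $g\times g$ is a Borel isomorphism onto its image. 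Outside that set, the two marginals are $K^Y_i(y)$ and $K^Y_i(y\pr)$. Symmetry under transposition is also preserved, since the pushforward by a product map commutes with swapping the two blocks.

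Third, for the transport bound with $k\neq i$, write $u=g^{-1}(y)$ and $u\pr=g^{-1}(y\pr)$. By the upper Lipschitz bound,
$$\int \ol{w_k}{w_k\pr}^s\,\dl\pi^{i,Y}_{y,y\pr}=\int \ol{g(z_k)}{g(z_k\pr)}^s\,\dl\pi^{i,Z}_{u,u\pr}\le \Lambda^s\int\normof{z_k-z_k\pr}^s\,\dl\pi^{i,Z}_{u,u\pr}\le \Lambda^s b_{ik}^s\normof{u-u\pr}^s.$$
By the lower Lipschitz bound, $\normof{u-u\pr}\le \lambda^{-1}\ol y{y\pr}$. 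Taking the $1/s$ power gives $\frac{\Lambda}{\lambda}b_{ik}\,\ol y{y\pr}$.

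I expect the main obstacle to be the measure-theoretic bookkeeping in the first step rather than the inequality itself: the conditional-law identification, the Borel measurability of $g^{-1}$, and the transfer of the null sets. The three observations above are exactly what handle it.
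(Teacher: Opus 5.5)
Your proof is correct and follows essentially the same route as the paper: push the $\mo Z$-couplings forward coordinatewise through $g$, reparametrize by $y=g(z)$ using that $g(\mc K)$ is closed, extend by a fixed symmetric coupling off $g(\mc K)^2$, and chain the upper and lower Lipschitz bounds. Your explicit identification of the conditional laws and of the null sets is a welcome addition. The appeal to Lusin--Souslin is unnecessary, since you already note that $g^{-1}$ is $\lambda^{-1}$-Lipschitz, hence continuous, on the closed set $g(\mc K)$, which is what the paper uses.
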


\begin{proof}
    The map $g$ is injective, and $g^{-1}\colon g(\mc K)\to\mc K$ is $\lambda^{-1}$-Lipschitz.
    As $g(\mc K)$ is complete, it is closed in $\mc Q$, so $Z_i=g^{-1}(Y_i)$ is a Borel function of $Y_i$ and $\sigma(Y_i)=\sigma(Z_i)$.
    
Fix $i\in\nnset n$, and take the coupling kernel $\pi^i_{z,z\pr}$ for $\mo Z$ from \cref{def:dep:transport}. Apply $g$ to every coordinate of both samples. For $\mc L(Z_i)\otimes\mc L(Z_i)$-almost every $(z,z\pr)$, the resulting measure couples the conditional laws of $\mo Y_{-i}$ given $Y_i=g(z)$ and $Y_i=g(z\pr)$. Parametrizing these couplings by $y=g(z)$ and
$y\pr=g(z\pr)$ gives a measurable kernel, since $g^{-1}$ is continuous. The kernel inherits symmetry from $\pi^i$ and can be
extended by a fixed symmetric probability measure outside $g(\mc K)^2$.

For each $k\neq i$, the Lipschitz bounds on $g$ and
\eqref{eq:dep:transport} give
    \begin{align*}
        \br{\int\ol{g(z_k)}{g(z_k\pr)}^{\,s}\dl\pi^i_{z,z\pr}}^{1/s}
        \leq \Lambda\br{\int\normof{z_k-z_k\pr}^{s}\dl\pi^i_{z,z\pr}}^{1/s}
\leq \Lambda b_{ik}\normof{z-z\pr}
        \leq \tfrac{\Lambda}{\lambda}b_{ik}\,\ol{g(z)}{g(z\pr)}
        \eqfs
    \end{align*}
\end{proof}

In \cref{prp:dep:link}, the data are modeled as the image of a latent process whose dependence is understood, and the geometry of $\mc Q$ enters only through the distortion $\Lambda/\lambda$. This distortion equals $1$ when $g$ is an isometry.

For symmetric positive definite $m\times m$ matrices with the Log-Cholesky metric, the map $P\mapsto\lfloor L(P)\rfloor+\log D(L(P))$, where $L(P)$ is the Cholesky factor, $\lfloor\cdot\rfloor$ its strictly lower triangular part, and $D(\cdot)$ its diagonal part, is an isometry onto $\R^{m(m+1)/2}$ \cite[Section~3]{Lin2019Cholesky-SIMAX}, so \cref{prp:dep:link} applies to its inverse with $\mc K=\R^{m(m+1)/2}$, for instance with a Gaussian latent sequence as in \cref{prp:dep:gaussian}.

For the $2$-Wasserstein space of distributions on a compact interval, quantile functions give an isometry onto a closed, bounded, convex set $\mc K\subseteq L^2(0,1)$, see, e.g., \cite{PanaretosZemel2020Wasserstein-Springer}, so the latent process must take values in this set; nondegenerate Gaussian processes do not, since their supports are unbounded and $\mc K$ is bounded.
In both examples $\mc Q$ is flat, and the Fréchet mean is a linear average in the latent coordinates.
Curved spaces require maps $g$ that are only bi-Lipschitz.

For positive definite $m\times m$ matrices, $m\geq2$, with the affine-invariant metric, no such map onto the whole space exists: the space contains totally geodesic hyperbolic planes, which admit no bi-Lipschitz embedding into a Hilbert space, since they contain copies of the finite complete binary trees with uniformly bounded distortion~\cite{Sarkar2011Delaunay-GD}, and these trees do not embed with uniformly bounded distortion into any superreflexive Banach space, in particular not into a Hilbert space~\cite[Theorem~1]{Bourgain1986Superreflexivity-IJM}.
There, \cref{prp:dep:link} applies only to processes confined to smaller regions, such as bounded balls, on which the distortion $\Lambda/\lambda$ grows with the radius.

For the phylogenetic tree spaces of \cite{BilleraEtAl2001PhylogeneticTrees-AAM}, the map $v$ that sends a tree to its vector of split lengths is an isometry on each orthant, and the metric is the length metric of the orthants, so that $\normof{v(x)-v(y)}\leq\ol xy$.
Conversely, $\ol xy\leq\sqrt2\,\normof{v(x)-v(y)}$, and the constant $\sqrt2$ cannot be improved. Indeed, let $x$ and $y$ be trees. We first bound $\ol xy$ by constructing a path through the face spanned by their common splits. Let $a$ be the Euclidean norm of the lengths of the splits present in $x$ but not in $y$, and define $b$ with the roles of $x$ and $y$ reversed. Let $c$ be the Euclidean norm of the length differences on the common splits. Then $\normof{v(x)-v(y)}^2=a^2+b^2+c^2$.  If $a+b=0$, the trees lie in a common orthant, and the straight segment between them gives $\ol xy\leq c=\normof{v(x)-v(y)}$.

Suppose now that $a+b>0$, and set $t:=a/(a+b)$. Choose $p$ in the face spanned by the common splits, with lengths obtained by linear interpolation from those of $x$ to those of $y$ at parameter $t$. The straight segments from $x$ to $p$ and from $p$ to $y$ lie in the orthants of $x$ and $y$, respectively.  Their total length is
    \begin{equation*}
        \sqrt{a^2+t^2c^2} + \sqrt{b^2+(1-t)^2c^2} = \sqrt{(a+b)^2+c^2}
        \eqfs
    \end{equation*}
    Hence
    \begin{equation*}
        \ol xy \leq \sqrt{(a+b)^2+c^2} \leq \sqrt{2a^2+2b^2+c^2} \leq \sqrt2\,\normof{v(x)-v(y)}
        \eqfs
    \end{equation*}
 It remains to show that the constant $\sqrt2$ cannot be improved. Consider trees with four labelled endpoints, that is, three leaves and the root in the setting of \cite{BilleraEtAl2001PhylogeneticTrees-AAM}. The three possible internal splits are pairwise incompatible. Thus every path between trees with different internal splits must pass through the face with no internal splits.

Take $x$ and $y$ with different internal splits of the same length $a>0$, and identical pendant edge lengths when these are included. Then $c=0$ and $\normof{v(x)-v(y)}=\sqrt2\,a$.
    For every point $p$ of the face with no internal splits,
    \begin{equation*}
        \ol xp\geq\normof{v(x)-v(p)}\geq a
        \quad\text{and}\quad
        \ol py\geq\normof{v(p)-v(y)}\geq a
        \eqfs
    \end{equation*}
Every path from $x$ to $y$ therefore has length at least $2a$. Combining this with the upper bound gives $\ol xy=2a=\sqrt2\,\normof{v(x)-v(y)}$.
Hence $g=v^{-1}$ satisfies the assumptions of \cref{prp:dep:link} with $\lambda=1$ and $\Lambda=\sqrt2$ on the closed cone $\mc K=v(\mc Q)$, which is not convex in general.

\begin{proposition}[Reversible contractive chains]\label{prp:dep:kernel}
    Let $\rho\in[0,1)$ and $s\in(0,1]$.
    Let $\mo Y=(Y_1,\dots,Y_n)$ be $n$ consecutive observations of a stationary Markov chain on $\mc Q$ with marginal law $\mu$ and a transition kernel $P$ that is reversible with respect to $\mu$, that is, $\mu(\dl x)P(x,\dl y)=\mu(\dl y)P(y,\dl x)$.
    Assume that
    \begin{equation}\label{eq:dep:kernel-contraction}
        W_s\brOf{P(q,\cdot),P(p,\cdot)}\leq\rho^s\,\ol qp^s
        \quad\text{for all } q,p\in\mc Q
        \eqfs
    \end{equation}
    Then $\mo Y$ is $\mo b$-sensitive at exponent $s$ with $b_{ik}=\rho^{\lvert i-k\rvert}$.
\end{proposition}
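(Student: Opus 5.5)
The plan is to describe the conditional law $K_i(y)=\mc L(\mo Y_{-i}\mid Y_i=y)$ explicitly as two independent Markov chains started at $y$, and then to couple the versions started at $y$ and $y\pr$ synchronously, one step at a time, using optimal couplings of the transition kernel.

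\emph{Step 1: the conditional law.} Because the chain is stationary and reversible, the time-reversed chain $(Y_i,Y_{i-1},\dots,Y_1)$ is again a Markov chain with kernel $P$. Indeed, reversibility gives $\mu(\dl x)P(x,\dl y)=\mu(\dl y)P(y,\dl x)$, so the backward transition kernel equals $P$. The Markov property makes the past $(Y_{i-1},\dots,Y_1)$ and the future $(Y_{i+1},\dots,Y_n)$ conditionally independent given $Y_i$. Consequently a version of $K_i(y)$ is obtained as follows. Run a chain $y=Z_0,Z_1,\dots,Z_{n-i}$ with kernel $P$ and read it as $(Y_{i+1},\dots,Y_n)$. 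Independently, run a chain $y=\tilde Z_0,\tilde Z_1,\dots,\tilde Z_{i-1}$ with kernel $P$ and read it as $(Y_{i-1},\dots,Y_1)$. Coordinate $k$ then sits at chain time $\lvert k-i\rvert$.

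\emph{Step 2: a one-step coupling kernel.} For $s\in(0,1]$, the cost $d^s$ is a continuous metric on the Polish space $\mc Q$. For each pair $(q,p)$, the set $\Pi(P(q,\cdot),P(p,\cdot))$ is tight, and the cost functional is lower semicontinuous, so an optimal coupling attaining $W_s(P(q,\cdot),P(p,\cdot))$ exists. This value is finite by \eqref{eq:dep:kernel-contraction}. I need a choice of optimal couplings $\gamma_{q,p}$ that is measurable in $(q,p)$. For this I would invoke the standard measurable selection of optimal plans (Villani, \emph{Optimal Transport}, Cor.~5.22), applied with cost $d^s$ and using the measurability of $q\mapsto P(q,\cdot)$. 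I then symmetrize, replacing $\gamma_{q,p}$ by $\frac12(\gamma_{q,p}+\gamma_{p,q}\tr)$. The resulting coupling is still optimal by linearity of the cost, and it satisfies $\gamma_{p,q}=\gamma_{q,p}\tr$. I expect this measurability issue to be the main technical obstacle; everything else is an iteration.

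\emph{Step 3: synchronous coupling and contraction.} Define $\pi^i_{y,y\pr}$ by starting at $(Z_0,Z_0\pr)=(y,y\pr)$ and drawing $(Z_{t+1},Z_{t+1}\pr)\sim\gamma_{Z_t,Z_t\pr}$ for each $t$. Do the same, with fresh independent randomness, for the backward pair $(\tilde Z_t,\tilde Z_t\pr)$. Each marginal is then the chain construction of Step~1, so $\pi^i_{y,y\pr}$ couples $K_i(y)$ and $K_i(y\pr)$. The symmetry $\gamma_{p,q}=\gamma_{q,p}\tr$ propagates through every step, which gives $\pi^i_{y\pr,y}=(\pi^i_{y,y\pr})\tr$. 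For the displacement bound, condition on step $t-1$ and apply \eqref{eq:dep:kernel-contraction}:
\[
\EOf{\ol{Z_t}{Z_t\pr}^s}=\EOf{W_s\brOf{P(Z_{t-1},\cdot),P(Z_{t-1}\pr,\cdot)}}\leq\rho^s\,\EOf{\ol{Z_{t-1}}{Z_{t-1}\pr}^s}\eqfs
\]
Induction then gives $\EOf{\ol{Z_t}{Z_t\pr}^s}\leq\rho^{ts}\ol y{y\pr}^s$, and the same bound holds for the backward pair. Taking $t=\lvert k-i\rvert$ and raising to the power $1/s$ yields \eqref{eq:dep:transport} with $b_{ik}=\rho^{\lvert i-k\rvert}$, while $b_{ii}=1$ by definition.
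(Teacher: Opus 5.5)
Your proposal is correct and follows essentially the same route as the paper: a version of $K_i(y)$ built from independent forward and backward chains started at $y$, a measurable, symmetrized choice of optimal $W_s$-plans via Villani's Corollary~5.22, and a synchronous step-by-step coupling whose induction gives $\rho^{s\lvert i-k\rvert}$. The only difference is that the paper runs the backward half with a backward kernel $\hat P$. The same argument therefore also covers nonreversible chains whose backward kernel satisfies the contraction, and reversibility simply gives $\hat P=P$.
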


\begin{remark}\label{rem:dep:kernel}
    For $s=1$ and $P(x,\cdot)\in\mc P^1(\mc Q)$ for all $x$, \eqref{eq:dep:kernel-contraction} says that $P$ has coarse Ricci curvature at least $1-\rho$ in the sense of \cite[Definition~3]{Ollivier2009RicciCurvature-JFA}.
    Metropolis--Hastings chains and random-scan Gibbs samplers are reversible.
    For a nonreversible chain, the conclusion holds if, in addition, a version $\hat P$ of the backward kernel, $\mu(\dl x)P(x,\dl y)=\mu(\dl y)\hat P(y,\dl x)$, satisfies \eqref{eq:dep:kernel-contraction}.
    Forward contraction alone does not suffice: the $\ms{AR}(1)$ process with Rademacher innovations of \cref{rem:dep:ar1} below has a $W_1$-contractive kernel, but its present determines its past through an expanding map.
    For linear averages, Joulin and Ollivier~\cite{JoulinOllivier2010Curvature-AOP} derive nonasymptotic error bounds for Markov chain Monte Carlo under forward contraction, without reversibility; we do not know whether forward contraction yields nonasymptotic bounds on the risk of Fréchet means by other arguments.
    The Markov property cannot be dropped.
    For independent Rademacher variables $\varepsilon_1,\varepsilon_2$, the periodic sequence $Y_{3j+1}=\varepsilon_1$, $Y_{3j+2}=\varepsilon_2$, $Y_{3j+3}=\varepsilon_1\varepsilon_2$ is stationary with independent consecutive pairs; its one-step kernel is therefore constant, reversible, and satisfies \eqref{eq:dep:kernel-contraction} with $\rho=0$, so that the conclusion would be $b_{ik}=0$ for $k\neq i$.
    However, $Y_{3j+1}=Y_1$ for all $j$, so every coupling of the conditional laws given $Y_1=y$ and $Y_1=y\pr$ moves these coordinates by $\lvert y-y\pr\rvert$.
    Hence every admissible matrix has $b_{1,3j+1}\geq1$, and the row sum $\sum_{k}b_{1k}$ is at least $n/3$.
    Accordingly, the sample mean does not converge to the common mean $0$ of the marginals: $\Eof{(n^{-1}\sum_{i=1}^nY_i)^2}=1/3$ for all $n\in3\N$.
\end{remark}

\begin{proof}
We prove the nonreversible statement of \cref{rem:dep:kernel}; for a reversible chain, $\hat P=P$ is a version of the backward kernel.  Fix $i\in\nnset n$. By the Markov property, conditionally on $Y_i=y$, the future $(Y_{i+1},\dots,Y_n)$ and the past $(Y_{i-1},\dots,Y_1)$ are independent, the future is a Markov chain with kernel $P$ started at $y$, and the past, in reversed time, is a Markov chain with kernel $\hat P$ started at $y$; this defines a version $K_i$ of $\mc L(\mo Y_{-i}\mid Y_i=y)$. The cost $\ol{z}{z\pr}^s$ is continuous and nonnegative, and $(q,p)\mapsto(P(q,\cdot),P(p,\cdot))$ is measurable into $\mc P(\mc Q)^2$ with the weak topology, so \cite[Corollary~5.22]{Villani2009OptimalTransport-Springer} gives a measurable choice $(q,p)\mapsto\gamma_{q,p}$ of optimal plans between $P(q,\cdot)$ and $P(p,\cdot)$ for this cost.

The symmetrized plans $\tilde\gamma_{q,p}:=\frac{1}{2}(\gamma_{q,p}+\gamma_{p,q}\tr)$ are again couplings of $P(q,\cdot)$ and $P(p,\cdot)$ with cost $W_s(P(q,\cdot),P(p,\cdot))\leq\rho^s\ol qp^s$, depend measurably on $(q,p)$, and satisfy $\tilde\gamma_{p,q}=\tilde\gamma_{q,p}\tr$.
    
Define $\hat\gamma_{q,p}$ in the same way from $\hat P$. Given $(y,y\pr)$, let $(Z_i,Z_i\pr):=(y,y\pr)$ and, for $t\geq i$, draw $(Z_{t+1},Z_{t+1}\pr)\sim\tilde\gamma_{Z_t,Z_t\pr}$; for $t\leq i$, draw $(Z_{t-1},Z_{t-1}\pr)\sim\hat\gamma_{Z_t,Z_t\pr}$, independently of the forward steps.

By the Ionescu Tulcea theorem, this defines a probability kernel, and we let $\pi^i_{y,y\pr}$ be the law of $((Z_k)_{k\neq i},(Z_k\pr)_{k\neq i})$.
Its marginals are $K_i(y)$ and $K_i(y\pr)$, and the symmetry of $\tilde\gamma$ and $\hat\gamma$ gives $\pi^i_{y\pr,y}=(\pi^i_{y,y\pr})\tr$.
Furthermore, $\Eof{\ol{Z_{t+1}}{Z_{t+1}\pr}^s\mid Z_t,Z_t\pr}\leq\rho^s\ol{Z_t}{Z_t\pr}^s$, so induction gives $\Eof{\ol{Z_{i+h}}{Z_{i+h}\pr}^s}\leq\rho^{sh}\ol y{y\pr}^s$, and likewise backward. This is \eqref{eq:dep:transport} with $b_{ik}=\rho^{\lvert i-k\rvert}$.
\end{proof}

A simple instance of \cref{prp:dep:kernel} is the refresh chain.
For $\rho\in[0,1)$ and $\mu\in\mc P(\mc Q)$, the refresh chain is the stationary Markov chain with marginal law $\mu$ that, at each step, retains its current observation with probability $\rho$ and otherwise draws a fresh observation independently from $\mu$.
The refresh chain is used in \cref{prp:2fmh:refresh}, in \eqref{eq:pos:refresh}, and in \cref{exa:2fmh:decl:refresh-tripod,exa:pos:sphere,exa:power:decl:refresh}.

\begin{proposition}[Refresh chain]\label{prp:dep:refresh}
    Let $n\in\N$, $\mu\in\mc P(\mc Q)$, and $\rho\in[0,1)$.
    Let $\mo Y=(Y_1,\dots,Y_n)$ consist of $n$ consecutive observations of the stationary Markov chain with marginal law $\mu$ and transition kernel
    \begin{equation}\label{eq:dep:refresh:kernel}
        P(x,\cdot)=\rho\,\delta_x+(1-\rho)\,\mu \eqcm\quad x\in\mc Q
        \eqfs
    \end{equation}
    Then, for every $s\in(0,1]$, $\mo Y$ is $\mo b$-sensitive at exponent $s$ with $b_{ik}=\rho^{\lvert i-k\rvert/s}$, $i,k\in\nnset n$.
    Moreover, the couplings can be chosen such that
    \begin{equation}\label{eq:dep:refresh:transport}
        \int \ol{z_k}{z_k\pr}^{\,s}\, \dl\pi^i_{y,y\pr}(z,z\pr) = \rho^{\abs{i-k}}\,\ol y{y\pr}^{\,s}
        \eqcm\quad k\neq i
        \eqcm
    \end{equation}
    so that \eqref{eq:dep:transport} holds with equality.
    If $\mu$ is not a Dirac measure, these coefficients are the smallest admissible ones.
\end{proposition}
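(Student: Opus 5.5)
The plan has three parts: build explicit couplings that attain equality in \eqref{eq:dep:refresh:transport}, deduce $\mo b$-sensitivity from equality, and then prove minimality separately.

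\emph{Construction.} We follow the proof of \cref{prp:dep:kernel}, replacing optimal plans by a ``synchronous'' coupling of the refresh kernel. The kernel \eqref{eq:dep:refresh:kernel} is reversible with respect to $\mu$: both $\mu(\dl x)P(x,\dl y)$ and $\mu(\dl y)P(y,\dl x)$ equal $\rho\,\mu(\dl x)\delta_x(\dl y)+(1-\rho)\mu(\dl x)\mu(\dl y)$. So the backward kernel is $P$ itself, and given $Y_i=y$ the future and the reversed past are independent refresh chains started at $y$. For $(q,p)$, let $\gamma_{q,p}$ be the law of $(Z,Z\pr)$ obtained as follows: with probability $\rho$ set $(Z,Z\pr)=(q,p)$, and otherwise draw one $U\sim\mu$ and set $Z=Z\pr=U$. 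This is a coupling of $P(q,\cdot)$ and $P(p,\cdot)$, it is measurable in $(q,p)$, and it satisfies $\gamma_{p,q}=\gamma_{q,p}\tr$. We chain these couplings forward and backward from $(y,y\pr)$ by Ionescu Tulcea, exactly as in the proof of \cref{prp:dep:kernel}. This gives $\pi^i_{y,y\pr}$ with marginals $K_i(y)$ and $K_i(y\pr)$ and the required transpose symmetry.

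\emph{Equality.} Under $\gamma$, once the two chains have merged they stay merged, because from equal states both branches give equal values. Before merging, they keep their states $(y,y\pr)$. Hence $(Z_k,Z_k\pr)=(y,y\pr)$ with probability $\rho^{\lvert i-k\rvert}$, which is the event of no refresh among the $\lvert i-k\rvert$ steps, and $Z_k=Z_k\pr$ otherwise. This yields \eqref{eq:dep:refresh:transport} exactly. Taking the $1/s$-th power gives \eqref{eq:dep:transport} with $b_{ik}=\rho^{\lvert i-k\rvert/s}$, and $b_{ii}=1$ holds by convention.

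\emph{Minimality.} This is the main step. Let $\pi$ be any admissible coupling with coefficient $\tilde b_{ik}$, and fix $k\neq i$ with $h=\lvert i-k\rvert$. The conditional law of $Y_k$ given $Y_i=y$ is $\rho^h\delta_y+(1-\rho^h)\mu$. For $y\neq y\pr$ we use a lower bound on $W_s$ of two such mixtures with a common component. The masses at the points $y$ and $y\pr$ do not balance, and a mass $\rho^h\delta_y$ must be transported. The cleanest route is to test against $f=d(\cdot,y\pr)^s\wedge d(y,y\pr)^s$ or to use Kantorovich duality with a $d^s$-Lipschitz function, but $\mu$ can also put mass near $y$ or $y\pr$, so this only gives bounds up to terms involving $\mu$. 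Instead, we use a scaling argument like the one in the minimality part of \cref{prp:dep:gaussian}. We need $\rho^h\,\ol y{y\pr}^s\le \tilde b_{ik}^s\,\ol y{y\pr}^s$ for $\mu\otimes\mu$-almost every pair. It suffices to have, for a.e.\ $(y,y\pr)$, $W_s(\rho^h\delta_y+(1-\rho^h)\mu,\rho^h\delta_{y\pr}+(1-\rho^h)\mu)\ge\rho^h\ol y{y\pr}^s$. The common part $(1-\rho^h)\mu$ may cancel, and the question is whether it can cheapen transport below the direct cost. By duality, $W_s\ge\rho^h(f(y)-f(y\pr))$ for every $f$ that is $1$-Lipschitz for $d^s$, because the $\mu$ parts cancel in $\int f\,\dl(\cdot)$. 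Taking $f=d(\cdot,y\pr)^s$ gives exactly $\rho^h\ol y{y\pr}^s$, since $d^s$ is a metric for $s\le1$.

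Since $\mu$ is not a Dirac measure, $\mu\otimes\mu$ charges pairs with $\ol y{y\pr}>0$. Dividing by $\ol y{y\pr}^s$ on that set then forces $\tilde b_{ik}\ge\rho^{h/s}$. This duality step is where I expect care is needed, but it appears to close cleanly.
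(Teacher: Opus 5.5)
Your construction and the equality \eqref{eq:dep:refresh:transport} are correct. The one-step coupling keeps $(q,p)$ with probability $\rho$ and otherwise sends both chains to a common fresh draw from $\mu$. It therefore shares all refresh decisions and all fresh labels, so after chaining it is the paper's block coupling (same partition, same labels on blocks not containing $i$) written in Markov form. The paper builds the conditional law of $\mo Y_{-i}$ directly from the block representation. You instead obtain it from reversibility and the forward/backward decomposition in the proof of \cref{prp:dep:kernel}. In both versions the $k$-th coordinates differ only on the event of no refresh in the $\lvert i-k\rvert$ intervening steps, and then by exactly $d(y,y\pr)$.

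The minimality step has a gap at the stated generality. The proposition assumes only $\mu\in\mc P(\mc Q)$, with no moment condition, and your final test function $f=d(\cdot,y\pr)^s$ is unbounded. With $h:=\lvert i-k\rvert$, suppose $\int d(x,y\pr)^s\,\mu(\dl x)=\infty$. Then both $\int f\,\dl(\rho^h\delta_y+(1-\rho^h)\mu)$ and $\int f\,\dl(\rho^h\delta_{y\pr}+(1-\rho^h)\mu)$ are infinite, so ``the $\mu$ parts cancel'' reads $\infty-\infty$. The duality lower bound with an unbounded $f$ is meaningful only when $\mu$ has a finite $s$-th moment.

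The repair is the function you discarded, which is also the paper's choice. Take $f:=\min\{d(\cdot,y\pr)^s,\,d(y,y\pr)^s\}$: it is bounded, $1$-Lipschitz for the metric $d^s$, and satisfies $f(y)-f(y\pr)=d(y,y\pr)^s$. Integrate the pointwise bound $d(z,z\pr)^s\geq f(z)-f(z\pr)$ against any coupling of the two $k$-th marginals. The common component contributes the finite amount $(1-\rho^h)\int f\,\dl\mu$ to both integrals and cancels, so the cost is at least $\rho^h d(y,y\pr)^s$. No duality theorem is needed.

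Your reason for rejecting this $f$ is mistaken. Where $\mu$ puts its mass relative to $y$ and $y\pr$ plays no role, since for bounded $f$ the common part cancels exactly; truncation matters only for integrability. Equivalently, you can keep the unbounded $f$ and let $M\to\infty$ in $f\wedge M$ by dominated convergence, using $\lvert f(z)-f(z\pr)\rvert\leq d(z,z\pr)^s$ on couplings of finite cost, but that is the same truncation. The announced scaling argument is never used and can be dropped. With these changes your minimality argument coincides with the paper's.
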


\begin{proof}
    The measure \[\mu(\dl x)P(x,\dl y) = \rho\,\mu(\dl x)\delta_x(\dl y) +(1-\rho)\mu(\dl x)\mu(\dl y)\] is symmetric in $x,y$, so $\mu$ is invariant and the chain is reversible.

    We construct the conditional coupling.
    Independently at each of the $n-1$ gaps between observations, place a block boundary with probability $1-\rho$.
    Conditionally on the resulting partition, assign independent labels with distribution $\mu$ to the blocks, and let each observation equal the label of its block.
    This construction has initial law $\mu$ and transition kernel \eqref{eq:dep:refresh:kernel}, and therefore has the same law as $\mo Y$.

    Fix $i\in\nnset n$.
    Given the partition and the labels of the blocks not containing $i$, the label $Y_i$ still has law $\mu$; hence $Y_i$ is independent of the partition together with those labels.
    Consequently, a version of $\mc L(\mo Y_{-i}\mid Y_i=y)$ is obtained by drawing the partition and the labels of the blocks not containing $i$ as above, assigning the label $y$ to the block containing $i$, and omitting coordinate $i$.

    For $y,y\pr\in\mc Q$, use the same partition and the same labels on all blocks not containing $i$ in two copies, and assign the labels $y$ and $y\pr$, respectively, to the block containing $i$.
    Let $\pi^i_{y,y\pr}$ be the joint law of the two tuples with coordinate $i$ omitted.
    This is a coupling of the two conditional laws; it is measurable in $(y,y\pr)$, since there are finitely many partitions and the construction is measurable for each partition, and interchanging $y$ and $y\pr$ transposes it.
    Under $\pi^i_{y,y\pr}$, we have $\ol{z_k}{z_k\pr}=\ol y{y\pr}$ if $k$ lies in the block of $i$, and $\ol{z_k}{z_k\pr}=0$ otherwise.
    Since $i$ and $k$ belong to the same block with probability $\rho^{\lvert i-k\rvert}$, we obtain \eqref{eq:dep:refresh:transport}, which is \eqref{eq:dep:transport} with $b_{ik}=\rho^{\lvert i-k\rvert/s}$, even with equality.

    For minimality, let $k\neq i$ and $h:=\lvert i-k\rvert$.
    The $k$-th marginal of $K_i(y)$ is $\rho^h\delta_y+(1-\rho^h)\mu$.
    For $y,y\pr\in\mc Q$, the function $f:=\min\{\ol\cdot{y\pr}^s,\ol y{y\pr}^s\}$ is $1$-Lipschitz with respect to the metric $d^s$, so every coupling of the $k$-th marginals of $K_i(y)$ and $K_i(y\pr)$ has cost at least $\rho^h(f(y)-f(y\pr))=\rho^h\ol y{y\pr}^s$.
    Hence every admissible coefficient satisfies $b_{ik}^s\ol y{y\pr}^s\geq\rho^h\ol y{y\pr}^s$ for $\mu\otimes\mu$-almost every $(y,y\pr)$, and, if $\mu$ is not a Dirac measure, this set contains pairs with $y\neq y\pr$, so that $b_{ik}\geq\rho^{h/s}$.
\end{proof}

The coupling is built from a block representation of the chain: block boundaries between consecutive observations are placed independently with probability $1-\rho$, the blocks receive independent labels with law $\mu$, and each observation equals the label of its block.
The blocks are determined by the latent refresh decisions, so adjacent blocks may carry the same label when $\mu$ has atoms.
The coefficients themselves also follow from \cref{prp:dep:kernel}, as the kernel \eqref{eq:dep:refresh:kernel} is reversible and satisfies \eqref{eq:dep:kernel-contraction} with $\rho^{1/s}$ in place of $\rho$.

\subsection{Relation to other notions of dependence}\label{ssec:dep:relations}

\begin{remark}[$\tau$-dependence]\label{rem:dep:tau}
    Let $\mo Y$ be $\mo b$-sensitive at exponent $1$, with marginals in $\mc P^1(\mc Q)$.
    For $i\neq k$, the coefficient of Dedecker and Prieur~\cite[Section~7, (7.1)]{DedeckerPrieur2005Dependence-PTRF}, written in Wasserstein form by Kantorovich--Rubinstein duality, satisfies
    \begin{equation}\label{eq:dep:tau}
        \tau\brOf{\sigma(Y_i),Y_k} := \EOf{W_1\brOf{\mc L(Y_k\mid Y_i),\mu_k}}
        \leq
        b_{ik}\,\EOf{\ol{Y_i}{Y_i\pr}}
        \leq
        b_{ik}\sqrt{2\VmOf{Y_i}}
        \eqcm
    \end{equation}
    where $Y_i\pr$ is an independent copy of $Y_i$.
    The first inequality follows from convexity of $W_1$ in its second argument and averaging \eqref{eq:dep:transport} over $\mu_i\otimes\mu_i$.
    The second follows from the Cauchy--Schwarz inequality under the additional assumption $\mu_i\in\mc P^2(\mc Q)$.

    This bound concerns conditioning on a single observation rather than on the whole past.
    For Markov chains and $k>i$, $\tau(\sigma(Y_i),Y_k) = \tau(\sigma(Y_j:j\leq i),Y_k)$.
    This need not hold in general.
    For instance, let $Y_1,\dots,Y_{n-1}$ be iid $N(0,1)$ and $Y_n:=(n-1)^{-1/2}\sum_{j<n}Y_j$; then \cref{prp:dep:gaussian} gives admissible coefficients $b_{ik}\leq(n-1)^{-1/2}$ for $i\neq k$, whereas
    \begin{equation*}
        \tau(\sigma(Y_1,\dots,Y_{n-1}),Y_n)=2/\sqrt\pi
        \eqfs
    \end{equation*}
    Conversely, neither geometric decay of the forward coefficients $\tau(\sigma(Y_j:j\leq i),Y_{i+h})$ in $h$ (\cref{rem:dep:ar1}) nor vanishing of all $\tau(\sigma(Y_i),Y_k)$, $i\neq k$ (\cref{rem:dep:conditional-index}), guarantees small transport-sensitivity coefficients.
    The condition \eqref{eq:dep:transport} is uniform in the conditioning value and symmetric in time, since it conditions on the single variable $Y_i$ rather than on a past, and it couples all remaining coordinates simultaneously.
\end{remark}

\begin{remark}[An expanding past]\label{rem:dep:ar1}
    Transport sensitivity treats past and future coordinates alike, and this forces large coefficients for processes whose past is an expanding function of their present.
    Consider the stationary real-valued process $Y_{t+1}=\rho Y_t+\xi_{t+1}$ with iid innovations $\xi_t$ uniform on $\{-1,+1\}$ and $0<\rho<1/2$.
    Here the present determines the past, $Y_{i-h}=T^h(Y_i)$ for a map $T$ with slope $\rho^{-1}$ on each sign branch, and every admissible coefficient satisfies $b_{i,i-h}\geq\rho^{-h}$.
    Up to an affine change of scale, the case $\rho=1/2$ is the classical example of an autoregressive process that is not strong mixing~\cite{Andrews1984NonStrongMixing-JAP}, for the same reason.
    The process nevertheless satisfies geometric moment contraction in the sense of \cite{WuShao2004IteratedRandom-JAP}, and its forward coefficients $\tau(\sigma(Y_j:j\leq i),Y_{i+h})$ decay geometrically in $h$.
    Hence neither geometric moment contraction nor forward $\tau$-dependence implies $\mo b$-sensitivity with summable coefficients, even on $\R$.
    With iid $N(0,1)$ innovations and $0<\rho<1$, in contrast, \cref{prp:dep:gaussian} gives $b_{ik}=\rho^{\lvert i-k\rvert}$.
    The consequences for the risk bounds are discussed in \cref{rem:2fmh:decl:limits}.
\end{remark}

\begin{remark}[Pairwise indices]\label{rem:dep:conditional-index}
    For identically distributed, nondegenerate observations, Wiesel's Wasserstein correlation coefficient \cite[Definition~2.1]{Wiesel2022Association-Bernoulli}, evaluated at $\mc L(Y_i,Y_k)$, and the conditional index of dependence of \cite[Section~4.2]{CatalanoLavenant2025Dependence-arxiv} with $p=1$ equal $\tau(\sigma(Y_i),Y_k)/\Eof{\ol{Y_k}{Y_k\pr}}$, where $Y_k\pr$ is an independent copy of $Y_k$, so \eqref{eq:dep:tau} bounds them by $b_{ik}$.
    Such pairwise indices cannot replace the simultaneous conditional couplings required by $\mo b$-sensitivity.
    In Bernstein's example of \cref{rem:dep:neighborhood-related}, all pairwise indices vanish, and the pairwise analogue $W_1(\mc L(Y_k\mid Y_i=y),\mc L(Y_k\mid Y_i=y\pr))\leq b_{ik}\,\ol y{y\pr}$ of \eqref{eq:dep:transport}, which corresponds to the Lipschitz-kernel assumption of \cite[Assumption~6.1]{Wiesel2022Association-Bernoulli}, holds with $b_{ik}=0$ for $i\neq k$.
    However, the conditional laws of $(Y_2,Y_3)$ given $Y_1=1$ and $Y_1=-1$ are uniform on $\{(1,1),(-1,-1)\}$ and on $\{(1,-1),(-1,1)\}$, so every coupling moves exactly one coordinate by $2=\lvert 1-(-1)\rvert$, and hence $b_{12}^s+b_{13}^s\geq1$ at every exponent $s\in(0,1]$; in particular, $b_{12}+b_{13}\geq1$ at exponent $1$.
\end{remark}

The matrix $\mo b$ is related to the interdependence matrices of dependent systems.
Dobrushin's interdependence matrix \cite{Dobrushin1970ConditionalDistributions-TPA} bounds, in the $1$-Wasserstein distance with respect to a metric on the state space, how the conditional law of one coordinate given all others changes with a single other coordinate; for the discrete metric, this distance is the total variation distance.
The total-variation form controls covariances of Gibbs measures \cite{Foellmer1982Covariance-JFA}, and Wu \cite{Wu2006Poincare-AOP} uses the $1$-Wasserstein form for Poincar\'e and transportation inequalities.
Marton's contraction matrices \cite{Marton2004Concentration-AOP} likewise condition on all coordinates outside a patch and use a Wasserstein distance for the Euclidean metric, whereas the mixing matrices of \cite{KontorovichRamanan2008Concentration-AOP} and the Marton couplings of \cite{Paulin2015Concentration-EJP} condition on the past and use the Hamming metric.
In contrast, \eqref{eq:dep:transport} conditions on a single coordinate and couples all remaining coordinates simultaneously.
For a systematic account of dependence measures built from Wasserstein distances, see \cite{CatalanoLavenant2025Dependence-arxiv}.
The term sensitivity is also used in \cite{AnsariEtAl2026Dependence-Bernoulli} for a real-valued dependence measure built from how conditional distributions vary with the conditioning value; there it quantifies the strength of dependence between two variables, whereas here $\mo b$ enters an error bound.

\section{Quadratic Fréchet means in Hadamard spaces}\label{sec:2fmh}

In this section, $(\mc Q,d)$ is a separable Hadamard space and $\alpha=2$ in the sampling notation \eqref{eq:pre:sampling}.
After the preliminaries (\cref{ssec:2fmh:pre}), we state the general coupling bound (\cref{ssec:2fmh:main}) and compare it with the exact error in Hilbert spaces (\cref{ssec:2fmh:illu}).
We then apply it under dependency neighborhoods (\cref{ssec:2fmh:strucdep}) and under transport sensitivity (\cref{ssec:2fmh:decl}).

\subsection{Preliminaries}\label{ssec:2fmh:pre}

Let $(\mc Q,d)$ be a separable Hadamard space.
For $\mu\in\mc P^1(\mc Q)$, the $2$-Fréchet mean $m_2(\mu)$ exists and is unique \cite[Proposition~4.3]{Sturm2003NPC-ContempMath}.
We use the following properties of Hadamard spaces and of $2$-Fréchet means therein; they are stated for finite second moments, which is the setting of our results, and extend to finite first moments \cite[Propositions~4.3, 4.4, Theorem~6.3]{Sturm2003NPC-ContempMath}.

\begin{enumerate}
    \item[(QI)] \phantomsection\label{sqr:QI}
    \textbf{Quadruple Inequality.}
    For $q,p,y,z\in\mc Q$, we have \cite[Proposition~2.4]{Sturm2003NPC-ContempMath}
    \begin{equation*}
        \ol yq^2 - \ol yp^2 - \ol zq^2 + \ol zp^2 \leq 2 \, \ol qp \, \ol yz
        \eqfs
    \end{equation*}
    \item[(VI)] \phantomsection\label{sqr:VI}
    \textbf{Variance Inequality.}
    Let $\mu \in \mc P^2(\mc Q)$ and $m:=m_2(\mu)$.
    Then for all $q\in\mc Q$, we have \cite[Proposition~4.4]{Sturm2003NPC-ContempMath}
    \begin{equation*}
        \ol {m}{q}^2 \leq \int \brOf{\ol yq^2 - \ol y{m}^2} \dl \mu(y)
        \eqfs
    \end{equation*}
    \item[(CP)] \phantomsection\label{sqr:CP}
    \textbf{Contraction Property.}
    Let $\mu,\nu \in \mc P^2(\mc Q)$.
    Then, with the $1$-Wasserstein distance $W_1$ of \eqref{eq:pre:wasserstein}, we have \cite[Theorem~6.3]{Sturm2003NPC-ContempMath}
    \begin{equation*}
        \ol {m_2(\mu)}{m_2(\nu)}\leq W_1(\mu, \nu)
        \eqfs
    \end{equation*}
\end{enumerate}

For $\mu\in\mc P^2(\mc Q)$, the Fréchet and metric variances of \eqref{eq:pre:variances} are comparable,
\begin{equation}\label{eq:2fmh:variances}
    \Vf(\mu) \leq \Vm(\mu) \leq 2\Vf(\mu)
    \eqcm
\end{equation}
where the first inequality is \cite[Theorem~2]{SongMueller2026Dispersion-JASA} and the second follows from the triangle inequality \cite[Lemma~B.10]{Schoetz2026Weighted-arxiv}; they coincide if $\mc Q$ is a closed convex subset of a Hilbert space.
The metric variance controls the concentration of the sample Fréchet mean.
For iid data $Y_1, \dots, Y_n$ with law in $\mc P^2(\mc Q)$, we have
\begin{equation}\label{eq:2fmh:iid}
    \EOf{\ol{\mbar}{\mhat}^2} \leq \frac{1}{n}\VmOf{Y_1}
    \eqfs
\end{equation}
This bound is contained in the proof of \cite[Theorem~3]{BrunelSerres2026FiniteSample-arxiv}, case $\kappa\leq0$, whose statement gives the weaker $2\Vfof{Y_1}/n$.
Under an additional lower curvature bound, \cite[Corollary~3.4]{LeGouicEtAl2023FastConvergence-JEMS} gives the sharper $\Vfof{Y_1}/n$; as noted there, the argument does not extend to spaces without such a bound, such as the tripod of \cref{exa:2fmh:decl:refresh-tripod}.
Bound \eqref{eq:2fmh:iid} is an equality if $\mc Q$ is a closed convex subset of a Hilbert space.

\subsection{The coupling bound}\label{ssec:2fmh:main}

We bound the distance between the sample $2$-Fréchet mean $\mhat$ and the population $2$-Fréchet mean $\mbar$ of \eqref{eq:pre:sampling}.
The dependence enters the error bound through the displacements of exchangeable copies (\cref{def:dep:exchangeable}).

The coupling bounds of \cref{thm:2fmh:dependent,thm:pos:dependent,thm:power:dep} rest on one inequality, which we state for both geometric settings.
It is the exchangeable counterpart of \cite[Lemma~C.2]{Schoetz2026Weighted-arxiv}, where the $i$-th observation is resampled independently.

\begin{lemma}[Compensated replace-one stability, proof in \cref{proof_lmm:supp:loo}]\label{lmm:supp:loo}
    Assume one of the following.
    \begin{enumerate}[label=(\alph*)]
        \item $(\mc Q,d)$ is a separable Hadamard space, $\alpha\in(1,2]$, and $\mu_i\in\mc P^\alpha(\mc Q)$ for all $i\in\nnset n$; put $C_{\mathrm{QI}}:=2^{2-\alpha}\alpha$.
        \item $\alpha=2$, $(\mc Q,d)$, $\setC$, and $C_\Delta$ are as in \cref{ssec:pos:pre}, and $\mu_i\in\mc P(\setC)$ for all $i\in\nnset n$; put $C_{\mathrm{QI}}:=2C_\Delta$.
    \end{enumerate}
    Let $\mo Y$, $\mubar$, $\mbar$, $\muhat$, and $\mhat$ be as in \eqref{eq:pre:sampling}, and, for $i\in\nnset n$, let $\mo Y^i$ be an exchangeable copy of $\mo Y$ with $i$-independence and $\mhat^i=m_\alpha(\muhat^i)$.
    Then $\mhat$ and $\mhat^i$ are random variables, and
    \begin{equation}\label{eq:supp:loo}
    \begin{aligned}
        \EOf{\int\brOf{\ol y{\mhat}^\alpha-\ol y{\mbar}^\alpha}\dl\mubar(y)}
        +\EOf{\frac{1}{n}\sum_{j=1}^n\brOf{\ol{Y_j}{\mbar}^\alpha-\ol{Y_j}{\mhat}^\alpha}}
     \leq
        \frac{C_{\mathrm{QI}}}{2n}\sum_{i=1}^n\EOf{\ol{Y_i}{Y_i^i}^{\alpha-1}\,\ol{\mhat}{\mhat^i}}.
    \end{aligned}
    \end{equation}
   
\end{lemma}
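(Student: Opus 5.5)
The plan is to rewrite the left-hand side of \eqref{eq:supp:loo} as a sum over $i$ of expectations involving only $(\mo Y,\mo Y^i)$, and then symmetrize using exchangeability.

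\textbf{Measurability.} First, $\mhat$ and $\mhat^i$ are measurable functions of the samples. The map $(y_1,\dots,y_n)\mapsto m_\alpha(n^{-1}\sum_j\delta_{y_j})$ is continuous. In case (a) this follows from uniqueness of the minimizer together with the quadruple/variance inequalities, or from (CP) for $\alpha=2$; in case (b) it follows from the corresponding stability on $\setC$. Hence it is Borel. Integrability of all terms follows from $\mu_i\in\mc P^\alpha$ and the elementary bound $\lvert\ol yq^\alpha-\ol yo^\alpha\rvert\leq\alpha(\ol yo^{\alpha-1}+\ol oq^{\alpha-1})\ol oq$.

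\textbf{Expressing the first term through the copies.} Write $\int f\,\dl\mubar=n^{-1}\sum_i\Eof{f(Y_i^i)}$ for the fixed function $f(y)=\ol y{q}^\alpha-\ol y{\mbar}^\alpha$. Since $Y_i^i\perp\mo Y$ and $Y_i^i\stackrel d=Y_i$ (by exchangeability, $\mo Y^i\stackrel d=\mo Y$), conditioning on $\mo Y$ and substituting $q=\mhat$ gives
\[
\EOf{\int\brOf{\ol y{\mhat}^\alpha-\ol y{\mbar}^\alpha}\dl\mubar(y)}=\frac1n\sum_{i=1}^n\EOf{\ol{Y_i^i}{\mhat}^\alpha-\ol{Y_i^i}{\mbar}^\alpha}.
\]
The second term is $n^{-1}\sum_i\Eof{\ol{Y_i}{\mbar}^\alpha-\ol{Y_i}{\mhat}^\alpha}$. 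Adding the two, the $\mbar$ terms combine, and the left-hand side equals
\[
\frac1n\sum_i\EOf{\ol{Y_i^i}{\mhat}^\alpha-\ol{Y_i}{\mhat}^\alpha+\ol{Y_i}{\mbar}^\alpha-\ol{Y_i^i}{\mbar}^\alpha}.
\]
Because $Y_i\stackrel d=Y_i^i$, the $\mbar$ part has expectation zero. To make this rigorous with only $\alpha-1$ moments, I would keep the difference form and rely on integrability.

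\textbf{Symmetrization and the quadruple inequality.} Exchangeability of $(\mo Y,\mo Y^i)$ swaps $(Y_i,\mhat)$ with $(Y_i^i,\mhat^i)$. Therefore
\[
\EOf{\ol{Y_i^i}{\mhat}^\alpha-\ol{Y_i}{\mhat}^\alpha}=\tfrac12\EOf{\ol{Y_i^i}{\mhat}^\alpha-\ol{Y_i}{\mhat}^\alpha-\ol{Y_i^i}{\mhat^i}^\alpha+\ol{Y_i}{\mhat^i}^\alpha}.
\]
The bracket is a quadruple expression in $(y,z,q,p)=(Y_i^i,Y_i,\mhat,\mhat^i)$. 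The quadruple inequality for $d^\alpha$ bounds it by $C_{\mathrm{QI}}\,\ol{Y_i}{Y_i^i}^{\alpha-1}\ol{\mhat}{\mhat^i}$. In case (a) this is the Hadamard-space $\alpha$-quadruple inequality of \cite{Schoetz2024Quadruple-MIA} with constant $2^{2-\alpha}\alpha$, which reduces to (QI) for $\alpha=2$. In case (b) it is the $\CAT(\kappa)$ inequality of \cite{Schoetz2026Trapezoid-arxiv} with constant $2C_\Delta$ on $\setC$; here one must also check that $\mhat,\mhat^i\in\setC$, which holds by convexity of $\setC$. Summing over $i$ and dividing by $n$ yields \eqref{eq:supp:loo}.

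\textbf{Main obstacle.} I expect the difficulty to lie not in the algebra but in justifying the splitting of expectations under weak moments. The individual terms $\Eof{\ol{Y_i}{\mhat}^\alpha}$ need not be finite relative to each other, so I would work throughout with the reference-point differences $\ol yq^\alpha-\ol yo^\alpha$, which are integrable. I would also verify that the symmetrization identity is applied to integrable quantities, which it is, since each is bounded via $\ol o{\mhat}$, and $\ol o{\mhat}$ is controlled by the data moments.
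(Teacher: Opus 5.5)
Your proposal is correct and follows essentially the same route as the paper: Fubini with $Y_i^i\perp\mo Y$ to rewrite the first term, cancellation of the $\mbar$ terms via $Y_i^i\stackrel d=Y_i$, the quadruple inequality at $(Y_i^i,Y_i,\mhat,\mhat^i)$, and the swap $(\mo Y,\mo Y^i)\stackrel d=(\mo Y^i,\mo Y)$; your averaged identity is the same step as the paper's sign flip. The one point you only assert is the finiteness of $\E[\ol{\mhat}{\mbar}^\alpha]$, which the paper obtains from \cref{lmm:aux:rough} and Jensen's inequality; since the lemma assumes finite $\alpha$-th moments, your concern about working with only $\alpha-1$ moments does not arise.
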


In the first term of \eqref{eq:supp:loo}, the integral over $y$ is evaluated at the random point $\mhat$.
   Both terms on the left-hand side are nonnegative, by the minimizing properties of $\mbar$ and $\mhat$.

\begin{theorem}[Coupling bound]\label{thm:2fmh:dependent}
    Let $\mo Y = (Y_1, \dots, Y_n)$ be an $n$-tuple of $\mc Q$-valued random variables with arbitrary joint distribution and marginals $\mu_i \in \mc P^2(\mc Q)$, and let $\mbar$ and $\mhat$ be as in \eqref{eq:pre:sampling} with $\alpha=2$.
    For $i \in \nnset n$, let $\mo Y^i$ be an exchangeable copy of $\mo Y$ with $i$-independence, and let $\mhat^i:=m_2(\muhat^i)$.
    Then
    \begin{equation}\label{eq:2fmh:result_intermediate}
        \EOf{\ol {\mbar}{\mhat}^2}
        \leq
        \frac1{2n}\sum_{i=1}^n \EOf{\ol{Y_i}{Y_i^i}\, \ol{\mhat}{\mhat^i}},
    \end{equation}
    and 
    \begin{equation}\label{eq:2fmh:result_final}
        \EOf{\ol{\mbar}{\mhat}^2}
        \leq
        \frac1{2n^2}\sum_{i,k=1}^n \EOf{\ol{Y_i}{Y_i^i}\, \ol{Y_k}{Y_k^i}}
        \eqfs
    \end{equation}
\end{theorem}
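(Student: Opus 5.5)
The plan is to combine \cref{lmm:supp:loo} in case (a) with $\alpha=2$, so that $C_{\mathrm{QI}}=2$, with the variance inequality \hyperref[sqr:VI]{(VI)} for the first bound. Then the contraction property \hyperref[sqr:CP]{(CP)} controls the movement $\ol{\mhat}{\mhat^i}$ in the second bound.

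For \eqref{eq:2fmh:result_intermediate}, apply \hyperref[sqr:VI]{(VI)} to $\mu=\mubar\in\mc P^2(\mc Q)$ with $m=\mbar$, evaluated at the random point $q=\mhat$. This gives pointwise
\[
\ol{\mbar}{\mhat}^2\leq\int\brOf{\ol y{\mhat}^2-\ol y{\mbar}^2}\dl\mubar(y).
\]
Taking expectations bounds $\EOf{\ol{\mbar}{\mhat}^2}$ by the first term on the left-hand side of \eqref{eq:supp:loo}. Measurability of $\mhat$ is part of \cref{lmm:supp:loo}. The second term on that left-hand side is nonnegative, because $\mhat$ minimizes the empirical objective, so we may add it. \Cref{lmm:supp:loo} with $C_{\mathrm{QI}}/2=1$ then gives
\[
\EOf{\ol{\mbar}{\mhat}^2}\leq\frac1n\sum_{i}\EOf{\ol{Y_i}{Y_i^i}\,\ol{\mhat}{\mhat^i}}.
\]

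This has constant $1/n$, but \eqref{eq:2fmh:result_intermediate} claims $1/(2n)$. So either the lemma's constant already absorbs the factor $1/2$, or the factor must be recovered inside the proof. If needed, I would recover it by reopening the lemma's symmetrization rather than dropping the second term. The argument would run as follows.
\begin{enumerate}
\item Since $Y_i^i\perp\mo Y$ and $Y_i^i\sim\mu_i$, the population term equals $\frac1n\sum_i\EOf{\ol{Y_i^i}{\mhat}^2-\ol{Y_i^i}{\mbar}^2}$.
\item Adding the empirical term gives $\frac1n\sum_i\EOf{\ol{Y_i^i}{\mhat}^2-\ol{Y_i^i}{\mbar}^2-\ol{Y_i}{\mhat}^2+\ol{Y_i}{\mbar}^2}$.
\item Exchangeability swaps $(Y_i,\mhat)$ with $(Y_i^i,\mhat^i)$. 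Averaging the two forms yields a quadruple expression in $y=Y_i^i$, $z=Y_i$, $q=\mhat$, $p=\mhat^i$, halved.
\item \hyperref[sqr:QI]{(QI)} bounds that expression by $\ol{Y_i}{Y_i^i}\,\ol{\mhat}{\mhat^i}$.
\end{enumerate}
Using \hyperref[sqr:VI]{(VI)} on $\muhat$ as well, the empirical term dominates $\ol{\mhat}{\mbar}^2$. The left-hand side then dominates $2\EOf{\ol{\mbar}{\mhat}^2}$, which gives exactly the factor $1/(2n)$. I expect pinning down this constant to be the delicate step. The rest is bookkeeping of the $\mbar$-terms, which cancel in the symmetrization.

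For \eqref{eq:2fmh:result_final}, apply \hyperref[sqr:CP]{(CP)} to $\muhat$ and $\muhat^i$. Coupling atom $Y_k$ with atom $Y_k^i$ gives
\[
\ol{\mhat}{\mhat^i}\leq W_1(\muhat,\muhat^i)\leq\frac1n\sum_k\ol{Y_k}{Y_k^i}.
\]
Substituting this into \eqref{eq:2fmh:result_intermediate} gives \eqref{eq:2fmh:result_final}. All expectations are finite by Cauchy--Schwarz, since $\mu_k\in\mc P^2(\mc Q)$ and $Y_k^i\stackrel d=Y_k$.
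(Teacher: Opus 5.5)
Your final argument is correct and is the paper's proof. You apply \cref{lmm:supp:loo}\,(a) with $C_{\mathrm{QI}}=2$, bound the first term below by (VI) for $\mubar$ at $\mhat$ and the second by (VI) for $\muhat$ at $\mbar$, so that the left-hand side is at least $2\Eof{\ol{\mbar}{\mhat}^2}$, and then finish with (CP) and the index-wise coupling. Your detour through the symmetrization only re-proves the lemma with the same right-hand side $\frac1n\sum_i\Eof{\ol{Y_i}{Y_i^i}\,\ol{\mhat}{\mhat^i}}$, so it is unnecessary; the factor $1/2$ comes solely from keeping the second term and applying (VI) to $\muhat$, not from any delicate constant inside the lemma.
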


\begin{proof}
    Apply \cref{lmm:supp:loo}\,(a) with $\alpha=2$, so that $C_{\mathrm{QI}}=2$.
    By \hyperref[sqr:VI]{(VI)} for $\mubar$ at $q=\mhat$, the integral in the first term of \eqref{eq:supp:loo} is at least $\ol{\mbar}{\mhat}^2$.
    The second term equals $\Eof{\int(\ol y{\mbar}^2-\ol y{\mhat}^2)\dl\muhat(y)}$, and \hyperref[sqr:VI]{(VI)} for $\muhat$ at $q=\mbar$ bounds this integral from below by $\ol{\mbar}{\mhat}^2$.
    Hence
    \begin{equation*}
        2\,\EOf{\ol{\mbar}{\mhat}^2}
        \leq
        \frac{1}{n}\sum_{i=1}^n \EOf{\ol{Y_i}{Y_i^i} \, \ol{\mhat}{\mhat^i}}
        \eqcm
    \end{equation*}
    which is \eqref{eq:2fmh:result_intermediate}.
    By \hyperref[sqr:CP]{(CP)} and the index-wise coupling $n^{-1}\sum_{k=1}^n\delta_{(Y_k,Y_k^i)}$ of $\muhat$ and $\muhat^i$,
    \begin{equation}\label{eq:supp:2fmh:cp-index}
        \ol{\mhat}{\mhat^i}\leq W_1(\muhat, \muhat^i) \leq \frac{1}{n} \sum_{k=1}^n \ol{Y_k}{Y_k^i}
        \eqcm
    \end{equation}
    and inserting this into \eqref{eq:2fmh:result_intermediate} gives \eqref{eq:2fmh:result_final}.
\end{proof}

We call \eqref{eq:2fmh:result_intermediate} the intermediate bound and \eqref{eq:2fmh:result_final} the coordinatewise bound.
Observe that the proof uses the compensated replace-one stability technique: it combines \hyperref[sqr:VI]{(VI)} and \hyperref[sqr:CP]{(CP)} with \cref{lmm:supp:loo}, which rests on \hyperref[sqr:QI]{(QI)} and is shared with \cref{sec:pos,sec:power}.
Replace-one stability arguments for independent data \cite{Escande2024Minimizers-JMLR,BrunelSerres2026FiniteSample-arxiv,Schoetz2026Weighted-arxiv,Schoetz2026Transformed-arxiv} use the coordinate-replacement copy \eqref{eq:dep:replacement}, which resamples only the $i$-th observation; here, for dependent data, the whole $n$-tuple $\mo Y$ is replaced by an exchangeable copy $\mo Y^i$ with $i$-independence.
Unlike distances to $\mbar$, the displacements $\ol{Y_k}{Y_k^i}$ and the variances $\Vmof{Y_k}$ and $\Vfof{Y_k}$ in the bounds of this section measure fluctuations of the individual observations; the bounds therefore vanish for deterministic observations, also at different locations.

\begin{corollary}\label{cor:2fmh:dependent}
    In the setting of \cref{thm:2fmh:dependent}, each of the following terms is an upper bound for $\Eof{\ol{\mbar}{\mhat}^2}$:
    \begin{enumerate}[label=(\roman*)]
        \item
        $\frac1{2n}\sum_{i=1}^n \Eof{\ol{Y_i}{Y_i^i} \, \ol{\mhat}{\mhat^i} } \leq \frac1{2n}\sum_{i=1}^n \Eof{\ol{Y_i}{Y_i^i}^2}^\frac{1}{2} \Eof{\ol{\mhat}{\mhat^i}^2}^\frac{1}{2}$
        \item
        $\frac1{2n}\sum_{i=1}^n \Eof{\ol{Y_i}{Y_i^i} \, W_1(\muhat, \muhat^i) } \leq \frac1{2n}\sum_{i=1}^n \Eof{\ol{Y_i}{Y_i^i}^2}^\frac{1}{2} \Eof{W_1(\muhat, \muhat^i) ^2}^\frac{1}{2}$
        \item
        $\frac1{2n^2}\sum_{i,k=1}^n \Eof{\ol{Y_i}{Y_i^i} \, \ol{Y_k}{Y_k^i} } \leq \frac{1}{n^2}\sum_{i=1}^n \sqrt{\Vmof{Y_i}} \sum_{k=1}^n \sqrt{\frac{1}{2} \Eof{\ol{Y_k}{Y_k^i}^2}}$
        \item
        $\frac{1}{n^2}\sum_{i,k=1}^n \sqrt{\Vmof{Y_i} \Vmof{Y_k}}$
    \end{enumerate}
\end{corollary}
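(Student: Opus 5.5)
The plan is to obtain each item from \cref{thm:2fmh:dependent} and \eqref{eq:supp:2fmh:cp-index}, followed by Cauchy--Schwarz. Throughout, the key fact is that $Y_i^i$ is independent of $\mo Y$ and, by exchangeability, $Y_i^i\stackrel d=Y_i$. Hence $\Eof{\ol{Y_i}{Y_i^i}^2}=2\Vmof{Y_i}$.

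Item (i) is the intermediate bound \eqref{eq:2fmh:result_intermediate}, and its right-hand inequality is Cauchy--Schwarz for each $i$. For (ii), the contraction inequality $\ol{\mhat}{\mhat^i}\leq W_1(\muhat,\muhat^i)$ from \eqref{eq:supp:2fmh:cp-index} is inserted into (i), and Cauchy--Schwarz is applied again. Item (iii) is the coordinatewise bound \eqref{eq:2fmh:result_final}. For its right-hand side, Cauchy--Schwarz gives $\Eof{\ol{Y_i}{Y_i^i}\,\ol{Y_k}{Y_k^i}}\leq \sqrt{2\Vmof{Y_i}}\,\Eof{\ol{Y_k}{Y_k^i}^2}^{1/2}$. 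Multiplying by $\frac1{2n^2}$ and writing $\frac12\sqrt2\,\Eof{\cdot}^{1/2}=\sqrt{\frac12\Eof{\cdot}}$ produces exactly the stated expression.

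For (iv), the bound in (iii) would need $\Eof{\ol{Y_k}{Y_k^i}^2}\leq 2\Vmof{Y_k}$ for $k\neq i$, which fails for general copies (for example $\mo Y^i=\mo Y$ off coordinate $i$ is not admissible, but other copies may move coordinates arbitrarily within law constraints). Instead I would specialize to a particular admissible copy: the independent copy of the whole sample, $\mo Y^i:=\mo Y\pr$ with $\mo Y\pr$ independent of $\mo Y$ and $\mo Y\pr\stackrel d=\mo Y$. This choice is exchangeable and satisfies $i$-independence. Then $Y_k^i$ is independent of $Y_k$ with the same law, so $\Eof{\ol{Y_k}{Y_k^i}^2}=2\Vmof{Y_k}$. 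Plugging this into (iii) gives $\frac1{n^2}\sum_{i,k}\sqrt{\Vmof{Y_i}\Vmof{Y_k}}$.

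The only point needing care is that the corollary is stated for arbitrary copies while (iv) is copy-free. It is nevertheless an upper bound for the risk, because \cref{thm:2fmh:dependent} holds for every choice of copies, in particular the independent one. I expect no real obstacle beyond checking the constants in (iii).
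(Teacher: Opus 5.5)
Your proposal is correct and follows the paper's proof: items (i)--(iii) come from the intermediate bound, the contraction estimate $\ol{\mhat}{\mhat^i}\leq W_1(\muhat,\muhat^i)\leq n^{-1}\sum_k\ol{Y_k}{Y_k^i}$, Cauchy--Schwarz, and $\Eof{\ol{Y_i}{Y_i^i}^2}=2\Vmof{Y_i}$. Item (iv) follows, as in the paper, by applying (iii) with $\mo Y^i$ an independent copy of the whole sample, for which $\frac12\Eof{\ol{Y_k}{Y_k^i}^2}=\Vmof{Y_k}$; your constants check out.
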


\begin{proof}
    The first three items follow from \eqref{eq:2fmh:result_intermediate}, \eqref{eq:supp:2fmh:cp-index}, and the Cauchy--Schwarz inequality. For (iii), also $\Eof{\ol{Y_i}{Y_i^i}^2}=2\Vmof{Y_i}$, as $Y_i^i$ is an independent copy of $Y_i$.
    For the last item,  apply (iii) with $\mo Y^i$ an independent
    copy of the entire sample $\mo Y$ for each $i$. For these copies, $\frac{1}{2}\Eof{\ol{Y_k}{Y_k^i}^2}=\Vmof{Y_k}$ for every $i,k\in\nnset n$, which gives the bound.
\end{proof}

The term in (iv) depends only on the marginal laws $\mu_i$, not on the dependence structure of $\mo Y$; it is the right-hand side of (iii) for an independent copy $\mo Y^i$ of $\mo Y$, which is admissible for every joint law.
In a Hilbert space, it is attained by the fully dependent sample $Y_1=\dots=Y_n$.

\begin{remark}\label{rem:2fmh:onMainThm}
    \begin{enumerate}[label=(\roman*)]
        \item
        To obtain the best possible bound from \cref{thm:2fmh:dependent,cor:2fmh:dependent}, we take the infimum of the error bound over all couplings $\pi_i$ of $(\mo Y, \mo Y^i)$ under which they form an exchangeable pair of $n$-tuples with $i$-independence.
        In particular, we can write
        \begin{equation*}
            \EOf{\ol{\mbar}{\mhat}^2}
            \leq
            \frac{1}{n^2} \sum_{i=1}^n \sqrt{\VmOf{Y_i}} \inf_{\pi_i} \sum_{k=1}^n \sqrt{\frac{1}{2} \E_{\pi_{i}}\abOf{\ol{Y_k}{Y_k^i}^2}}
            \eqfs
        \end{equation*}
        \item
        For independent observations, the coordinate-replacement copies \eqref{eq:dep:replacement} give
        \begin{equation*}
            \EOf{\ol{\mbar}{\mhat}^2}
            \leq
            \frac1{2n^2} \sum_{i=1}^n \EOf{\ol{Y_i}{Y_i^i}^2}
            =
            \frac{1}{n^2} \sum_{i=1}^n \VmOf{Y_i}
            \eqfs
        \end{equation*}
        This is \cite[Theorem~3.1]{Schoetz2026Weighted-arxiv} with equal weights; in Hilbert spaces, it is an equality.
        \item
        Unlike in Hilbert spaces, the risk in Hadamard spaces is in general not a function of the laws of the pairs $(Y_j,Y_k)$, $j,k\in\nnset n$; see \cref{exa:supp:pairlaws}.
        \item         The coordinatewise bound \eqref{eq:2fmh:result_final} extends to weighted Fr\'echet means.
        Let $w_1,\dots,w_n\geq0$ with $\sum_iw_i=1$, let $\bar\mu_w:=\sum_iw_i\mu_i$ and $\hat\mu_w:=\sum_iw_i\delta_{Y_i}$, and let $\bar m_w:=m_2(\bar\mu_w)$ and $\hat m_w:=m_2(\hat\mu_w)$.
        Then
        \begin{equation*}
            \EOf{\ol{\bar m_w}{\hat m_w}^2}\leq\frac{1}{2}\sum_{i,k=1}^nw_iw_k\EOf{\ol{Y_i}{Y_i^i}\,\ol{Y_k}{Y_k^i}}
            \eqfs
        \end{equation*}
        Indeed, the proof of \cref{thm:2fmh:dependent} is linear in the weights of the empirical measure.
        Let $\hat m_w^i:=m_2(\sum_kw_k\delta_{Y_k^i})$.
        The proof of \cref{lmm:supp:loo}\,(a) with $\alpha=2$ and the weights $w_i$ in place of $1/n$ gives
        \begin{align*}
            &\EOf{\int\brOf{\ol y{\hat m_w}^2-\ol y{\bar m_w}^2}\dl\bar\mu_w(y)}
            +\EOf{\int\brOf{\ol y{\bar m_w}^2-\ol y{\hat m_w}^2}\dl\hat\mu_w(y)}
            \\&\quad\leq
            \sum_{i=1}^nw_i\EOf{\ol{Y_i}{Y_i^i}\,\ol{\hat m_w}{\hat m_w^i}}
            \eqcm
        \end{align*}
        as Fubini's theorem, \hyperref[sqr:QI]{(QI)}, and the distributional identities used there hold verbatim for each $i$.
        By \hyperref[sqr:VI]{(VI)} for $\bar\mu_w$ at $\hat m_w$ and for $\hat\mu_w$ at $\bar m_w$, each term on the left-hand side is at least $\Eof{\ol{\bar m_w}{\hat m_w}^2}$.
        By \hyperref[sqr:CP]{(CP)} and the coupling $\sum_kw_k\delta_{(Y_k,Y_k^i)}$, we have $\ol{\hat m_w}{\hat m_w^i}\leq\sum_kw_k\ol{Y_k}{Y_k^i}$.
        Combining these three bounds gives the claim.
    \end{enumerate}
\end{remark}

\subsection{Illustration in Hilbert spaces}\label{ssec:2fmh:illu}

In Hilbert spaces, the sample and population $2$-Fréchet means reduce to the arithmetic mean and the expectation, respectively. The mean squared error of the arithmetic mean has two exact expressions: one in terms of covariances, and one in terms of the exchangeable copies of \cref{thm:2fmh:dependent}.

\subsubsection{General comparison}\label{sssec:2fmh:illu:general}

Let $\mc H$ be a separable Hilbert space.
Let $X_1, \dots, X_n$ be $\mc H$-valued random variables with finite second moment, and let $\overline X_n := n^{-1}\sum_{i=1}^n X_i$.
For $i \in \nnset n$, let $\mo X^i = (X_1^i,\dots,X_n^i)$ be an exchangeable copy of $\mo X=(X_1,\dots,X_n)$ with $i$-independence, and define
\begin{equation}\label{eq:2fmh:illu:delta}
    D_{ik} := X_k - X_k^i \eqcm\quad k \in \nnset n
\eqfs
\end{equation}
Then, on the one hand, we have the exact identities
\begin{equation}\label{eq:2fmh:illu:identity}
    \EOf{\normOf{\overline X_n - \EOf{\overline X_n}}^2} =
    \frac{1}{n^2} \sum_{i,k=1}^n \traceOf{\CovOf{X_i, X_k}}
    =\frac1{2n^2}\sum_{i,k=1}^n \EOf{\ip{D_{ii}}{D_{ik}}}
    \eqcm
\end{equation}
where the second identity holds for each summand, $\Eof{\langle D_{ii}\,,\, D_{ik}\rangle}=2\trace(\Covof{X_i,X_k})$, by exchangeability and as $X_i^i$ is independent of $\mo X$.
On the other hand, \cref{thm:2fmh:dependent} yields the bound
\begin{equation}\label{eq:2fmh:illu:bound}
    \EOf{\normOf{\overline X_n - \EOf{\overline X_n}}^2}
    \le
    \frac1{2n^2}\sum_{i,k=1}^n \EOf{\normOf{D_{ii}}\,\normOf{D_{ik}}}
    \eqfs
\end{equation}
Hence, the gap between \eqref{eq:2fmh:illu:identity} and \eqref{eq:2fmh:illu:bound} is precisely the loss of exactness induced by the Cauchy--Schwarz inequality
\begin{equation}\label{eq:2fmh:illu:csi}
    \ip{D_{ii}}{D_{ik}}
    \le
    \normOf{D_{ii}}\,\normOf{D_{ik}}
    \eqfs
\end{equation}
Since $\Eof{\langle D_{ii}\,,\, D_{ik}\rangle}$ does not depend on the choice of coupling, a coupling of $\mo X$ and $\mo X^i$ is most useful for the coordinatewise bound \eqref{eq:2fmh:result_final} if it makes the norm-weighted angular defect $\Eof{\normof{D_{ii}}\normof{D_{ik}}-\langle D_{ii}\,,\, D_{ik}\rangle}$ small.

\subsubsection{Sharpness for Gaussians}\label{sssec:2fmh:illu:gauss}

Under sign conditions on the covariances, the bounds of \cref{thm:2fmh:dependent} are sharp for Gaussian random variables.
Let us first illustrate this on the real line.
Let $\mo X = (X_1, \dots, X_n)$ be an $n$-tuple of $\R$-valued random variables with finite second moment and $\Vof{X_j}>0$ for all $j\in\nnset n$.
For $i\in\nnset n$, let $\mo X^i$ be an exchangeable copy of $\mo X$ with $i$-independence.
For the differences from \eqref{eq:2fmh:illu:delta}, we have $\Eof{D_{ii}D_{ik}} = 2\Covof{X_i, X_k}$, and hence $\Eof{\lvert D_{ii}D_{ik}\rvert}\geq2\lvert \Covof{X_i,X_k}\rvert$, for every such copy.
Suppose that the correlations of the copies satisfy
\begin{equation}\label{eq:2fmh:illu:corr}
    \CorOf{X_k, X_k^i} = 1 - \CorOf{X_i, X_k}^2
    \eqcm\quad i,k\in\nnset n
    \eqfs
\end{equation}
Then, by \cref{lmm:supp:corr}, $\Eof{D_{ii}^2}^{1/2}\Eof{D_{ik}^2}^{1/2}=2\lvert \Covof{X_i,X_k}\rvert=\lvert \Eof{D_{ii}D_{ik}}\rvert$, so the Cauchy--Schwarz inequality for $\Eof{D_{ii}D_{ik}}$ is an equality.
Hence $D_{ik}$ is almost surely a multiple of $D_{ii}$, and comparing $\Eof{D_{ii}D_{ik}}=2\Covof{X_i,X_k}$ with $\Eof{D_{ii}^2}=2\Vof{X_i}$ identifies the multiple:
\begin{equation}\label{eq:2fmh:illu:regression}
    D_{ik} = \beta_{ik}D_{ii}
    \eqcm\quad
    \beta_{ik}:=\frac{\CovOf{X_i,X_k}}{\VOf{X_i}}
    \eqcm\quad i,k\in\nnset n
    \eqfs
\end{equation}
Conversely, \eqref{eq:2fmh:illu:regression} implies \eqref{eq:2fmh:illu:corr}, so the correlation identity and the regression identity are equivalent.
Under them, $\lvert D_{ii}D_{ik}\rvert=\lvert \beta_{ik}\rvert D_{ii}^2$, so $\Eof{\lvert D_{ii}D_{ik}\rvert}=2\lvert \Covof{X_i,X_k}\rvert$, and the coordinatewise bound \eqref{eq:2fmh:result_final} equals $n^{-2}\sum_{i,k=1}^n\lvert \Covof{X_i,X_k}\rvert$.
In addition, $\mhat-\mhat^i=\overline X_n-\overline X_n^i=(n^{-1}\sum_{k=1}^n\beta_{ik})D_{ii}$, so the intermediate bound \eqref{eq:2fmh:result_intermediate} equals
\begin{equation}\label{eq:2fmh:illu:rowsum}
    \frac1{2n}\sum_{i=1}^n\EOf{D_{ii}^2}\,\abs{\frac{1}{n}\sum_{k=1}^n\beta_{ik}}
    =
    \frac{1}{n^2}\sum_{i=1}^n\abs{\sum_{k=1}^n\CovOf{X_i,X_k}}
    \eqfs
\end{equation}
So far, we have assumed that the copies satisfy \eqref{eq:2fmh:illu:corr}.
Such copies always exist when $\mo X$ is jointly Gaussian; this is the content of the next proposition.

\begin{proposition}[Real Gaussian vectors]\label{prp:2fmh:gaussian-1d}
    Let $n\in\N$.
    Let $\mo X=(X_1,\dots,X_n)$ be jointly Gaussian in $\R^n$ with $\Vof{X_j}>0$ for all $j\in\nnset n$.
    Then, for $i \in\nnset n$, on a possibly extended probability space, we can extend $\mo X$ to a Gaussian random vector $(\mo X, \mo X^i)$ in $\R^{2n}$ such that $(\mo X, \mo X^i)$ is an exchangeable pair with $i$-independence and \eqref{eq:2fmh:illu:corr} and \eqref{eq:2fmh:illu:regression} hold.
    For these copies, the bounds \eqref{eq:2fmh:result_intermediate} and \eqref{eq:2fmh:result_final} of \cref{thm:2fmh:dependent} equal
    \begin{equation}\label{eq:2fmh:illu:gaussian-1d-sharp}
        \frac{1}{n^2}\sum_{i=1}^n\abs{\sum_{k=1}^n\CovOf{X_i,X_k}}
        \quad\text{and}\quad
        \frac{1}{n^2}\sum_{i,k=1}^n \abs{\CovOf{X_i,X_k}}
        \eqcm
    \end{equation}
    respectively.
    In particular, \eqref{eq:2fmh:result_intermediate} equals $\Vof{\overline X_n}$ if and only if $\sum_{k=1}^n\Covof{X_i,X_k}\geq0$ for all $i\in\nnset n$, and \eqref{eq:2fmh:result_final} equals $\Vof{\overline X_n}$ if and only if $\Covof{X_i,X_k} \geq 0$ for all $i,k\in\nnset n$.
\end{proposition}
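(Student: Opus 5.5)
Fix $i\in\nnset n$. I will construct the copy explicitly by regression, as anticipated after \cref{def:dep:exchangeable}. Let $X_i\pr$ be an independent copy of $X_i$, independent of $\mo X$ (available on an extended probability space). Set $X_i^i:=X_i\pr$ and, for $k\neq i$,
\begin{equation*}
    X_k^i := X_k-\beta_{ik}(X_i-X_i\pr)\eqcm\qquad \beta_{ik}=\frac{\CovOf{X_i,X_k}}{\VOf{X_i}}\eqcm
\end{equation*}
which also holds for $k=i$ since $\beta_{ii}=1$. Thus \eqref{eq:2fmh:illu:regression} holds by construction, and $(\mo X,\mo X^i)$ is a linear image of the Gaussian vector $(\mo X,X_i\pr)$, hence jointly Gaussian in $\R^{2n}$. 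By the argument before the proposition, \eqref{eq:2fmh:illu:regression} implies \eqref{eq:2fmh:illu:corr}. Alternatively, one checks directly that $\CovOf{X_k,X_k^i}=\VOf{X_k}-\beta_{ik}\CovOf{X_i,X_k}$.

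Next I verify the exchangeable pair with $i$-independence. Independence $X_i^i\perp\mo X$ holds by construction. For exchangeability it suffices, by Gaussianity, to compare means and covariances of $(\mo X,\mo X^i)$ and $(\mo X^i,\mo X)$.

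\textbf{Means.} $\EOf{X_k^i}=\EOf{X_k}$, since $X_i-X_i\pr$ is centered.

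\textbf{Covariance of $\mo X^i$.} Write $R_k:=X_k-\beta_{ik}X_i$, which is uncorrelated with $X_i$ and hence independent of $X_i$, and independent of $X_i\pr$. Then $X_k=R_k+\beta_{ik}X_i$ and $X_k^i=R_k+\beta_{ik}X_i\pr$. Since $(R,X_i)\stackrel d=(R,X_i\pr)$, the vectors $\mo X$ and $\mo X^i$ have the same law. This decomposition $\mo X=R+\beta X_i$, $\mo X^i=R+\beta X_i\pr$ is the key observation.

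\textbf{Swap symmetry.} Because $R$, $X_i$, and $X_i\pr$ are mutually independent and $X_i,X_i\pr$ are iid, swapping $X_i\leftrightarrow X_i\pr$ preserves the joint law of $(R,X_i,X_i\pr)$. This swap exchanges $\mo X$ and $\mo X^i$, so $(\mo X,\mo X^i)\stackrel d=(\mo X^i,\mo X)$. Mutual independence holds because $(R,X_i)$ is jointly Gaussian and uncorrelated. With this decomposition no covariance bookkeeping is needed, and I expect this to be the only subtle step.

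For the values of the bounds, use $D_{ik}=\beta_{ik}D_{ii}$ with $D_{ii}=X_i-X_i\pr$ and $\EOf{D_{ii}^2}=2\VOf{X_i}$. The coordinatewise bound is then $\frac1{2n^2}\sum_{i,k}\abs{\beta_{ik}}\,2\VOf{X_i}=\frac1{n^2}\sum_{i,k}\abs{\CovOf{X_i,X_k}}$. For the intermediate bound, $\mhat-\mhat^i=(n^{-1}\sum_k\beta_{ik})D_{ii}$, which gives \eqref{eq:2fmh:illu:rowsum}.

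Finally, $\VOf{\overline X_n}=n^{-2}\sum_i\sum_k\CovOf{X_i,X_k}$, and the triangle inequality $\sum_i\abs{a_i}\ge\sum_i a_i$ holds with equality iff all $a_i\ge0$. Applying this with $a_i$ the row sums gives the equality criterion for the intermediate bound, and applying it to the individual covariances gives the criterion for the coordinatewise bound.
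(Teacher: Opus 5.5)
Your proof is correct and follows the paper's. You use the same regression copy $X_k^i=X_k-\beta_{ik}(X_i-X_i\pr)$, the same covariance computation for \eqref{eq:2fmh:illu:corr}, and the same evaluation of both bounds through $D_{ik}=\beta_{ik}D_{ii}$. The only difference is that you verify exchangeability and $i$-independence directly from the decomposition $\mo X=R+\beta X_i$, $\mo X^i=R+\beta X_i\pr$, whereas the paper identifies the pair with the one that \cref{prp:dep:transport-copy} builds from the translation couplings of \cref{prp:dep:gaussian}; those couplings rest on exactly this decomposition.
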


\begin{proof}
 We create $\mo X^i$ by first letting $X_i^i$ have the same distribution as $X_i$ and be independent of $\mo X$, and then setting $X_k^i := X_k - \beta_{ik}(X_i - X_i^i)$ with $\beta_{ik}$ as in \eqref{eq:2fmh:illu:regression}, which is well defined because $\Vof{X_i}>0$. 
 
 As $\beta_{ik}=\Sigma_{ki}\Sigma_{ii}^{+}$ in the notation of \cref{prp:dep:gaussian} with $p=1$, the pair $(\mo X,\mo X^i)$ has the law of the pair that the proof of \cref{prp:dep:transport-copy} builds from the translation couplings in the proof of \cref{prp:dep:gaussian}.
Hence $(\mo X, \mo X^i)$ is an exchangeable pair with $i$-independence, and it is jointly Gaussian as a linear image of $(\mo X, X_i^i)$.

By construction, $D_{ik} = X_k-X_k^i = \beta_{ik} D_{ii}$ pointwise, which is \eqref{eq:2fmh:illu:regression}. We also observe that as $X_i^i$ is independent of $X_k$,
    \begin{align*}
        \VOf{X_k} \CorOf{X_k, X_k^i}
        &=
        \CovOf{X_k, X_k - \beta_{ik} D_{ii}}
        \\&=
        \VOf{X_k} - \frac{\CovOf{X_i, X_k}^2}{\VOf{X_i}}
        =
        \VOf{X_k} \br{1 - \CorOf{X_i, X_k}^2}
        \eqcm
    \end{align*}
    which is \eqref{eq:2fmh:illu:corr}. The remaining claims follow from \eqref{eq:2fmh:illu:corr}, \eqref{eq:2fmh:illu:regression}, and the computation leading to \eqref{eq:2fmh:illu:rowsum} in \cref{sssec:2fmh:illu:gauss}.
\end{proof}

The coordinatewise bound \eqref{eq:2fmh:result_final} cannot exploit cancellation between positive and negative covariances, by the lower bound $\Eof{\lvert D_{ii}D_{ik}\rvert}\geq2\lvert \Covof{X_i,X_k}\rvert$ above; hence the Gaussian construction minimizes \eqref{eq:2fmh:result_final} over all exchangeable copies with $i$-independence, even when this bound exceeds the true variance.
The intermediate bound \eqref{eq:2fmh:result_intermediate} does exploit such cancellation.
For a Gaussian $\ms{AR}(1)$ sequence with negative parameter $\rho\geq-1/2$, the intermediate bound equals $\Vof{\overline X_n}$, whereas the coordinatewise bound exceeds it (\cref{ssec:aux:ar1:negative}).

The next result extends the `if' part of the last claim of \cref{prp:2fmh:gaussian-1d}, a sufficient condition for equality in the coordinatewise bound \eqref{eq:2fmh:result_final}, to Gaussian vectors in $\R^p$ whose cross-covariance matrices are nonnegative multiples of a common positive definite matrix.

\begin{proposition}[Gaussian vectors with a common covariance shape]\label{prp:2fmh:gaussian-md}
    Let $n,p\in\N$, let $\mo X=(X_1,\dots,X_n)$ be jointly Gaussian in $(\R^p)^n$, and define $\Sigma_{ik}:=\Covof{X_i,X_k}\in\R^{p\times p}$. Assume that there exist a positive definite matrix $\Sigma_0\in\R^{p\times p}$ and nonnegative scalars $c_{ik}$, $i,k\in\nnset n$, with $c_{ii}>0$ for all $i\in\nnset n$, such that
    \begin{equation}\label{eq:2fmh:gaussian-md:common-shape}
        \Sigma_{ik}=c_{ik}\Sigma_0
        \quad
        \text{for all } i,k\in\nnset n
        \eqfs
    \end{equation}
    Then, on a possibly extended probability space, there exist exchangeable copies $\mo X^i=(X_1^i,\dots,X_n^i)$ of $\mo X$ with $i$-independence, $i\in\nnset n$, such that each pair $(\mo X,\mo X^i)$ is jointly Gaussian and
    \begin{equation*}
        \EOf{\normOf{\overline X_n-\EOf{\overline X_n}}^2}
        =
        \frac{1}{2n^2}\sum_{i,k=1}^n \EOf{\normOf{X_i-X_i^i}\,\normOf{X_k-X_k^i}}
        \eqfs
    \end{equation*}
\end{proposition}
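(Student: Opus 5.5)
The plan is to mimic the one-dimensional construction of \cref{prp:2fmh:gaussian-1d} with matrix regression coefficients. Fix $i$ and let $X_i^i$ be an independent copy of $X_i$, independent of $\mo X$. Set
\[
X_k^i:=X_k-B_{ki}(X_i-X_i^i),\qquad B_{ki}:=\Sigma_{ki}\Sigma_{ii}^{-1}=\frac{c_{ki}}{c_{ii}}I_p ,
\]
where the second form uses the common-shape assumption \eqref{eq:2fmh:gaussian-md:common-shape} and the invertibility of $\Sigma_{ii}=c_{ii}\Sigma_0$. Thus $B_{ki}=\beta_{ik}I_p$ with the scalar $\beta_{ik}:=c_{ik}/c_{ii}\geq0$; here $c_{ki}=c_{ik}$ because $\Sigma_{ki}=\Sigma_{ik}\tr$ and $\Sigma_0$ is symmetric.

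Exchangeability with $i$-independence follows exactly as in the proof of \cref{prp:2fmh:gaussian-1d}. This pair has the law of the pair that \cref{prp:dep:transport-copy} builds from the translation couplings in the proof of \cref{prp:dep:gaussian}. Joint Gaussianity of $(\mo X,\mo X^i)$ holds because it is a linear image of $(\mo X,X_i^i)$.

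With $D_{ik}:=X_k-X_k^i$ as in \eqref{eq:2fmh:illu:delta}, the construction gives $D_{ik}=\beta_{ik}D_{ii}$ pointwise, where $\beta_{ik}$ is a nonnegative scalar. This is the key point of the common-shape assumption: $D_{ik}$ is then a nonnegative multiple of $D_{ii}$, so the Cauchy--Schwarz step \eqref{eq:2fmh:illu:csi} is an equality pointwise:
\[
\ip{D_{ii}}{D_{ik}}=\beta_{ik}\normof{D_{ii}}^2=\normof{D_{ii}}\,\normof{D_{ik}} .
\]
Taking expectations and summing, the exact identity \eqref{eq:2fmh:illu:identity} turns into
\[
\EOf{\normOf{\overline X_n-\EOf{\overline X_n}}^2}=\frac1{2n^2}\sum_{i,k=1}^n\EOf{\normof{D_{ii}}\,\normof{D_{ik}}} ,
\]
which is the claim.

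As a consistency check one can compute directly. Since $\Eof{\normof{D_{ii}}^2}=2\,\trace(\Sigma_{ii})=2c_{ii}\trace(\Sigma_0)$, we get $\Eof{\normof{D_{ii}}\normof{D_{ik}}}=2c_{ik}\trace(\Sigma_0)=2\trace(\Sigma_{ik})$, matching the covariance expression.

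No step is really hard. The only points needing care are the verification of exchangeability in the vector case, which is inherited from \cref{prp:dep:gaussian,prp:dep:transport-copy}, and noting that nonnegativity of the $c_{ik}$ is what makes the multiple nonnegative. Without nonnegativity the pointwise equality would become $\ip{D_{ii}}{D_{ik}}=-\normof{D_{ii}}\normof{D_{ik}}$ for negative $c_{ik}$, and the identity would fail.
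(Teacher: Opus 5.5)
Your proposal is correct and follows the paper's proof essentially step for step. You use the same regression copy $X_k^i:=X_k-(c_{ik}/c_{ii})(X_i-X_i^i)$, with exchangeability and $i$-independence inherited from \cref{prp:2fmh:gaussian-1d}, and the same observation that $D_{ik}=(c_{ik}/c_{ii})D_{ii}$ with a nonnegative factor, so that \eqref{eq:2fmh:illu:csi} is an equality pointwise and \eqref{eq:2fmh:illu:bound} coincides with \eqref{eq:2fmh:illu:identity}.
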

\begin{proof}
By \eqref{eq:2fmh:gaussian-md:common-shape} and the symmetry of $\Sigma_0$, we have $\Sigma_{ki}\Sigma_{ii}^{-1}=(c_{ik}/c_{ii})I_p$, where the inverse exists because $\Sigma_{ii}=c_{ii}\Sigma_0$ is positive definite.

As in the proof of \cref{prp:2fmh:gaussian-1d}, letting $X_i^i$ have the same distribution as $X_i$ and be independent of $\mo X$, and setting $X_k^i := X_k - (c_{ik}/c_{ii})(X_i - X_i^i)$, yields a jointly Gaussian exchangeable pair $(\mo X,\mo X^i)$ with $i$-independence.

By construction, $D_{ik}=X_k-X_k^i=(c_{ik}/c_{ii})D_{ii}$ pointwise with $c_{ik}/c_{ii}\geq0$, so $\normof{D_{ii}}\normof{D_{ik}}=\langle D_{ii}, D_{ik}\rangle$, the Cauchy--Schwarz inequality \eqref{eq:2fmh:illu:csi} is an equality, and \eqref{eq:2fmh:illu:bound} coincides with \eqref{eq:2fmh:illu:identity}.
\end{proof}

Thus the coordinatewise bound \eqref{eq:2fmh:result_final} of \cref{thm:2fmh:dependent} is attained with equality. 

The copies in \cref{prp:2fmh:gaussian-1d,prp:2fmh:gaussian-md} are $X_k^i=X_k-\Sigma_{ki}\Sigma_{ii}^{-1}(X_i-X_i^i)$, which \cref{prp:dep:transport-copy} builds from the translation couplings in the proof of \cref{prp:dep:gaussian}.
For them, $\normof{D_{ik}}=b_{ik}\normof{D_{ii}}$ with the coefficients $b_{ik}$ of \cref{prp:dep:gaussian}, so the bound of \cref{thm:2fmh:decl} equals the coordinatewise bound \eqref{eq:2fmh:result_final}.
In particular, \cref{prp:2fmh:gaussian-md} also follows from \cref{thm:2fmh:decl} below and \cref{prp:dep:gaussian}: with $b_{ik}=c_{ik}/c_{ii}$ and $\Vmof{X_i}=c_{ii}\trace(\Sigma_0)$, the bound \eqref{eq:2fmh:decl:main-bound} equals $n^{-2}\sum_{i,k=1}^n\trace(\Covof{X_i,X_k})$.

\subsubsection{Comonotone functions of a common factor}\label{sssec:2fmh:illu:monotone}

An independent copy of $\mo X$ is always admissible (\cref{ssec:dep:copies}).
The next proposition gives a dependent setting on the real line in which this copy satisfies $\lvert D_{ii}D_{ik}\rvert=D_{ii}D_{ik}$ almost surely for all $i,k\in\nnset n$.
Comparing \eqref{eq:2fmh:illu:identity} and \eqref{eq:2fmh:illu:bound}, we conclude that the coordinatewise bound of \cref{thm:2fmh:dependent} holds with equality.

\begin{proposition}[Comonotone functions
of a common factor]\label{prp:real:monotone-factor} Let $U$ be a real-valued random variable and $f_1,\dots,f_n\colon\R\to\R$ monotone in the same direction.
Define $X_i := f_i(U)$, $i\in\nnset n$. Assume that $X_1,\dots,X_n$ are square-integrable, set $\mo X := (X_1,\dots, X_n)$, and let $\mo X^i = (X_1^i,\dots, X_n^i)$ be an independent copy of $\mo X$. Then $(\mo X,\mo X^i)$ is an exchangeable pair with $i$-independence, and 
    \[\VOf{\overline X_n} = \frac1{2n^2}\sum_{i,k=1}^n \EOf{\abs{X_i-X_i^i}\,\abs{X_k-X_k^i}} = \frac{1}{n^2}\sum_{i,k=1}^n \CovOf{X_i, X_k} \eqfs\]
\end{proposition}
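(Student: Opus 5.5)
We first verify that $(\mo X,\mo X^i)$ is an exchangeable pair with $i$-independence. Since $\mo X^i$ is an independent copy of $\mo X$, the joint law of $(\mo X,\mo X^i)$ is the product $\mc L(\mo X)\otimes\mc L(\mo X)$. This product is invariant under swapping the two factors, so $(\mo X,\mo X^i)\stackrel d=(\mo X^i,\mo X)$. Moreover, $X_i^i$ is a coordinate of $\mo X^i$, which is independent of $\mo X$. This is \cref{def:dep:exchangeable}.

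The key step is a pointwise sign identity. Realize the independent copy as $\mo X^i=(f_1(U\pr),\dots,f_n(U\pr))$ with $U\pr$ an independent copy of $U$; this does not change the law of $(\mo X,\mo X^i)$, and the claimed expectations depend only on that law. Suppose first that all $f_j$ are nondecreasing. If $U\geq U\pr$, then $D_{jj}=f_j(U)-f_j(U\pr)\geq0$ for every $j$. If $U<U\pr$, then $D_{jj}\leq0$ for every $j$. In either case $D_{ii}=X_i-X_i^i$ and $D_{ik}=X_k-X_k^i$ have a common weak sign, so $\lvert D_{ii}\rvert\,\lvert D_{ik}\rvert=D_{ii}D_{ik}$ almost surely. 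The nonincreasing case follows by replacing every $f_j$ with $-f_j$, which leaves both products unchanged.

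Next we take expectations; all terms are integrable by square-integrability and Cauchy--Schwarz. The first identity in \eqref{eq:2fmh:illu:identity}, or a direct expansion using independence of $\mo X$ and $\mo X^i$ and $\mo X^i\stackrel d=\mo X$, gives $\Eof{D_{ii}D_{ik}}=\Eof{X_iX_k}-\Eof{X_i}\Eof{X_k}-\Eof{X_i}\Eof{X_k}+\Eof{X_iX_k}=2\Covof{X_i,X_k}$. Summing over $i,k$ and dividing by $2n^2$ yields $\frac1{2n^2}\sum_{i,k}\Eof{\lvert X_i-X_i^i\rvert\lvert X_k-X_k^i\rvert}=n^{-2}\sum_{i,k}\Covof{X_i,X_k}$. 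This last expression equals $\Vof{\overline X_n}$ by bilinearity of covariance, which is the $\mc H=\R$ case of \eqref{eq:intro:covariance}.

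No step is a real obstacle. The only points needing care are justifying the representation through a common $U\pr$, which is harmless because everything is determined by the law, and handling ties $U=U\pr$ and flat pieces of the $f_j$, where some differences vanish. These cases are covered by using weak signs: a product of two reals that are both $\geq0$ or both $\leq0$ equals the product of their absolute values.
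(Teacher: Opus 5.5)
Your proposal is correct and follows the paper's proof. As in the paper, you realize the independent copy via an independent copy $U\pr$ of the factor, use that $f_i(U)-f_i(U\pr)$ and $f_k(U)-f_k(U\pr)$ share a weak sign so that $\lvert D_{ii}D_{ik}\rvert=D_{ii}D_{ik}$, and conclude from $\Eof{D_{ii}D_{ik}}=2\Covof{X_i,X_k}$, which is what the paper obtains by combining \eqref{eq:2fmh:illu:identity} and \eqref{eq:2fmh:illu:bound}; your explicit check of exchangeability and $i$-independence spells out what the paper leaves to the remark that an independent copy is always admissible.
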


\begin{proof}
For $u,v \in \R$, the differences $f_i(u) - f_i(v)$ and $f_k(u) - f_k(v)$ have the same sign, as $f_i$ and $f_k$ are monotone in the same direction.
As the claim depends only on $\mc L(\mo X,\mo X^i)=\mc L(\mo X)\otimes\mc L(\mo X)$, we may realize $X_k^i = f_k(U\pr)$ for an independent copy $U\pr$ of $U$; with $D_{ii}$, $D_{ik}$ as in \eqref{eq:2fmh:illu:delta}, we have
\begin{equation*}
        D_{ii} D_{ik} = (f_i(U) - f_i(U\pr))(f_k(U) - f_k(U\pr)) \geq 0
        \eqfs
    \end{equation*}
Hence, $\lvert D_{ii} D_{ik}\rvert = D_{ii} D_{ik}$ and the claim follows from \eqref{eq:2fmh:illu:identity} and \eqref{eq:2fmh:illu:bound}.
\end{proof}

The pointwise inequality $(f_i(u)-f_i(v))(f_k(u)-f_k(v))\geq0$ used in \cref{prp:real:monotone-factor} is also the key step in Chebyshev's association inequality $\Covof{f_i(U),f_k(U)}\geq0$; in the terminology of \cite[Section~2, (P3) and (P4)]{EsaryEtAl1967Association-AMStat}, the family $(f_j(U))_{j\in\nnset n}$ is associated.
Conversely, for the independent copy, the coordinatewise bound \eqref{eq:2fmh:illu:bound} is an equality only if $(X_i-X_i^i)(X_k-X_k^i)\geq0$ almost surely for all $i,k\in\nnset n$, as each of its summands is at least the corresponding summand of \eqref{eq:2fmh:illu:identity}.

\subsection{Dependency neighborhoods}\label{ssec:2fmh:strucdep}

Recall the dependency neighborhoods $N_i$ of \cref{def:dep:neighborhood} and their average size $\bar N=n^{-1}\sum_{i=1}^n\#N_i$ from \eqref{eq:dep:L}.

\begin{theorem}[Local dependence]\label{thm:2fmh:neighborhood}
    Let $\mo Y = (Y_1, \dots, Y_n)$ be an $n$-tuple of $\mc Q$-valued random variables with marginals $\mu_{i}\in\mc P^2(\mc Q)$ and dependency neighborhoods $(N_i)_{i\in\nnset n}$, and let $\mbar,\mhat$ be as in \cref{thm:2fmh:dependent}.
    Then
    \begin{equation}\label{eq:2fmh:neighborhood:bound}
    \begin{aligned}
        \EOf{\ol{\mbar}{\mhat}^2}
        &\leq
        \frac{1}{n^2}\sum_{i=1}^n \VmOf{Y_i}
        +
        \frac{\sqrt{2}}{n^2}\sum_{i=1}^n \sqrt{\VmOf{Y_i}} \sum_{k \in N_i\setminus\{i\}} \sqrt{\VfOf{Y_k}}
        \\&\leq
        \sqrt2\,\max_{i\in\nnset n}\VmOf{Y_i}\,\frac{\bar N}n
        \eqfs
    \end{aligned}
    \end{equation}
\end{theorem}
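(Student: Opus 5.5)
The plan is to combine the coupling bound with the copies built for dependency neighborhoods. For each $i$, take the exchangeable copy $\mo Y^i$ from \cref{prp:dep:neighborhood-copy}. Then apply \cref{cor:2fmh:dependent}\,(iii), which bounds the risk by
\[
\frac{1}{n^2}\sum_{i=1}^n \sqrt{\VmOf{Y_i}}\sum_{k=1}^n\sqrt{\tfrac12\EOf{\ol{Y_k}{Y_k^i}^2}}\eqfs
\]
The inner sum splits into three groups of indices $k$.

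For $k\in N_i\compl$, item (i) of \cref{prp:dep:neighborhood-copy} gives $Y_k^i=Y_k$ almost surely, so these terms vanish. For $k=i$, item (ii) says $Y_i^i$ is an independent copy of $Y_i$. Hence $\tfrac12\EOf{\ol{Y_i}{Y_i^i}^2}=\VmOf{Y_i}$, and the diagonal contributes $n^{-2}\sum_i\VmOf{Y_i}$.

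For $k\in N_i\setminus\{i\}$, I use item (iii): $Y_k$ and $Y_k^i$ are conditionally iid given $A:=(Y_j)_{j\in N_i\compl}$. I expect this to be the main step, since it must yield the Fréchet variance rather than the metric variance. Let $m$ be the Fréchet mean of $\mu_k$. The triangle inequality and $(a+b)^2\le 2a^2+2b^2$ give
\[
\EOf{\ol{Y_k}{Y_k^i}^2}\le 2\EOf{\ol{Y_k}{m}^2}+2\EOf{\ol{Y_k^i}{m}^2}=4\VfOf{Y_k}\eqcm
\]
because $Y_k^i\stackrel d=Y_k$. Hence $\sqrt{\tfrac12\EOf{\ol{Y_k}{Y_k^i}^2}}\le\sqrt{2\VfOf{Y_k}}$, which gives exactly the $\sqrt2$ factor in the first line of \eqref{eq:2fmh:neighborhood:bound}. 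The conditional independence is not even needed for this crude bound, only equality of marginals. One could sharpen it via the conditional structure, but the stated constant suggests the crude route is intended.

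For the second inequality, I use \eqref{eq:2fmh:variances}, $\VfOf{Y_k}\le\VmOf{Y_k}\le V:=\max_j\VmOf{Y_j}$. The right side of the first line is then at most
\[
\frac{1}{n^2}\sum_i\bigl(V+\sqrt2\,V(\#N_i-1)\bigr)\le\frac{\sqrt2\,V}{n^2}\sum_i\#N_i=\sqrt2\,V\frac{\bar N}{n}\eqcm
\]
using $1\le\sqrt2$. The only points to check are that the corollary applies with different copies for different $i$, which is allowed by the statement, and the moment assumptions, which hold since $\mu_i\in\mc P^2(\mc Q)$.
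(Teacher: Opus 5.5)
Your proof is correct and follows the paper's argument essentially verbatim: the copies of \cref{prp:dep:neighborhood-copy} inserted into the coordinatewise bound with Cauchy--Schwarz (which is exactly \cref{cor:2fmh:dependent}\,(iii)), the same three-way split of $k$, the estimate $\EOf{\ol{Y_k}{Y_k^i}^2}\leq4\VfOf{Y_k}$ using only $Y_k^i\stackrel d=Y_k$ (the paper uses Minkowski's inequality instead of $(a+b)^2\leq2a^2+2b^2$, which gives the same constant), and the same final counting. One correction to your aside: the conditional structure sharpens $4$ to $2$ only in Hilbert spaces, and \cref{rem:2fmh:neighborhood} shows that $4$ is optimal in general Hadamard spaces.
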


\begin{proof}
    For each $i\in\nnset n$, let $\mo Y^i$ be the exchangeable copy of \cref{prp:dep:neighborhood-copy}.
    \Cref{thm:2fmh:dependent} and the Cauchy--Schwarz inequality yield
    \begin{equation}\label{eq:2fmh:neighborhood:start}
        \EOf{\ol{\mbar}{\mhat}^2} \leq \frac{1}{2n^2}\sum_{i,k=1}^n \EOf{\ol{Y_i}{Y_i^i}^2}^{1/2}\EOf{\ol{Y_k}{Y_k^i}^2}^{1/2}
        \eqfs
    \end{equation}
Since $Y_i^i$ is an independent copy of $Y_i$, we have $\Eof{\ol{Y_i}{Y_i^i}^2} = 2 \Vmof{Y_i}$. This covers the diagonal term $k=i$ exactly. For $k\in N_i\compl$, we have $Y_k^i=Y_k$ almost surely, so $\Eof{\ol{Y_k}{Y_k^i}^2}=0$.

Finally, for $k\in N_i\setminus\{i\}$, the triangle inequality and Minkowski's inequality imply
    \begin{equation*}
        \EOf{\ol{Y_k}{Y_k^i}^2}^{1/2}
        \leq \EOf{\ol{Y_k}{m_k}^2}^{1/2}+\EOf{\ol{Y^i_k}{m_k}^2}^{1/2}
        =  2\sqrt{\VfOf{Y_k}},
    \end{equation*}
  where $m_k:=m_2(\mu_{k})$; the equality holds as $Y_k^i\stackrel{d}{=} Y_k$. Substituting these estimates into \eqref{eq:2fmh:neighborhood:start} gives the first bound in \eqref{eq:2fmh:neighborhood:bound}. The second follows from $\Vfof{Y_k} \leq \Vmof{Y_k}$, see \eqref{eq:2fmh:variances}, from $\Vmof{Y_k}\leq \max_{i\in\nnset n} \Vmof{Y_i}$, and from $1+\sqrt2\,(\#N_i-1)\leq\sqrt2\,\#N_i$.
\end{proof}

The first bound retains the individual variances. For independent observations, we may take $N_i=\{i\}$ for all $i\in\nnset n$, and this bound reduces to $n^{-2}\sum_{i=1}^n\Vmof{Y_i}$, recovering \cref{rem:2fmh:onMainThm}\,(ii).
The second bound expresses the dependence on the largest metric variance and the average neighborhood size.

\begin{remark}\label{rem:2fmh:neighborhood}
    The factor $\sqrt2$ in \cref{thm:2fmh:neighborhood} comes from the bound $\Eof{\ol{Y_k}{Y_k^i}^2}\leq4\Vfof{Y_k}$ for $k\in N_i\setminus\{i\}$.
    In Hilbert spaces, this bound improves to $2\Vfof{Y_k}$, and the factor $\sqrt2$ disappears.
    Indeed, by \cref{prp:dep:neighborhood-copy}\,(iii), $Y_k$ and $Y_k^i$ are conditionally iid given $A:=(Y_j)_{j\in N_i\compl}$, so the improvement follows from the law of total variance.
    The constant $4$ cannot be improved in general Hadamard spaces.
    To see this, consider the metric tree obtained by gluing $L\geq3$ unit intervals at a common endpoint $o$.
    Let $A$ take the values $0$ and $1$ with equal probability.
    Given $A=0$, set $Y_k=Y_k^i=o$; given $A=1$, let $Y_k$ and $Y_k^i$ be independent and uniform on the $L$ outer endpoints.
    Then $o$ is the Fréchet mean of $Y_k$, $\Vfof{Y_k}=1/2$, and $\Eof{\ol{Y_k}{Y_k^i}^2}=2(1-1/L)$, which tends to $2=4\Vfof{Y_k}$ as $L\to\infty$.
    Thus the constant $4$ is optimal in this estimate.
\end{remark}

\subsection{Transport sensitivity}\label{ssec:2fmh:decl}

\begin{theorem}[Transport sensitivity]\label{thm:2fmh:decl}
    Let $\mo Y$ be $\mo b$-sensitive at exponent $1$ (\cref{def:dep:transport}) with marginals $\mu_{i}\in\mc P^2(\mc Q)$, and let $\mbar,\mhat$ be as in \cref{thm:2fmh:dependent}.
    Then
    \begin{equation}\label{eq:2fmh:decl:main-bound}
        \EOf{\ol{\mbar}{\mhat}^2}
        \;\leq\;
        \frac{1}{n^2}\sum_{i=1}^n \VmOf{Y_i}\sum_{k=1}^n b_{ik}
        \eqfs
    \end{equation}
\end{theorem}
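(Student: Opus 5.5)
The plan is to combine the coordinatewise bound \eqref{eq:2fmh:result_final} of \cref{thm:2fmh:dependent} with the copies built in \cref{prp:dep:transport-copy}. For each $i\in\nnset n$, apply \cref{prp:dep:transport-copy} with $s=1$ to obtain, on a possibly extended probability space, an exchangeable copy $\mo Y^i$ of $\mo Y$ with $i$-independence such that $Y_i^i$ is an independent copy of $Y_i$. These copies are admissible in \cref{thm:2fmh:dependent}, since that theorem only requires an exchangeable copy with $i$-independence for each $i$ separately. The extension of the probability space does not affect the left-hand side $\Eof{\ol{\mbar}{\mhat}^2}$, which depends only on the law of $\mo Y$.

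Next, bound each summand of \eqref{eq:2fmh:result_final} by the mixed-moment inequality \eqref{eq:dep:mixed-moment}, taking $r=1$ and $s\pr=1$:
\begin{equation*}
    \EOf{\ol{Y_i}{Y_i^i}\,\ol{Y_k}{Y_k^i}}\leq b_{ik}\,\EOf{\ol{Y_i}{Y_i^i}^2}=2b_{ik}\,\VmOf{Y_i}
    \eqcm
\end{equation*}
where the equality uses that $Y_i^i$ is an independent copy of $Y_i$, so that $\Eof{\ol{Y_i}{Y_i^i}^2}=2\Vmof{Y_i}$ by the definition of the metric variance. The right-hand side is finite because $\mu_i\in\mc P^2(\mc Q)$. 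For $k=i$ the inequality is an identity, since $b_{ii}=1$.

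Summing this over $i,k\in\nnset n$ and inserting it into \eqref{eq:2fmh:result_final} gives
\begin{equation*}
    \EOf{\ol{\mbar}{\mhat}^2}\leq\frac{1}{2n^2}\sum_{i,k=1}^n 2b_{ik}\VmOf{Y_i}
    \eqcm
\end{equation*}
which is exactly \eqref{eq:2fmh:decl:main-bound}. There is no real obstacle here, because the measure-theoretic work (measurability of the kernel, exchangeability, and independence of $Y_i^i$ from $\mo Y$) is already contained in \cref{prp:dep:transport-copy}. The only point to check is that the expectations in \eqref{eq:2fmh:result_final} are taken under the same joint law that \cref{prp:dep:transport-copy} constructs, and this holds by construction.
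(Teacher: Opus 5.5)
Your proposal is correct and follows essentially the same route as the paper's proof: take the copies of \cref{prp:dep:transport-copy} with $s=1$, bound each mixed moment by \eqref{eq:dep:mixed-moment} with $r=s\pr=1$ to get $2b_{ik}\Vmof{Y_i}$, and insert this into the coordinatewise bound \eqref{eq:2fmh:result_final}. Your remarks on the probability-space extension and on choosing the copies separately for each $i$ are correct; the paper does not spell them out.
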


\begin{proof}
    For each $i\in\nnset n$, let $\mo Y^i$ be the exchangeable copy of \cref{prp:dep:transport-copy} with $s=1$.
    Since $Y_i^i$ is an independent copy of $Y_i$, we have $\Eof{\ol{Y_i}{Y_i^i}^2}=2\Vmof{Y_i}$, and \eqref{eq:dep:mixed-moment} with $r=s\pr=1$ gives $\Eof{\ol{Y_i}{Y_i^i}\,\ol{Y_k}{Y_k^i}}\leq 2b_{ik}\Vmof{Y_i}$.
    Inserting this into \eqref{eq:2fmh:result_final} gives \eqref{eq:2fmh:decl:main-bound}.
\end{proof}

For stationary Gaussian $\ms{AR}(1)$ data with positive parameter, \eqref{eq:2fmh:decl:main-bound} with the coefficients of \cref{prp:dep:gaussian} is an equality (\cref{ssec:aux:ar1:gaussian}).

\begin{corollary}\label{cor:2fmh:decl:profile}
    In the setting of \cref{thm:2fmh:decl}, suppose $b_{ik}\leq b(\lvert i-k\rvert)$ for a function $b\colon\Nn\to\Rp$ with $b(0)=1$, and set $F_n:=1+2\sum_{h=1}^{n-1}b(h)$.
    Then
    \begin{equation}\label{eq:2fmh:decl:profile}
        \EOf{\ol{\mbar}{\mhat}^2}
        \;\leq\;
        \max_{i\in\nnset n}\VmOf{Y_i}\,\frac{F_n}{n}
        \eqfs
    \end{equation}
    In particular:
    \begin{enumerate}[label=(\roman*)]
        \item if $b$ is summable, then $F_n\leq1+2\sum_{h\geq1}b(h)$, which does not depend on $n$;
        \item if there exists $\rho\in[0,1)$ such that $b(h)=\rho^h$ for all $h\in\Nn$, then $F_n\leq\frac{1+\rho}{1-\rho}$;
        \item if there exist $\theta\in(0,1)$ and $c\geq1$ such that $b(h)\leq c\,(1+h)^{-\theta}$ for all $h\in\Nn$, then $F_n\leq\frac{2c}{1-\theta}\,n^{1-\theta}$;
        \item if there exists $c\geq1$ such that $b(h)\leq c\,(1+h)^{-1}$ for all $h\in\Nn$, then $F_n\leq1+2c\log n$.
    \end{enumerate}
\end{corollary}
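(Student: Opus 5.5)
The plan is to start from \eqref{eq:2fmh:decl:main-bound} of \cref{thm:2fmh:decl} and reduce everything to a uniform bound on the row sums of $\mo b$. Bounding $\Vmof{Y_i}\leq\max_j\Vmof{Y_j}$ gives
\begin{equation*}
\EOf{\ol{\mbar}{\mhat}^2}\leq \max_{j\in\nnset n}\VmOf{Y_j}\,\frac{1}{n^2}\sum_{i=1}^n\sum_{k=1}^n b_{ik}\eqfs
\end{equation*}
Next I would use $b_{ik}\leq b(\lvert i-k\rvert)$ together with $b_{ii}=1=b(0)$. For a fixed $i$, each lag $h\in\{1,\dots,n-1\}$ is realized by at most two indices $k$, namely $i\pm h$. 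Hence $\sum_k b_{ik}\leq 1+2\sum_{h=1}^{n-1}b(h)=F_n$. Averaging over $i$ then yields \eqref{eq:2fmh:decl:profile}.

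For the special cases, each item only requires bounding $F_n$.
\begin{enumerate}[label=(\roman*)]
\item This follows from monotonicity of partial sums of a nonnegative series.
\item Use the geometric series: $1+2\rho/(1-\rho)=(1+\rho)/(1-\rho)$.
\item Compare the sum with an integral. Since $x\mapsto x^{-\theta}$ is decreasing, $\sum_{h=1}^{n-1}(1+h)^{-\theta}\leq\int_1^{n}x^{-\theta}\,\dl x\leq n^{1-\theta}/(1-\theta)$. This gives $F_n\leq 1+\frac{2c}{1-\theta}n^{1-\theta}$. To absorb the leading $1$, I would not sum from $h=1$ but include $h=0$ in the comparison. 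Writing $F_n\leq 2\sum_{h=0}^{n-1}c(1+h)^{-\theta}-1$ with $c\geq1$, we have $\sum_{h=0}^{n-1}(1+h)^{-\theta}=\sum_{j=1}^n j^{-\theta}\leq 1+\int_1^n x^{-\theta}\,\dl x=1+\frac{n^{1-\theta}-1}{1-\theta}\leq\frac{n^{1-\theta}}{1-\theta}$. The last step uses $1-\frac{1}{1-\theta}\leq0$. Therefore $F_n\leq\frac{2c}{1-\theta}n^{1-\theta}-1$, which is stronger than claimed.
\item Similarly, $\sum_{h=1}^{n-1}(1+h)^{-1}=\sum_{j=2}^n j^{-1}\leq\int_1^n x^{-1}\,\dl x=\log n$, so $F_n\leq1+2c\log n$.
\end{enumerate}

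The only mildly delicate point is the constant bookkeeping in (iii), that is, absorbing the diagonal term $1$ into $\frac{2c}{1-\theta}n^{1-\theta}$. The route above, which includes $h=0$ in the integral comparison and uses $c\geq1$, handles it. Everything else is routine.
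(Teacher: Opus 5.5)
Your proposal is correct and follows essentially the same route as the paper: bound $\Vmof{Y_i}$ by the maximum and each row sum $\sum_k b_{ik}$ by $F_n$ in \eqref{eq:2fmh:decl:main-bound}, then control $F_n$ by geometric-series and sum--integral comparisons. The only difference is in (iii). The paper integrates $\int_0^{n-1}(1+x)^{-\theta}\,\dl x$ and absorbs the diagonal $1$ using $2c/(1-\theta)\geq1$, whereas you fold the $h=0$ term into the sum using $c\geq1$. This is the same bookkeeping and yields a bound sharper by $1$.
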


\begin{proof}
    Bound $\sum_{k}b_{ik}\leq b(0)+2\sum_{h=1}^{n-1}b(h)=F_n$ in \eqref{eq:2fmh:decl:main-bound}.
    Item (i) is immediate and (ii) follows from $1+2\sum_{h\geq1}\rho^h = \frac{1+\rho}{1-\rho}$.
    For (iii), note first that $c \geq b(0) = 1$, so $\frac{2c}{1-\theta}\geq1$.
    With $\sum_{h=1}^{n-1}(1+h)^{-\theta}\leq\int_0^{n-1}(1+x)^{-\theta}\dl x=\frac{n^{1-\theta}-1}{1-\theta}$, we obtain $F_n\leq\frac{2c}{1-\theta}n^{1-\theta}+1-\frac{2c}{1-\theta}\leq\frac{2c}{1-\theta}n^{1-\theta}$.
    For (iv), $\sum_{h=1}^{n-1}(1+h)^{-1}\leq\int_0^{n-1}(1+x)^{-1}\dl x=\log n$.
\end{proof}

Items (i) and (ii) deal with summable sensitivities.
In this case, the risk is $\landauO(n^{-1})$, as for independent observations, as long as $\max_{i}\Vmof{Y_i}$ and $\sum_{h\geq1}b(h)$ remain bounded as $n$ grows.
A natural example is a reversible Markov chain whose kernel contracts $W_1$ by a factor $\rho\in[0,1)$, started from its stationary law in $\mc P^2(\mc Q)$.
By \cref{prp:dep:kernel} with $s=1$, item (ii) applies, and we obtain the bound \eqref{eq:2fmh:iid} for $n(1-\rho)/(1+\rho)$ independent draws.
In other words, $n(1-\rho)/(1+\rho)$ plays the role of an ``effective sample size'' for the Fréchet mean.
This is in the spirit of the Fréchet correlation effective sample size of Magee et al.~\cite{MageeEtAl2024Phylogenetic-BA}, which is built from Fréchet variances and squared distances for Markov chain Monte Carlo samples of phylogenetic trees.
We note in passing that \cref{prp:dep:kernel} assumes stationarity, so a chain started away from its stationary law requires a separate treatment.

Item (iii) gives a finite-sample bound for nonsummable dependence in an arbitrary Hadamard space.
The risk is then $\landauO(n^{-\theta})$ instead of $\landauO(n^{-1})$, as long as $\theta$ is fixed and $\max_{i}\Vmof{Y_i}$ and $c$ remain bounded.
This rate cannot be improved in general.
Indeed, consider stationary fractional Gaussian noise with Hurst index $H\in(1/2,1)$ and marginal variance $\sigma^2$.
By \cref{prp:dep:gaussian}, we may take $b(h)$ to be the correlation at lag $h$, which is nonnegative and decays like $h^{2H-2}$.
Hence item (iii) applies with $\theta=2-2H$.
Since the correlations are nonnegative, the bound of \cref{thm:2fmh:decl} equals the variance of the sample mean $\overline Y_n$, which is $\sigma^2n^{-\theta}$.

\begin{proposition}[Quadratic risk of the refresh chain]\label{prp:2fmh:refresh}
    Let $\mu\in\mc P^2(\mc Q)$, $\rho\in[0,1)$, and let $\mo Y$ be the refresh chain of \cref{prp:dep:refresh}.
    Then $\mbar=m_2(\mu)$ and
    \begin{equation}\label{eq:2fmh:refresh:bound}
        \EOf{\ol{\mbar}{\mhat}^2} \;\leq\; \frac{\VmOf{Y_1}}{n^2}\sum_{i,k=1}^n\rho^{\abs{i-k}} \;\leq\; \frac{1+\rho}{1-\rho}\cdot\frac{\VmOf{Y_1}}{n}
        \eqfs
    \end{equation}
    If $\mc Q$ is a Hilbert space, the first inequality in \eqref{eq:2fmh:refresh:bound} is an equality.
\end{proposition}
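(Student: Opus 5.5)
The plan is to combine \cref{prp:dep:refresh} with \cref{thm:2fmh:decl}, and then verify equality in Hilbert spaces through the exact identity \eqref{eq:2fmh:illu:identity}.

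First, $\mbar=m_2(\mu)$. The chain is stationary, so every marginal is $\mu$. Hence $\mubar=\mu$, and $\mbar=m_2(\mubar)=m_2(\mu)$.

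Second, the two inequalities. By \cref{prp:dep:refresh} with $s=1$, the sample $\mo Y$ is $\mo b$-sensitive at exponent $1$ with $b_{ik}=\rho^{\lvert i-k\rvert}$. Since $\Vmof{Y_i}=\Vmof{Y_1}$ for all $i$, \cref{thm:2fmh:decl} gives the first inequality in \eqref{eq:2fmh:refresh:bound} directly. For the second, bound each row sum by $\sum_k\rho^{\lvert i-k\rvert}\le 1+2\sum_{h\ge1}\rho^h=\frac{1+\rho}{1-\rho}$, as in \cref{cor:2fmh:decl:profile}\,(ii).

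Third, equality in a Hilbert space $\mc H$. There $\mhat=\overline Y_n$ and $\mbar=\EOf{Y_1}$. By \eqref{eq:2fmh:illu:identity},
\[
\EOf{\normOf{\overline Y_n-\EOf{Y_1}}^2}=\frac{1}{n^2}\sum_{i,k=1}^n\traceOf{\CovOf{Y_i,Y_k}},
\]
so it suffices to show $\traceOf{\CovOf{Y_i,Y_k}}=\rho^{\lvert i-k\rvert}\Vmof{Y_1}$.

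To compute this trace, use the block representation from the proof of \cref{prp:dep:refresh}. With probability $\rho^{\lvert i-k\rvert}$ the indices $i$ and $k$ share a block, and then $Y_i=Y_k$. Otherwise $Y_i$ and $Y_k$ are the labels of distinct blocks, which are independent with law $\mu$. Writing $m:=\EOf{Y_1}$, this gives
\[
\EOf{\ip{Y_i-m}{Y_k-m}}=\rho^{\lvert i-k\rvert}\,\EOf{\normof{Y_1-m}^2}+\bigl(1-\rho^{\lvert i-k\rvert}\bigr)\cdot 0 .
\]
The left side is the trace of the cross-covariance. In a Hilbert space $\EOf{\normof{Y_1-m}^2}=\Vfof{Y_1}=\Vmof{Y_1}$ by the remark after \eqref{eq:2fmh:variances}. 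Summing over $i,k$ yields exactly the middle term of \eqref{eq:2fmh:refresh:bound}.

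Every step is routine except conditioning on the partition in this last computation. It relies on the independence of the partition from the block labels in the block construction, and I expect that step to need the most care.
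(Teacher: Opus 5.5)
Your proof is correct and matches the paper's: \cref{prp:dep:refresh} with $s=1$ inserted into \cref{thm:2fmh:decl} gives the inequalities, and in a Hilbert space equality follows from $\EOf{\ip{Y_i-\mbar}{Y_k-\mbar}}=\rho^{\abs{i-k}}\VmOf{Y_1}$. The only difference is how you get this covariance: you use the block representation, whereas the paper iterates the kernel to obtain $\EOf{Y_k-\mbar\mid Y_1,\dots,Y_i}=\rho^{k-i}(Y_i-\mbar)$ for $i\leq k$. In the block representation the labels are iid $\mu$ and independent of the partition by construction, so the step you flag is fine.
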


\begin{proof}
    By stationarity, $\mu_i=\mu$ for all $i$, so $\mubar=\mu$, $\mbar=m_2(\mu)$, and $\Vmof{Y_i}=\Vmof{Y_1}$.
    By \cref{prp:dep:refresh} with $s=1$, $\mo Y$ is $\mo b$-sensitive at exponent $1$ with $b_{ik}=\rho^{\lvert i-k\rvert}$, and \cref{thm:2fmh:decl} gives the first inequality in \eqref{eq:2fmh:refresh:bound}.
    The second follows from
    \begin{equation*}
        \sum_{i,k=1}^n\rho^{\abs{i-k}} = n+2\sum_{h=1}^{n-1}(n-h)\rho^h \leq n\,\frac{1+\rho}{1-\rho}
        \eqfs
    \end{equation*}
    For the equality statement, suppose that $\mc Q$ is a Hilbert space, so that $\mbar=\Eof{Y_1}$, $\mhat=n^{-1}\sum_{t=1}^nY_t$, and $\Vmof{Y_1}=\Eof{\normof{Y_1-\mbar}^2}$.
    Iterating \eqref{eq:dep:refresh:kernel} gives $\Eof{Y_k-\mbar\mid Y_1,\dots,Y_i}=\rho^{k-i}(Y_i-\mbar)$ for $i\leq k$, and hence $\Eof{\langle Y_i-\mbar, Y_k-\mbar\rangle}=\rho^{\lvert i-k\rvert}\Vmof{Y_1}$ for all $i,k\in\nnset n$.
    Summing over $i$ and $k$ gives $\Eof{\normof{\mhat-\mbar}^2}=n^{-2}\Vmof{Y_1}\sum_{i,k=1}^n\rho^{\lvert i-k\rvert}$, which is the first bound in \eqref{eq:2fmh:refresh:bound}.
\end{proof}

We define $a_n(\rho):=n^{-2}\sum_{i,k=1}^n\rho^{\lvert i-k\rvert}$.
The first bound in \eqref{eq:2fmh:refresh:bound} is the iid bound \eqref{eq:2fmh:iid} multiplied by $n\,a_n(\rho)$.
In the block representation of \cref{prp:dep:refresh}, this factor is the expected size of the block containing an index chosen uniformly from $\nnset n$, independently of the partition.
Indeed, indices $i$ and $k$ belong to the same block with probability $\rho^{\lvert i-k\rvert}$.
For fixed $\rho\in[0,1)$, this expected size is nondecreasing in $n$ and converges to $(1+\rho)/(1-\rho)$.

In terms of the metric covariance $\covQof{U,V}:=\frac14\Eof{\ol U{V\pr}^2+\ol{U\pr}V^2-2\ol UV^2}$ of \cite[(4)]{DubeyMueller2020TimeVarying-JRSSB}, where $(U\pr,V\pr)$ is an independent copy of $(U,V)$, the refresh chain has $\covQof{Y_i,Y_k}=\rho^{\lvert i-k\rvert}\Vmof{Y_1}$, so the first bound in \eqref{eq:2fmh:refresh:bound} equals $n^{-2}\sum_{i,k=1}^n\covQof{Y_i,Y_k}$.
Such pairwise sums do not determine the risk in general (\cref{rem:2fmh:onMainThm}\,(iii)).

\begin{example}\label{exa:2fmh:decl:refresh-tripod}
    Let $\mc Q$ be the metric tripod obtained by gluing three unit intervals at their common endpoint $o$, and denote their other endpoints by $x_1,x_2,x_3$.
    Equip $\mc Q$ with its path metric, so that $\ol o{x_1}=\ol o{x_2}=\ol o{x_3}=1$ and distinct outer endpoints are at distance $2$.
    As a compact $\R$-tree, $\mc Q$ is a separable Hadamard space \cite[Example~II.1.15(5)]{BridsonHaefliger1999NPC-Springer}, and it cannot be embedded isometrically into a Hilbert space.
    Take
    \[\mu=\tfrac12\delta_{x_1}+\tfrac14\delta_{x_2}+\tfrac14\delta_{x_3}\eqcm \]
    let $\rho\in[0,1)$, and let $\mo Y$ be the refresh chain of \cref{prp:dep:refresh}.
    Then $\mbar=m_2(\mu)=o$, $\Vfof{Y_1}=1$, $\Vmof{Y_1}=5/4$, and, for every $n\in\N$,
    \begin{equation*}
        \frac{1}{2} a_n(\rho) \leq \EOf{\ol{\mhat}o^2} \leq \frac54 a_n(\rho)
        \eqfs
    \end{equation*}
    The upper bound is \cref{prp:2fmh:refresh}. Thus \cref{prp:2fmh:refresh} captures the dependence of the risk on $n$ and $\rho$ up to a factor of at most $5/2$ in a space that is not a Hilbert space. In addition, $\Eof{\ol{\mhat}o^2}/a_n(\rho)\to1/2$ as $n\to\infty$ for fixed $\rho$, so this factor is attained asymptotically. These claims are proved in \cref{ssec:supp:2fmh:dep}.
\end{example}

\begin{remark}[An $\ms{AR}(1)$ comparison]\label{rem:2fmh:decl:limits}
    For the $\ms{AR}(1)$ process with Rademacher innovations and $0<\rho<1/2$ of \cref{rem:dep:ar1}, \cref{thm:2fmh:decl} gives no vanishing bound, since its admissible coefficients grow geometrically (\cref{rem:dep:ar1}).  The best bounds from \eqref{eq:2fmh:result_intermediate} and \eqref{eq:2fmh:result_final}, optimized over all exchangeable copies with $i$-independence, have orders $n^{-1/2}$ and $1$, respectively.
    With iid $N(0,1)$ innovations and $\rho\in(0,1)$, both bounds are exact. The exact error has order $n^{-1}$ in both models. The constructions and calculations are given in \cref{sec:aux:ar1}.
\end{remark}

\section{Quadratic Fréchet means in spaces with a positive curvature upper bound}\label{sec:pos}

In this section, $(\mc Q,d)$ is a separable and complete $\CAT(\kappa)$ space with $\kappa>0$, and $\alpha=2$ in the sampling notation \eqref{eq:pre:sampling}.
Restricting the data to a closed convex set of small circumradius retains uniqueness of the mean as well as the quadruple, variance, and contraction inequalities, up to explicit constants (\cref{ssec:pos:pre}).
With these, the coupling bound (\cref{ssec:pos:main}) and its consequences under dependency neighborhoods (\cref{ssec:pos:strucdep}) and under transport sensitivity (\cref{ssec:pos:decl}) carry over from \cref{sec:2fmh} with curvature-dependent factors.
Lipschitz images of Gaussian vectors are treated with the coupling bound, and a spherical three-point law observed along a refresh chain shows that the transport-sensitivity bound is sharp up to these factors.

\subsection{Preliminaries}\label{ssec:pos:pre}

We use the setting and notation of \cite[Section~4.1]{Schoetz2026Weighted-arxiv}.
Let $\kappa>0$ and let $(\mc Q, d)$ be a separable and complete $\CAT(\kappa)$ space.
Let $\setC \subseteq \mc Q$ be a nonempty, bounded, closed, and convex set with circumradius $\varrho := \inf_{o\in\mc Q}\sup_{y\in\setC}\ol oy$, and assume $\varrho<\pi/(2\sqrt\kappa)$.
Set
\begin{equation}\label{eq:pos:Delta}
    \Delta := 2\varrho\sqrt\kappa \in [0,\pi)
    \eqcm\quad
    C_\Delta := \frac{\Delta}{\sin(\Delta)} \in[1,\infty)
    \eqcm\quad
    C_0:=1
    \eqfs
\end{equation}
Let $\mc P(\setC)$ be the set of Borel probability measures on $\mc Q$ concentrated on $\setC$.
By \cite[Theorem~B]{Yokota2016Barycenter-JMSJ}, every $\mu \in \mc P(\setC)$ has a unique $2$-Fréchet mean $m_2(\mu)=\argmin_{q\in\mc Q}\int\ol yq^2\dl\mu(y)$, and $m_2(\mu)\in\setC$, so that the minimization may equivalently be restricted to $\setC$.
We use the following counterparts of \hyperref[sqr:QI]{(QI)}, \hyperref[sqr:VI]{(VI)}, and \hyperref[sqr:CP]{(CP)}.

\begin{enumerate}
    \item[(QI+)] \phantomsection\label{pos:QI}
    \textbf{Quadruple Inequality.}
    For $q,p,y,z\in \setC$, we have \cite[Theorem~1.4]{Schoetz2026Trapezoid-arxiv},
    \begin{equation*}
        \ol yq^2 - \ol yp^2 - \ol zq^2 + \ol zp^2 \leq 2C_\Delta\, \ol qp\, \ol yz
        \eqfs
    \end{equation*}
    \item[(VI+)] \phantomsection\label{pos:VI}
    \textbf{Variance Inequality.}
    Let $\mu \in \mc P(\setC)$ and $m:=m_2(\mu)$.
    Then for all $q\in \setC$, we have \cite[Theorem~A.2]{Schoetz2026Weighted-arxiv}
    \begin{equation*}
        \ol {m}q^2 \leq C_\Delta\int \brOf{\ol yq^2 - \ol y{m}^2} \dl \mu(y)
        \eqfs
    \end{equation*}
    \item[(CP+)]
    \phantomsection\label{pos:CP}
    \textbf{Contraction Property.}
    Let $\mu,\nu \in \mc P(\setC)$.
    Then we have \cite[Theorem~A.12]{Schoetz2026Weighted-arxiv}
    \begin{equation*}
        \ol {m_2(\mu)}{m_2(\nu)} \leq C_\Delta^2\, W_1(\mu, \nu)
        \eqfs
    \end{equation*}
\end{enumerate}

Moreover, for $\mc L(Y)\in\mc P(\setC)$, the variances are comparable \cite[(27) and Lemma~A.10]{Schoetz2026Weighted-arxiv}:
\begin{equation}\label{eq:pos:variances}
    \VfOf{Y}\leq\frac{2C_\Delta}{C_\Delta+1}\VmOf{Y}
    \quad\text{and}\quad
    \VmOf{Y}\leq2\VfOf{Y}
    \eqfs
\end{equation}
As $\Delta\to0$, we have $C_\Delta\to1$, and \hyperref[pos:QI]{(QI+)}, \hyperref[pos:VI]{(VI+)}, \hyperref[pos:CP]{(CP+)}, and \eqref{eq:pos:variances} turn into their Hadamard counterparts of \cref{ssec:2fmh:pre}.
Sharper constants are discussed in \cite[Remark~4.2]{Schoetz2026Weighted-arxiv}.

\subsection{The coupling bound}\label{ssec:pos:main}

\begin{theorem}[Coupling bound]\label{thm:pos:dependent}
    Let $\mo Y = (Y_1, \dots, Y_n)$ be an $n$-tuple of $\mc Q$-valued random variables with arbitrary joint distribution and marginals $\mu_{i} \in \mc P(\setC)$, and let $\mbar$ and $\mhat$ be as in \eqref{eq:pre:sampling} with $\alpha=2$.
    For $i \in \nnset n$, let $\mo Y^i$ be an exchangeable copy of $\mo Y$ with $i$-independence, and let $\mhat^i := m_2(\muhat^i)$.
    Then
    \begin{equation}\label{eq:pos:result_intermediate}
        \EOf{\ol{\mbar}{\mhat}^2}
        \leq
        \frac{C_\Delta^2}{2n}\sum_{i=1}^n \EOf{\ol{Y_i}{Y_i^i}\, \ol{\mhat}{\mhat^i}},
    \end{equation}
    and 
    \begin{equation}\label{eq:pos:result_final}
        \EOf{\ol{\mbar}{\mhat}^2}
        \leq
        \frac{C_\Delta^4}{2n^2}\, \sum_{i,k=1}^n \EOf{\ol{Y_i}{Y_i^i}\, \ol{Y_k}{Y_k^i}}
        \eqfs
    \end{equation}
\end{theorem}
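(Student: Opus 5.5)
We follow the proof of \cref{thm:2fmh:dependent}, replacing each Hadamard ingredient by its positive-curvature counterpart from \cref{ssec:pos:pre}. First we check that all points involved lie in $\setC$. The marginals are concentrated on $\setC$, so almost surely all $Y_k$ lie in $\setC$. By exchangeability, $\mo Y^i\stackrel d=\mo Y$, hence all $Y_k^i$ lie in $\setC$ as well. Then $\muhat,\muhat^i,\mubar\in\mc P(\setC)$, and by \cite[Theorem~B]{Yokota2016Barycenter-JMSJ} the means $\mbar,\mhat,\mhat^i$ exist, are unique and belong to $\setC$. This ensures that (VI+), (QI+) and (CP+) apply at all the points where we use them.

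Next we apply \cref{lmm:supp:loo}\,(b), with $C_{\mathrm{QI}}=2C_\Delta$. Its right-hand side is
\[
\frac{C_\Delta}{n}\sum_{i=1}^n\EOf{\ol{Y_i}{Y_i^i}\,\ol{\mhat}{\mhat^i}}.
\]
We bound each term on the left of \eqref{eq:supp:loo} from below with (VI+). Applying it to $\mubar$ at $q=\mhat\in\setC$ shows that the first term is at least $C_\Delta^{-1}\Eof{\ol{\mbar}{\mhat}^2}$. Applying it to $\muhat$ at $q=\mbar\in\setC$ gives the same lower bound for the second term. Together,
\[
\frac{2}{C_\Delta}\,\EOf{\ol{\mbar}{\mhat}^2}\le\frac{C_\Delta}{n}\sum_{i=1}^n\EOf{\ol{Y_i}{Y_i^i}\,\ol{\mhat}{\mhat^i}},
\]
which is \eqref{eq:pos:result_intermediate}.

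For \eqref{eq:pos:result_final}, we use (CP+) with the index-wise coupling $n^{-1}\sum_k\delta_{(Y_k,Y_k^i)}$ of $\muhat$ and $\muhat^i$. This gives
\[
\ol{\mhat}{\mhat^i}\le C_\Delta^2\,W_1(\muhat,\muhat^i)\le \frac{C_\Delta^2}{n}\sum_{k=1}^n\ol{Y_k}{Y_k^i}.
\]
Inserting this into \eqref{eq:pos:result_intermediate} yields the factor $C_\Delta^4/(2n^2)$.

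The only step needing care is the one already contained in \cref{lmm:supp:loo}\,(b). There, (QI+) must be applied to points in $\setC$, and the distributional identities hold only with almost-sure membership in $\setC$. Measurability of $\mhat$ and $\mhat^i$ is also handled by the lemma. Since we take the lemma as given, the remaining work is the bookkeeping of the constants $C_\Delta$.
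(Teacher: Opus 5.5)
Your proposal is correct and follows the paper's proof essentially line by line. You apply \cref{lmm:supp:loo}\,(b) with $C_{\mathrm{QI}}=2C_\Delta$, then (VI+) for $\mubar$ at $\mhat$ and for $\muhat$ at $\mbar$ to get $\frac{2}{C_\Delta}\EOf{\ol{\mbar}{\mhat}^2}\le\frac{C_\Delta}{n}\sum_i\EOf{\ol{Y_i}{Y_i^i}\,\ol{\mhat}{\mhat^i}}$, and finally (CP+) with the index-wise coupling to pass from the intermediate to the coordinatewise bound; your preliminary check that $\mbar,\mhat,\mhat^i\in\setC$ almost surely is the same point the paper records before invoking (VI+).
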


\begin{proof}
    Apply \cref{lmm:supp:loo}\,(b), so that $C_{\mathrm{QI}}=2C_\Delta$.
    Almost surely, $\muhat\in\mc P(\setC)$, so $\mhat\in\setC$, and also $\mbar\in\setC$.
    By \hyperref[pos:VI]{(VI+)} for $\mubar$ at $q=\mhat$ and for $\muhat$ at $q=\mbar$, each of the two terms on the left-hand side of \eqref{eq:supp:loo} is at least $C_\Delta^{-1}\Eof{\ol{\mbar}{\mhat}^2}$.
    Hence
    \begin{equation*}
        \frac{2}{C_\Delta}\,\EOf{\ol{\mbar}{\mhat}^2}
        \leq
        \frac{C_\Delta}{n} \sum_{i=1}^n \EOf{\ol{Y_i}{Y_i^i} \, \ol{\mhat}{\mhat^i}}
        \eqcm
    \end{equation*}
    which is \eqref{eq:pos:result_intermediate}.
    By \hyperref[pos:CP]{(CP+)} and the index-wise coupling of the two empirical measures, $\ol{\mhat}{\mhat^i} \leq C_\Delta^2\,W_1(\muhat, \muhat^i) \leq C_\Delta^2 n^{-1} \sum_{k=1}^n \ol{Y_k}{Y_k^i}$, which gives \eqref{eq:pos:result_final}.
\end{proof}

As in \cref{thm:2fmh:dependent}, we call \eqref{eq:pos:result_intermediate} the intermediate bound and \eqref{eq:pos:result_final} the coordinatewise bound.
The proof is that of \cref{thm:2fmh:dependent}, with the inequalities of \cref{ssec:pos:pre} in place of those of \cref{ssec:2fmh:pre}; the factor $C_\Delta^4$ is the product of the curvature factors $C_\Delta$, $C_\Delta$, and $C_\Delta^2$ by which \hyperref[pos:VI]{(VI+)}, \hyperref[pos:QI]{(QI+)}, and \hyperref[pos:CP]{(CP+)} differ from their Hadamard counterparts.

\begin{remark}\label{rem:pos:onMainThm}
    As in \cref{cor:2fmh:dependent}, the terms (i)--(iv) there are upper bounds for $\Eof{\ol{\mbar}{\mhat}^2}$ if (i) is multiplied by $C_\Delta^2$ and (ii)--(iv) by $C_\Delta^4$.
    For independent observations, the coordinate-replacement copies \eqref{eq:dep:replacement} give $\Eof{\ol{\mbar}{\mhat}^2}\leq C_\Delta^4 n^{-2}\sum_{i=1}^n \Vmof{Y_i}$, which is \cite[Theorem~4.3]{Schoetz2026Weighted-arxiv} with equal weights.
\end{remark}

The coupling bound also yields risk bounds for Lipschitz images of Gaussian vectors, without requiring transport sensitivity of the transformed observations. Indeed, the bi-Lipschitz transfer result of \cref{prp:dep:link} cannot be used for nonconstant Gaussian inputs here: their unbounded support cannot be mapped into the bounded set $\setC$ by a map with a positive lower Lipschitz bound. Instead, we apply the map coordinatewise to the Gaussian copies of \cref{prp:2fmh:gaussian-1d} and use \cref{thm:pos:dependent}.

\begin{proposition}[Lipschitz images of Gaussian vectors]\label{prp:pos:gaussian-link}
    Let $\mo Z=(Z_1,\dots,Z_n)$ be jointly Gaussian in $\R^n$ with $\Vof{Z_i}>0$ for all $i\in\nnset n$.
    Let $g\colon\R\to\mc Q$ take values in $\setC$ and satisfy $\ol{g(z)}{g(z\pr)}\leq\Lambda\lvert z-z\pr\rvert$ for all $z,z\pr\in\R$ and some $\Lambda\in\Rpp$, and let $Y_i:=g(Z_i)$ for $i\in\nnset n$.
    Then, with $\mbar$ and $\mhat$ as in \cref{thm:pos:dependent},
    \begin{equation}\label{eq:pos:gaussian-link}
        \EOf{\ol{\mbar}{\mhat}^2}
        \leq
        \frac{C_\Delta^4\,\Lambda^2}{n^2}\sum_{i,k=1}^n\abs{\CovOf{Z_i,Z_k}}
        \eqfs
    \end{equation}
\end{proposition}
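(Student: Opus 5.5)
For each $i\in\nnset n$, I will take the Gaussian exchangeable copy $\mo Z^i$ of $\mo Z$ with $i$-independence from \cref{prp:2fmh:gaussian-1d}. It is defined by $Z_k^i:=Z_k-\beta_{ik}(Z_i-Z_i^i)$ with $\beta_{ik}=\Covof{Z_i,Z_k}/\Vof{Z_i}$, where $Z_i^i$ is an independent copy of $Z_i$. I then push it through $g$ coordinatewise and set $Y_k^i:=g(Z_k^i)$.

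The first step is to check that $\mo Y^i=(Y_1^i,\dots,Y_n^i)$ is an exchangeable copy of $\mo Y$ with $i$-independence.
\begin{itemize}
\item Exchangeability: $(\mo Y,\mo Y^i)$ is the image of $(\mo Z,\mo Z^i)$ under the measurable map $g^{\otimes n}\times g^{\otimes n}$, which commutes with swapping the two blocks. Hence $(\mo Z,\mo Z^i)\stackrel d=(\mo Z^i,\mo Z)$ transfers to $(\mo Y,\mo Y^i)\stackrel d=(\mo Y^i,\mo Y)$.
\item Independence: $Y_i^i=g(Z_i^i)$ is a function of $Z_i^i$, which is independent of $\mo Z$. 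Since $\mo Y$ is a function of $\mo Z$, we get $Y_i^i\perp\mo Y$.
\item Standing assumptions: all $Y_k$ lie in $\setC$, so $\mu_k\in\mc P(\setC)$ and \cref{thm:pos:dependent} applies. Measurability of $g$ follows from its Lipschitz continuity.
\end{itemize}
(If the probability space must be extended to carry $Z_i^i$, this is harmless: $\mhat$ and $\mbar$ are unaffected.)

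Next I apply the coordinatewise bound \eqref{eq:pos:result_final}. The Lipschitz property gives $\ol{Y_k}{Y_k^i}\leq\Lambda\lvert Z_k-Z_k^i\rvert=\Lambda\lvert\beta_{ik}\rvert\,\lvert Z_i-Z_i^i\rvert$ pointwise, and for $k=i$ it gives $\ol{Y_i}{Y_i^i}\leq\Lambda\lvert Z_i-Z_i^i\rvert$. Therefore
\[
\EOf{\ol{Y_i}{Y_i^i}\,\ol{Y_k}{Y_k^i}}\leq\Lambda^2\lvert\beta_{ik}\rvert\,\EOf{(Z_i-Z_i^i)^2}=2\Lambda^2\lvert\beta_{ik}\rvert\VOf{Z_i}=2\Lambda^2\abs{\CovOf{Z_i,Z_k}}.
\]
The identity $\Eof{(Z_i-Z_i^i)^2}=2\Vof{Z_i}$ holds because $Z_i^i$ is an independent copy of $Z_i$. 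Summing over $i,k$ and multiplying by $C_\Delta^4/(2n^2)$ gives \eqref{eq:pos:gaussian-link}.

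I expect no serious obstacle. The only point needing care is the first step, since \cref{thm:pos:dependent} needs exactly the two defining properties of \cref{def:dep:exchangeable}, and these are preserved under coordinatewise images. The key idea is that the pointwise proportionality $D_{ik}=\beta_{ik}D_{ii}$ in the latent Gaussian coordinates turns the mixed moment into a multiple of a second moment before any Cauchy--Schwarz step, so no variance inflation occurs.
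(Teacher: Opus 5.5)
Your proposal is correct and is essentially the paper's proof. Like the paper, you take the regression copies $Z_k^i=Z_k-\beta_{ik}(Z_i-Z_i^i)$ of \cref{prp:2fmh:gaussian-1d}, map them coordinatewise through $g$ (which preserves exchangeability and $i$-independence), bound $\EOf{\ol{Y_i}{Y_i^i}\,\ol{Y_k}{Y_k^i}}$ by $\Lambda^2\lvert\beta_{ik}\rvert\,\EOf{(Z_i-Z_i^i)^2}=2\Lambda^2\abs{\CovOf{Z_i,Z_k}}$, and insert this into \eqref{eq:pos:result_final}.
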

\begin{proof}
    For $i\in\nnset n$, let $(\mo Z,\mo Z^i)$ be the Gaussian exchangeable pair with $i$-independence of \cref{prp:2fmh:gaussian-1d}.
    By \eqref{eq:2fmh:illu:regression}, $Z_k-Z_k^i=\beta_{ik}(Z_i-Z_i^i)$ with $\beta_{ik}=\Covof{Z_i,Z_k}/\Vof{Z_i}$, and $\Eof{(Z_i-Z_i^i)^2}=2\Vof{Z_i}$.
    Let $Y_k^i:=g(Z_k^i)$ for $k\in\nnset n$.
    Applying the continuous map $g$ coordinatewise preserves exchangeability and $i$-independence, so $\mo Y^i$ is an exchangeable copy of $\mo Y$ with $i$-independence, and all marginals lie in $\mc P(\setC)$.
    By the Lipschitz property,
    \begin{equation*}
        \EOf{\ol{Y_i}{Y_i^i}\,\ol{Y_k}{Y_k^i}}
        \leq\Lambda^2\abs{\beta_{ik}}\,\EOf{(Z_i-Z_i^i)^2}
        =2\Lambda^2\abs{\CovOf{Z_i,Z_k}}
        \eqcm
    \end{equation*}
    and inserting this into \eqref{eq:pos:result_final} gives \eqref{eq:pos:gaussian-link}.
\end{proof}

For instance, if $\mo Z$ is a stationary Gaussian $\ms{AR}(1)$ sequence with marginal variance $\sigma^2$ and parameter $\rho\in(-1,1)$, the right-hand side of \eqref{eq:pos:gaussian-link} equals $C_\Delta^4\Lambda^2\sigma^2a_n(\lvert \rho\rvert)$, with $a_n$ as after \cref{prp:2fmh:refresh}.
This holds even if $g$ winds $\R$ around a circle in $\setC$, in which case $\mo Y$ is in general not a Markov chain even though $\mo Z$ is.
With $C_\Delta=1$ and \cref{thm:2fmh:dependent} in place of \cref{thm:pos:dependent}, the same bound holds for Lipschitz maps into a Hadamard space.

\subsection{Dependency neighborhoods}\label{ssec:pos:strucdep}

Recall the dependency neighborhoods $N_i$ of \cref{def:dep:neighborhood} and their average size $\bar N=n^{-1}\sum_{i=1}^n\#N_i$ from \eqref{eq:dep:L}.

\begin{theorem}[Local dependence]\label{thm:pos:neighborhood}
    Let $\mo Y = (Y_1, \dots, Y_n)$ be an $n$-tuple of $\mc Q$-valued random variables with marginals in $\mc P(\setC)$ and dependency neighborhoods $(N_i)_{i\in\nnset n}$, and let $\mbar,\mhat$ be as in \cref{thm:pos:dependent}.
    Then
    \begin{equation*}
    \begin{aligned}
        \EOf{\ol{\mbar}{\mhat}^2}
        &\leq
        \frac{C_\Delta^4}{n^2}\sum_{i=1}^n \VmOf{Y_i}
        +
        \frac{\sqrt{2}\,C_\Delta^4}{n^2}\sum_{i=1}^n \sqrt{\VmOf{Y_i}} \sum_{k \in N_i\setminus\{i\}} \sqrt{\VfOf{Y_k}}
        \\&\leq
        2\sqrt{\frac{C_\Delta}{C_\Delta+1}}\;C_\Delta^4\max_{i\in\nnset n}\VmOf{Y_i}\,\frac{\bar N}n
        \eqfs
    \end{aligned}
    \end{equation*}
\end{theorem}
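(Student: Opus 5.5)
The plan is to repeat the proof of \cref{thm:2fmh:neighborhood} verbatim, with \cref{thm:pos:dependent} in place of \cref{thm:2fmh:dependent} and the comparison \eqref{eq:pos:variances} in place of \eqref{eq:2fmh:variances}.

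For each $i\in\nnset n$, let $\mo Y^i$ be the exchangeable copy of \cref{prp:dep:neighborhood-copy}. We first check that \cref{thm:pos:dependent} applies. The copy $\mo Y^i$ has the same law as $\mo Y$, so its coordinates take values in $\setC$ almost surely. The coordinatewise bound \eqref{eq:pos:result_final} together with the Cauchy--Schwarz inequality then gives
\begin{equation*}
    \EOf{\ol{\mbar}{\mhat}^2}\leq\frac{C_\Delta^4}{2n^2}\sum_{i,k=1}^n\EOf{\ol{Y_i}{Y_i^i}^2}^{1/2}\EOf{\ol{Y_k}{Y_k^i}^2}^{1/2}\eqfs
\end{equation*}
We bound the terms of this sum as follows.
\begin{enumerate}[label=(\roman*)]
    \item For $k=i$, the variable $Y_i^i$ is an independent copy of $Y_i$, so $\Eof{\ol{Y_i}{Y_i^i}^2}=2\Vmof{Y_i}$.
    \item For $k\in N_i\compl$, the term vanishes because $Y_k^i=Y_k$ almost surely.
    \item For $k\in N_i\setminus\{i\}$, we use $Y_k^i\stackrel d=Y_k$ and apply Minkowski's inequality around $m_k:=m_2(\mu_k)$. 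This gives $\Eof{\ol{Y_k}{Y_k^i}^2}^{1/2}\leq2\sqrt{\Vfof{Y_k}}$.
\end{enumerate}
Substituting these three estimates yields the first displayed bound, with diagonal part $C_\Delta^4n^{-2}\sum_i\Vmof{Y_i}$ and off-diagonal factor $\tfrac{1}{2}\cdot\sqrt2\cdot2=\sqrt2$.

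For the second bound, we apply the first inequality of \eqref{eq:pos:variances}, $\Vfof{Y_k}\leq\frac{2C_\Delta}{C_\Delta+1}\Vmof{Y_k}$. Then $\sqrt2\sqrt{\Vfof{Y_k}}\leq2\sqrt{C_\Delta/(C_\Delta+1)}\,\sqrt{\Vmof{Y_k}}$. Set $V:=\max_j\Vmof{Y_j}$ and $\gamma:=2\sqrt{C_\Delta/(C_\Delta+1)}$. The $i$-th summand is then at most $V\bigl(1+\gamma(\#N_i-1)\bigr)$.

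Since $C_\Delta\geq1$, we have $\gamma\geq\sqrt2\geq1$, so $1+\gamma(\#N_i-1)\leq\gamma\,\#N_i$. Averaging over $i$ gives $\gamma C_\Delta^4V\bar N/n$, which is the claim.

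I expect no real obstacle. The only point that needs attention is the final constant comparison $1\leq\gamma$, which uses $C_\Delta\geq1$ from \eqref{eq:pos:Delta}; it replaces the inequality $1+\sqrt2(\#N_i-1)\leq\sqrt2\,\#N_i$ used in the Hadamard case.
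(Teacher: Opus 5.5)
Your proposal is correct and follows the paper's proof essentially line by line. You take the copies of \cref{prp:dep:neighborhood-copy} and insert the three estimates from the Hadamard case into \eqref{eq:pos:result_final} via Cauchy--Schwarz; you then apply the first inequality of \eqref{eq:pos:variances} and use $1+\gamma(\#N_i-1)\leq\gamma\,\#N_i$, which holds because $\gamma\geq1$ follows from $C_\Delta\geq1$.
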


\begin{proof}
    Choose the copies of \cref{prp:dep:neighborhood-copy}. The estimates of the proof of \cref{thm:2fmh:neighborhood}, inserted into \eqref{eq:pos:result_final} together with the Cauchy--Schwarz inequality, give the first bound. For the second, let $c:=2\sqrt{C_\Delta/(C_\Delta+1)}\geq1$; then \eqref{eq:pos:variances} gives $\sqrt2\,\Vfof{Y_k}^{1/2}\leq c\,(\max_{i}\Vmof{Y_i})^{1/2}$, and $1+c\,(\#N_i-1)\leq c\,\#N_i$.
\end{proof}

As $\Delta\to0$, the constant
$2\sqrt{C_\Delta/(C_\Delta+1)}$ converges to the constant
$\sqrt2$ of \cref{thm:2fmh:neighborhood}.

\subsection{Transport sensitivity}\label{ssec:pos:decl}

The notion of transport sensitivity (\cref{def:dep:transport}) applies without change, and so do the verifications of \cref{ssec:dep:examples}, which are metric constructions that do not use the curvature of $\mc Q$: \cref{prp:dep:kernel} applies to reversible Markov chains whose kernel maps $\setC$ into $\mc P(\setC)$ and satisfies \eqref{eq:dep:kernel-contraction} on $\setC$, and \cref{prp:dep:link} applies whenever the latent process takes values in a closed set $\mc K\subseteq\mc H$ with $g(\mc K)\subseteq\setC$, which forces $\mc K$ to be bounded.

\begin{theorem}[Transport sensitivity]\label{thm:pos:decl}
    Let $\mo Y$ be $\mo b$-sensitive at exponent $1$ (\cref{def:dep:transport}) with marginals in $\mc P(\setC)$, and let $\mbar, \mhat$ be as in \cref{thm:pos:dependent}.
    Then
    \begin{equation}\label{eq:pos:decl:main-bound}
        \EOf{\ol{\mbar}{\mhat}^2}
        \;\leq\;
        \frac{C_\Delta^4}{n^2}\sum_{i=1}^n \VmOf{Y_i}\sum_{k=1}^n b_{ik}
        \eqfs
    \end{equation}
\end{theorem}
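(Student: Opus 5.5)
The plan is to mirror the proof of \cref{thm:2fmh:decl}, now using the positive-curvature coupling bound. For each $i\in\nnset n$, we take the exchangeable copy $\mo Y^i$ of \cref{prp:dep:transport-copy} with $s=1$. That proposition uses only the Polish structure of $\mc Q$ and the couplings of \cref{def:dep:transport}, and never the curvature. So $\mo Y^i$ is an exchangeable copy of $\mo Y$ with $i$-independence, and $Y_i^i$ is an independent copy of $Y_i$. We must also check that the marginals of $\mo Y^i$ lie in $\mc P(\setC)$, which \cref{thm:pos:dependent} requires. This follows because $\mo Y^i\stackrel d=\mo Y$ by exchangeability, so each $Y_k^i$ has law $\mu_k\in\mc P(\setC)$. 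In addition, $\muhat^i$ is almost surely concentrated on $\setC$, so $\mhat^i$ is well defined.

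Next we apply the coordinatewise bound \eqref{eq:pos:result_final} of \cref{thm:pos:dependent} to these copies, which gives
\begin{equation*}
    \EOf{\ol{\mbar}{\mhat}^2}\leq\frac{C_\Delta^4}{2n^2}\sum_{i,k=1}^n\EOf{\ol{Y_i}{Y_i^i}\,\ol{Y_k}{Y_k^i}}\eqfs
\end{equation*}
We then bound each summand with \eqref{eq:dep:mixed-moment}, taking $r=s\pr=1$. This yields $\Eof{\ol{Y_i}{Y_i^i}\,\ol{Y_k}{Y_k^i}}\leq b_{ik}\Eof{\ol{Y_i}{Y_i^i}^2}$. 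Since $Y_i^i$ is an independent copy of $Y_i$, the right-hand side equals $2b_{ik}\Vmof{Y_i}$. For $k=i$ this is an identity, because $b_{ii}=1$. Inserting these bounds and cancelling the factor $2$ gives \eqref{eq:pos:decl:main-bound}.

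All moments involved are finite, since $\setC$ is bounded. We therefore expect no obstacle here. The only point needing care is the membership check in the first step, which ensures that \cref{thm:pos:dependent} applies to the transport copies. It holds because those copies reproduce the joint law of $\mo Y$.
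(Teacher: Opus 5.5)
Your proof is correct and follows the paper's argument: take the transport-sensitivity copies of \cref{prp:dep:transport-copy} with $s=1$, insert \eqref{eq:dep:mixed-moment} with $r=s\pr=1$ together with $\Eof{\ol{Y_i}{Y_i^i}^2}=2\Vmof{Y_i}$ into the coordinatewise bound \eqref{eq:pos:result_final}, and cancel the factor $2$. Your extra check that the copies reproduce the law of $\mo Y$, so their marginals lie in $\mc P(\setC)$, is a harmless addition to the paper's shorter write-up.
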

\begin{proof}
    Choose the copies of \cref{prp:dep:transport-copy} with $s=1$. Since $Y_i^i$ is an independent copy of $Y_i$, we have $\Eof{\ol{Y_i}{Y_i^i}^2}=2\Vmof{Y_i}$; now insert \eqref{eq:dep:mixed-moment} with $r=s\pr=1$ into \eqref{eq:pos:result_final}.
\end{proof}

\begin{corollary}\label{cor:pos:decl:profile}
    In the setting of \cref{thm:pos:decl}, suppose $b_{ik}\leq b(\lvert i-k\rvert)$ for a function $b\colon\Nn\to\Rp$ with $b(0)=1$.
    Then \eqref{eq:2fmh:decl:profile} holds with its right-hand side multiplied by $C_\Delta^4$, and $F_n$ as in \cref{cor:2fmh:decl:profile}.
\end{corollary}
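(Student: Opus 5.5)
The statement is the positive-curvature version of \cref{cor:2fmh:decl:profile}. I would follow that proof verbatim, with \cref{thm:pos:decl} in place of \cref{thm:2fmh:decl}.

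Start from \eqref{eq:pos:decl:main-bound}. Fix $i\in\nnset n$ and bound the row sum using the profile assumption:
\begin{equation*}
\sum_{k=1}^n b_{ik}\leq \sum_{k=1}^n b(\lvert i-k\rvert)\leq b(0)+2\sum_{h=1}^{n-1}b(h)=F_n\eqcm
\end{equation*}
since $b_{ii}=1=b(0)$ and each lag $h\in\{1,\dots,n-1\}$ occurs at most twice among the indices $k\neq i$. Next bound $\Vmof{Y_i}\leq\max_{j\in\nnset n}\Vmof{Y_j}$. Inserting both estimates gives
\begin{equation*}
\EOf{\ol{\mbar}{\mhat}^2}\leq\frac{C_\Delta^4}{n^2}\cdot n\cdot\max_{j}\VmOf{Y_j}\,F_n=C_\Delta^4\max_{j}\VmOf{Y_j}\,\frac{F_n}{n}\eqcm
\end{equation*}
which is \eqref{eq:2fmh:decl:profile} multiplied by $C_\Delta^4$.

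The special cases (i)--(iv) concern only the deterministic quantity $F_n$. Their bounds are exactly those proved in \cref{cor:2fmh:decl:profile}, via the geometric series and the integral comparisons $\sum_{h=1}^{n-1}(1+h)^{-\theta}\leq\int_0^{n-1}(1+x)^{-\theta}\dl x$. So they carry over with no change.

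There is no real obstacle. The only things to check are that the marginals lie in $\mc P(\setC)$, which is needed for \cref{thm:pos:decl} to apply, and that the metric variances are finite, which holds because $\setC$ is bounded.
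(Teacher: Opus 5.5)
Your proposal is correct and follows essentially the same route as the paper. The paper's proof simply notes that the argument for \cref{cor:2fmh:decl:profile} only bounds the row sums $\sum_k b_{ik}\leq F_n$ and leaves the prefactor $C_\Delta^4$ of \eqref{eq:pos:decl:main-bound} untouched, so the deterministic bounds on $F_n$ in items (i)--(iv) carry over unchanged.
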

\begin{proof}
    The proof of \cref{cor:2fmh:decl:profile} only manipulates $\sum_k b_{ik}$ and leaves the prefactor of \eqref{eq:pos:decl:main-bound} untouched.
\end{proof}

For the refresh chain of \cref{prp:dep:refresh} with $\mu\in\mc P(\setC)$, which has $b_{ik}=\rho^{\lvert i-k\rvert}$ at exponent $1$, \cref{thm:pos:decl} gives
\begin{equation}\label{eq:pos:refresh}
    \EOf{\ol{\mbar}{\mhat}^2}\leq \frac{1}{n^2}C_\Delta^4\,\VmOf{Y_1}\,\sum_{i,k=1}^n\rho^{\lvert i-k\rvert}.
\end{equation}

\begin{example}[Three points on a sphere]\label{exa:pos:sphere}
    Let $\mc Q$ be the $2$-sphere of radius $1/\sqrt\kappa$ with its intrinsic distance. Fix $o\in\mc Q$ and $r\in(0,\pi/(4\sqrt\kappa))$, and let $\setC$ be the closed ball of radius $r$ about $o$, so that $\varrho=r$ and $\Delta=2\sqrt\kappa\,r<\pi/2$ in the notation of \cref{ssec:pos:pre}.
    
    Consider three points $x_1,x_2,x_3\in\setC$ at distance $r$ from $o$ such that the geodesics from $o$ to these points make pairwise angles $2\pi/3$ at $o$. By the spherical law of cosines, the distance between any two of these points is $\ell_r:=2\kappa^{-1/2}\arcsin(2^{-1}\sqrt3\sin(\sqrt\kappa\,r))$. Let $\mu$ be the uniform distribution on $\{x_1,x_2,x_3\}$, and let $\mo Y$ be the refresh chain of \cref{prp:dep:refresh} with marginal law $\mu$ and parameter $\rho\in[0,1)$. Then $\mbar=o$, $\Vfof{Y_1}=r^2$, and $\Vmof{Y_1}=\ell_r^2/3<r^2$.
    In addition, for all $n\in\N$,
    \begin{equation}\label{eq:pos:sphere}
        r^2\,a_n(\rho)
        \leq \EOf{\ol{\mbar}{\mhat}^2}
        \leq C_\Delta^4\,\frac{\ell_r^2}{3}\,a_n(\rho)
        \leq C_\Delta^4\,r^2\,a_n(\rho)
        \eqfs
    \end{equation}
\end{example}

Thus \cref{thm:pos:decl} captures the dependence of the error on $n$ and $\rho$ up to the factor $C_\Delta^4$, uniformly in $n$ and $\rho$, in a space of constant positive curvature.
Moreover, as $\Vmof{Y_1}=\ell_r^2/3<r^2$, the lower bound in \eqref{eq:pos:sphere} exceeds $\Vmof{Y_1}\,a_n(\rho)$: the Hadamard-space bound of \cref{prp:2fmh:refresh} fails on the sphere, and a curvature factor of at least $3r^2/\ell_r^2>1$ is necessary. These claims are proved in \cref{sec:supp:pos}.

\section{Power Fréchet means in Hadamard spaces}\label{sec:power}

In this section, $(\mc Q,d)$ is a separable Hadamard space and $\alpha\in(1,2)$ in the sampling notation \eqref{eq:pre:sampling}.
We first collect the geometric tools and the standing notation (\cref{ssec:power:prelim}), then introduce the adapted loss and its conversion to the $L^\alpha$ risk (\cref{ssec:supp:power:loss}). We subsequently state the general coupling bound (\cref{ssec:power:general}) and its consequences under dependency neighborhoods (\cref{ssec:power:strucdep}) and transport sensitivity (\cref{ssec:power:decl}).

\subsection{Preliminaries}\label{ssec:power:prelim}

Let $(\mc Q,d)$ be a separable Hadamard space, and fix
\begin{equation}\label{eq:power:alpha-phi}
\alpha\in(1,2)\eqcm\quad \pwa:=\frac{2-\alpha}{\alpha-1}
\eqfs
\end{equation}
Thus $\pwa>0$; the excluded case $\alpha=2$, in which $\pwa=0$, is the subject of \cref{sec:2fmh}. For $\mu\in \mc P^{\alpha-1}(\mc Q)$, the $\alpha$-Fréchet mean $m_\alpha(\mu)$ of \eqref{eq:pre:mean} exists and is unique \cite[Proposition~5.2(ii) and Corollary~5.8]{Schoetz2025VarianceInequalities-EJP}.
For $s\in\Rpp$, we call
\begin{equation}\label{eq:power:sigma-def}
\sigma_s(\mu) := \int \ol y{m_\alpha(\mu)}^{s}\,\dl\mu(y) \in [0,\infty]
\end{equation}
the \emph{moment of order $s$} of $\mu$ about its $\alpha$-Fréchet mean. We use the following inequalities.
\begin{enumerate}
    \item[(QI$_\alpha$)] \phantomsection\label{power:QI}
    \textbf{Quadruple Inequality.}
    For $q,p,y,z\in \mc Q$, we have \cite[Theorem~3]{Schoetz2019ConvergenceRates-EJS}
    \begin{equation*}
        \ol yq^\alpha-\ol yp^\alpha-\ol zq^\alpha+\ol zp^\alpha
        \leq
        2^{2-\alpha}\alpha\,\ol qp\,\ol yz^{\alpha-1}
    \eqfs
    \end{equation*}

    \item[(VI$_\alpha$)] \phantomsection\label{power:VI}
    \textbf{Variance Inequality.}
    Let $\mu\in \mc P^{\alpha-1}(\mc Q)$ and $m:=m_\alpha(\mu)$.
    Then for all  $q\in \mc Q$, we have
    \begin{equation*}
        \frac{1}{2}\alpha(\alpha-1)\ol qm^2
        \bigl(\sigma_{\alpha-1}(\mu)+\ol qm^{\alpha-1}\bigr)^{-\pwa}
        \leq
        \int\brOf{\ol yq^\alpha-\ol ym^\alpha}\dl\mu(y)
    \eqfs
    \end{equation*}
    This is \cite[Supplement, Lemma~S7.8]{Schoetz2026Transformed-arxiv} and \cite[(107)]{Schoetz2026Weighted-arxiv}, both based on \cite[Theorem~5.4]{Schoetz2025VarianceInequalities-EJP}.

    \item[(CP$_\alpha$)] \phantomsection\label{power:CP}
    \textbf{Contraction Property.}
    Let $\mu,\nu\in \mc P^{\alpha-1}(\mc Q)$ and $\delta:=\ol{m_\alpha(\mu)}{m_\alpha(\nu)}$.
    Then
    \begin{equation*}
        \frac{\delta}{\bigl(\sigma_{\alpha-1}(\mu)+\delta^{\alpha-1}\bigr)^\pwa
        + \bigl(\sigma_{\alpha-1}(\nu)+\delta^{\alpha-1}\bigr)^\pwa}
        \leq \frac{2^{1-\alpha}}{\alpha-1}W_{\alpha-1}(\mu,\nu)
    \eqfs
    \end{equation*}
    The left-hand side is read as $0$ if $\delta=0$. This is \cite[(108)]{Schoetz2026Weighted-arxiv}, which generalizes \cite[Supplement, Lemma~S7.2]{Schoetz2026Transformed-arxiv}.
\end{enumerate}
In the limiting case $\alpha=2$, the inequalities \hyperref[power:QI]{(QI$_\alpha$)}, \hyperref[power:VI]{(VI$_\alpha$)}, and \hyperref[power:CP]{(CP$_\alpha$)} reduce to \hyperref[sqr:QI]{(QI)}, \hyperref[sqr:VI]{(VI)}, and \hyperref[sqr:CP]{(CP)} of \cref{ssec:2fmh:pre}.
For $\alpha<2$, the moduli of \hyperref[power:VI]{(VI$_\alpha$)} and \hyperref[power:CP]{(CP$_\alpha$)} depend on the distributions through $\sigma_{\alpha-1}$.
\hyperref[power:VI]{(VI$_\alpha$)} is quadratic in $\ol qm$ only below the scale $\sigma_{\alpha-1}(\mu)^{1/(\alpha-1)}$ and of order $\ol qm^\alpha$ above it, and \hyperref[power:CP]{(CP$_\alpha$)} bounds $\delta$ by $W_{\alpha-1}(\mu,\nu)$ only up to a factor that grows with $\sigma_{\alpha-1}(\mu)$, $\sigma_{\alpha-1}(\nu)$, and $\delta$. As in \cref{sec:2fmh,sec:pos}, the proofs combine these three inequalities by the compensated replace-one stability technique (\cref{lmm:supp:loo}).

\begin{notation}[Standing notation]\label{not:power:standing}
Let $\mo Y=(Y_1,\dots,Y_n)$ be an $n$-tuple of $\mc Q$-valued random variables with arbitrary joint distribution and marginals $\mu_i\in \mc P^\alpha(\mc Q)$, and let $\mubar$, $\mbar=m_\alpha(\mubar)$, $\muhat$, and $\mhat=m_\alpha(\muhat)$ be as in \eqref{eq:pre:sampling}. For $s\in(0,\alpha]$, define
\begin{equation}\label{eq:power:standing-moments}
\sigma_s:=\sigma_s(\mubar)=\frac{1}{n}\sum_{j=1}^n\EOf{\ol{Y_j}{\mbar}^{s}}\eqcm\,\,
\sigmax s:=\max_{j\in\nnset n}\EOf{\ol{Y_j}{\mbar}^s}\eqcm\,\,
\hat\sigma_{\alpha-1}:=\frac{1}{n}\sum_{j=1}^n\ol{Y_j}{\mbar}^{\alpha-1}
\eqfs
\end{equation}
We also define $V^{\max}_s:=\max_{j\in\nnset n}\Vm^{s}[Y_j]$ for the maximal metric $s$-variance. It satisfies $V^{\max}_s\leq2\sigmax s$. Unlike $\sigmax s$, it vanishes when all observations are deterministic, even if their values differ.
\end{notation}

When exchangeable copies $\mo Y^i$ have been chosen, we abbreviate
\begin{equation*}
\begin{gathered}
    \eta_{ik}:=\ol{Y_k}{Y_k^i}^{\alpha-1}, \,\,
    \eta_i:=\eta_{ii}, \,\,
    T_i:=W_{\alpha-1}(\muhat,\muhat^i), \,\,
    \delta_i:=\ol{\mhat}{\mhat^i}, \,\,
    \hat\sigma_{\alpha-1}^i :=\frac{1}{n}\sum_{j=1}^n\ol{Y_j^i}{\mbar}^{\alpha-1}.
\end{gathered}
\end{equation*}
We also use the exponents
\begin{equation}\label{eq:power:structured-exponents}
    \beta:=\frac{\alpha}{\alpha-1}, \quad
    \alpha^\ast:=\frac{\alpha}{2-\alpha}, \quad
    \beta^\ast:=\frac{\alpha}{2\alpha-2},
\end{equation}
for which $\beta\geq2$, $\beta^\ast\geq1$, and $(\alpha^\ast)^{-1}+(\beta^\ast)^{-1}=1$.

\subsection{The adapted loss and the conversion to the
\texorpdfstring{$L^\alpha$}{L-alpha} risk}
\label{ssec:supp:power:loss}

The variance inequality \hyperref[power:VI]{(VI$_\alpha$)} suggests the adapted loss:
\begin{equation}\label{eq:power:loss}
    \psi_\alpha(r)
    :=r^2\brOf{\sigma_{\alpha-1}+r^{\alpha-1}}^{-\pwa}
    \eqcm \,\, r>0
\end{equation}
with $\psi_\alpha(0):=0$.
Up to the factor $\frac12\alpha(\alpha-1)$, $\psi_\alpha(\ol{\mbar}{\mhat})$ is the lower bound in \hyperref[power:VI]{(VI$_\alpha$)} for $\mubar$ at $q=\mhat$. This loss is used in \cite[Theorem~3.3 and Remark~3.4]{Schoetz2026Transformed-arxiv}; see also \cite[(128)]{Schoetz2026Weighted-arxiv}.

For $\sigma_{\alpha-1}>0$, it satisfies
\begin{equation*}
    2^{-\pwa}
    \min\{\sigma_{\alpha-1}^{-\pwa}r^2,r^\alpha\}
    \leq \psi_\alpha(r)
    \leq
    \min\{\sigma_{\alpha-1}^{-\pwa}r^2,r^\alpha\}
    \eqcm
\end{equation*}
see \cite[Section~C.4.2]{Schoetz2026Weighted-arxiv}. Thus the loss is quadratic below the scale $\sigma_{\alpha-1}^{1/(\alpha-1)}$ and of order $r^\alpha$ above it. If $\sigma_{\alpha-1}=0$, then $\psi_\alpha(r)=r^\alpha$. In the limiting case $\alpha=2$, the weight equals $1$, and the loss reduces to $\psi_2(r)=r^2$, as in \cref{sec:2fmh}.

The following lemma converts a bound on the expected adapted loss into an $L^\alpha$ risk bound.

\begin{lemma} [Proof in \cref{proof_lmm:supp:power:conversion}]\label{lmm:supp:power:conversion}
   
    Let $v\in\Rpp$, let $X$ be a nonnegative random variable,
    and let $R\in\Rp$ satisfy
    $\Eof{X^2(v+X^{\alpha-1})^{-\pwa}}\leq R$.
    Then
    \begin{equation*}
        \EOf{X^\alpha}^{1/\alpha}
        \leq
        c_\alpha\br{
            v^{\pwa/2}R^{1/2}+R^{1/\alpha}
        }
        \eqfs
    \end{equation*}
\end{lemma}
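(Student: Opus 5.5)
The plan is to split the expectation according to whether $X$ lies below or above the scale $v^{1/(\alpha-1)}$, that is, according to the event $E:=\{X^{\alpha-1}\leq v\}$ and its complement. On each event, the weight $(v+X^{\alpha-1})^{-\pwa}$ is comparable to a pure power, so the hypothesis turns into a bound on a simpler moment.

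On $E$ we have $v+X^{\alpha-1}\leq 2v$, so $X^2\leq 2^{\pwa}v^{\pwa}X^2(v+X^{\alpha-1})^{-\pwa}$. Hence
\[
\EOf{X^2\indOf{E}}\leq 2^{\pwa}v^{\pwa}R .
\]
Since $\alpha<2$, Jensen's (or Lyapunov's) inequality applied to the probability measure $\Pr$ gives
\[
\EOf{X^\alpha\indOf{E}}\leq \EOf{X^2\indOf{E}}^{\alpha/2}\leq c_\alpha\,(v^{\pwa}R)^{\alpha/2} .
\]

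On $E\compl$ we have $v+X^{\alpha-1}\leq 2X^{\alpha-1}$. The key identity is $(\alpha-1)\pwa=2-\alpha$, which gives $X^2(v+X^{\alpha-1})^{-\pwa}\geq 2^{-\pwa}X^{2-(2-\alpha)}=2^{-\pwa}X^\alpha$. Therefore
\[
\EOf{X^\alpha\indOf{E\compl}}\leq 2^{\pwa}R .
\]

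Adding the two pieces yields $\EOf{X^\alpha}\leq c_\alpha\bigl((v^{\pwa}R)^{\alpha/2}+R\bigr)$. Taking the $1/\alpha$-th power and using $(a+b)^{1/\alpha}\leq a^{1/\alpha}+b^{1/\alpha}$ (valid because $1/\alpha<1$) gives $c_\alpha(v^{\pwa/2}R^{1/2}+R^{1/\alpha})$, which is the claim.

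There is no real obstacle here. The only points requiring care are the exponent identity $(\alpha-1)\pwa=2-\alpha$, and the fact that on $E$ Jensen's inequality is applied to the sub-probability restriction $\EOf{X^\alpha\indOf{E}}\leq\EOf{X^2\indOf{E}}^{\alpha/2}$. This last step holds by Hölder's inequality with exponents $2/\alpha$ and $2/(2-\alpha)$, since $\PrOf{E}\leq1$. The case $X=0$ is trivial under the convention $\psi(0)=0$.
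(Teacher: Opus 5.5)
Your proof is correct, but it takes a different route from the paper. The paper makes no case distinction. It quotes an external estimate (Proposition~D.4 of the cited weighted-means paper),
\[
\EOf{X^\alpha}\leq v^{\pwa\alpha/2}P^{\alpha/2}\brOf{1+v^{-\alpha/2}P^{(\alpha-1)/2}}^{\pwa},\qquad P:=\EOf{X^2(v+X^{\alpha-1})^{-\pwa}},
\]
and notes that the right-hand side is nondecreasing in $P$, so $R$ may replace $P$. It then takes the $\alpha$-th root, using $(1+x)^{\pwa/\alpha}\leq c_\alpha(1+x^{\pwa/\alpha})$ and $\frac12+\frac{(\alpha-1)\pwa}{2\alpha}=\frac1\alpha$.

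Your split along $E=\{X^{\alpha-1}\leq v\}$ replaces that citation with a short self-contained argument that compares the weight with a pure power on each piece. It uses only $(\alpha-1)\pwa=2-\alpha$ and Jensen, and it gives the explicit constant $c_\alpha=2^{\pwa/\alpha}$.

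The quoted estimate is sharper in its constants: it has leading constant $1$ both when $P$ is small and when it is large compared with $v^{\alpha/(\alpha-1)}$. Your split loses factors of order $2^{\pwa}$. Since the lemma only claims an unspecified $c_\alpha$, this difference does not matter.

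Two cosmetic points. First, no convention is needed at $X=0$: because $v>0$, the weight $(v+X^{\alpha-1})^{-\pwa}$ is finite there, and that case simply lies in $E$. Second, since $X^\alpha\indOf{E}=(X^2\indOf{E})^{\alpha/2}$, the step $\EOf{X^\alpha\indOf{E}}\leq\EOf{X^2\indOf{E}}^{\alpha/2}$ is plain Jensen for the concave map $t\mapsto t^{\alpha/2}$ under $\Pr$; no sub-probability or Hölder argument is required.
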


If $\sigma_{\alpha-1}=0$, then $Y_j=\mbar$ almost surely for every $j$, hence $\mhat=\mbar$ almost surely, and all bounds of \cref{sec:power} hold trivially. We therefore assume $\sigma_{\alpha-1}>0$ below.

We apply \cref{lmm:supp:power:conversion} with $v=\sigma_{\alpha-1}$ and $X=\ol{\mbar}{\mhat}$, and define
\begin{equation*}
    \Phi(R)
    :=\sigma_{\alpha-1}^{\pwa/2}R^{1/2}+R^{1/\alpha}
    \eqcm \,\, R\geq0
    \eqfs
\end{equation*}
The map $\Phi$ is nondecreasing and subadditive on $\Rp$, and $\Phi(cR) \leq \max\{c^{1/2},c^{1/\alpha}\}\Phi(R)$, $c\geq0.$ Hence an adapted-loss bound
$\Eof{\psi_\alpha(\ol{\mbar}{\mhat})}
\leq c_\alpha(R_1+R_2)$ yields
\begin{equation*}
    \EOf{\ol{\mbar}{\mhat}^\alpha}^{1/\alpha}
    \leq
    c_\alpha\br{\Phi(R_1)+\Phi(R_2)}
    \eqfs
\end{equation*}
By Jensen's inequality, the moments of
\eqref{eq:power:standing-moments} satisfy
\begin{equation}\label{eq:supp:power:jensen}
    \sigma_{\alpha-1}
    \leq \sigma_\alpha^{\frac{\alpha-1}\alpha}
    \eqfs
\end{equation}
For the leading contribution in our bounds, the term $R_1^{1/\alpha}$ in $\Phi(R_1)$ can be absorbed into $\sigma_{\alpha-1}^{\pwa/2}R_1^{1/2}$ and a term with displacements of order $\alpha$, which avoids moments of order $\alpha$ about $\mbar$. This is the absorption step of \cite[proof of Theorem~5.1, Step~2]{Schoetz2026Weighted-arxiv}.

\subsection{The coupling bound}\label{ssec:power:general}

We now extend the coupling bound of \cref{thm:2fmh:dependent} to power Fréchet means. For each $i\in\nnset n$, let $\mo Y^i$ be an exchangeable copy of $\mo Y$ with $i$-independence, and let $\mhat^i:=m_\alpha(\muhat^i)$. We summarize the displacements of these copies by $D_n$ and $D_n^\ast$, the fluctuation of the empirical moment $\hat\sigma_{\alpha-1}$ by $\omega_n$, and recall the conversion map $\Phi$:
\begin{equation}\label{eq:power:dep:notation}
\begin{gathered}
    D_n
    :=\frac{1}{n^2}\sum_{i,k=1}^n
    \EOf{
        \ol{Y_i}{Y_i^i}^{\alpha-1}
        \,\ol{Y_k}{Y_k^i}^{\alpha-1}}, \quad
    D_n^\ast
    :=\frac{1}{n^2}\sum_{i,k=1}^n
    \EOf{
        \ol{Y_i}{Y_i^i}^{\alpha/2}
        \,\ol{Y_k}{Y_k^i}^{\alpha/2}
    }^{\frac{2\alpha-2}\alpha}
    \eqcm
    \\
    \omega_n
    :=\EOf{
        \abs{\hat\sigma_{\alpha-1}-\sigma_{\alpha-1}}
        ^{\frac\alpha{\alpha-1}}
    }^{\frac{\alpha-1}\alpha}, \quad  \Phi(R)
    =\sigma_{\alpha-1}^{\pwa/2}R^{1/2}+R^{1/\alpha}, \,\, R\geq0.
\end{gathered}
\end{equation}
The quantities $D_n$ and $D_n^\ast$ depend on the choice
of copies.

\begin{lemma}\label{lmm:supp:power:single}
    Let $R\in\Rp$ satisfy
    $\Eof{\psi_\alpha(\ol{\mbar}{\mhat})}\leq R$.
    Then
    \begin{equation*}
        \EOf{\ol{\mbar}{\mhat}^\alpha}^{1/\alpha}
        \leq
        c_\alpha
        \sigma_\alpha^{(2-\alpha)/(2\alpha)}
        R^{1/2}
        \eqfs
    \end{equation*}
\end{lemma}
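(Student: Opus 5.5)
The plan is to start from \cref{lmm:supp:power:conversion} and remove the $R^{1/\alpha}$ term by splitting into the cases $R\leq\sigma_\alpha$ and $R>\sigma_\alpha$. Throughout, $X:=\ol{\mbar}{\mhat}$ and $v:=\sigma_{\alpha-1}>0$, as arranged after \cref{lmm:supp:power:conversion}. Since $\mu_i\in\mc P^\alpha(\mc Q)$, $\sigma_\alpha$ is finite, and Jensen's inequality \eqref{eq:supp:power:jensen} gives $\sigma_\alpha\geq\sigma_{\alpha-1}^{\alpha/(\alpha-1)}>0$.

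\emph{Case $R\leq\sigma_\alpha$.} First I apply \cref{lmm:supp:power:conversion} with $v=\sigma_{\alpha-1}$. The hypothesis $\Eof{\psi_\alpha(X)}\leq R$ is exactly its assumption, so
\begin{equation*}
\EOf{X^\alpha}^{1/\alpha}\leq c_\alpha\bigl(\sigma_{\alpha-1}^{\pwa/2}R^{1/2}+R^{1/\alpha}\bigr).
\end{equation*}
For the first term, \eqref{eq:supp:power:jensen} gives $\sigma_{\alpha-1}^{\pwa/2}\leq\sigma_\alpha^{\frac{\alpha-1}{\alpha}\cdot\frac{2-\alpha}{2(\alpha-1)}}=\sigma_\alpha^{(2-\alpha)/(2\alpha)}$.
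For the second term, write $R^{1/\alpha}=R^{1/2}R^{(2-\alpha)/(2\alpha)}$. Since $R\leq\sigma_\alpha$, this is at most $\sigma_\alpha^{(2-\alpha)/(2\alpha)}R^{1/2}$.

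\emph{Case $R>\sigma_\alpha$.} Here I use a crude a priori bound instead of the lemma. By the triangle inequality and convexity of $t\mapsto t^\alpha$,
\begin{equation*}
X^\alpha\leq 2^{\alpha-1}\bigl(\ol{\mbar}{Y_j}^\alpha+\ol{Y_j}{\mhat}^\alpha\bigr)\quad\text{for each } j.
\end{equation*}
I average this over $j$. Since $\mhat$ minimizes $q\mapsto n^{-1}\sum_j\ol{Y_j}{q}^\alpha$ (finite $\alpha$-th moments are available), I can replace $\mhat$ by $\mbar$ in the second sum. This gives $X^\alpha\leq 2^\alpha n^{-1}\sum_j\ol{Y_j}{\mbar}^\alpha$, and hence $\Eof{X^\alpha}\leq 2^\alpha\sigma_\alpha$. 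Consequently
\begin{equation*}
\EOf{X^\alpha}^{1/\alpha}\leq 2\sigma_\alpha^{1/\alpha}=2\sigma_\alpha^{(2-\alpha)/(2\alpha)}\sigma_\alpha^{1/2}\leq 2\sigma_\alpha^{(2-\alpha)/(2\alpha)}R^{1/2},
\end{equation*}
using the identity $\frac{2-\alpha}{2\alpha}+\frac12=\frac1\alpha$ in the equality step.

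Combining the two cases proves the claim with a constant depending only on $\alpha$. The only step that is not routine is handling $R^{1/\alpha}$ when $R$ is large, and the minimality comparison above settles it. The only point to check is that this comparison uses the raw objective $\sum_j\ol{Y_j}{q}^\alpha$, which is legitimate because the $Y_j$ have finite $\alpha$-th moments.
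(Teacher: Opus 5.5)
Your proof is correct, and its structure matches the paper's. The paper makes the same split into $R\le\sigma_\alpha$ and $R>\sigma_\alpha$. It treats the first case identically via \cref{lmm:supp:power:conversion} and \eqref{eq:supp:power:jensen}, and it uses the same closing algebra $\sigma_\alpha^{1/\alpha}=\sigma_\alpha^{(2-\alpha)/(2\alpha)}\sigma_\alpha^{1/2}$ in the second case.

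The one difference is how you obtain the a priori bound $\Eof{\ol{\mbar}{\mhat}^\alpha}\le c_\alpha\sigma_\alpha$.
\begin{itemize}
\item The paper reuses the integrability step from the proof of \cref{lmm:supp:loo}. There, the rough location bound of \cref{lmm:aux:rough}, which comes from \hyperref[power:CP]{(CP$_\alpha$)} with a point mass, is applied to $\muhat$ at $q=\mbar$ and followed by Jensen's inequality.
\item You instead compare the empirical objective at $\mhat$ and at $\mbar$ directly, using the triangle inequality.
\end{itemize}
Your route is more elementary, uses no curvature, and gives the explicit constant $2^\alpha$. The paper's route is shorter only because that estimate was already established. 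It also yields more: pointwise control of $\ol{\mhat}{\mbar}^{\alpha-1}$ by the empirical $(\alpha-1)$-moment $\hat\sigma_{\alpha-1}$, which the paper needs elsewhere, for instance in the proof of \cref{prp:supp:power:dep}. Your minimality trick cannot give that, since $\mhat$ minimizes the $\alpha$-objective rather than the $(\alpha-1)$-objective.

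A small correction to your last remark: using the raw objective $\sum_j\ol{Y_j}{q}^\alpha$ is legitimate for every realization because $\muhat$ is finitely supported. The moment assumption on the $Y_j$ is needed only to make $\sigma_\alpha$ finite.
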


\begin{proof}\label{proof_lmm:supp:power:single}
    If $R\leq\sigma_\alpha$, then
    \begin{equation*}
        R^{1/\alpha}
        =R^{1/2}R^{(2-\alpha)/(2\alpha)}
        \leq
        \sigma_\alpha^{(2-\alpha)/(2\alpha)}R^{1/2}
        \eqcm
    \end{equation*}
    and
    $\sigma_{\alpha-1}^{\pwa/2}
    \leq\sigma_\alpha^{(2-\alpha)/(2\alpha)}$
    by \eqref{eq:supp:power:jensen}.
    Thus the claim follows from
    \cref{lmm:supp:power:conversion} with
    $v=\sigma_{\alpha-1}$.

    If $R>\sigma_\alpha$, the integrability step of the proof of \cref{lmm:supp:loo} gives
    \begin{equation*}
        \EOf{\ol{\mhat}{\mbar}^\alpha}
        \leq c_\alpha\frac{1}{n}\sum_j \EOf{\ol{Y_j}{\mbar}^\alpha}
        \eqfs
    \end{equation*}
    Hence
    \begin{align*}
        \EOf{\ol{\mbar}{\mhat}^\alpha}^{1/\alpha} \leq c_\alpha\sigma_\alpha^{1/\alpha} =c_\alpha
        \sigma_\alpha^{(2-\alpha)/(2\alpha)}
        \sigma_\alpha^{1/2}  < c_\alpha \sigma_\alpha^{(2-\alpha)/(2\alpha)} R^{1/2}.
    \end{align*}
\end{proof}

\begin{proposition}[Proof in \cref{proof_prp:supp:power:dep}]\label{prp:supp:power:dep}
 In the setting of \cref{not:power:standing}, let  $\mo Y^i$, $i\in\nnset n$, be exchangeable copies of $\mo Y$ with $i$-independence, let $D_n$,  $D_n^\ast$, $\omega_n$, and $\Phi$ be as in \eqref{eq:power:dep:notation}. Set $R_n:=n^{-1}\sum_{i=1}^n\Eof{\hat\sigma_{\alpha-1}^\pwa\eta_iT_i}$. Then, 

    \begin{equation}\label{eq:supp:power:dep}
        \EOf{\psi_\alpha(\ol{\mbar}{\mhat})}
        \leq
        \frac{2^{2-\alpha}}{\alpha-1}\,\frac{1}{n}\sum_{i=1}^n\EOf{\eta_i\delta_i}
        \eqcm\quad
        \EOf{\psi_\alpha(\ol{\mbar}{\mhat})}
        \leq
        c_\alpha R_n
        \eqfs
    \end{equation}
    Furthermore,
    \begin{equation}\label{eq:supp:power:dep:final}
        R_n\leq c_\alpha\brOf{\sigma_{\alpha-1}^\pwa D_n+\omega_n^\pwa D_n^\ast}
        \eqcm\quad
        R_n\leq\sigma_\alpha^{(2-\alpha)/\alpha}D_n^\ast
        \eqfs
    \end{equation}
\end{proposition}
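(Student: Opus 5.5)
We will follow the template of \cref{thm:2fmh:dependent}, now with the $\alpha$-versions of the three inequalities. There are three parts: the first bound in \eqref{eq:supp:power:dep}, the second bound in \eqref{eq:supp:power:dep} via (CP$_\alpha$), and the two estimates of $R_n$ in \eqref{eq:supp:power:dep:final}.

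\emph{First bound.} Apply \cref{lmm:supp:loo}\,(a), whose constant is $C_{\mathrm{QI}}=2^{2-\alpha}\alpha$. This bounds the sum of the two nonnegative terms by $\frac{2^{2-\alpha}\alpha}{2n}\sum_i\Eof{\eta_i\delta_i}$. Drop the second term and apply (VI$_\alpha$) to $\mubar$ at $q=\mhat$. With $\sigma_{\alpha-1}=\sigma_{\alpha-1}(\mubar)$, the first term is then at least $\frac12\alpha(\alpha-1)\Eof{\psi_\alpha(\ol{\mbar}{\mhat})}$. Dividing by this constant gives exactly $\frac{2^{2-\alpha}}{\alpha-1}\frac1n\sum_i\Eof{\eta_i\delta_i}$.

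\emph{Second bound.} We bound $\delta_i$ through (CP$_\alpha$) with $\mu=\muhat$ and $\nu=\muhat^i$. This is where the dependence of the contraction modulus on the data must be controlled, and I expect it to be the main obstacle. The modulus involves $\sigma_{\alpha-1}(\muhat)=\frac1n\sum_j\ol{Y_j}{\mhat}^{\alpha-1}$, which is a moment about $\mhat$, not about $\mbar$. By the triangle inequality and $(a+b)^{\alpha-1}\le a^{\alpha-1}+b^{\alpha-1}$, we get $\sigma_{\alpha-1}(\muhat)\le\hat\sigma_{\alpha-1}+\ol{\mhat}{\mbar}^{\alpha-1}$, and similarly for the copy, where $\hat\sigma^i_{\alpha-1}$ and $\ol{\mhat^i}{\mbar}$ appear. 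Also $\ol{\mhat^i}{\mbar}\le\ol{\mhat}{\mbar}+\delta_i$. This yields
\[
\delta_i\le c_\alpha T_i\bigl(\hat\sigma_{\alpha-1}+\hat\sigma^i_{\alpha-1}+\ol{\mhat}{\mbar}^{\alpha-1}+\delta_i^{\alpha-1}\bigr)^{\pwa}.
\]
We then multiply by $\eta_i$ and take expectations. Exchangeability of $(\mo Y,\mo Y^i)$, which swaps $\eta_i,T_i$ invariantly, lets us replace the $\hat\sigma^i_{\alpha-1}$ terms by $\hat\sigma_{\alpha-1}$ terms. This produces the main contribution $R_n$ plus remainder terms containing $\ol{\mhat}{\mbar}^{(\alpha-1)\pwa}=\ol{\mhat}{\mbar}^{2-\alpha}$ or $\delta_i^{2-\alpha}$. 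These remainders have to be absorbed into the left-hand side. The plan is to split according to whether $\ol{\mhat}{\mbar}^{\alpha-1}$ exceeds $\sigma_{\alpha-1}$. On the large event, $\psi_\alpha\asymp r^\alpha$, and Young's inequality with exponents $\alpha^\ast,\beta^\ast$ gives terms $\epsilon\,\Eof{\psi_\alpha}+c_\alpha(\ldots)$, so the $\epsilon$-part is absorbed. The $\delta_i$ remainder is handled the same way, using $\delta_i\le\ol{\mhat}{\mbar}+\ol{\mhat^i}{\mbar}$ and the symmetry from exchangeability. This step is where the constant $c_\alpha$ is lost.

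\emph{Bounds on $R_n$.} Index-wise coupling gives $T_i\le\frac1n\sum_k\eta_{ik}$. For the first estimate, write $\hat\sigma_{\alpha-1}^\pwa\le c_\alpha(\sigma_{\alpha-1}^\pwa+|\hat\sigma_{\alpha-1}-\sigma_{\alpha-1}|^\pwa)$. The first piece gives $\sigma_{\alpha-1}^\pwa D_n$. For the second piece, apply Hölder with exponents $\alpha^\ast$ (note $\pwa\alpha^\ast=\alpha/(\alpha-1)$) and $\beta^\ast$; this yields $\omega_n^\pwa\,\Eof{(\eta_i\eta_{ik})^{\beta^\ast}}^{1/\beta^\ast}$, and since $(\eta_i\eta_{ik})^{\beta^\ast}=\ol{Y_i}{Y_i^i}^{\alpha/2}\ol{Y_k}{Y_k^i}^{\alpha/2}$, summing gives $\omega_n^\pwa D_n^\ast$. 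For the second estimate, apply Hölder directly with the same exponents. Jensen gives $\Eof{\hat\sigma_{\alpha-1}^{\alpha/(\alpha-1)}}\le\sigma_\alpha$, so the prefactor is $\sigma_\alpha^{1/\alpha^\ast}=\sigma_\alpha^{(2-\alpha)/\alpha}$, and the remaining factor is again $D_n^\ast$.
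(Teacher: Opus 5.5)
Your first bound and your two estimates of $R_n$ are correct and follow the paper's argument. The gap is in the second bound of \eqref{eq:supp:power:dep}.

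\textbf{Where the absorption fails.} You apply (CP$_\alpha$) only between $\muhat$ and $\muhat^i$. This leaves remainders $\eta_iT_i\,\ol{\mhat}{\mbar}^{2-\alpha}$, together with the analogous term for $\mhat^i$ and the term $\eta_iT_i\,\delta_i^{2-\alpha}$. Your absorption plan does not bound these by $c_\alpha R_n$.
\begin{itemize}
\item On the large event, Young's inequality with exponents $\alpha^\ast,\beta^\ast$ turns $\eta_iT_i\,\ol{\mhat}{\mbar}^{2-\alpha}$ into $\epsilon\,\ol{\mhat}{\mbar}^{\alpha}+C_\epsilon(\eta_iT_i)^{\beta^\ast}$. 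Since $\beta^\ast-1=\pwa/2$, you have the identity $(\eta_iT_i)^{\beta^\ast}=\hat\sigma_{\alpha-1}^{\pwa}\eta_iT_i\,\bigl(\eta_iT_i/\hat\sigma_{\alpha-1}^2\bigr)^{\pwa/2}$. The last factor is of order $n^{\pwa/2}$ when one displaced observation dominates $\hat\sigma_{\alpha-1}$. For example, under coordinate replacement with $\eta_i\approx n\hat\sigma_{\alpha-1}$ and $T_i\approx\eta_i/n$, you get $\eta_iT_i\approx n\hat\sigma_{\alpha-1}^2$.
\item On the complementary event, you get the deterministic factor $\sigma_{\alpha-1}^{\pwa}$, not $\hat\sigma_{\alpha-1}^{\pwa}$.
\end{itemize}
So your route yields a different inequality. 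It contains a higher-moment term of the type $\widetilde D_n$ from \cref{lmm:supp:power:absorption}. It does not give $\Eof{\psi_\alpha(\ol{\mbar}{\mhat})}\le c_\alpha R_n$ with $c_\alpha$ depending only on $\alpha$.

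\textbf{The missing idea.} No absorption is needed, because the remainders are controlled pointwise by the rough location bound \cref{lmm:aux:rough}. That lemma is (CP$_\alpha$) applied against the point mass $\delta_{\mbar}$, using $m_\alpha(\delta_{\mbar})=\mbar$ and $\sigma_{\alpha-1}(\delta_{\mbar})=0$.
\begin{itemize}
\item Applied to $\muhat$ at $q=\mbar$, it gives $\ol{\mhat}{\mbar}^{\alpha-1}\le c_\alpha\hat\sigma_{\alpha-1}$. Likewise, $\ol{\mhat^i}{\mbar}^{\alpha-1}\le c_\alpha\hat\sigma^i_{\alpha-1}$.
\item Hence $\sigma_{\alpha-1}(\muhat)\le c_\alpha\hat\sigma_{\alpha-1}$ and $\sigma_{\alpha-1}(\muhat^i)\le c_\alpha\hat\sigma^i_{\alpha-1}$.
\item Bound $\delta_i^{\alpha-1}\le\ol{\mhat}{\mbar}^{\alpha-1}+\ol{\mbar}{\mhat^i}^{\alpha-1}$, rather than going through $\ol{\mhat^i}{\mbar}\le\ol{\mhat}{\mbar}+\delta_i$. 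Then also $\delta_i^{\alpha-1}\le c_\alpha(\hat\sigma_{\alpha-1}+\hat\sigma^i_{\alpha-1})$.
\item Inserting these into (CP$_\alpha$) gives, pointwise, $\delta_i\le c_\alpha T_i\bigl(\hat\sigma_{\alpha-1}^{\pwa}+(\hat\sigma^i_{\alpha-1})^{\pwa}\bigr)$.
\end{itemize}
Your exchangeability step then yields $\Eof{\psi_\alpha(\ol{\mbar}{\mhat})}\le c_\alpha R_n$.
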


\begin{lemma}[Absorption, proof in \cref{proof:lmm:supp:power:absorption} ]\label{lmm:supp:power:absorption}
In the setting of \cref{prp:supp:power:dep}, let $x,y\in\Rp$ and $C>0$ satisfy $x\leq C\sigma_{\alpha-1}^{\pwa\alpha}y^{\alpha-1}$. 
Then
    \begin{equation}\label{eq:supp:power:absorption}
        \Phi(x) \leq 2\sigma_{\alpha-1}^{\pwa/2}x^{1/2} + C^{\frac1{\alpha(\alpha-1)}}y^{1/\alpha}
        \eqfs
    \end{equation}
    Moreover, $\sigma_{\alpha-1}^\pwa D_n\leq2^{2-\alpha}\sigma_{\alpha-1}^{\pwa\alpha}\widetilde D_n^{\alpha-1}$, where $\widetilde D_n := n^{-2}\sum_{i,k=1}^n \EOf{ \ol{Y_i}{Y_i^i}^{\alpha/2} \,\ol{Y_k}{Y_k^i}^{\alpha/2}}.$ 
\end{lemma}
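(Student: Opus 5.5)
The statement has two parts, and I would prove them separately. Throughout, $\Phi(x)=\sigma_{\alpha-1}^{\pwa/2}x^{1/2}+x^{1/\alpha}$, and $\sigma_{\alpha-1}>0$ as assumed above.

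\emph{First part.} I would split into two cases according to which summand of $\Phi(x)$ dominates.

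If $x^{1/\alpha}\leq\sigma_{\alpha-1}^{\pwa/2}x^{1/2}$, then $\Phi(x)\leq 2\sigma_{\alpha-1}^{\pwa/2}x^{1/2}$, and \eqref{eq:supp:power:absorption} holds.

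Otherwise $x^{1/\alpha-1/2}>\sigma_{\alpha-1}^{\pwa/2}$, and this is equivalent to $x>\sigma_{\alpha-1}^{\pwa\alpha/(2-\alpha)}$ because $1/\alpha-1/2=(2-\alpha)/(2\alpha)$. In this case $\Phi(x)\leq 2x^{1/\alpha}$, so it suffices to bound $x^{1/\alpha}$ by $C^{1/(\alpha(\alpha-1))}y^{1/\alpha}$. The factor $2$ is harmless only if it can be absorbed into the constant; I therefore plan to bound $x^{1/\alpha}$ directly by the second term and keep the first term for the case split.

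From the hypothesis $x\leq C\sigma_{\alpha-1}^{\pwa\alpha}y^{\alpha-1}$ and the lower bound $\sigma_{\alpha-1}^{\pwa}<x^{(2-\alpha)/\alpha}$, I get
\[
x\leq C\,x^{2-\alpha}\,y^{\alpha-1},
\]
and hence $x^{\alpha-1}\leq Cy^{\alpha-1}$, that is, $x\leq C^{1/(\alpha-1)}y$. Taking the $1/\alpha$ power gives $x^{1/\alpha}\leq C^{1/(\alpha(\alpha-1))}y^{1/\alpha}$.

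To avoid the factor $2$, I would write $\Phi(x)=\sigma_{\alpha-1}^{\pwa/2}x^{1/2}+x^{1/\alpha}$ and bound each summand by the corresponding term of \eqref{eq:supp:power:absorption}:
\begin{itemize}
\item In the second case, $\sigma_{\alpha-1}^{\pwa/2}x^{1/2}\leq 2\sigma_{\alpha-1}^{\pwa/2}x^{1/2}$ trivially, and $x^{1/\alpha}$ is bounded as shown above.
\item In the first case, $x^{1/\alpha}\leq\sigma_{\alpha-1}^{\pwa/2}x^{1/2}$, which gives $\Phi(x)\leq2\sigma_{\alpha-1}^{\pwa/2}x^{1/2}$.
\end{itemize}
This settles the first part with exactly the stated constants. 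The edge case $x=0$ is trivial.

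\emph{Second part.} For each pair $(i,k)$ I would apply Jensen's inequality to the concave map $t\mapsto t^{\alpha-1}$ with $\alpha-1\in(0,1)$. Writing
\[
\ol{Y_i}{Y_i^i}^{\alpha-1}\ol{Y_k}{Y_k^i}^{\alpha-1}=\bigl(\ol{Y_i}{Y_i^i}^{\alpha/2}\ol{Y_k}{Y_k^i}^{\alpha/2}\bigr)^{2(\alpha-1)/\alpha},
\]
the exponent is $2(\alpha-1)/\alpha\leq1$, so Jensen's inequality gives $\E[Z^{p}]\leq\E[Z]^{p}$ with $p=(2\alpha-2)/\alpha$. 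Jensen's inequality once more for the average over the $n^2$ pairs gives
\[
D_n\leq\Bigl(\frac1{n^2}\sum_{i,k}\E[\cdots]\Bigr)^{p}=\widetilde D_n^{\,p}.
\]
This falls short of the target: it has exponent $(2\alpha-2)/\alpha$ on $\widetilde D_n$ rather than $\alpha-1$, and it has no extra power of $\sigma_{\alpha-1}$. The needed extra factor $\sigma_{\alpha-1}^{\pwa(\alpha-1)}$ suggests interpolating through a bound involving moments about $\mbar$.

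I would therefore bound $\ol{Y_i}{Y_i^i}^{\alpha-1}\leq$ (by the triangle inequality and subadditivity of $t\mapsto t^{\alpha-1}$) $\ol{Y_i}{\mbar}^{\alpha-1}+\ol{Y_i^i}{\mbar}^{\alpha-1}$, and then use Hölder's inequality to split the product into a part with $\alpha/2$ powers and a remainder controlled by $\sigma_{\alpha-1}$. The exponents should be chosen so that the result reads $\widetilde D_n^{\alpha-1}\sigma_{\alpha-1}^{\pwa(\alpha-1)}$ with constant $2^{2-\alpha}$.

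Matching these exponents is the step I expect to be the main obstacle. Since $\pwa(\alpha-1)=2-\alpha$, the target is $\sigma_{\alpha-1}^{\pwa}D_n\leq2^{2-\alpha}\sigma_{\alpha-1}^{\pwa}\sigma_{\alpha-1}^{2-\alpha}\widetilde D_n^{\alpha-1}$, i.e.
\[
D_n\leq 2^{2-\alpha}\sigma_{\alpha-1}^{2-\alpha}\widetilde D_n^{\alpha-1}.
\]
Hölder's inequality with exponents $1/(\alpha-1)$ and $1/(2-\alpha)$, applied to the factorization
\[
\eta_i\eta_{ik}=(\eta_i^{\alpha/2(\alpha-1)}\eta_{ik}^{\alpha/2(\alpha-1)})^{\alpha-1}\cdot\bigl(\ol{Y_i}{Y_i^i}^{a}\ol{Y_k}{Y_k^i}^{b}\bigr)^{2-\alpha},
\]
is what I would try first, with the leftover powers bounded via the triangle inequality by $2\sigma_{\alpha-1}$-type averages. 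This is where the constant $2^{2-\alpha}$ should arise.
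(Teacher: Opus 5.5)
Your first part is correct and is the paper's proof. For $x>0$, your case condition $x^{1/\alpha}\le\sigma_{\alpha-1}^{\pwa/2}x^{1/2}$ is exactly the paper's condition $x\le\sigma_{\alpha-1}^{\alpha/(\alpha-1)}$, and in the complementary case you derive $x^{\alpha-1}\le Cy^{\alpha-1}$ the same way.

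The second part, however, stops short of a proof. You correctly reduce the claim to $D_n\le2^{2-\alpha}\sigma_{\alpha-1}^{2-\alpha}\widetilde D_n^{\alpha-1}$, and you name the right tool, H\"older with exponents $1/(\alpha-1)$ and $1/(2-\alpha)$, which the paper also uses. But three things are missing. The exponents $a,b$ are left open. The average over the $n^2$ pairs is not addressed. And ``leftover powers bounded via the triangle inequality'' skips that the leftover is a \emph{mixed} moment of two displacements, which must be decoupled before it can be compared with $\sigma_{\alpha-1}$.

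To close it, put $A_{ik}:=\ol{Y_i}{Y_i^i}\,\ol{Y_k}{Y_k^i}$ and use $\alpha-1=(\alpha-1)\frac\alpha2+(2-\alpha)\frac{\alpha-1}2$, so $a=b=\frac{\alpha-1}2$. H\"older inside each expectation, and then the discrete H\"older inequality over the pairs $(i,k)$, give $D_n\le\bigl(n^{-2}\sum_{i,k}\Eof{A_{ik}^{(\alpha-1)/2}}\bigr)^{2-\alpha}\widetilde D_n^{\alpha-1}$. For the first factor, use three steps:
\begin{enumerate}
\item decouple with Cauchy--Schwarz, $\Eof{A_{ik}^{(\alpha-1)/2}}\le\Eof{\ol{Y_i}{Y_i^i}^{\alpha-1}}^{1/2}\Eof{\ol{Y_k}{Y_k^i}^{\alpha-1}}^{1/2}$;
\item apply the triangle inequality, subadditivity of $t\mapsto t^{\alpha-1}$, and $Y_k^i\stackrel d=Y_k$ to get $\Eof{\ol{Y_k}{Y_k^i}^{\alpha-1}}\le2s_k$ with $s_k:=\Eof{\ol{Y_k}{\mbar}^{\alpha-1}}$;
\item conclude with $\bigl(n^{-1}\sum_k s_k^{1/2}\bigr)^2\le n^{-1}\sum_k s_k=\sigma_{\alpha-1}$.
\end{enumerate}
This yields the factor $(2\sigma_{\alpha-1})^{2-\alpha}$. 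Multiplying by $\sigma_{\alpha-1}^\pwa$ and using $\pwa+2-\alpha=\pwa\alpha$ gives the claim, which is the paper's argument.

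The order of the steps matters. The triangle inequality must act on $\ol{Y_k}{Y_k^i}^{\alpha-1}$ after H\"older and after decoupling. Applied to $\eta_i$ before H\"older, as one of your sentences suggests, it removes the displacement $\ol{Y_i}{Y_i^i}$ needed to form $\widetilde D_n$. Applied to the factors $\ol{Y_k}{Y_k^i}^{(\alpha-1)/2}$ before decoupling, it gives $4\sigma_{\alpha-1}$, hence $4^{2-\alpha}$ instead of $2^{2-\alpha}$.
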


\begin{lemma}[Proof in \cref{proof_lmm:supp:power:omega}]\label{lmm:supp:power:omega}
    In the setting of \cref{prp:supp:power:dep}, we have 
\begin{equation}\label{eq:power:dep:omega}
        \omega_n^{\frac\alpha{\alpha-1}}
        \leq
        c_\alpha\inf_{M\geq0}\br{
            M^{2-\alpha}D_n
            +\max_{j\in\nnset n}
            \Eof{ \ol{Y_j}{\mbar}^\alpha
                \indOf{\{\ol{Y_j}{\mbar}>M\}} } },
    \end{equation}
and $\omega_n^\beta\leq2V^{\max}_\alpha$.
\end{lemma}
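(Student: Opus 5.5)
The plan is to treat $\hat\sigma_{\alpha-1}$ as a linear average of real random variables and to bound its centered $\beta$-th moment, where $\beta=\alpha/(\alpha-1)\geq2$, in two different ways. Write $f_j(y):=\ol y{\mbar}^{\alpha-1}$. Then $\hat\sigma_{\alpha-1}=n^{-1}\sum_j f_j(Y_j)$ and $\sigma_{\alpha-1}=\Eof{\hat\sigma_{\alpha-1}}$, and $\omega_n^{\alpha/(\alpha-1)}=\Eof{\lvert\hat\sigma_{\alpha-1}-\sigma_{\alpha-1}\rvert^\beta}$. The key pointwise fact is $\lvert f_j(y)-f_j(z)\rvert\leq\ol yz^{\alpha-1}$. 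It holds by the reverse triangle inequality together with $\lvert a^{\alpha-1}-b^{\alpha-1}\rvert\leq\lvert a-b\rvert^{\alpha-1}$ for $a,b\geq0$, since $\alpha-1\in(0,1)$.

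\emph{Second claim.} By convexity of $x\mapsto\lvert x\rvert^\beta$,
\[
\Eof{\lvert\hat\sigma_{\alpha-1}-\sigma_{\alpha-1}\rvert^\beta}\leq n^{-1}\sum_j\Eof{\lvert f_j(Y_j)-\Eof{f_j(Y_j)}\rvert^\beta}.
\]
Let $Y_j\pr$ be an independent copy of $Y_j$. Conditional Jensen gives $\Eof{\lvert X-\Eof{X}\rvert^\beta}\leq\Eof{\lvert X-X\pr\rvert^\beta}$ for $X=f_j(Y_j)$ and $X\pr=f_j(Y_j\pr)$. The pointwise fact then bounds this by $\Eof{\ol{Y_j}{Y_j\pr}^{(\alpha-1)\beta}}=\Eof{\ol{Y_j}{Y_j\pr}^{\alpha}}=2\Vm^\alpha[Y_j]\leq 2V^{\max}_\alpha$.

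\emph{First claim.} Fix $M\geq0$ and truncate by setting $g_j(y):=\min\{\ol y{\mbar},M\}^{\alpha-1}$ and $h_j:=f_j-g_j\geq0$. Since truncation is $1$-Lipschitz, $g_j$ still satisfies $\lvert g_j(y)-g_j(z)\rvert\leq\ol yz^{\alpha-1}$, and moreover $0\leq g_j\leq M^{\alpha-1}$. Splitting $\hat\sigma_{\alpha-1}-\sigma_{\alpha-1}=(S-\Eof S)+(H-\Eof H)$ with $S:=n^{-1}\sum_jg_j(Y_j)$ and $H:=n^{-1}\sum_jh_j(Y_j)$, it suffices to bound each part by $c_\alpha$ times one of the two terms in \eqref{eq:power:dep:omega}. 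For the tail part, convexity and $h_j(y)^\beta\leq\ol y{\mbar}^{\alpha}\indOf{\{\ol y{\mbar}>M\}}$ give
\[
\Eof{\lvert H-\Eof H\rvert^\beta}\leq 2^\beta\max_j\Eof{\ol{Y_j}{\mbar}^\alpha\indOf{\{\ol{Y_j}{\mbar}>M\}}}.
\]

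\emph{Truncated part (the main step).} Since $\lvert S-\Eof S\rvert\leq M^{\alpha-1}$ and $\beta\geq2$, we have $\lvert S-\Eof S\rvert^\beta\leq M^{(\alpha-1)(\beta-2)}(S-\Eof S)^2$, and $(\alpha-1)(\beta-2)=2-\alpha$. It remains to bound $\VOf S$ by $D_n$. Here the exchangeable copies enter: $(g_j(Y_j^i))_j$ is an exchangeable copy of $(g_j(Y_j))_j$ with $i$-independence, because applying fixed maps coordinatewise preserves both properties. So the real-line identity \eqref{eq:2fmh:illu:identity} applies with $X_j:=g_j(Y_j)$ and yields
\[
\VOf S=\frac1{2n^2}\sum_{i,k}\Eof{D_{ii}D_{ik}}\leq\frac1{2n^2}\sum_{i,k}\Eof{\lvert g_i(Y_i)-g_i(Y_i^i)\rvert\,\lvert g_k(Y_k)-g_k(Y_k^i)\rvert}\leq\tfrac12D_n.
\]
Combining the two parts with $\lvert a+b\rvert^\beta\leq2^{\beta-1}(\lvert a\rvert^\beta+\lvert b\rvert^\beta)$ and taking the infimum over $M$ gives \eqref{eq:power:dep:omega}. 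The only step that needs care is checking this transfer of exchangeability to the real-valued functionals; the rest is routine, with constants depending only on $\alpha$.
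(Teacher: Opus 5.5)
Your proof is correct and follows essentially the same route as the paper. Both truncate at level $M^{\alpha-1}$, bound the variance of the truncated average by $D_n/2$ via the exchangeable-pair covariance identity (the paper derives it inline, while you cite \eqref{eq:2fmh:illu:identity} for the coordinatewise-transported copies), use boundedness by $M^{\alpha-1}$ to pass from the variance to the $\beta$-th moment, and control the tail by Jensen; for the second claim, both use conditional Jensen with an independent copy.
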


\begin{theorem}[Coupling bound]\label{thm:power:dep}
    In the setting of \cref{prp:supp:power:dep}, let $\mhat^i:=m_\alpha(\muhat^i)$. Then
    \begin{equation}\label{eq:power:dep:intermediate}
        \EOf{\ol{\mbar}{\mhat}^\alpha}^{1/\alpha} \leq
        c_\alpha\,\Phi\br{\frac{1}{n}\sum_{i=1}^n \EOf{ \ol{Y_i}{Y_i^i}^{\alpha-1} \,\ol{\mhat}{\mhat^i}} }
    \end{equation}
    and
    \begin{equation}\label{eq:power:dep:final}
        \EOf{\ol{\mbar}{\mhat}^\alpha}^{1/\alpha}
        \leq c_\alpha\,\Phi\brOf{\sigma_{\alpha-1}^\pwa D_n} +c_\alpha\,\Phi\brOf{\omega_n^\pwa D_n^\ast}
        \eqfs
    \end{equation}
Furthermore, 
    \begin{equation}\label{eq:power:dep:simple}
        \EOf{\ol{\mbar}{\mhat}^\alpha}^{1/\alpha} \leq
        c_\alpha\,\sigma_\alpha^{(2-\alpha)/\alpha} \brOf{D_n^\ast}^{1/2}.
    \end{equation}
\end{theorem}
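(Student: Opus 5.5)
The theorem collects three consequences of the results just stated, so the plan is to chain \cref{prp:supp:power:dep}, \cref{lmm:supp:power:conversion} in the form of the map $\Phi$, and \cref{lmm:supp:power:single}. Throughout, assume $\sigma_{\alpha-1}>0$, since otherwise all claims are trivial as noted before \eqref{eq:power:dep:notation}.

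For the intermediate bound \eqref{eq:power:dep:intermediate}, take the first inequality of \eqref{eq:supp:power:dep}. It gives $\Eof{\psi_\alpha(\ol{\mbar}{\mhat})}\leq c_\alpha R$ with $R:=n^{-1}\sum_i\Eof{\eta_i\delta_i}$, which is exactly the argument of $\Phi$ in \eqref{eq:power:dep:intermediate}. Apply \cref{lmm:supp:power:conversion} with $v=\sigma_{\alpha-1}$ and $X=\ol{\mbar}{\mhat}$, noting that $\psi_\alpha(X)=X^2(v+X^{\alpha-1})^{-\pwa}$. This gives $\Eof{X^\alpha}^{1/\alpha}\leq c_\alpha\Phi(c_\alpha R)$. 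The scaling property $\Phi(cR)\leq\max\{c^{1/2},c^{1/\alpha}\}\Phi(R)$ then moves the constant outside $\Phi$.

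For \eqref{eq:power:dep:final}, combine the second inequality of \eqref{eq:supp:power:dep} with the first inequality of \eqref{eq:supp:power:dep:final}. This yields $\Eof{\psi_\alpha(\ol{\mbar}{\mhat})}\leq c_\alpha(R_1+R_2)$ with $R_1=\sigma_{\alpha-1}^\pwa D_n$ and $R_2=\omega_n^\pwa D_n^\ast$. The remark after \cref{lmm:supp:power:conversion} (monotonicity, subadditivity and scaling of $\Phi$) then gives $c_\alpha(\Phi(R_1)+\Phi(R_2))$ directly.

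For \eqref{eq:power:dep:simple}, use the second inequality of \eqref{eq:supp:power:dep:final}, $R_n\leq\sigma_\alpha^{(2-\alpha)/\alpha}D_n^\ast$, so that $\Eof{\psi_\alpha(\ol{\mbar}{\mhat})}\leq c_\alpha\sigma_\alpha^{(2-\alpha)/\alpha}D_n^\ast=:R$. \Cref{lmm:supp:power:single} then gives
\[
\EOf{\ol{\mbar}{\mhat}^\alpha}^{1/\alpha}\leq c_\alpha\sigma_\alpha^{(2-\alpha)/(2\alpha)}\bigl(\sigma_\alpha^{(2-\alpha)/\alpha}D_n^\ast\bigr)^{1/2}=c_\alpha\sigma_\alpha^{(2-\alpha)/\alpha}(D_n^\ast)^{1/2}.
\]
The exponents add as $\frac{2-\alpha}{2\alpha}+\frac{2-\alpha}{2\alpha}=\frac{2-\alpha}{\alpha}$, as required.

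The substantive work lies entirely in \cref{prp:supp:power:dep}, which is assumed here. The only step needing care is bookkeeping of constants: the factors $c_\alpha$ must be pulled through $\Phi$ using its scaling property, and $\sigma_\alpha<\infty$ must hold, which follows from $\mu_i\in\mc P^\alpha(\mc Q)$.
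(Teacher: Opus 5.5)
Your proposal is correct and matches the paper's proof step for step. The paper proves \eqref{eq:power:dep:intermediate} from the first bound in \eqref{eq:supp:power:dep} and \cref{lmm:supp:power:conversion} with $v=\sigma_{\alpha-1}$, proves \eqref{eq:power:dep:final} from the second bound in \eqref{eq:supp:power:dep}, the first bound in \eqref{eq:supp:power:dep:final}, and the properties of $\Phi$, and proves \eqref{eq:power:dep:simple} from the second bounds in both displays followed by \cref{lmm:supp:power:single}. You additionally pull the constants through $\Phi$ via its scaling property and set aside the case $\sigma_{\alpha-1}=0$, which makes explicit what the paper leaves implicit.
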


\begin{proof}
We apply \cref{prp:supp:power:dep}, with $R_n$ as defined there. The first bound in \eqref{eq:supp:power:dep} and
    \cref{lmm:supp:power:conversion} with
    $v=\sigma_{\alpha-1}$ give
    \eqref{eq:power:dep:intermediate}, since
    $\eta_i\delta_i
    =\ol{Y_i}{Y_i^i}^{\alpha-1}\ol{\mhat}{\mhat^i}$.

Next, the second bound in \eqref{eq:supp:power:dep} and the first bound in \eqref{eq:supp:power:dep:final} give
    \begin{equation*}
        \EOf{\psi_\alpha(\ol{\mbar}{\mhat})}
        \leq
        c_\alpha\br{
            \sigma_{\alpha-1}^\pwa D_n
            +\omega_n^\pwa D_n^\ast
        }
        \eqfs
    \end{equation*}
Applying the conversion in the form $\EOf{\ol{\mbar}{\mhat}^\alpha}^{1/\alpha} \leq c_\alpha\br{\Phi(R_1)+\Phi(R_2)}$ of \cref{ssec:supp:power:loss} gives \eqref{eq:power:dep:final}.

Finally, the second bounds in \eqref{eq:supp:power:dep} and \eqref{eq:supp:power:dep:final} give $\EOf{\psi_\alpha(\ol{\mbar}{\mhat})}\leq c_\alpha\sigma_\alpha^{(2-\alpha)/\alpha}D_n^\ast$. Then \cref{lmm:supp:power:single} implies
    \[ \EOf{\ol{\mbar}{\mhat}^\alpha}^{1/\alpha}\leq c_\alpha
        \sigma_\alpha^{(2-\alpha)/(2\alpha)}
        \br{\sigma_\alpha^{(2-\alpha)/\alpha}D_n^\ast
        }^{1/2}
        =c_\alpha\sigma_\alpha^{(2-\alpha)/\alpha} \brOf{D_n^\ast}^{1/2}.
    \]
\end{proof}

As in \cref{thm:2fmh:dependent}, we call \eqref{eq:power:dep:intermediate} the intermediate bound and \eqref{eq:power:dep:final} the coordinatewise bound. At $\alpha=2$, we have $\pwa=0$ and $D_n=D_n^\ast$, so these bounds have the same form, up to constants, as the corresponding quadratic bounds \eqref{eq:2fmh:result_intermediate} and \eqref{eq:2fmh:result_final}.

The coordinatewise bound distinguishes two moment scales. The term $\sigma_{\alpha-1}^\pwa D_n^{1/2}$ involves moments of order $\alpha-1$ about $\mbar$ and of order $2\alpha-2$ of the displacements. The higher-order displacement quantity $D_n^\ast$ appears in the second contribution, with the factor $\omega_n^\pwa$ inside $\Phi$. By \eqref{eq:power:dep:omega}, $\omega_n\to0$ whenever $D_n\to0$ and the $\alpha$-th moments about $\mbar$ are uniformly integrable, that is, the tail term in \eqref{eq:power:dep:omega} tends to zero as $M\to\infty$,
uniformly in $n$. We also have $\omega_n^{\alpha/(\alpha-1)}\leq2V_\alpha^{\max}$ by \cref{lmm:supp:power:omega}.

The proof first bounds the expected adapted loss in \cref{prp:supp:power:dep}. These bounds are then converted into $L^\alpha$ bounds by \cref{lmm:supp:power:conversion}; this is the role of $\Phi$. For independent observations, the coordinate-replacement copies of \eqref{eq:dep:replacement} make the off-diagonal terms of $D_n$ and $D_n^\ast$ vanish. 
We compare the resulting bound with known results for independent data in \cref{rem:power:scope}.

\subsection{Dependency neighborhoods}\label{ssec:power:strucdep}

Recall the dependency neighborhoods $N_i$ of \cref{def:dep:neighborhood}, their maximal size $N_{\max}=\max_{i\in\nnset n}\#N_i$, and their average size $\bar N=n^{-1}\sum_{i=1}^n\#N_i$ from \eqref{eq:dep:L}.
The bound also involves a partition $\mc B_1,\dots,\mc B_\chi$ of $\nnset n$ such that, for every $t\in\nnset\chi$, the family $(Y_j)_{j\in\mc B_t}$ is mutually independent.
Such a partition with $\chi\leq N_{\max}$ always exists (\cref{lmm:power:locdep:partition}), so the bound below also holds with $\chi$ replaced by $N_{\max}$.

\begin{lemma}[Proof in \cref{proof_lmm:power:locdep:Xbar}]\label{lmm:power:locdep:Xbar}
    Let $\mc B_1,\dots,\mc B_\chi$ be a partition of $\nnset n$ such that $(Y_j)_{j\in\mc B_t}$ is mutually independent for every $t\in\nnset\chi$, and let $X_j:=\ol{Y_j}{\mbar}^{\alpha-1}-\Eof{\ol{Y_j}{\mbar}^{\alpha-1}}$ and $\overline{X}_n:=\frac{1}{n}\sum_{j=1}^nX_j=\hat\sigma_{\alpha-1}-\sigma_{\alpha-1}$.
    Then
    \begin{equation}\label{eq:power:locdep:Xbar}
        \EOf{\abs{\overline X_n}^\beta} \leq c_\alpha \br{\frac{\chi}{n}}^{\beta/2}V^{\max}_\alpha
        \eqfs
    \end{equation}
\end{lemma}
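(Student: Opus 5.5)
Write $\overline X_n=\sum_{t=1}^\chi \frac{n_t}{n}\,\overline X^{(t)}$, where $n_t:=\#\mc B_t$ and $\overline X^{(t)}:=n_t^{-1}\sum_{j\in\mc B_t}X_j$ is the average over the $t$-th block. The plan is to bound each block average by a moment inequality for independent centered real variables, and then recombine the blocks by convexity. This accounts for the factor $\chi^{\beta/2}$: the only loss from dependence is the count of blocks, paid through a triangle or H\"older inequality across blocks.

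\emph{Step 1: a centered bound on the $X_j$.} The key per-coordinate input is
\[
\EOf{\abs{X_j}^\beta}\leq c_\alpha V^{\max}_\alpha .
\]
Here $\beta(\alpha-1)=\alpha$. Let $Y_j\pr$ be an independent copy of $Y_j$. By Jensen's inequality,
\[
\EOf{\abs{X_j}^\beta}\leq \EOf{\abs{\ol{Y_j}{\mbar}^{\alpha-1}-\ol{Y_j\pr}{\mbar}^{\alpha-1}}^\beta}.
\]
Since $0<\alpha-1<1$, the map $x\mapsto x^{\alpha-1}$ is $(\alpha-1)$-H\"older with constant $1$. Together with the triangle inequality this gives $\abs{\ol{Y_j}{\mbar}^{\alpha-1}-\ol{Y_j\pr}{\mbar}^{\alpha-1}}\leq \ol{Y_j}{Y_j\pr}^{\alpha-1}$. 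Hence
\[
\EOf{\abs{X_j}^\beta}\leq \EOf{\ol{Y_j}{Y_j\pr}^\alpha}=2\Vm^\alpha[Y_j]\leq 2V^{\max}_\alpha .
\]
This is exactly why the statement involves the metric variance rather than $\sigma_\alpha$, and it is consistent with $\omega_n^\beta\leq 2V^{\max}_\alpha$ in \cref{lmm:supp:power:omega}.

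\emph{Step 2: within a block.} Inside $\mc B_t$ the variables $X_j$ are independent, centered, and have finite moments of order $\beta\geq2$. Rosenthal's inequality, or Marcinkiewicz--Zygmund followed by Jensen on the square function since $\beta/2\geq1$, gives
\[
\EOf{\abs{\textstyle\sum_{j\in\mc B_t}X_j}^\beta}\leq c_\beta\, n_t^{\beta/2-1}\sum_{j\in\mc B_t}\EOf{\abs{X_j}^\beta}\leq c_\alpha\, n_t^{\beta/2}V^{\max}_\alpha .
\]
With Marcinkiewicz--Zygmund the constant depends only on $\beta$, hence only on $\alpha$. One should check that $\beta=\alpha/(\alpha-1)$ stays finite; it does, since $\alpha>1$ is fixed.

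\emph{Step 3: across blocks.} Minkowski's inequality in $L^\beta$ gives
\[
\norm{\sum_j X_j}_\beta\leq\sum_t \norm{\sum_{j\in\mc B_t}X_j}_\beta\leq c_\alpha (V^{\max}_\alpha)^{1/\beta}\sum_t n_t^{1/2}.
\]
By Cauchy--Schwarz, $\sum_t n_t^{1/2}\leq(\chi n)^{1/2}$. Raising to the power $\beta$ and dividing by $n^\beta$ yields
\[
\EOf{\abs{\overline X_n}^\beta}\leq c_\alpha\, (\chi n)^{\beta/2}n^{-\beta}V^{\max}_\alpha=c_\alpha(\chi/n)^{\beta/2}V^{\max}_\alpha,
\]
which is \eqref{eq:power:locdep:Xbar}.

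The main obstacle is only Step 1: making sure the centering about the mean of $\ol{Y_j}{\mbar}^{\alpha-1}$, rather than about some point, is what allows the symmetrization to produce $\Vm^\alpha$. The H\"older continuity of $x^{\alpha-1}$ settles this. The remaining steps are standard.
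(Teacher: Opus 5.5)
Your proof is correct and is essentially the paper's argument. It uses the same symmetrization bound $\EOf{\abs{X_j}^\beta}\leq 2\Vm^\alpha[Y_j]\leq 2V^{\max}_\alpha$ via the H\"older continuity of $x\mapsto x^{\alpha-1}$, a moment inequality for independent centered sums within each block, and Minkowski's inequality plus Cauchy--Schwarz across blocks.

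The differences are cosmetic. The paper uses Rio's inequality $\EOf{\abs{\sum_j X_j}^\beta}^{2/\beta}\leq(\beta-1)\sum_j\EOf{\abs{X_j}^\beta}^{2/\beta}$ where you use Marcinkiewicz--Zygmund or Rosenthal. Note also that it is your Minkowski step that produces the exponent $\beta/2$ on $\chi/n$. The convexity recombination mentioned in your overview would in general only give the weaker factor $\chi/n$ when the blocks are unbalanced.
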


\begin{proposition}[Adapted-loss bound under local dependence]\label{prp:supp:power:structured}
Assume the setting of \cref{not:power:standing}, let $(N_i)_{i\in\nnset n}$ be dependency neighborhoods, and let $\mc B_1,\dots,\mc B_\chi$ be a partition of $\nnset n$ into mutually independent families as above. Then
    \begin{equation}\label{eq:supp:power:structured}
    \begin{aligned}
        \EOf{\psi_\alpha(\ol{\mbar}{\mhat})} &\leq c_\alpha\frac {\bar N}n \left( \sigma_{\alpha-1}^\pwa V^{\max}_{2\alpha-2} + \br{\frac\chi n}^{\pwa/2}V^{\max}_\alpha \right)
        \eqfs
    \end{aligned}
    \end{equation}
\end{proposition}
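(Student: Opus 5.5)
We use the copies of \cref{prp:dep:neighborhood-copy} for each $i$ and start from \cref{prp:supp:power:dep}: $\Eof{\psi_\alpha(\ol{\mbar}{\mhat})}\leq c_\alpha R_n$ with
\[
R_n\leq c_\alpha\bigl(\sigma_{\alpha-1}^\pwa D_n+\omega_n^\pwa D_n^\ast\bigr).
\]
It then suffices to bound $D_n$, $D_n^\ast$, and $\omega_n$ separately. We aim for $D_n\leq c_\alpha(\bar N/n)V^{\max}_{2\alpha-2}$, $D_n^\ast\leq c_\alpha(\bar N/n)(V^{\max}_\alpha)^{(2\alpha-2)/\alpha}$, and $\omega_n^\pwa\leq c_\alpha(\chi/n)^{\pwa/2}(V^{\max}_\alpha)^{(2-\alpha)/\alpha}$. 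The product in the second term then gives $(\bar N/n)(\chi/n)^{\pwa/2}V^{\max}_\alpha$, because the exponents of $V^{\max}_\alpha$ add to $(2\alpha-2)/\alpha+(2-\alpha)/\alpha=1$.

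\textbf{Bounding $D_n$.} For $k\notin N_i$ we have $Y_k^i=Y_k$, so only pairs with $k\in N_i$ contribute; there are $n\bar N$ such pairs in total. For each such pair, the Cauchy--Schwarz inequality gives
\[
\Eof{\eta_i\eta_{ik}}\leq\Eof{\ol{Y_i}{Y_i^i}^{2\alpha-2}}^{1/2}\,\Eof{\ol{Y_k}{Y_k^i}^{2\alpha-2}}^{1/2}.
\]
The first factor is $(2\Vm^{2\alpha-2}[Y_i])^{1/2}$, since $Y_i^i$ is an independent copy of $Y_i$. For the second factor with $k\neq i$, we use that $Y_k$ and $Y_k^i$ are conditionally iid given $A=(Y_j)_{j\in N_i\compl}$ by \cref{prp:dep:neighborhood-copy}\,(iii). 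It follows that
\[
\Eof{\ol{Y_k}{Y_k^i}^{s}}=\Eof{2\Vm^s[\mc L(Y_k\mid A)]}.
\]
We compare this with $2\Vm^s[Y_k]$, and this comparison is the delicate point. For $s\leq1$ the map $d^s$ is a metric, and negative type fails in general. We therefore fall back on the triangle inequality through an arbitrary point $q$:
\[
\Eof{\ol{Y_k}{Y_k^i}^s}\leq 2^{(s-1)\vee0}\cdot2\,\Eof{\ol{Y_k}{q}^s}.
\]
Taking the infimum over $q$ yields $c\,\Vf^s[Y_k]\leq c\,\Vm^s[Y_k]$; for the last step we use $\Vf^s\leq\Vm^s$, which follows by taking $q=Y_k\pr$ and averaging. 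Hence $D_n\leq c_\alpha(\bar N/n)V^{\max}_{2\alpha-2}$.

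\textbf{Bounding $D_n^\ast$.} The same argument at exponent $\alpha/2$ gives
\[
\Eof{\ol{Y_i}{Y_i^i}^{\alpha/2}\ol{Y_k}{Y_k^i}^{\alpha/2}}\leq c_\alpha V^{\max}_\alpha .
\]
Raising this to the power $(2\alpha-2)/\alpha$ and summing over the $n\bar N$ nonzero pairs gives $D_n^\ast\leq c_\alpha(\bar N/n)(V^{\max}_\alpha)^{(2\alpha-2)/\alpha}$.

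\textbf{Bounding $\omega_n$.} By definition $\omega_n=\Eof{|\overline X_n|^\beta}^{1/\beta}$, and \cref{lmm:power:locdep:Xbar} gives
\[
\omega_n\leq c_\alpha\Bigl(\frac{\chi}{n}\Bigr)^{1/2}(V^{\max}_\alpha)^{1/\beta}.
\]
Raising this to the power $\pwa$ and using $\pwa/\beta=(2-\alpha)/\alpha$ gives the claimed bound on $\omega_n^\pwa$. Combining the three bounds yields \eqref{eq:supp:power:structured}.

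The main obstacle is the off-diagonal moment comparison, namely bounding $\Eof{\ol{Y_k}{Y_k^i}^s}$ by $V^{\max}_s$ when the sharp conditional-variance argument is unavailable; the triangle-inequality route through the Fréchet $s$-variance should suffice at the level of constants $c_\alpha$.
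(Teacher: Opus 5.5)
Your proposal is correct and follows the paper's proof essentially step by step. Both use the copies of \cref{prp:dep:neighborhood-copy}, the triangle-inequality comparison $\Eof{\ol{Y_k}{Y_k^i}^r}\leq c_\alpha\Vm^{r}[Y_k]$ for $r\in\{2\alpha-2,\alpha\}$ obtained by integrating over $q\sim\mu_k$, Cauchy--Schwarz, the count of $n\bar N$ nonzero pairs, \cref{lmm:power:locdep:Xbar} for $\omega_n$, and \cref{prp:supp:power:dep}.

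There are two small slips, and both affect only $c_\alpha$. First, averaging over $q=Y_k\pr$ gives $\Vf^s\leq2\Vm^s$, not $\Vf^s\leq\Vm^s$; the latter fails, for example, for two equally weighted points on $\R$. Second, in the $D_n^\ast$ step the moment comparison must be applied at exponent $\alpha$, because Cauchy--Schwarz doubles $\alpha/2$.
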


\begin{proof}
    For each $i$, choose the exchangeable copy $\mo Y^i$ of \cref{prp:dep:neighborhood-copy}; as $\mo Y^i$ and $\mo Y$ agree outside $N_i$, $\eta_{ik}=0$ for $k\notin N_i$.
    Let $r\in\{2\alpha-2,\alpha\}$ and $q\in\mc Q$.
    The triangle inequality, \eqref{eq:aux:subadd}, and $Y_k^i\stackrel d=Y_k$ give $\Eof{\ol{Y_k}{Y_k^i}^r}\leq c_\alpha\Eof{\ol{Y_k}q^r}$, and integrating over $q$ with respect to $\mu_k$ yields $\Eof{\ol{Y_k}{Y_k^i}^r}\leq c_\alpha\Vm^{r}[Y_k]\leq c_\alpha V^{\max}_r$ for every $k$.
    By the Cauchy--Schwarz inequality, $\Eof{\eta_i\eta_{ik}}\leq c_\alpha V^{\max}_{2\alpha-2}$ and $\Eof{\ol{Y_i}{Y_i^i}^{\alpha/2}\ol{Y_k}{Y_k^i}^{\alpha/2}}\leq c_\alpha V^{\max}_\alpha$.
    As there are $n\bar N$ pairs $(i,k)$ with $k\in N_i$, it follows that $D_n\leq c_\alpha(\bar N/n)V^{\max}_{2\alpha-2}$, $D_n^\ast\leq c_\alpha(\bar N/n)(V^{\max}_\alpha)^{(2\alpha-2)/\alpha}$, and, with $\widetilde D_n$ as in \cref{lmm:supp:power:absorption}, $\widetilde D_n\leq c_\alpha(\bar N/n)V^{\max}_\alpha$.
    \cref{lmm:power:locdep:Xbar} gives $\omega_n^\pwa\leq c_\alpha(\chi/n)^{\pwa/2}(V^{\max}_\alpha)^{(2-\alpha)/\alpha}$, since $\pwa/\beta=(2-\alpha)/\alpha$.
    \Cref{prp:supp:power:dep} gives $\Eof{\psi_\alpha(\ol{\mbar}{\mhat})}\leq c_\alpha R_n\leq c_\alpha(\sigma_{\alpha-1}^\pwa D_n+\omega_n^\pwa D_n^\ast)$, and inserting these bounds proves \eqref{eq:supp:power:structured}.
\end{proof}

\begin{theorem}[Local dependence]\label{thm:power:structured}
Assume the setting of \cref{prp:supp:power:structured}. Then
\begin{eqnarray}\label{eq:power:structured}
    \EOf{\ol{\mbar}{\mhat}^\alpha}^{1/\alpha}
    \leq c_\alpha\br{\frac{\bar N}n}^{\frac{1}{2}}\br{\sigma_{\alpha-1}^{\pwa}\,\brOf{V^{\max}_{2\alpha-2}}^{\frac{1}{2}} + \sigma_{\alpha-1}^{\frac\pwa2}\,\brOf{V^{\max}_\alpha}^{\frac{1}{2}}\br{\frac\chi n}^{\frac\pwa4}} 
    + c_\alpha\br{\frac{\bar N\,V^{\max}_\alpha}{n}}^{\frac1\alpha}.
\end{eqnarray}
In addition,
\begin{equation}\label{eq:power:structured:simple}
    \EOf{\ol{\mbar}{\mhat}^\alpha}^{1/\alpha}
    \leq
    c_\alpha\,\sigma_\alpha^{(2-\alpha)/\alpha}\brOf{V^{\max}_\alpha}^{(\alpha-1)/\alpha}\br{\frac{\bar N}n}^{1/2}
    \eqfs
\end{equation}
\end{theorem}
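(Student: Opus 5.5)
We use the copies of \cref{prp:dep:neighborhood-copy} throughout and convert the adapted-loss bound of \cref{prp:supp:power:structured} into an $L^\alpha$ bound. That proposition gives $\Eof{\psi_\alpha(\ol{\mbar}{\mhat})}\leq c_\alpha(R_1+R_2)$ with
\[
R_1:=\sigma_{\alpha-1}^\pwa D_n\leq c_\alpha\frac{\bar N}{n}\sigma_{\alpha-1}^\pwa V^{\max}_{2\alpha-2},
\qquad
R_2:=\omega_n^\pwa D_n^\ast\leq c_\alpha\frac{\bar N}{n}\br{\frac\chi n}^{\pwa/2}V^{\max}_\alpha .
\]
By the conversion of \cref{ssec:supp:power:loss}, the risk is at most $c_\alpha(\Phi(R_1)+\Phi(R_2))$ with $\Phi(R)=\sigma_{\alpha-1}^{\pwa/2}R^{1/2}+R^{1/\alpha}$.

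The square-root parts give the first two terms of \eqref{eq:power:structured} directly. We have $\sigma_{\alpha-1}^{\pwa/2}R_1^{1/2}=(\bar N/n)^{1/2}\sigma_{\alpha-1}^{\pwa}(V^{\max}_{2\alpha-2})^{1/2}$, and $\sigma_{\alpha-1}^{\pwa/2}R_2^{1/2}$ is the middle term, with the factor $(\chi/n)^{\pwa/4}$.

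It remains to treat $R_1^{1/\alpha}$ and $R_2^{1/\alpha}$, which should produce $(\bar N V^{\max}_\alpha/n)^{1/\alpha}$. For $R_2$, use $\chi\leq n$: then $R_2^{1/\alpha}\leq c_\alpha(\bar N V^{\max}_\alpha/n)^{1/\alpha}$.

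For $R_1$, use the absorption \cref{lmm:supp:power:absorption} rather than estimating $R_1^{1/\alpha}$ crudely, which would introduce $V_{2\alpha-2}$ at the wrong power. The lemma gives $\sigma_{\alpha-1}^\pwa D_n\leq 2^{2-\alpha}\sigma_{\alpha-1}^{\pwa\alpha}\widetilde D_n^{\alpha-1}$, and the proof of \cref{prp:supp:power:structured} shows $\widetilde D_n\leq c_\alpha(\bar N/n)V^{\max}_\alpha$. We therefore apply \eqref{eq:supp:power:absorption} with $x=\sigma_{\alpha-1}^\pwa D_n$ and $y=\widetilde D_n$. This gives
\[
\Phi(x)\leq 2\sigma_{\alpha-1}^{\pwa/2}x^{1/2}+c_\alpha(\bar N V^{\max}_\alpha/n)^{1/\alpha}.
\]
The first term is bounded as above, since the estimate $D_n\leq c_\alpha(\bar N/n)V^{\max}_{2\alpha-2}$ still applies. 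The main care needed is this bookkeeping: we run $\Phi$ on the unbounded quantity $x$, and only afterwards insert the bound on $D_n$, so that the absorption works on the correct variable.

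For \eqref{eq:power:structured:simple}, use \eqref{eq:power:dep:simple} with the same copies. The proof of \cref{prp:supp:power:structured} gives $D_n^\ast\leq c_\alpha(\bar N/n)(V^{\max}_\alpha)^{(2\alpha-2)/\alpha}$. Taking square roots yields
\[
c_\alpha\,\sigma_\alpha^{(2-\alpha)/\alpha}(V^{\max}_\alpha)^{(\alpha-1)/\alpha}(\bar N/n)^{1/2},
\]
which is exactly the claim.
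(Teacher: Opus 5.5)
Your proposal is correct and follows the paper's proof essentially step for step: you start from the coordinatewise bound $c_\alpha\bigl(\Phi(\sigma_{\alpha-1}^\pwa D_n)+\Phi(\omega_n^\pwa D_n^\ast)\bigr)$, apply \cref{lmm:supp:power:absorption} with $x=\sigma_{\alpha-1}^\pwa D_n$ and $y=\widetilde D_n$ to the first term, expand $\Phi$ and use $\chi\leq n$ in the second, and obtain \eqref{eq:power:structured:simple} from \eqref{eq:power:dep:simple} with the neighborhood bound on $D_n^\ast$. The only slips are cosmetic: the identity for $\sigma_{\alpha-1}^{\pwa/2}R_1^{1/2}$ should be an inequality with a constant $c_\alpha$, and the bounds on $R_1$ and $R_2$ come from the proof, not the statement, of \cref{prp:supp:power:structured}.
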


\begin{proof}
For each $i\in\nnset n$, choose the exchangeable copy $\mo Y^i$ of \cref{prp:dep:neighborhood-copy}. We first prove \eqref{eq:power:structured}.
The coordinatewise bound \eqref{eq:power:dep:final} gives \[\Eof{\ol{\mbar}{\mhat}^\alpha}^{1/\alpha}\leq c_\alpha(\Phi(\sigma_{\alpha-1}^\pwa D_n)+\Phi(\omega_n^\pwa D_n^\ast)).\] 
Applying \cref{lmm:supp:power:absorption} to the first term and expanding $\Phi$ in the second, we obtain
\[
\EOf{\ol{\mbar}{\mhat}^\alpha}^{1/\alpha} \leq c_\alpha\br{\sigma_{\alpha-1}^\pwa D_n^{1/2} +\sigma_{\alpha-1}^{\pwa/2}
            \brOf{\omega_n^\pwa D_n^\ast}^{1/2}}+c_\alpha\br{\widetilde D_n^{1/\alpha}+\brOf{\omega_n^\pwa D_n^\ast}^{1/\alpha}}.\]

We now bound these four terms. The estimates from the proof of
\cref{prp:supp:power:structured} are $D_n\leq c_\alpha(\bar N/n)V^{\max}_{2\alpha-2}$, $\widetilde D_n\leq c_\alpha(\bar N/n)V^{\max}_\alpha$, and $\omega_n^\pwa D_n^\ast\leq c_\alpha(\bar N/n)(\chi/n)^{\pwa/2}V^{\max}_\alpha$.
For the two square-root terms, these give 
\[\sigma_{\alpha-1}^\pwa D_n^{1/2} + \sigma_{\alpha-1}^{\pwa/2}
\brOf{\omega_n^\pwa D_n^\ast}^{1/2}\leq c_\alpha\br{\frac{\bar N}{n}}^{1/2} \br{\sigma_{\alpha-1}^\pwa\brOf{V^{\max}_{2\alpha-2}}^{1/2}  +\sigma_{\alpha-1}^{\pwa/2}\brOf{V^{\max}_\alpha}^{1/2} \br{\frac\chi n}^{\pwa/4}}.\]
For the remaining terms, we use $\chi\leq n$ to obtain
\[\widetilde D_n^{1/\alpha}+ \brOf{\omega_n^\pwa D_n^\ast}^{1/\alpha}\leq c_\alpha\br{\frac{\bar N\,V^{\max}_\alpha}{n}}^{1/\alpha}.\] 
Substituting this into \eqref{eq:power:dep:simple} proves the second bound.
\end{proof}

The bound \eqref{eq:power:structured} consists of three terms. The first one is the leading term. It is of order $(\bar N/n)^{1/2}$ and only involves moments of order $\alpha-1$ and $2\alpha-2$. The $\alpha$-th moment only enters through the other two terms. In the second term, it comes with the additional factor $(\chi/n)^{\pwa/4}$. The third term is of order $(\bar N/n)^{1/\alpha}$, which is smaller because $\alpha<2$.

Two kinds of moments appear in \eqref{eq:power:structured} and \eqref{eq:power:structured:simple}. The quantities $\sigma_{\alpha-1}$ and $\sigma_\alpha$ measure distances to the common target $\mbar$.
All other moments are maximal metric variances, which measure \emph{fluctuations}. Since $\sigma_{\alpha-1}$ and $\sigma_\alpha$ only appear multiplied by metric variances, both bounds vanish for deterministic observations, even at different locations.

As an example, consider $\ell$-dependent data with $\ell\geq1$. By \cref{rem:dep:neighborhood-related}, the sets $N_i=\setByEleInText{j\in\nnset n}{\lvert i-j\rvert\leq\ell}$ are dependency neighborhoods, so that $\bar N\leq N_{\max}\leq\min\{n,2\ell+1\}$. For the partition, we group the indices by their residue modulo $\ell+1$. Two distinct indices in the same class are more than $\ell$ apart, so the observations in each class are mutually independent. This gives $\chi=\min\{n,\ell+1\}$ classes In particular, $\bar N\leq3\ell$ and $\chi\leq2\ell$. The right-hand sides of \eqref{eq:power:structured} and \eqref{eq:power:structured:simple} are nondecreasing in $\bar N$ and $\chi$.
Therefore, both bounds hold with $\bar N$ and $\chi$ replaced by $\ell$, after adjusting the constant $c_\alpha$.

\begin{remark}\label{rem:power:scope}
    For independent observations, the coordinate-replacement copies \eqref{eq:dep:replacement} move only the $i$-th coordinate, so the off-diagonal terms of $D_n$ and $D_n^\ast$ vanish.
    With the moment bound of \cref{lmm:power:locdep:Xbar} for $\omega_n$ and the absorption step of \cref{lmm:supp:power:absorption}, \eqref{eq:power:dep:final} then gives
    \begin{equation*}
        \EOf{\ol{\mbar}{\mhat}^\alpha}^{1/\alpha} \leq c_\alpha\,\sigma_{\alpha-1}^{\pwa}\br{\frac{V^{\max}_{2\alpha-2}}n}^{\frac{1}{2}} + c_\alpha\,\sigma_{\alpha-1}^{\frac\pwa2}\brOf{V^{\max}_\alpha}^{\frac{1}{2}}n^{-\frac{1}{2}-\frac\pwa4} + c_\alpha\br{\frac{V^{\max}_\alpha}{n}}^{\frac1\alpha}
        \eqcm
    \end{equation*}
    which is \cref{thm:power:structured} with $N_i=\{i\}$ and a single block, $\chi=1$.
For iid data, the leading term coincides, up to a constant, with that of \cite[Corollary~3.5]{Schoetz2026Transformed-arxiv}, while the lower-order terms are weaker, of orders $n^{-1/2-\pwa/4}$ and $n^{-1/\alpha}$ against $n^{-1/2-\pwa/2}$ and $n^{-1/(\alpha(\alpha-1))}$ there.  A similar comparison holds for independent observations with different laws in \cite[Corollary~5.2]{Schoetz2026Weighted-arxiv}. 
    
The weaker middle term comes from the random modulus $\hat\sigma_{\alpha-1}$, whose fluctuation we control by H\"older's inequality (\cref{prp:supp:power:dep}); for independent observations, \cite[proof of Proposition~C.3]{Schoetz2026Weighted-arxiv} separates the resampled observation from this modulus, so that the expectation factorizes, which is not possible when the copy moves all coordinates.

The weaker last term comes from the absorption step (\cref{lmm:supp:power:absorption}), which does not use the decay in $n$ of a displacement moment of lower order. For iid data, lower bounds that attain this leading term for suitable laws are given in \cite[Corollary~5.10 and Proposition~5.11]{Schoetz2026Transformed-arxiv}. We do not pursue lower bounds under dependence. Unlike in \cref{ssec:2fmh:illu}, there is no exact representation of the error to compare the bounds against for $\alpha<2$, as the $\alpha$-Fréchet mean is not linear even on the real line, and the constants $c_\alpha$ are not tracked; we do not claim that the dependence of the bounds in \cref{thm:power:structured,thm:power:decl} on the moments and on the strength of dependence is optimal.
    
\end{remark}

\subsection{Transport sensitivity}\label{ssec:power:decl}
We now apply the coupling bound under transport sensitivity.
For a sensitivity matrix $\mo b=(b_{ik})$, set $r_n:=n^{-1}\max_{i}\sum_{k}b_{ik}^{\alpha-1}$.
We use the notation $\omega_n$ for the fluctuation term defined in \eqref{eq:power:dep:notation}.

\begin{lemma}\label{lmm:power:decl:hatsigma}
    We have $\Eof{\hat\sigma_{\alpha-1}^{\,\beta}} \leq \sigma_\alpha\leq\sigmax\alpha$.
\end{lemma}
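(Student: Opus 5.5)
The first inequality will follow from Jensen's inequality applied to the empirical average, followed by a pointwise power comparison. The second inequality is a direct comparison of averages and maxima.

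For the first inequality, write $\hat\sigma_{\alpha-1}=\frac1n\sum_{j=1}^n \ol{Y_j}{\mbar}^{\alpha-1}$. Since $\beta=\alpha/(\alpha-1)\geq 1$, the map $x\mapsto x^\beta$ is convex on $\Rp$. Jensen's inequality for the uniform average over $j\in\nnset n$ then gives, pointwise,
\begin{equation*}
    \hat\sigma_{\alpha-1}^{\,\beta}
    \leq
    \frac1n\sum_{j=1}^n \br{\ol{Y_j}{\mbar}^{\alpha-1}}^{\beta}
    =
    \frac1n\sum_{j=1}^n \ol{Y_j}{\mbar}^{\alpha}
    \eqcm
\end{equation*}
because $(\alpha-1)\beta=\alpha$. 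Taking expectations and using linearity yields
\begin{equation*}
    \EOf{\hat\sigma_{\alpha-1}^{\,\beta}}\leq \frac1n\sum_{j=1}^n\EOf{\ol{Y_j}{\mbar}^\alpha}=\sigma_\alpha
    \eqcm
\end{equation*}
by the definition of $\sigma_\alpha$ in \eqref{eq:power:standing-moments}.

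For the second inequality, $\sigma_\alpha$ is an average of the $n$ numbers $\Eof{\ol{Y_j}{\mbar}^\alpha}$, and $\sigmax\alpha$ is their maximum. An average never exceeds the maximum, so $\sigma_\alpha\leq\sigmax\alpha$.

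No step presents a real obstacle. The only care needed concerns integrability. All quantities are nonnegative, so the inequalities remain valid in $[0,\infty]$. Moreover, $\mu_j\in\mc P^\alpha(\mc Q)$ under \cref{not:power:standing}, so every term is in fact finite.
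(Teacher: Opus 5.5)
Your proposal is correct and matches the paper's proof: Jensen's inequality for the uniform average with the convex map $x\mapsto x^\beta$, the identity $(\alpha-1)\beta=\alpha$, and then taking expectations. The second inequality $\sigma_\alpha\leq\sigmax\alpha$ is the average-versus-maximum comparison, which the paper leaves implicit.
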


\begin{proof}
    As $\hat\sigma_{\alpha-1}$ is an average of nonnegative terms and $\beta\geq1$, Jensen's inequality gives $\hat\sigma_{\alpha-1}^{\,\beta} \leq \frac{1}{n}\sum_{j=1}^n\ol{Y_j}{\mbar}^{(\alpha-1)\beta} = \frac{1}{n}\sum_{j=1}^n\ol{Y_j}{\mbar}^{\alpha}$.
    Taking expectations proves the claim.
\end{proof}

\begin{proposition}[Adapted-loss bound under transport sensitivity]\label{prp:supp:power:decl}
   Assume the setting of \cref{not:power:standing}, and let $\mo Y$ be $\mo b$-sensitive at exponent $\alpha/2$ (\cref{def:dep:transport}).
    Then
    \begin{equation}\label{eq:supp:power:decl}
    \begin{aligned}
        \EOf{\psi_\alpha(\ol{\mbar}{\mhat})}
        \leq
        \frac{c_\alpha\sigma_\alpha^{(2-\alpha)/\alpha}}{n^2}\sum_{i,k=1}^nb_{ik}^{\alpha-1}\brOf{\VmsOf{\alpha}{Y_i}}^{(2\alpha-2)/\alpha}
 \leq
        c_\alpha\sigma_\alpha^{(2-\alpha)/\alpha}\brOf{V^{\max}_\alpha}^{(2\alpha-2)/\alpha}r_n
        \eqfs
    \end{aligned}
    \end{equation}
\end{proposition}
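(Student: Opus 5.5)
The plan is to use the copies of \cref{prp:dep:transport-copy} at exponent $s=\alpha/2$ and to feed them into the simple bound of \cref{prp:supp:power:dep}. That proposition gives
\[
\EOf{\psi_\alpha(\ol{\mbar}{\mhat})}\leq c_\alpha R_n
\quad\text{and}\quad
R_n\leq\sigma_\alpha^{(2-\alpha)/\alpha}D_n^\ast .
\]
It therefore suffices to bound each summand of $D_n^\ast$ in terms of $b_{ik}$ and $\Vm^\alpha[Y_i]$. We use $D_n^\ast$ rather than the coordinatewise pair $(D_n,\omega_n)$ because $D_n^\ast$ only involves mixed moments with exponents $\alpha/2$ on both factors, and this is exactly the form controlled by \eqref{eq:dep:mixed-moment}.

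\textbf{Step 1.} Fix $i$ and take the copy $\mo Y^i$ from \cref{prp:dep:transport-copy} with $s=\alpha/2\in(1/2,1)$. Apply \eqref{eq:dep:mixed-moment} with $r=\alpha/2$ and $s\pr=\alpha/2$. This gives
\[
\EOf{\ol{Y_i}{Y_i^i}^{\alpha/2}\,\ol{Y_k}{Y_k^i}^{\alpha/2}}\leq b_{ik}^{\alpha/2}\,\EOf{\ol{Y_i}{Y_i^i}^{\alpha}}=2\,b_{ik}^{\alpha/2}\,\Vm^\alpha[Y_i],
\]
where the equality holds because $Y_i^i$ is an independent copy of $Y_i$. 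For $k=i$ the same computation applies with $b_{ii}=1$.

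\textbf{Step 2.} Raise this bound to the power $(2\alpha-2)/\alpha$, which is monotone. The factor $b_{ik}^{\alpha/2}$ becomes $b_{ik}^{\alpha-1}$, so each summand of $D_n^\ast$ is at most
\[
2^{(2\alpha-2)/\alpha}\,b_{ik}^{\alpha-1}\bigl(\Vm^\alpha[Y_i]\bigr)^{(2\alpha-2)/\alpha}.
\]
Summing over $i,k$ and dividing by $n^2$ yields the first inequality of \eqref{eq:supp:power:decl}.

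\textbf{Step 3.} For the second inequality, bound $\Vm^\alpha[Y_i]\leq V^{\max}_\alpha$. Then
\[
n^{-2}\sum_{i}\sum_k b_{ik}^{\alpha-1}\leq n^{-1}\max_i\sum_k b_{ik}^{\alpha-1}=r_n,
\]
which gives the claimed bound.

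The main obstacle is mild: it lies in checking that the exponents match. The $\alpha/2$-th power of a $b$-coefficient, raised to the power $(2\alpha-2)/\alpha$, must give exactly $b_{ik}^{\alpha-1}$. In addition, sensitivity at exponent $\alpha/2$ must be enough to invoke \eqref{eq:dep:mixed-moment} with $s\pr=\alpha/2$; this holds since $s\pr\leq s$. All remaining work, namely the Hölder manipulations behind $R_n\leq\sigma_\alpha^{(2-\alpha)/\alpha}D_n^\ast$, is already contained in \cref{prp:supp:power:dep}, so infinite moments cause no issue beyond the finiteness of $\Vm^\alpha$ guaranteed by $\mu_i\in\mc P^\alpha(\mc Q)$.
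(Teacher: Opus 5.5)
Your proof is correct and follows the paper's argument. The paper also takes the copies of \cref{prp:dep:transport-copy} at exponent $s=\alpha/2$ and applies \eqref{eq:dep:mixed-moment} with $r=s'=\alpha/2$. The only difference is that it re-derives the H\"older/Minkowski step (via $\mathbf{E}[\hat\sigma_{\alpha-1}^{\beta}]\leq\sigma_\alpha$ and $T_i\leq n^{-1}\sum_k\eta_{ik}$) inline, whereas you cite it as the bound $R_n\leq\sigma_\alpha^{(2-\alpha)/\alpha}D_n^\ast$ of \cref{prp:supp:power:dep}; this is the same estimate, since $\|\eta_i\eta_{ik}\|_{\beta^\ast}$ is exactly the $(i,k)$ summand of $D_n^\ast$.
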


\begin{proof}
    For each $i$, choose the exchangeable copy $\mo Y^i$ of \cref{prp:dep:transport-copy} with $s=\alpha/2$.
    As $Y_i^i$ is an independent copy of $Y_i$, \eqref{eq:dep:mixed-moment} with $r=s\pr=\alpha/2$ gives
    \begin{equation}\label{eq:supp:power:decl:mixed}
        \EOf{\ol{Y_i}{Y_i^i}^{\alpha/2}\,\ol{Y_k}{Y_k^i}^{\alpha/2}}
        \leq b_{ik}^{\alpha/2}\,\EOf{\ol{Y_i}{Y_i^i}^{\alpha}}
        = 2b_{ik}^{\alpha/2}\,\VmsOf{\alpha}{Y_i}
        \eqfs
    \end{equation}
    H\"older's inequality with the exponents \eqref{eq:power:structured-exponents}, \cref{lmm:power:decl:hatsigma}, the index-wise coupling $T_i\leq n^{-1}\sum_{k=1}^n\eta_{ik}$, and Minkowski's inequality in $L^{\beta^\ast}$ give
    \begin{equation*}
        \EOf{\hat\sigma_{\alpha-1}^\pwa\,\eta_iT_i}
        \leq
        \EOf{\hat\sigma_{\alpha-1}^{\,\beta}}^{1/\alpha^\ast}\,\normOf{\eta_iT_i}_{\beta^\ast}
        \leq
        \sigma_\alpha^{(2-\alpha)/\alpha}\,\frac{1}{n}\sum_{k=1}^n\normOf{\eta_i\eta_{ik}}_{\beta^\ast}
        \eqfs
    \end{equation*}
    Since $(\eta_i\eta_{ik})^{\beta^\ast}=\ol{Y_i}{Y_i^i}^{\alpha/2}\,\ol{Y_k}{Y_k^i}^{\alpha/2}$ and $\frac{\alpha/2}{\beta^\ast}=\alpha-1$, \eqref{eq:supp:power:decl:mixed} gives
    \begin{equation*}
        \normOf{\eta_i\eta_{ik}}_{\beta^\ast}\leq c_\alpha\, b_{ik}^{\alpha-1}\br{\Vm^{\alpha}[Y_i]}^{(2\alpha-2)/\alpha}
        \eqfs
    \end{equation*}
    Hence
    \begin{equation*}
        \EOf{\hat\sigma_{\alpha-1}^\pwa\,\eta_iT_i}\leq c_\alpha\sigma_\alpha^{(2-\alpha)/\alpha}\brOf{\VmsOf{\alpha}{Y_i}}^{(2\alpha-2)/\alpha}\frac{1}{n}\sum_{k=1}^nb_{ik}^{\alpha-1}
        \eqcm
    \end{equation*}
    and averaging over $i$ and applying the second bound in \eqref{eq:supp:power:dep} proves the first bound in \eqref{eq:supp:power:decl}.
    The second follows from $\Vm^{\alpha}[Y_i]\leq V^{\max}_\alpha$ and $n^{-2}\sum_{i,k=1}^nb_{ik}^{\alpha-1}\leq r_n$.
\end{proof}
\begin{theorem}[Transport sensitivity]\label{thm:power:decl}
   Assume the setting of \cref{not:power:standing}, and let $\mo Y$ be $\mo b$-sensitive at exponent $\alpha/2$ (\cref{def:dep:transport}).
    Then
    \begin{equation}\label{eq:power:decl:simple}
        \EOf{\ol{\mbar}{\mhat}^\alpha}^{1/\alpha}
        \leq
        c_\alpha\,\sigma_\alpha^{(2-\alpha)/\alpha}\brOf{V^{\max}_\alpha}^{(\alpha-1)/\alpha}r_n^{1/2}
        \eqfs
    \end{equation}
    If, in addition, $r_n\leq1$, then
    \begin{equation}\label{eq:power:decl:main}
        \EOf{\ol{\mbar}{\mhat}^\alpha}^{1/\alpha}
        \leq
        c_\alpha\,r_n^{\frac{1}{2}}\br{\sigma_{\alpha-1}^{\pwa}\,\brOf{V^{\max}_{2\alpha-2}}^{\frac{1}{2}}+\sigma_{\alpha-1}^{\frac\pwa2}\,\omega_n^{\frac\pwa2}\brOf{V^{\max}_\alpha}^{\frac{\alpha-1}\alpha}}+c_\alpha\brOf{r_nV^{\max}_\alpha}^{\frac1\alpha}
        \eqcm
    \end{equation}
    where the fluctuation term $\omega_n$ satisfies
    \begin{equation}\label{eq:power:decl:omega}
        \omega_n^{\frac\alpha{\alpha-1}}
        \leq
        c_\alpha\inf_{M\geq0}\br{M^{2-\alpha}\,r_nV^{\max}_{2\alpha-2}+\max_{j\in\nnset n}\Eof{\ol{Y_j}{\mbar}^\alpha\indOf{\{\ol{Y_j}{\mbar}>M\}}}}
        \eqfs
    \end{equation}
\end{theorem}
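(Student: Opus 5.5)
Throughout, I use the copies $\mo Y^i$ of \cref{prp:dep:transport-copy} with $s=\alpha/2$, as in \cref{prp:supp:power:decl}. The argument then mirrors the proof of \cref{thm:power:structured}, with the neighborhood estimates replaced by transport-sensitivity estimates.

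\emph{The simple bound \eqref{eq:power:decl:simple}.} \Cref{prp:supp:power:decl} gives $\Eof{\psi_\alpha(\ol{\mbar}{\mhat})}\leq R$, where
\[
R:=c_\alpha\sigma_\alpha^{(2-\alpha)/\alpha}\brOf{V^{\max}_\alpha}^{(2\alpha-2)/\alpha}r_n .
\]
Applying \cref{lmm:supp:power:single} yields $c_\alpha\sigma_\alpha^{(2-\alpha)/(2\alpha)}R^{1/2}$. This equals the claim, because $\sigma_\alpha^{(2-\alpha)/(2\alpha)}\cdot\sigma_\alpha^{(2-\alpha)/(2\alpha)}=\sigma_\alpha^{(2-\alpha)/\alpha}$.

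\emph{The bound \eqref{eq:power:decl:main}.} I start from the coordinatewise bound \eqref{eq:power:dep:final}. I apply \cref{lmm:supp:power:absorption} to $\Phi(\sigma_{\alpha-1}^\pwa D_n)$ and expand $\Phi$ in the second term. This gives four terms:
\[
\sigma_{\alpha-1}^\pwa D_n^{1/2},\quad \sigma_{\alpha-1}^{\pwa/2}(\omega_n^\pwa D_n^\ast)^{1/2},\quad \widetilde D_n^{1/\alpha},\quad (\omega_n^\pwa D_n^\ast)^{1/\alpha}.
\]
The displacement quantities are then bounded through the mixed moments \eqref{eq:dep:mixed-moment}.

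For $\widetilde D_n$, use $r=s\pr=\alpha/2$. This gives $\Eof{\ol{Y_i}{Y_i^i}^{\alpha/2}\ol{Y_k}{Y_k^i}^{\alpha/2}}\leq 2b_{ik}^{\alpha/2}V^{\max}_\alpha$. Since $b_{ik}^{\alpha/2}$ is not directly comparable to $b_{ik}^{\alpha-1}$, I would instead first bound the mixed moment by Hölder, as $\Eof{\cdot}^{(2\alpha-2)/\alpha}$-type terms. Alternatively, and more cleanly, I take the power $(2\alpha-2)/\alpha$, which gives $D_n^\ast\leq c_\alpha r_n (V^{\max}_\alpha)^{(2\alpha-2)/\alpha}$, exactly as in the proof of \cref{prp:supp:power:decl}.

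For $D_n$, use $r=s\pr=\alpha-1\leq\alpha/2$. This gives $\Eof{\eta_i\eta_{ik}}\leq b_{ik}^{\alpha-1}\Eof{\ol{Y_i}{Y_i^i}^{2\alpha-2}}=2b_{ik}^{\alpha-1}\Vm^{2\alpha-2}[Y_i]$, hence $D_n\leq 2r_nV^{\max}_{2\alpha-2}$. This settles the first term.

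The second term becomes $\sigma_{\alpha-1}^{\pwa/2}\omega_n^{\pwa/2}r_n^{1/2}(V^{\max}_\alpha)^{(\alpha-1)/\alpha}$, which matches the claim.

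\emph{The main obstacle: the two $1/\alpha$-power terms.} These must be bounded by $(r_nV^{\max}_\alpha)^{1/\alpha}$.

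For $(\omega_n^\pwa D_n^\ast)^{1/\alpha}$, use $\omega_n^\pwa\leq c_\alpha(V^{\max}_\alpha)^{(2-\alpha)/\alpha}$ from \cref{lmm:supp:power:omega}. This gives $(c_\alpha r_nV^{\max}_\alpha)^{1/\alpha}$.

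For $\widetilde D_n$, I avoid it: I would not use absorption on the first term and instead bound $(\sigma_{\alpha-1}^\pwa D_n)^{1/\alpha}$ directly. By Hölder or Lyapunov, $\Vm^{2\alpha-2}\leq c_\alpha(V^{\max}_\alpha)^{(2\alpha-2)/\alpha}$. Then Jensen in the form \eqref{eq:supp:power:jensen} is needed to compare $\sigma_{\alpha-1}^\pwa$ with a metric variance, and I expect this to fail because $\sigma$ is not a fluctuation quantity. So the fallback is to keep the absorption and handle $\widetilde D_n$ by $\widetilde D_n\leq n^{-2}\sum_{i,k}2b_{ik}^{\alpha/2}V^{\max}_\alpha$. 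The hypothesis $r_n\leq1$ is presumably what lets the weaker exponent be dominated.

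Concretely, I plan to use the absorption lemma with the estimate $\sigma_{\alpha-1}^\pwa D_n\leq C\sigma_{\alpha-1}^{\pwa\alpha}y^{\alpha-1}$ where $y:=r_nV^{\max}_\alpha$. Here I use $D_n\leq 2r_nV^{\max}_{2\alpha-2}\leq c_\alpha r_n (V^{\max}_\alpha)^{(2\alpha-2)/\alpha}$, and then $r_n\leq r_n^{\alpha-1}$ because $r_n\leq1$ and $\alpha-1<1$. The remaining factor $\sigma_{\alpha-1}^{\pwa(1-\alpha)}(V^{\max}_\alpha)^{(2\alpha-2)/\alpha-(\alpha-1)}$ needs care, and checking it is the step I expect to be hardest.

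\emph{The fluctuation bound \eqref{eq:power:decl:omega}.} This follows by inserting $D_n\leq 2r_nV^{\max}_{2\alpha-2}$ into \eqref{eq:power:dep:omega}.
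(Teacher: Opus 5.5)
Your derivation of \eqref{eq:power:decl:simple} and \eqref{eq:power:decl:omega} is correct and agrees with the paper. So are the estimates $D_n\leq2r_nV^{\max}_{2\alpha-2}$ and $D_n^\ast\leq c_\alpha r_n(V^{\max}_\alpha)^{(2\alpha-2)/\alpha}$, and your handling of the two square-root terms and of $(\omega_n^\pwa D_n^\ast)^{1/\alpha}$.

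The gap is the other $1/\alpha$-power term, the one produced by absorbing $\Phi(\sigma_{\alpha-1}^\pwa D_n)$. Neither of your two routes closes it.

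\emph{The $\widetilde D_n$ route.} The estimate you get after raising to the power $(2\alpha-2)/\alpha$ controls $D_n^\ast$, not $\widetilde D_n$. Also, $r_n\leq1$ does not let $b_{ik}^{\alpha/2}$ be dominated by $b_{ik}^{\alpha-1}$: it only says $\sum_kb_{ik}^{\alpha-1}\leq n$, so individual coefficients may exceed $1$. For example, $b_{ik}^{\alpha-1}=n-1$ for one $k\neq i$ in each row gives $r_n=1$, while $n^{-2}\sum_{i,k}b_{ik}^{\alpha/2}$ grows like $n^{(2-\alpha)/(2\alpha-2)}$.

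\emph{Your concrete route.} Lyapunov's inequality $V^{\max}_{2\alpha-2}\leq c_\alpha(V^{\max}_\alpha)^{(2\alpha-2)/\alpha}$ discards exactly the factor the absorption lemma needs. With it, the hypothesis $\sigma_{\alpha-1}^\pwa D_n\leq C\sigma_{\alpha-1}^{\pwa\alpha}(r_nV^{\max}_\alpha)^{\alpha-1}$ reduces to $r_n(V^{\max}_\alpha)^{(\alpha-1)/\alpha}\leq c\,\sigma_{\alpha-1}$. This is false in general, because $(V^{\max}_\alpha)^{(\alpha-1)/\alpha}/\sigma_{\alpha-1}$ is unbounded for heavy tails: a mass $\epsilon$ at distance $L$ makes it of order $\epsilon^{-1/\alpha}$. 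So the ``remaining factor'' you mention cannot be controlled.

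\emph{The paper's fix.} It interpolates between the orders $\alpha-1$ and $\alpha$ instead of using Lyapunov. H\"older's inequality with exponents $1/(2-\alpha)$ and $1/(\alpha-1)$, together with the triangle inequality, gives $\Vm^{2\alpha-2}[Y_i]\leq(\Vm^{\alpha-1}[Y_i])^{2-\alpha}(\Vm^{\alpha}[Y_i])^{\alpha-1}\leq s_i^{2-\alpha}(\Vm^{\alpha}[Y_i])^{\alpha-1}$, where $s_i:=\Eof{\ol{Y_i}{\mbar}^{\alpha-1}}$. Concavity of $x\mapsto x^{2-\alpha}$ over $i$ then yields $D_n\leq2r_n\sigma_{\alpha-1}^{2-\alpha}(V^{\max}_\alpha)^{\alpha-1}$. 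As $\pwa+2-\alpha=\pwa\alpha$ and $(1+\pwa)(\alpha-1)=1$, this reads $\sigma_{\alpha-1}^\pwa D_n\leq2\sigma_{\alpha-1}^{\pwa\alpha}(r_n^{1+\pwa}V^{\max}_\alpha)^{\alpha-1}$. Now \cref{lmm:supp:power:absorption} with $y=r_n^{1+\pwa}V^{\max}_\alpha$, $C=2$, and $r_n\leq1$ produces the term $c_\alpha(r_nV^{\max}_\alpha)^{1/\alpha}$.

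\emph{An alternative repair of your $\widetilde D_n$ route.} Cap the transport estimate. The Cauchy--Schwarz and triangle-inequality argument in the proof of \cref{prp:supp:power:structured} gives $\Eof{\ol{Y_i}{Y_i^i}^{\alpha/2}\ol{Y_k}{Y_k^i}^{\alpha/2}}\leq c_\alpha V^{\max}_\alpha$ for every copy. So this moment is at most $c_\alpha\min\{b_{ik}^{\alpha/2},1\}V^{\max}_\alpha\leq c_\alpha b_{ik}^{\alpha-1}V^{\max}_\alpha$, whence $\widetilde D_n\leq c_\alpha r_nV^{\max}_\alpha$. Both parts of \cref{lmm:supp:power:absorption}, with $y=\widetilde D_n$, then finish the argument. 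This variant does not even use $r_n\leq1$.
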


\begin{proof}
For each $i\in\nnset n$, choose the exchangeable copy $\mo Y^i$ of \cref{prp:dep:transport-copy} with $s=\alpha/2$. Since $Y_i^i$ is an independent copy of $Y_i$, \eqref{eq:dep:mixed-moment}, applied with
$r=s\pr=\alpha-1$ and with $r=s\pr=\alpha/2$, gives
    \[
        \EOf{\eta_i\eta_{ik}}\leq2b_{ik}^{\alpha-1}\VmsOf{2\alpha-2}{Y_i}
        \eqcm\quad
        \EOf{\ol{Y_i}{Y_i^i}^{\alpha/2}\ol{Y_k}{Y_k^i}^{\alpha/2}}\leq2b_{ik}^{\alpha/2}\VmsOf{\alpha}{Y_i}.
    \]
Then $D_n\leq2r_nV^{\max}_{2\alpha-2}$ and $D_n^\ast\leq c_\alpha r_n(V^{\max}_\alpha)^{(2\alpha-2)/\alpha}$. Substituting the second estimate into \eqref{eq:power:dep:simple} proves \eqref{eq:power:decl:simple}. The first estimate, combined with \eqref{eq:power:dep:omega}, gives
\eqref{eq:power:decl:omega}. It remains to prove \eqref{eq:power:decl:main}. Assume now that $r_n\leq1$. The coordinatewise bound \eqref{eq:power:dep:final} gives
\[
    \EOf{\ol{\mbar}{\mhat}^\alpha}^{1/\alpha}
    \leq
    c_\alpha\Phi\brOf{\sigma_{\alpha-1}^\pwa D_n}
    +
    c_\alpha\Phi\brOf{\omega_n^\pwa D_n^\ast}
    \eqfs
\]
We bound these two contributions separately. For the first, we need an interpolation estimate for $D_n$. Write $s_i:=\Eof{\ol{Y_i}{\mbar}^{\alpha-1}}$, so that $n^{-1}\sum_i s_i=\sigma_{\alpha-1}$.
H\"older's inequality and the triangle inequality give $\Vm^{2\alpha-2}[Y_i] \leq\brOf{\Vm^{\alpha-1}[Y_i]}^{2-\alpha}
        \brOf{\Vm^\alpha[Y_i]}^{\alpha-1} \leq s_i^{2-\alpha} \brOf{\Vm^\alpha[Y_i]}^{\alpha-1}.$
Using the definition of $r_n$ and applying H\"older inequality to the sum over $i$, we obtain
\[D_n \leq \frac2{n^2}\sum_{i,k=1}^n b_{ik}^{\alpha-1}\Vm^{2\alpha-2}[Y_i] \leq \frac{2r_n}{n}\sum_{i=1}^ns_i^{2-\alpha} \brOf{\Vm^\alpha[Y_i]}^{\alpha-1} 
\leq2r_n\sigma_{\alpha-1}^{2-\alpha} \brOf{V^{\max}_\alpha}^{\alpha-1}.\]
        
Since $\pwa+2-\alpha=\pwa\alpha$ and $(1+\pwa)(\alpha-1)=1$, this implies
$ \sigma_{\alpha-1}^\pwa D_n \leq 2\sigma_{\alpha-1}^{\pwa\alpha}
    \brOf{r_n^{1+\pwa}V^{\max}_\alpha}^{\alpha-1} \eqfs$ We then apply \eqref{eq:supp:power:absorption} with $x=\sigma_{\alpha-1}^\pwa D_n$, $y=r_n^{1+\pwa}V^{\max}_\alpha$, and $C=2$. This gives
 $\Phi(\sigma_{\alpha-1}^\pwa D_n)\leq2\sigma_{\alpha-1}^\pwa D_n^{1/2}+c_\alpha(r_nV^{\max}_\alpha)^{1/\alpha}$, since $r_n\leq1$. For the second contribution, \cref{lmm:supp:power:omega} gives $(\omega_n^\pwa D_n^\ast)^{1/\alpha}\leq c_\alpha(r_nV^{\max}_\alpha)^{1/\alpha}$. Together with the bound on $D_n^\ast$, this yields $\omega_n^\pwa D_n^\ast \leq c_\alpha r_nV^{\max}_\alpha$. Expanding $\Phi$, we conclude that
$\Phi\brOf{\omega_n^\pwa D_n^\ast}
    \leq \sigma_{\alpha-1}^{\pwa/2} \brOf{\omega_n^\pwa D_n^\ast}^{1/2}
 + c_\alpha\brOf{r_nV^{\max}_\alpha}^{1/\alpha}$ Combining the two contributions gives
\[\EOf{\ol{\mbar}{\mhat}^\alpha}^{1/\alpha} \leq c_\alpha\br{ \sigma_{\alpha-1}^\pwa D_n^{1/2}  + \sigma_{\alpha-1}^{\pwa/2}
\omega_n^{\pwa/2}(D_n^\ast)^{1/2}} + c_\alpha\brOf{r_nV^{\max}_\alpha}^{1/\alpha}.
\]
Finally, substituting the bounds on $D_n$ and $D_n^\ast$
from the beginning of the proof gives
\eqref{eq:power:decl:main}.
\end{proof}

The adapted-loss version of \eqref{eq:power:decl:simple} is given in \cref{prp:supp:power:decl}.
The first contribution in \eqref{eq:power:decl:main} involves only $\sigma_{\alpha-1}$ and $V^{\max}_{2\alpha-2}$.
The higher-order variance $V^{\max}_\alpha$ appears in the other two terms, with the additional factor $\omega_n^{\pwa/2}$ or the power $r_n^{1/\alpha}$.
The estimate \eqref{eq:power:decl:omega} shows that
 $\omega_n\to0$ whenever $r_n\to0$ and the variables $\ol{Y_j}{\mbar}^\alpha$ are uniformly integrable over $n$ and $j\in\nnset n$.

\begin{corollary}\label{cor:power:decl:profile}
    In the setting of \cref{thm:power:decl}, suppose $b_{ik}\leq b(\lvert i-k\rvert)$ for a function $b\colon\Nn\to\Rp$ with $b(0)=1$, and set $F_n:=1+2\sum_{h=1}^{n-1} b(h)^{\alpha-1}$.
    Then
    \begin{equation*}
        \EOf{\ol{\mbar}{\mhat}^\alpha}^{1/\alpha}
        \;\leq\;
        c_\alpha\,\sigma_\alpha^{(2-\alpha)/\alpha}\brOf{V^{\max}_\alpha}^{(\alpha-1)/\alpha}\br{\frac{F_n}n}^{1/2}
        \eqfs
    \end{equation*}
    Furthermore, the following bounds on $F_n$ hold:
    \begin{enumerate}[label=(\roman*)]
        \item if $\sum_{h\geq1}b(h)^{\alpha-1}<\infty$, then $F_n\leq1+2\sum_{h\geq1}b(h)^{\alpha-1}$, which does not depend on $n$;
        \item if there exist $\theta>0$ and $c\geq1$ such that $b(h)\leq c\,(1+h)^{-\theta}$ for all $h\in\Nn$, then, with $\gamma:=\theta(\alpha-1)$,
        \begin{equation*}
            F_n\leq
            \begin{cases}
                1+\frac{2c^{\alpha-1}}{\gamma-1}, & \gamma>1,\\
                1+2c^{\alpha-1}\log n, & \gamma=1,\\
                \frac{2c^{\alpha-1}}{1-\gamma}\,n^{1-\gamma}, & \gamma\in(0,1).
            \end{cases}
        \end{equation*}
    \end{enumerate}
\end{corollary}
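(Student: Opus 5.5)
The corollary follows from \eqref{eq:power:decl:simple} of \cref{thm:power:decl}; what remains is to bound $r_n$ by $F_n/n$ and then estimate $F_n$ in each regime. For the first step, fix $i\in\nnset n$. Since $b_{ik}\leq b(\lvert i-k\rvert)$ and $x\mapsto x^{\alpha-1}$ is nondecreasing on $\Rp$, we have $b_{ik}^{\alpha-1}\leq b(\lvert i-k\rvert)^{\alpha-1}$. For each $h\in\{1,\dots,n-1\}$, at most two indices $k\in\nnset n$ satisfy $\lvert i-k\rvert=h$, and only $k=i$ gives $h=0$, where $b(0)^{\alpha-1}=1$. Hence $\sum_k b_{ik}^{\alpha-1}\leq F_n$ for every $i$, so $r_n\leq F_n/n$. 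Substituting this into \eqref{eq:power:decl:simple} gives the displayed bound with the same constant $c_\alpha$.

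For item (i), we simply drop the truncation at $n-1$ in the nonnegative series. For item (ii), the hypothesis gives $b(h)^{\alpha-1}\leq c^{\alpha-1}(1+h)^{-\gamma}$, so
\begin{equation*}
F_n\leq 1+2c^{\alpha-1}\sum_{h=1}^{n-1}(1+h)^{-\gamma}.
\end{equation*}
Since $x\mapsto(1+x)^{-\gamma}$ is decreasing, we compare the sum with an integral, as in the proof of \cref{cor:2fmh:decl:profile}: $\sum_{h=1}^{n-1}(1+h)^{-\gamma}\leq\int_0^{n-1}(1+x)^{-\gamma}\,\dl x$. This integral equals $(1-n^{1-\gamma})/(\gamma-1)\leq1/(\gamma-1)$ when $\gamma>1$, equals $\log n$ when $\gamma=1$, and equals $(n^{1-\gamma}-1)/(1-\gamma)$ when $\gamma\in(0,1)$.

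The first two cases follow immediately. In the third, we get
\begin{equation*}
F_n\leq \tfrac{2c^{\alpha-1}}{1-\gamma}n^{1-\gamma}+1-\tfrac{2c^{\alpha-1}}{1-\gamma},
\end{equation*}
and we discard the constant term by checking that $\tfrac{2c^{\alpha-1}}{1-\gamma}\geq1$. This holds because $c\geq1$, so $c^{\alpha-1}\geq1$, and $1-\gamma\in(0,1)$. This sign check is the only step that needs care, and it mirrors the corresponding step in \cref{cor:2fmh:decl:profile}.
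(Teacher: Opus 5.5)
Your proof is correct and follows the paper's argument essentially step for step. You bound $r_n\leq F_n/n$ by noting there are at most two indices at each positive lag, substitute into \eqref{eq:power:decl:simple}, and handle item (ii) by the integral comparison, using $2c^{\alpha-1}/(1-\gamma)\geq1$ to discard the constant term when $\gamma\in(0,1)$.
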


\begin{proof}
For each $i$, there are at most two indices at any given positive distance from $i$. Then  $\sum_k b_{ik}^{\alpha-1}\leq b(0)^{\alpha-1}+2\sum_{h=1}^{n-1}b(h)^{\alpha-1}=F_n.$ Thus $r_n\leq F_n/n$, and the risk bound follows from
\eqref{eq:power:decl:simple}. Item (i) follows by extending the nonnegative sum to all $h\geq1$.

For (ii), the assumption gives $b(h)^{\alpha-1}\leq c^{\alpha-1}(1+h)^{-\gamma}$, then
$\sum_{h=1}^{n-1}(1+h)^{-\gamma}\leq\int_0^{n-1}(1+x)^{-\gamma}\dl x$. If $\gamma>1$, the integral is at most $1/(\gamma-1)$; if $\gamma=1$, it equals $\log n$. These give the first two cases.
For $0<\gamma<1$, evaluating the integral gives $F_n \leq 1+\frac{2c^{\alpha-1}}{1-\gamma}  \br{n^{1-\gamma}-1} \leq
    \frac{2c^{\alpha-1}}{1-\gamma}\,n^{1-\gamma},$ where the last inequality uses $2c^{\alpha-1}/(1-\gamma)\geq1$. This proves the remaining case.
\end{proof}

The resulting rates are $\landauO(n^{-1/2})$ in case (i) and in case (ii) with $\gamma>1$, $\landauO((n^{-1}\log n)^{1/2})$ for $\gamma=1$, and $\landauO(n^{-\gamma/2})$ for $\gamma\in(0,1)$.
These rates require $\sigma_\alpha$ and $V^{\max}_\alpha$ to be bounded in $n$, as well as $\sum_{h\geq1}b(h)^{\alpha-1}$ in case (i) and $c$ in case (ii), with $\theta$ fixed. In case (i), Markov's inequality then gives $\ol{\mbar}{\mhat}=\Op(n^{-1/2})$. The refresh chain of \cref{prp:dep:refresh} gives an example with infinite variance.

\begin{example}\label{exa:power:decl:refresh}
Let $\mu\in\mc P^\alpha(\mc Q)$, possibly with $\mu\notin\mc P^2(\mc Q)$, let $\rho\in[0,1)$, and let $\mo Y$ be the refresh chain of \cref{prp:dep:refresh} with marginal law $\mu$. By \cref{prp:dep:refresh} with $s=\alpha/2$, $\mo Y$ is $\mo b$-sensitive at exponent $\alpha/2$ with $b_{ik}=\rho^{2\lvert i-k\rvert/\alpha}$. As $b_{ik}^{\alpha-1}=\rho^{2(\alpha-1)\lvert i-k\rvert/\alpha}$ and $1-\rho^t\geq t(1-\rho)$ for $t\in(0,1]$,
 \begin{equation*}
nr_n\leq1+2\sum_{h\geq1}\rho^{2(\alpha-1)h/\alpha}=\frac{1+\rho^{2(\alpha-1)/\alpha}}{1-\rho^{2(\alpha-1)/\alpha}}\leq\frac{\alpha}{(\alpha-1)(1-\rho)}
\eqfs
\end{equation*}
As $\mbar=m_\alpha(\mu)$, $\sigma_\alpha=\sigmax\alpha=\sigma_\alpha(\mu)$ by stationarity, and $V^{\max}_\alpha\leq2\sigmax\alpha$, \eqref{eq:power:decl:simple} gives
    \begin{equation*}
        \EOf{\ol{m_\alpha(\mu)}{\mhat}^\alpha}^{1/\alpha}
        \leq c_\alpha\,\sigma_\alpha(\mu)^{1/\alpha}\br{\frac{1+\rho}{(1-\rho)\,n}}^{1/2} \eqfs
    \end{equation*}
Up to the constant, this is the dependence on $n$ and $\rho$ of the quadratic bound \eqref{eq:2fmh:refresh:bound}, which is not available here, as $\Vmof{Y_1}=\infty$ is allowed. Moreover, $b_{ik}\leq1$ gives $r_n\leq1$, so \eqref{eq:power:decl:main} also applies.
Its leading term is at most $c_\alpha\,\sigma_{\alpha-1}^{\pwa}\,(\Vm^{2\alpha-2}[Y_1])^{1/2}((1+\rho)/((1-\rho)n))^{1/2}$ and involves only moments of order $\alpha-1$ and $2\alpha-2$, whereas the $\alpha$-th moment enters only terms of order $o(n^{-1/2})$ for fixed $\rho$. By \eqref{eq:power:decl:omega}, $\omega_n\to0$, as $r_n\to0$ and $\max_{j\in\nnset n}\Eof{\ol{Y_j}{\mbar}^\alpha\indOf{\{\ol{Y_j}{\mbar}>M\}}}$ does not depend on $n$ and tends to zero as $M\to\infty$.
\end{example}

\subsection*{Use of AI tools}
The core elements are due to the authors: the compensated replace-one stability technique (\cref{lmm:supp:loo}); the coupling bounds of \cref{sec:2fmh,sec:pos,sec:power} and their consequences under local dependence, with their proofs; and the exactness for Gaussian observations (\cref{prp:2fmh:gaussian-1d}).
The transport-sensitivity setting (\cref{ssec:dep:transport}) was developed in dialogue with Anthropic's Claude~Opus~5 and~5.5.
Claude also helped with some of the examples and results that rely on this setting. More generally, the authors used Claude to refine arguments, search the literature, check proofs and references, and improve the writing.
OpenAI's ChatGPT (GPT-6) was also used to polish the writing and to suggest references. The authors checked all AI-assisted content and take full responsibility for the article.

\printbibliography

\clearpage


\renewcommand{\theequation}{A\arabic{equation}}
\renewcommand{\thefigure}{A\arabic{figure}}
\renewcommand{\thetable}{A\arabic{table}}
\renewcommand{\thesection}{A\arabic{section}}
\renewcommand{\thesubsection}{\thesection.\arabic{subsection}}
\renewcommand{\thetheorem}{\thesection.\arabic{theorem}}

\renewcommand{\theHequation}{A\arabic{equation}}
\renewcommand{\theHfigure}{A\arabic{figure}}
\renewcommand{\theHtable}{A\arabic{table}}
\renewcommand{\theHsection}{A\arabic{section}}
\renewcommand{\theHsubsection}{\thesection.\arabic{subsection}}
\renewcommand{\theHtheorem}{\thesection.\arabic{theorem}}

\setcounter{equation}{0}
\setcounter{figure}{0}
\setcounter{table}{0}
\setcounter{section}{0}
\setcounter{subsection}{0}
\setcounter{theorem}{0}

\appendix
\addcontentsline{toc}{section}{Appendix}

\section{Auxiliary results}\label{sec:aux}

\subsection{Proof of Lemma~\ref{lmm:aux:couple:triple} }\label{proof_lmm:aux:couple:triple}
\begin{proof}
    Let $\mu$ be the marginal distribution of $A$ on $S_A$, and let $K(\cdot \mid a)$ be a regular conditional distribution of $(B, C)$ given $A = a$.
    Define a probability measure on  \[S_A \times S_B \times S_C \times S_A \times S_B \times S_C\] by
    \begin{equation*}
        \begin{split}
            &\PrOf{(A, B, C, A\pr, B\pr, C\pr) \in \cdot}
            \\
             &=
            \int_{S_A} \int_{S_B \times S_C} \int_{S_B \times S_C}
            \indOf{(\cdot)}(a, b, c, a, b\pr, c\pr)\,
            K(\dl b, \dl c \mid a)\,
            K(\dl b\pr, \dl c\pr \mid a)\,
            \mu(\dl a)
            \eqfs
        \end{split}
    \end{equation*}

    Its image under the projection onto the first three coordinates is the law of $(A,B,C)$.
    Disintegrating it with respect to this projection, which is possible as the spaces are Polish, gives a probability kernel from $S_A\times S_B\times S_C$ to $S_A\times S_B\times S_C$; drawing $(A\pr,B\pr,C\pr)$ from this kernel evaluated at $(A,B,C)$, on the product of the original probability space with $S_A\times S_B\times S_C$, realizes this law with the given $(A,B,C)$.
    By construction, $(A, B, C, A\pr, B\pr, C\pr) \stackrel{d}{=} (A\pr, B\pr, C\pr, A, B, C)$, and the joint law of $(A,A\pr) $ is concentrated on the diagonal, so $A\pr=A$ almost surely.
    Moreover, conditionally on $A=a$, the pairs $(B,C)$ and $(B\pr,C\pr)$ are iid with common law $K(\cdot \mid a)$.
    It remains to show that $C\pr$ is independent of $(A, B, C)$.
    Denote by $\nu$ the marginal distribution of $C$.
    Since $A$ and $C$ are independent, the $C$-marginal of $K(\cdot \mid a)$ equals $\nu$ for $\mu$-almost every $a$.
    Hence for measurable sets $E\subset S_A \times S_B \times S_C$ and $F\subset S_C$, we have
    \begin{align*}
        \PrOf{(A,B,C)\in E,\ C\pr\in F} &=  \int_{S_A}\int_{S_B\times S_C}\indOf{E}(a,b,c)\,\nu(F)\,K(\dl b,\dl c\mid a)\,\mu(\dl a) \\
        &= \PrOf{(A,B,C)\in E}\,\nu(F)
        \eqcm
    \end{align*}
    which shows that $C\pr$ is independent of $(A,B,C)$.
\end{proof}

We also use two elementary inequalities for powers \cite[Supplement, Lemma~S1.1]{Schoetz2026Transformed-arxiv}, \cite[Lemma~D.1]{Schoetz2026Weighted-arxiv}: for $x_1,x_2\geq0$ and $r\geq0$,
\begin{equation}\label{eq:aux:subadd}
    (x_1+x_2)^r\leq 2^{\max\{0,r-1\}}(x_1^r+x_2^r)
    \eqcm
\end{equation}
and, by the mean value theorem and the triangle inequality, for $\alpha\in(1,2]$ and $y,q,p\in\mc Q$ in a metric space, as in \cite[(119)]{Schoetz2026Weighted-arxiv},
\begin{equation}\label{eq:aux:mvt}
    \abs{\ol yq^\alpha-\ol yp^\alpha}\leq\alpha\brOf{\ol yp^{\alpha-1}+\ol pq^{\alpha-1}}\ol pq
    \eqfs
\end{equation}
The contraction property with a point mass gives a rough bound on the location of the mean; compare \cite[Supplement, Lemma~S7.4]{Schoetz2026Transformed-arxiv} and \cite[(105)]{Schoetz2026Weighted-arxiv}.

\subsection{Auxiliary lemmas}

\begin{lemma}\label{lmm:aux:rough}
    Let $(\mc Q,d)$ be a separable Hadamard space, let $\alpha\in(1,2]$, and let $\mu\in\mc P^{\alpha-1}(\mc Q)$.
    Then, for all $q\in\mc Q$,
    \begin{equation*}
        \ol{m_\alpha(\mu)}q^{\alpha-1}\leq c_\alpha\int\ol yq^{\alpha-1}\dl\mu(y)
        \eqfs
    \end{equation*}
\end{lemma}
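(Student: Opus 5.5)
The plan is to apply \hyperref[power:CP]{(CP$_\alpha$)} with $\nu=\delta_q$ and then turn the resulting implicit inequality for $\delta:=\ol{m_\alpha(\mu)}q$ into an explicit one. The mean of a point mass is the point itself, since $m_\alpha(\delta_q)=q$. We also have $\sigma_{\alpha-1}(\delta_q)=0$ and $W_{\alpha-1}(\mu,\delta_q)=\int\ol yq^{\alpha-1}\dl\mu(y)=:A$. With these substitutions, (CP$_\alpha$) reads
\begin{equation*}
    \frac{\delta}{\bigl(\sigma_{\alpha-1}(\mu)+\delta^{\alpha-1}\bigr)^\pwa+\delta^{(\alpha-1)\pwa}}
    \leq \frac{2^{1-\alpha}}{\alpha-1}\,A
    \eqfs
\end{equation*}
If $\delta=0$ there is nothing to prove, so from now on we assume $\delta>0$.

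Next, remove $\sigma_{\alpha-1}(\mu)$, which is a moment about the unknown mean, by bounding it in terms of $A$ and $\delta$. The triangle inequality together with \eqref{eq:aux:subadd}, using $\alpha-1\leq1$, gives $\ol y{m_\alpha(\mu)}^{\alpha-1}\leq\ol yq^{\alpha-1}+\delta^{\alpha-1}$, and hence $\sigma_{\alpha-1}(\mu)\leq A+\delta^{\alpha-1}$. Substituting this, the denominator is at most $c_\alpha(A+\delta^{\alpha-1})^\pwa$, and so
\begin{equation*}
    \delta\leq c_\alpha\,A\,(A+\delta^{\alpha-1})^\pwa
    \eqfs
\end{equation*}

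Finally, split into two cases. If $\delta^{\alpha-1}\leq A$, the claim holds with $c_\alpha=1$. Otherwise $A+\delta^{\alpha-1}\leq 2\delta^{\alpha-1}$, and the inequality above gives $\delta\leq c_\alpha A\,\delta^{(\alpha-1)\pwa}=c_\alpha A\,\delta^{2-\alpha}$, using $(\alpha-1)\pwa=2-\alpha$. Dividing by $\delta^{2-\alpha}>0$ yields $\delta^{\alpha-1}\leq c_\alpha A$. For $\alpha=2$ we have $\pwa=0$, and the statement is just (CP) directly.

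The only delicate point is this last step. The exponent $(\alpha-1)\pwa=2-\alpha$ is strictly less than $1$, which is exactly what allows the self-referential factor to be absorbed and leaves the power $\alpha-1$ on $\delta$. We should also check that (CP$_\alpha$) is valid for $\nu$ a Dirac mass; it is, since $\delta_q$ has all moments.
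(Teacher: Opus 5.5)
Your proposal is correct and follows essentially the same route as the paper: (CP$_\alpha$) with $\nu=\delta_q$, the bound $\sigma_{\alpha-1}(\mu)\leq A+\delta^{\alpha-1}$, and a case split on $\delta^{\alpha-1}\leq A$ that absorbs the factor $\delta^{2-\alpha}$, with $\alpha=2$ handled separately by (CP). The one point to make explicit is that for $\alpha=2$ the lemma assumes only $\mu\in\mc P^{1}(\mc Q)$, so you need the version of (CP) for finite first moments (Sturm's extension, which the paper invokes) rather than the $\mc P^2$ statement listed in the preliminaries.
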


\begin{proof}
    Set $t:=\ol{m_\alpha(\mu)}q$ and $A:=\int\ol yq^{\alpha-1}\dl\mu(y)$, so that $A=W_{\alpha-1}(\mu,\delta_q)$; we may assume $t>0$.
    If $\alpha=2$, \hyperref[sqr:CP]{(CP)} with $\nu=\delta_q$, which holds under finite first moments (\cref{ssec:2fmh:pre}), gives $t\leq A$.
    Let $\alpha<2$.
    As $m_\alpha(\delta_q)=q$, $\sigma_{\alpha-1}(\delta_q)=0$, and $(\alpha-1)\pwa=2-\alpha$, \hyperref[power:CP]{(CP$_\alpha$)} with $\nu=\delta_q$ gives
    \begin{equation*}
        t\leq\frac{2^{1-\alpha}}{\alpha-1}\,A\,\brOf{\brOf{\sigma_{\alpha-1}(\mu)+t^{\alpha-1}}^\pwa+t^{2-\alpha}}
        \eqfs
    \end{equation*}
    The triangle inequality and \eqref{eq:aux:subadd} give $\sigma_{\alpha-1}(\mu)\leq A+t^{\alpha-1}$.
    If $t^{\alpha-1}\leq A$, there is nothing to show.
    Otherwise, $\sigma_{\alpha-1}(\mu)+t^{\alpha-1}\leq3t^{\alpha-1}$, so $t\leq\frac{2^{1-\alpha}}{\alpha-1}(3^\pwa+1)A\,t^{2-\alpha}$, that is, $t^{\alpha-1}\leq\frac{2^{1-\alpha}(3^\pwa+1)}{\alpha-1}A$.
\end{proof}

\begin{lemma}\label{lmm:supp:corr}
    Let $\mo X = (X_1, \dots, X_n)$ be an $n$-tuple of $\R$-valued random variables with finite second moment and $\Vof{X_j}>0$ for all $j\in\nnset n$.
    For $i\in\nnset n$, let $\mo X^i$ be an exchangeable copy of $\mo X$ with $i$-independence, and assume \eqref{eq:2fmh:illu:corr}.
    Then, with $D_{ik}$ as in \eqref{eq:2fmh:illu:delta},
    \begin{equation*}
        \frac{1}{2} \EOf{D_{ii}^2}^{\frac{1}{2}} \EOf{D_{ik}^2}^{\frac{1}{2}}
        =
        \abs{\CovOf{X_i, X_k}}
        \eqfs
    \end{equation*}
\end{lemma}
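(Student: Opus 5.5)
The plan is to compute the second moments of $D_{ii}$ and $D_{ik}$ directly and then insert the correlation assumption \eqref{eq:2fmh:illu:corr}.

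First, $\Eof{D_{ii}^2}=2\Vof{X_i}$. By exchangeability, $X_i^i\stackrel d=X_i$, and by $i$-independence, $X_i^i\perp X_i$. Hence $\Eof{D_{ii}}=0$ and $\Vof{X_i-X_i^i}=2\Vof{X_i}$.

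Next, for general $k$, exchangeability of $(\mo X,\mo X^i)$ gives $X_k^i\stackrel d=X_k$. In particular, $\Eof{D_{ik}}=0$ and
\[
\Eof{D_{ik}^2}=2\Vof{X_k}-2\Covof{X_k,X_k^i}=2\Vof{X_k}\br{1-\Corof{X_k,X_k^i}}.
\]
Assumption \eqref{eq:2fmh:illu:corr} states $\Corof{X_k,X_k^i}=1-\Corof{X_i,X_k}^2$, so
\[
\Eof{D_{ik}^2}=2\Vof{X_k}\Corof{X_i,X_k}^2=\frac{2\Covof{X_i,X_k}^2}{\Vof{X_i}}.
\]
Multiplying the two second moments gives
\[
\Eof{D_{ii}^2}\,\Eof{D_{ik}^2}=4\Covof{X_i,X_k}^2 .
\]
Taking square roots and halving yields the claim. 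The case $k=i$ is consistent with this: there \eqref{eq:2fmh:illu:corr} forces $\Corof{X_i,X_i^i}=0$, which agrees with independence.

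There is no real obstacle here. The only point needing care is that $\Vof{X_k^i}=\Vof{X_k}$, so that the correlation converts to a covariance with the right normalization; this follows from $X_k^i\stackrel d=X_k$, which in turn follows from exchangeability. The positivity of the variances ensures that all the correlations are well defined.
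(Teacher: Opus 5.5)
Your proof is correct and follows the paper's argument essentially step for step. You get $\Eof{D_{ii}^2}=2\Vof{X_i}$ from independence, rewrite $\Eof{D_{ik}^2}$ as $2\Vof{X_k}\bigl(1-\CorOf{X_k,X_k^i}\bigr)$ using $X_k^i\stackrel d=X_k$, and then substitute the correlation assumption.
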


\begin{proof}
    As $X_i^i$ is an independent copy of $X_i$, we have $\Vof{X_i} = \frac{1}{2} \Eof{D_{ii}^2}$.
    Furthermore, using $X_k \stackrel{d}{=}X_k^i$ and \eqref{eq:2fmh:illu:corr},
    \begin{equation*}
        \frac{1}{2} \EOf{D_{ik}^2}
        =
        \VOf{X_k}\br{1 - \CorOf{X_k, X_k^i}}
        =
        \VOf{X_k}\CorOf{X_i, X_k}^2
        \eqfs
    \end{equation*}
    Then \[\frac{1}{2} \Eof{D_{ii}^2}^{1/2} \Eof{D_{ik}^2}^{1/2}=\sqrt{\Vof{X_i} \Vof{X_k} \Cor(X_i, X_k)^2}=\lvert \Covof{X_i, X_k}\rvert.\]
\end{proof}

\section{Proof of Lemma \ref{lmm:supp:loo}}

\begin{proof}\label{proof_lmm:supp:loo}
We first check measurability and integrability, and then prove the coupling inequality.
The means exist and are unique by \cref{ssec:2fmh:pre,ssec:pos:pre,ssec:power:prelim}. In case (b), $\mbar\in\setC$, and the empirical measures $\muhat,\muhat^i$ belong to $\mc P(\setC)$ almost surely.

We show that the map $(y_1,\dots,y_n)\mapsto m_\alpha(n^{-1}\sum_k\delta_{y_k})$ is continuous on $\mc Q^n$ in case (a) and on $\setC^n$ in case (b). Let $\mo y^{(j)}\to\mo y$ in the relevant domain, and let $\nu_j,\nu$ be the corresponding empirical measures.
Matching corresponding observations gives $W_{\alpha-1}(\nu_j,\nu)
    \leq n^{-1}\sum_{k=1}^n  \ol{y_k^{(j)}}{y_k}^{\alpha-1} \to 0.$
For $\alpha=2$, continuity follows from \hyperref[sqr:CP]{(CP)} in case (a) and \hyperref[pos:CP]{(CP+)} in case (b).
If $\alpha<2$, by \cref{lmm:aux:rough}, applied at $q=y_1$, $\ol{m_\alpha(\nu_j)}{y_1}^{\alpha-1} \leq c_\alpha\,n^{-1} \sum_{k=1}^n
 \ol{y_k^{(j)}}{y_1}^{\alpha-1}.$
 
The right-hand side is bounded in $j$. Since the observations also remain at bounded distance from $y_1$, the triangle inequality shows that both $\sigma_{\alpha-1}(\nu_j)$ and $\ol{m_\alpha(\nu_j)}{m_\alpha(\nu)}$ are bounded. Thus the denominator in \hyperref[power:CP]{(CP$_\alpha$)} remains bounded. Together with $W_{\alpha-1}(\nu_j,\nu)\to0$, this gives
$m_\alpha(\nu_j)\to m_\alpha(\nu)$.
Hence $\mhat$ and $\mhat^i$ are random variables. In case (b), extend the mean map by a fixed point of $\setC$ outside $\setC^n$. Since $\setC$ is closed, this extension is measurable and defines the empirical means on the exceptional null events as well.
    
    In case (b), all distances are bounded by $\diam(\setC)$, so all expectations below are finite.
    In case (a), \cref{lmm:aux:rough} for $\muhat$ at $q=\mbar$ gives $\ol{\mhat}{\mbar}^{\alpha-1}\leq c_\alpha n^{-1}\sum_j\ol{Y_j}{\mbar}^{\alpha-1}$, and Jensen's inequality yields $\ol{\mhat}{\mbar}^\alpha\leq c_\alpha n^{-1}\sum_j\ol{Y_j}{\mbar}^\alpha$; thus $\Eof{\ol{\mhat}{\mbar}^\alpha}\leq c_\alpha n^{-1}\sum_j\Eof{\ol{Y_j}{\mbar}^\alpha}<\infty$, and the same holds for $\mhat^i$, as $\mo Y^i\stackrel d=\mo Y$.
    Together with \eqref{eq:aux:mvt} and H\"older's inequality with exponents $\alpha/(\alpha-1)$ and $\alpha$, this shows that all expectations below are finite.

The variable $Y_i^i$ has law $\mu_i$ and is independent of $\mo Y$, hence of $\mhat$. Fubini's theorem therefore gives
    \begin{align*}
        \EOf{\int\brOf{\ol y{\mhat}^\alpha-\ol y{\mbar}^\alpha}\dl\mubar(y)}
        &=\frac{1}{n}\sum_{i=1}^n\EOf{\int\brOf{\ol y{\mhat}^\alpha-\ol y{\mbar}^\alpha}\dl\mu_i(y)}
        \\&=\frac{1}{n}\sum_{i=1}^n\EOf{\ol{Y_i^i}{\mhat}^\alpha-\ol{Y_i^i}{\mbar}^\alpha}
        \eqfs
    \end{align*}
    Since $\Eof{\ol{Y_i^i}{\mbar}^\alpha}=\Eof{\ol{Y_i}{\mbar}^\alpha}$, the left-hand side of \eqref{eq:supp:loo} equals
    \begin{equation}\label{eq:supp:loo:sum}
        \frac{1}{n}\sum_{i=1}^n\EOf{\ol{Y_i^i}{\mhat}^\alpha-\ol{Y_i}{\mhat}^\alpha}
        \eqfs
    \end{equation}
    Fix $i\in\nnset n$. The quadruple inequality \hyperref[power:QI]{(QI$_\alpha$)} in case (a), which for $\alpha=2$ is \hyperref[sqr:QI]{(QI)}, and \hyperref[pos:QI]{(QI+)} in case (b), all four points lying in $\setC$, applied with $(y,z,q,p)=(Y_i^i,Y_i,\mhat,\mhat^i)$, give
    \begin{equation*}
        \ol{Y_i^i}{\mhat}^\alpha-\ol{Y_i}{\mhat}^\alpha
        \leq
        \ol{Y_i^i}{\mhat^i}^\alpha-\ol{Y_i}{\mhat^i}^\alpha
        +C_{\mathrm{QI}}\,\ol{Y_i}{Y_i^i}^{\alpha-1}\,\ol{\mhat}{\mhat^i}
        \eqfs
    \end{equation*}
    As $\mo Y^i\stackrel d=\mo Y$, we have $(Y_i^i,\mhat^i)\stackrel d=(Y_i,\mhat)$, and as $(\mo Y,\mo Y^i)\stackrel d=(\mo Y^i,\mo Y)$, we have $(Y_i,\mhat^i)\stackrel d=(Y_i^i,\mhat)$.
    Taking expectations therefore gives
    \begin{equation*}
        \EOf{\ol{Y_i^i}{\mhat}^\alpha-\ol{Y_i}{\mhat}^\alpha}
        \leq
        -\EOf{\ol{Y_i^i}{\mhat}^\alpha-\ol{Y_i}{\mhat}^\alpha}
        +C_{\mathrm{QI}}\,\EOf{\ol{Y_i}{Y_i^i}^{\alpha-1}\,\ol{\mhat}{\mhat^i}}
        \eqcm
    \end{equation*}
    that is, $\Eof{\ol{Y_i^i}{\mhat}^\alpha-\ol{Y_i}{\mhat}^\alpha}\leq\frac{C_{\mathrm{QI}}}2\Eof{\ol{Y_i}{Y_i^i}^{\alpha-1}\,\ol{\mhat}{\mhat^i}}$.
    Inserting this into \eqref{eq:supp:loo:sum} proves \eqref{eq:supp:loo}.
\end{proof}

\section{Proof of Example~\ref{exa:2fmh:decl:refresh-tripod}}\label{ssec:supp:2fmh:dep}

\begin{proof}
    The full tripod, rather than only its vertices, is the space over which Fréchet means are minimized.
    For a point at distance $t\in[0,1]$ from $o$ on a branch carrying endpoint mass $p$, the population objective is
    \[p(1-t)^2+(1-p)(1+t)^2 = 1+t^2+2t(1-2p)\eqfs\]
    Every branch has $p\leq1/2$, so the unique population minimizer is $o$, and $\Vfof{Y_1}=1$. Furthermore, $\VmOf{Y_1}=2^{-1}\EOf{\ol{Y_1}{Y_1\pr}^2}=5/4.$ The upper bound follows from \cref{prp:2fmh:refresh}.

    For the lower bound, set $n_j:=\sum_{t=1}^n\indOf{\{Y_t=x_j\}}$, $j\in\{1,2,3\}$.
    The same quadratic calculation for the empirical objective shows that
    \[\ol{\mhat}o^2 =\sum_{j=1}^3  \br{\br{\frac{2n_j}{n}-1} \vee 0 }^2\eqcm\]
    where at most one summand is positive.
    Let $\overline S_n:=n^{-1}\sum_{t=1}^nS_t$ with $S_t:=2\indOf{\{Y_t=x_1\}}-1$; then $\overline S_n=2n_1/n-1$.
    The sequence $(S_t)$ is a stationary symmetric two-state refresh chain: at a refresh, its sign is chosen independently and uniformly from $\{-1,1\}$.
    Its joint law is unchanged by simultaneous sign reversal, and $\Eof{S_iS_k}=\rho^{\lvert i-k\rvert}$.
    It follows that
    \[\EOf{\ol{\mhat}o^2} \geq\EOf{\br{\overline S_n\vee 0}^2} =\tfrac12\EOf{\overline S_n^2} =\tfrac12 a_n(\rho)\eqfs\]
    For the limit, the summand for $j=1$ equals $(\overline S_n\vee0)^2$ and contributes exactly $\frac{1}{2}a_n(\rho)$. It remains to show that the summands for $j\in\{2,3\}$ are $o(a_n(\rho))$, and as each is at most $1$ and $a_n(\rho)\geq1/n$, it suffices that $\Pr(n_2>n/2)=\landauO(n^{-2})$.
    Let $h:=\indOf{\{x_2\}}-\frac14$ on $\mc Q$, so that $\int h\,\dl\mu=0$, $\lvert h\rvert\leq1$, and $n_2-n/4=\sum_{t=1}^nh(Y_t)$.
    Iterating \eqref{eq:dep:refresh:kernel} gives $P^kf=\rho^kf+(1-\rho^k)\int f\,\dl\mu$ for bounded measurable $f$ and $k\in\Nn$.
    For $t_1\leq t_2\leq t_3\leq t_4$, the Markov property, stationarity, and $\int h\,\dl\mu=0$ therefore give
    \[\EOf{h(Y_{t_1})h(Y_{t_2})h(Y_{t_3})h(Y_{t_4})}=\rho^{(t_2-t_1)+(t_4-t_3)}\int h^2\,P^{t_3-t_2}(h^2)\,\dl\mu\in\left[0,\rho^{(t_2-t_1)+(t_4-t_3)}\right]\eqfs\]
    Every quadruple in $\nnset n^4$ has a nondecreasing rearrangement, which is shared by at most $4!$ quadruples, then
    \[\EOf{(n_2-n/4)^4}\leq24\sum_{t_1\leq t_2\leq t_3\leq t_4}\rho^{(t_2-t_1)+(t_4-t_3)}\leq\frac{24n^2}{(1-\rho)^2}.\]
Markov's inequality gives $\Pr(n_2>n/2)\leq4^4n^{-4}\Eof{(n_2-n/4)^4}=\landauO(n^{-2})$, the same holds for $n_3$.

Finally, the tripod cannot embed isometrically into a Hilbert space: $o$ is simultaneously the midpoint between $x_1,x_2$ and between $x_1,x_3$, which in a Hilbert space would force $x_2=x_3$.
\end{proof}

\section{An example for Remark~\ref{rem:2fmh:onMainThm} (iii)}
\begin{example}[Pair laws do not determine the risk]\label{exa:supp:pairlaws}
    Let $\mc Q$ be the tripod of \cref{exa:2fmh:decl:refresh-tripod} and $n=3$.
    For a law with mass $p_j$ at $x_j$, $j\in\nnset3$, the objective at the point at distance $t\in[0,1]$ from $o$ on the branch of $x_j$ is $p_j(1-t)^2+(1-p_j)(1+t)^2$, which is minimal at $t=(2p_j-1)_+$.
    Hence the Fréchet mean of such a law is $o$ if $p_j\leq1/2$ for all $j$, and it is the point at distance $2p_j-1$ from $o$ on the branch of $x_j$ if $p_j>1/2$.
    First, let $Y_1,Y_2,Y_3$ be iid and uniform on $\{x_1,x_2,x_3\}$.
    Second, with probability $1/3$, let $Y_1=Y_2=Y_3$ be uniform on $\{x_1,x_2,x_3\}$, and with probability $2/3$, let $(Y_1,Y_2,Y_3)$ be a uniformly chosen permutation of $(x_1,x_2,x_3)$.
    In both cases, each pair $(Y_j,Y_k)$, $j\neq k$, is uniform on $\{x_1,x_2,x_3\}^2$, and $\mbar=o$.
    In the first case, all three observations agree with probability $1/9$ and exactly two agree with probability $2/3$, so $\Eof{\ol{\mbar}{\mhat}^2}=1/9+(2/3)(1/3)^2=5/27$.
    In the second case, $\mhat=o$ unless all three observations agree, so $\Eof{\ol{\mbar}{\mhat}^2}=1/3$.
\end{example}

\section{Proof of Example~\ref{exa:pos:sphere}}\label{sec:supp:pos}

\begin{proof}
    The sphere of radius $1/\sqrt\kappa$ is a complete $\CAT(\kappa)$ space, and its closed balls of radius $r<\pi/(2\sqrt\kappa)$ are convex \cite[Proposition~II.1.4(3)]{BridsonHaefliger1999NPC-Springer}.
    Every point of $\setC$ has distance at most $r$ from $o$, and for every $o\pr\in\mc Q$ and two boundary points $y,y^*$ of $\setC$ on a geodesic through $o$, we have $2r=\ol y{y^*}\leq\ol{o\pr}y+\ol{o\pr}{y^*}$; hence $\varrho=r$.

    The rotation of $\mc Q$ by the angle $2\pi/3$ about the axis through $o$ is an isometry that preserves $\mu$ and $\setC$.
    By uniqueness, it fixes $\mbar=m_2(\mu)\in\setC$, and its only fixed point in $\setC$ is $o$.
    Hence $\mbar=o$ and $\Vfof{Y_1}=r^2$.
    For $j\neq k$, the spherical law of cosines gives $\cos(\sqrt\kappa\,\ol{x_j}{x_k})=\cos^2(\sqrt\kappa\,r)-\frac{1}{2}\sin^2(\sqrt\kappa\,r)=1-\frac32\sin^2(\sqrt\kappa\,r)$, that is, $\sin(\sqrt\kappa\,\ol{x_j}{x_k}/2)=\frac{\sqrt3}{2}\sin(\sqrt\kappa\,r)$.
    Thus $\ol{x_j}{x_k}=\ell_r$, and $\Vmof{Y_1}=\frac{1}{2}\cdot\frac23\,\ell_r^2=\ell_r^2/3$.
    As $\arcsin$ is strictly convex on $[0,1]$ with $\arcsin(0)=0$, we have $\arcsin(\lambda x)<\lambda\arcsin(x)$ for $\lambda\in(0,1)$ and $x\in(0,1]$. With $\lambda=\sqrt3/2$ and $x=\sin(\sqrt\kappa\,r)$, this gives $\ell_r<\sqrt3\,r$.
    The upper bounds in \eqref{eq:pos:sphere} now follow from \eqref{eq:pos:refresh}.

    For the lower bound, let $T_o$ be the tangent plane at $o$, let $Z_t:=\log_o(Y_t)\in T_o$ be the initial velocity of the geodesic from $o$ to $Y_t$ on $[0,1]$, and let $\widehat F_n(q):=\frac1{2n}\sum_{t=1}^n\ol q{Y_t}^2$ for $q\in\mc Q$.
    For $q\in\setC$ and $y\in\{x_1,x_2,x_3\}$, we have $\ol qy\leq2r<\pi/(2\sqrt\kappa)$.
    Hence $q\mapsto\frac{1}{2}\ol qy^2$ is smooth on a neighborhood of $\setC$, with gradient $-\log_q(y)$, and its Hessian has the eigenvalue $1$ in the direction of $\log_q(y)$ and the eigenvalue $\sqrt\kappa\,\ol qy\cot(\sqrt\kappa\,\ol qy)\in(0,1]$ in the orthogonal direction; see \cite[Proposition~A.1]{Tran2021Spheres-BJPS} for the unit sphere and the comparison results of \cite[Section~2]{Afsari2011CenterOfMass-PAMS}, and rescale the metric.
   
    At $q=y$, the Hessian is the identity.
    Thus the Hessian of $\widehat F_n$ has operator norm at most $1$ on $\setC$. Since $\muhat\in\mc P(\setC)$, the minimizer $\mhat$ of $\widehat F_n$ over $\mc Q$ lies in $\setC$ (\cref{ssec:pos:pre}), possibly on its boundary. As $\widehat F_n$ is smooth on a neighborhood of $\setC$, $\nabla \widehat F_n(\mhat)=0$. Let $\gamma\colon[0,1]\to\setC$ be the geodesic from $o$ to $\mhat$, and let $P_s$ denote parallel transport along $\gamma$ from $\gamma(s)$ to $o$.
    As $\nabla \widehat F_n(o)=-\frac{1}{n}\sum_{t=1}^nZ_t$,
    \begin{equation*}
        \normOf{\frac{1}{n}\sum_{t=1}^nZ_t}
        =\normof{\nabla \widehat F_n(o)-P_1\nabla \widehat F_n(\mhat)}
        \leq\int_0^1\normof{\operatorname{Hess}\widehat F_n(\gamma(s))\,\dot\gamma(s)}\dl s
        \leq\ol o{\mhat}
        \eqfs
    \end{equation*}
    The sequence $(Z_t)_{t\in\nnset n}$ is a refresh chain in $T_o$ whose marginal law is uniform on the three vectors $\log_o(x_j)$, which have norm $r$ and sum to $0$.
    Iterating \eqref{eq:dep:refresh:kernel}, and using that these three vectors sum to $0$, gives $\Eof{Z_k\mid Z_1,\dots,Z_i}=\rho^{k-i}Z_i$ for $i\leq k$.
    Hence $\Eof{\langle Z_i, Z_k\rangle}=r^2\rho^{\lvert i-k\rvert}$ and $\Eof{\ol{\mbar}{\mhat}^2}\geq\Eof{\normof{n^{-1}\sum_{t=1}^nZ_t}^2}=r^2a_n(\rho)$.
\end{proof}

\section{Proofs of Section \ref{sec:power}}\label{ssec:supp:power:dep}

\subsection{Proof of Lemma~\ref{lmm:supp:power:conversion}}\label{proof_lmm:supp:power:conversion}

\begin{proof}
    By \cite[Proposition~D.4]{Schoetz2026Weighted-arxiv}, which is the second inequality of \cite[Supplement, Lemma~S3.1(i)]{Schoetz2026Transformed-arxiv},
    \begin{equation*}
        \EOf{X^\alpha}\leq v^{\pwa\alpha/2}P^{\alpha/2}\brOf{1+v^{-\alpha/2}P^{(\alpha-1)/2}}^{\pwa}
        \eqcm\quad
        P:=\EOf{X^2(v+X^{\alpha-1})^{-\pwa}}
        \eqfs
    \end{equation*}
    The right-hand side is nondecreasing in $P$, so $R$ may be substituted for $P$.
    Taking the $\alpha$-th root and using $(1+x)^{\pwa/\alpha}\leq c_\alpha(1+x^{\pwa/\alpha})$ for $x\geq0$ gives
    \begin{equation*}
        \EOf{X^\alpha}^{1/\alpha}
        \leq c_\alpha\br{v^{\pwa/2}R^{1/2}+v^{\pwa/2}R^{1/2}\,v^{-\pwa/2}R^{\frac{(\alpha-1)\pwa}{2\alpha}}}
        = c_\alpha\br{v^{\pwa/2}R^{1/2}+R^{1/\alpha}}
        \eqcm
    \end{equation*}
    because $\frac{1}{2}+\frac{(\alpha-1)\pwa}{2\alpha}=\frac{1}{2}+\frac{2-\alpha}{2\alpha}=\frac1\alpha$.
\end{proof}

\subsection{Proof of Lemma~\ref{lmm:supp:power:absorption}}\label{proof:lmm:supp:power:absorption}
\begin{proof}
    Set $\vartheta:=\sigma_{\alpha-1}^{\alpha/(\alpha-1)}$, so that $\vartheta^{2-\alpha}=\sigma_{\alpha-1}^{\pwa\alpha}$ and $\vartheta^{(2-\alpha)/(2\alpha)}=\sigma_{\alpha-1}^{\pwa/2}$.
    If $x\leq\vartheta$, then $x^{1/\alpha}=x^{1/2}x^{(2-\alpha)/(2\alpha)}\leq\sigma_{\alpha-1}^{\pwa/2}x^{1/2}$.
    Otherwise, $\sigma_{\alpha-1}^{\pwa\alpha}=\vartheta^{2-\alpha}<x^{2-\alpha}$, so the assumption gives $x^{\alpha-1}\leq Cy^{\alpha-1}$, that is, $x^{1/\alpha}\leq C^{1/(\alpha(\alpha-1))}y^{1/\alpha}$.
    In both cases, $x^{1/\alpha}\leq\sigma_{\alpha-1}^{\pwa/2}x^{1/2}+C^{1/(\alpha(\alpha-1))}y^{1/\alpha}$, which gives \eqref{eq:supp:power:absorption}, as $\Phi(x)=\sigma_{\alpha-1}^{\pwa/2}x^{1/2}+x^{1/\alpha}$.

    For the second claim, we set
    \begin{equation*}
        D_n(r):=\frac{1}{n^2}\sum_{i,k=1}^n\EOf{\brOf{\ol{Y_i}{Y_i^i}\,\ol{Y_k}{Y_k^i}}^r}
        \eqcm\quad r>0,
    \end{equation*}
    so that $D_n=D_n(\alpha-1)$ and $\widetilde D_n=D_n(\alpha/2)$.
    As $\alpha-1=(2-\alpha)\frac{\alpha-1}2+(\alpha-1)\frac\alpha2$, H\"older's inequality with the exponents $\frac1{2-\alpha}$ and $\frac1{\alpha-1}$, applied first to each expectation and then to the sum, gives $D_n\leq D_n(\frac{\alpha-1}2)^{2-\alpha}\,\widetilde D_n^{\alpha-1}$.
    By the Cauchy--Schwarz inequality,
    \begin{equation*}
        \EOf{\brOf{\ol{Y_i}{Y_i^i}\,\ol{Y_k}{Y_k^i}}^{(\alpha-1)/2}}\leq\EOf{\ol{Y_i}{Y_i^i}^{\alpha-1}}^{1/2}\EOf{\ol{Y_k}{Y_k^i}^{\alpha-1}}^{1/2}.    \end{equation*}
    Then, the triangle inequality, \eqref{eq:aux:subadd}, and $Y_k^i\stackrel d=Y_k$ give $\Eof{\ol{Y_k}{Y_k^i}^{\alpha-1}}\leq2s_k$ for all $i,k$, where $s_k:=\Eof{\ol{Y_k}{\mbar}^{\alpha-1}}$.
    We observe that $n^{-1}\sum_ks_k=\sigma_{\alpha-1}$, then Jensen's inequality yields $D_n(\frac{\alpha-1}2)\leq2(n^{-1}\sum_ks_k^{1/2})^2\leq2\sigma_{\alpha-1}$.
    Thus $\sigma_{\alpha-1}^\pwa D_n\leq2^{2-\alpha}\sigma_{\alpha-1}^{\pwa+2-\alpha}\widetilde D_n^{\alpha-1}$, and $\pwa+2-\alpha=\pwa\alpha$.
\end{proof}

\subsection{Proof of Proposition \ref{prp:supp:power:dep} }

\begin{proof}\label{proof_prp:supp:power:dep}
We first prove \eqref{eq:supp:power:dep}.
Applying \cref{lmm:supp:loo}\,(a), discarding its nonnegative second term, and using \hyperref[power:VI]{(VI$_\alpha$)} for $\mubar$ at $q=\mhat$, we obtain
\begin{equation}\label{eq:power:first}
    \EOf{\psi_\alpha(\ol{\mbar}{\mhat})}
    \leq
    \frac{2^{2-\alpha}}{\alpha-1}\,\frac{1}{n}\sum_{i=1}^n\EOf{\eta_i\delta_i}
    \eqfs
\end{equation}
This is the exchangeable-copy counterpart of \cite[Supplement, Lemma~S7.1]{Schoetz2026Transformed-arxiv} and of the power case of \cite[Lemma~C.2]{Schoetz2026Weighted-arxiv}.

To obtain the bound in terms of $R_n$, we control $\delta_i$ by the deterministic stability argument of \cite[proof of Proposition~C.3]{Schoetz2026Weighted-arxiv}.
By \cref{lmm:aux:rough} for $\muhat$ at $q=\mbar$, $\ol{\mhat}{\mbar}^{\alpha-1}\leq c_\alpha\hat\sigma_{\alpha-1}$.
The triangle inequality and \eqref{eq:aux:subadd} then give $\sigma_{\alpha-1}(\muhat)\leq\hat\sigma_{\alpha-1}+\ol{\mhat}{\mbar}^{\alpha-1}\leq c_\alpha\hat\sigma_{\alpha-1}$.
The same estimates hold for the copied sample.
In particular, $\sigma_{\alpha-1}(\muhat^i)\leq c_\alpha\hat\sigma_{\alpha-1}^i$, and
\begin{equation*}
    \delta_i^{\alpha-1}
    \leq
    \ol{\mhat}{\mbar}^{\alpha-1}+\ol{\mbar}{\mhat^i}^{\alpha-1}
    \leq
    c_\alpha\br{\hat\sigma_{\alpha-1}+\hat\sigma_{\alpha-1}^i}
    \eqfs
\end{equation*}
Substituting these estimates into \hyperref[power:CP]{(CP$_\alpha$)} for $\muhat$ and $\muhat^i$ and applying \eqref{eq:aux:subadd} once more yields
\begin{equation}\label{eq:power:dep:delta-final}
    \delta_i
    \leq
    c_\alpha T_i\br{\hat\sigma_{\alpha-1}^\pwa+(\hat\sigma_{\alpha-1}^i)^\pwa}
    \eqfs
\end{equation}

The mixed moments below are finite.
Indeed, the finite $\alpha$-moment assumption and the triangle inequality give $\eta_i,\hat\sigma_{\alpha-1},\hat\sigma_{\alpha-1}^i\in L^\beta$, with $\beta$ as in \eqref{eq:power:structured-exponents}.
The coupling that matches corresponding observations gives $T_i\leq\hat\sigma_{\alpha-1}+\hat\sigma_{\alpha-1}^i$, so $T_i\in L^\beta$ as well.
H\"older's inequality with exponents $\beta,\beta,\alpha^\ast$ now applies, since $\pwa\alpha^\ast=\beta$.

Exchanging $\mo Y$ and $\mo Y^i$ leaves $\eta_iT_i$ unchanged and interchanges $\hat\sigma_{\alpha-1}$ and $\hat\sigma_{\alpha-1}^i$.
Thus, by exchangeability, $\Eof{\eta_iT_i(\hat\sigma_{\alpha-1}^i)^\pwa}=\Eof{\eta_iT_i\hat\sigma_{\alpha-1}^\pwa}$.
Combining this identity with \eqref{eq:power:first} and \eqref{eq:power:dep:delta-final} gives
\begin{equation*}
    \EOf{\psi_\alpha(\ol{\mbar}{\mhat})}
    \leq
    \frac{c_\alpha}{n}\sum_{i=1}^n\EOf{\hat\sigma_{\alpha-1}^\pwa\eta_iT_i}
    =
    c_\alpha R_n
    \eqfs
\end{equation*}

It remains to prove the two bounds on $R_n$.
The coupling $n^{-1}\sum_{k=1}^n\delta_{(Y_k,Y_k^i)}$ of $\muhat$ and $\muhat^i$ gives $T_i\leq n^{-1}\sum_{k=1}^n\eta_{ik}$, so that $R_n\leq n^{-2}\sum_{i,k=1}^n\Eof{\hat\sigma_{\alpha-1}^\pwa\eta_i\eta_{ik}}$.
For the first bound, \eqref{eq:aux:subadd} gives $\hat\sigma_{\alpha-1}^\pwa\leq c_\alpha(\sigma_{\alpha-1}^\pwa+\lvert \hat\sigma_{\alpha-1}-\sigma_{\alpha-1}\rvert^\pwa)$.
The constant term contributes $c_\alpha\sigma_{\alpha-1}^\pwa D_n$.
For the fluctuation term, H\"older's inequality with the conjugate exponents $\alpha^\ast,\beta^\ast$ of \eqref{eq:power:structured-exponents} gives
\begin{equation*}
    \EOf{\abs{\hat\sigma_{\alpha-1}-\sigma_{\alpha-1}}^\pwa\eta_i\eta_{ik}}
    \leq
    \omega_n^\pwa\,\EOf{\ol{Y_i}{Y_i^i}^{\alpha/2}\,\ol{Y_k}{Y_k^i}^{\alpha/2}}^{\frac{2\alpha-2}{\alpha}}
    \eqcm
\end{equation*}
where we used $\pwa\alpha^\ast=\beta$ and $(\alpha-1)\beta^\ast=\alpha/2$.
Summing over $i$ and $k$ proves the first bound in \eqref{eq:supp:power:dep:final}.
For the second bound, Jensen's inequality gives $\Eof{\hat\sigma_{\alpha-1}^\beta}\leq n^{-1}\sum_{j=1}^n\Eof{\ol{Y_j}{\mbar}^\alpha}=\sigma_\alpha$.
The claim follows by applying the same H\"older estimate directly to $\hat\sigma_{\alpha-1}^\pwa\eta_i\eta_{ik}$ and summing over $i$ and $k$, as $1/\alpha^\ast=(2-\alpha)/\alpha$.
\end{proof}

\subsection{Proof of Lemma~\ref{lmm:supp:power:omega}}

\begin{proof}\label{proof_lmm:supp:power:omega}
We first prove \eqref{eq:power:dep:omega} by truncation. Set $\xi_k:=\ol{Y_k}{\mbar}^{\alpha-1}$ and $\xi_k^i:=\ol{Y_k^i}{\mbar}^{\alpha-1}$, and recall that $\beta=\alpha/(\alpha-1)$. Since $\abs{a^{\alpha-1}-b^{\alpha-1}}
\leq\abs{a-b}^{\alpha-1}$ for $a,b\geq0$, the triangle inequality gives
$\abs{\xi_k-\xi_k^i}\leq\eta_{ik}$. 

Fix $M\geq0$ and let $g(x):=\min\{x,M^{\alpha-1}\}$
for $x\geq0$. For the truncated sums, write  $G:=\sum_{k=1}^ng(\xi_k)$ and $G^i:=\sum_{k=1}^ng(\xi_k^i)$. By exchangeability, $\Eof{g(\xi_i^i)G^i}=\Eof{g(\xi_i)G}$ and $\Eof{g(\xi_i)G^i}=\Eof{g(\xi_i^i)G}$, where the second equality uses that $\xi_i^i$ is independent of $G$ and has the same law as $\xi_i$.
Summing over $i$ and using that $g$ is $1$-Lipschitz gives
    \[
        \VOf{G}
        =\frac{1}{2}\sum_{i=1}^n\EOf{\brOf{g(\xi_i)-g(\xi_i^i)}(G-G^i)}
        \leq\frac{1}{2}\sum_{i,k=1}^n\EOf{\eta_i\eta_{ik}}
        =\frac{n^2D_n}2
        \eqfs
    \]
Since $G/n$ takes values in $[0,M^{\alpha-1}]$ and $(\alpha-1)(\beta-2)=2-\alpha$, we obtain $\Eof{\lvert G/n-\Eof{G/n}\rvert^\beta}\leq M^{(\alpha-1)(\beta-2)}\Vof{G/n}\leq M^{2-\alpha}D_n/2$. We next control the remainder 
$H:=\hat\sigma_{\alpha-1}-G/n=n^{-1}\sum_{k=1}^n(\xi_k-M^{\alpha-1})_+.$ The pointwise bound $(\xi_k-M^{\alpha-1})_+^\beta\leq\ol{Y_k}{\mbar}^\alpha\indOf{\{\ol{Y_k}{\mbar}>M\}}$ and Jensen's inequality imply 
$\EOf{H^\beta} \leq \max_{j\in\nnset n} \EOf{\ol{Y_j}{\mbar}^\alpha\indOf{\{\ol{Y_j}{\mbar}>M\}}}.$
Moreover, Minkowski's inequality gives $\Eof{\abs{H-\Eof{H}}^\beta}^{1/\beta}
\leq2\Eof{H^\beta}^{1/\beta}$.
We note that $\overline X_n=(G/n-\Eof{G/n})+(H-\Eof{H})$, then Minkowski's inequality implies
\[\omega_n\leq(M^{2-\alpha}D_n/2)^{1/\beta}+2(\max_{j\in\nnset n}\Eof{\ol{Y_j}{\mbar}^\alpha\indOf{\{\ol{Y_j}{\mbar}>M\}}})^{1/\beta}\eqfs\]
Applying \eqref{eq:aux:subadd}, and taking the infimum over $M\geq0$, we obtain \eqref{eq:power:dep:omega}.

For the second assertion, no truncation is needed.
Let $Y_k\pr$ be an independent copy of $Y_k$, and write $\xi_k\pr:=\ol{Y_k\pr}{\mbar}^{\alpha-1}$. Then  $\xi_k-\Eof{\xi_k}=\Eof{\xi_k-\xi_k\pr\mid Y_k}$. Conditional Jensen, the bound $\abs{\xi_k-\xi_k\pr} \leq\ol{Y_k}{Y_k\pr}^{\alpha-1}$, and $(\alpha-1)\beta=\alpha$ give  
    $\Eof{\lvert\xi_k-\Eof{\xi_k}\rvert^\beta}\leq\Eof{\ol{Y_k}{Y_k\pr}^\alpha}=2\Vm^\alpha[Y_k]\leq2V^{\max}_\alpha$.Finally, Minkowski's inequality yields $\omega_n\leq(2V^{\max}_\alpha)^{1/\beta}$.
\end{proof}

\subsection{Dependency neighborhoods}\label{ssec:supp:power:strucdep}

Recall the maximal size $N_{\max}=\max_{i\in\nnset n}\#N_i$ of the dependency neighborhoods from \eqref{eq:dep:L}.

\subsubsection{Existence of a partition}

\begin{lemma}\label{lmm:power:locdep:partition}
    Let $N_i$ be a dependency neighborhood of $Y_i$ for every $i\in\nnset n$.
    Then there exists a partition $\mc B_1,\dots,\mc B_\chi$ of $\nnset n$ with $\chi \leq N_{\max}$ such that, for every $t\in\nnset\chi$, the family $(Y_j)_{j\in\mc B_t}$ is mutually independent.
\end{lemma}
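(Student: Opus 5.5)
The plan is a greedy colouring argument. The key observation is that mutual independence of a finite family can be checked sequentially. Suppose $j_1,\dots,j_m$ is an enumeration of a set $\mc B$ such that, for each $l$, $Y_{j_l}$ is independent of $(Y_{j_1},\dots,Y_{j_{l-1}})$. Then by induction on $l$,
\[
\mc L(Y_{j_1},\dots,Y_{j_l})=\mc L(Y_{j_1},\dots,Y_{j_{l-1}})\otimes\mc L(Y_{j_l}) .
\]
Hence $\mc L(Y_{j_1},\dots,Y_{j_m})=\bigotimes_l \mc L(Y_{j_l})$, and $(Y_j)_{j\in\mc B}$ is mutually independent. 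So it suffices to build blocks in which each newly added index is independent of the indices already placed in its block.

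I would process the indices in the order $1,2,\dots,n$, keeping $N_{\max}$ initially empty blocks $\mc B_1,\dots,\mc B_{N_{\max}}$. When index $i$ is reached, I place it into a block $\mc B_t$ none of whose current members lies in $N_i$. Such a block exists for the following reason. The members of $N_i$ other than $i$ number at most $\#N_i-1\leq N_{\max}-1$. So at most $N_{\max}-1$ blocks contain an element of $N_i\setminus\{i\}$, and since $i$ itself has not yet been placed, at least one of the $N_{\max}$ blocks is free of $N_i$. At the end, I discard empty blocks, which leaves a partition with $\chi\leq N_{\max}$.

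It remains to check the sequential independence. When $i$ joins $\mc B_t$, the earlier members of $\mc B_t$ all lie in $N_i\compl$. By \cref{def:dep:neighborhood}, $Y_i$ is independent of $(Y_j)_{j\in N_i\compl}$, and therefore of any subfamily, in particular of the earlier members of $\mc B_t$. Enumerating each block in insertion order, the criterion of the first paragraph then gives mutual independence of $(Y_j)_{j\in\mc B_t}$ for every $t$.

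I expect no real obstacle. The only point needing care is that the neighbourhood relation is not symmetric: $j\in N_i$ does not imply $i\in N_j$. A naive dependency-graph colouring, with an edge whenever $j\in N_i$ or $i\in N_j$, could have unbounded degree, so it would not give $\chi\leq N_{\max}$. The greedy argument avoids this because it only ever uses the independence of the newly inserted index from its predecessors, which is exactly what \cref{def:dep:neighborhood} provides.
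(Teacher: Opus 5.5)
Your proposal is correct and is essentially the paper's proof: the same first-fit rule in increasing index order, the same count showing that at most $N_{\max}-1$ blocks can meet $N_i\setminus\{i\}$, and the same sequential-independence induction giving mutual independence within each block. The only cosmetic difference is that you preallocate $N_{\max}$ blocks and discard empty ones, whereas the paper opens blocks on demand.
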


\begin{proof}
    We construct the partition by a first-fit rule. Process the indices $j=1,\dots,n$ in increasing order. When $j$ is processed, add $j$ to the first existing block $\mc B_t$ with $\mc B_t\cap N_j=\emptyset$. If there is no such block, open a new block $\{j\}$.

    Each block is mutually independent.
    Indeed, let $\mc B_t=\{j_1<\dots<j_M\}$.
    When $j_v$ was added, $\{j_1,\dots,j_{v-1}\}\subseteq N_{j_v}\compl$, so $Y_{j_v}$ is independent of $(Y_{j_1},\dots,Y_{j_{v-1}})$.
    Induction on $v$ yields mutual independence of $(Y_{j_1},\dots,Y_{j_M})$.

    At most $N_{\max}$ blocks are opened.
    Indeed, when $j$ is processed, $j$ lies in no block yet, and a block is excluded only if it meets $N_j\setminus\{j\}$.
    As the blocks are disjoint and $\#(N_j\setminus\{j\})\leq N_{\max}-1$, at most $N_{\max}-1$ blocks are excluded.
    Hence a new block is opened only if at most $N_{\max}-1$ blocks exist, and $\chi\leq N_{\max}$.
\end{proof}

The bound $\chi\leq N_{\max}$ cannot be improved in general.
For $k,a\in\N$ and $n=ka$, let $Y_j:=\xi_g$ for $j\in G_g:=\{(g-1)a+1,\dots,ga\}$, where $\xi_1,\dots,\xi_k$ are iid and nondegenerate.
Then the groups $G_g$ are dependency neighborhoods, so that $N_{\max}=a$, and every mutually independent block contains at most one index of each group, so $\chi\geq a=N_{\max}$.
For $\ell$-dependence with $N_i=\setByEleInText{j\in\nnset n}{\lvert i-j\rvert\leq\ell}$, the first-fit rule returns the residue classes modulo $\ell+1$: inductively, when $j>\ell+1$ is processed, the blocks are the residue classes of $\{1,\dots,j-1\}$, and all of them except the class of $j$ meet $\{j-\ell,\dots,j-1\}$.
Thus $\chi=\min\{n,\ell+1\}$, which is at most the general bound $N_{\max}=\min\{n,2\ell+1\}$.

\subsubsection{Proof of Lemma~\ref{lmm:power:locdep:Xbar}}\label{proof_lmm:power:locdep:Xbar} 

\begin{proof}
    Set $S^{\mc B_t} := \sum_{j\in\mc B_t}X_j$ and define $\normof{Z}_\beta:=\Eof{\lvert Z\rvert^\beta}^{1/\beta}$ for a real random variable $Z$.
    For each $j$, let $Y_j\pr$ be an independent copy of $Y_j$.
    As $\lvert a^{\alpha-1}-b^{\alpha-1}\rvert\leq\lvert a-b\rvert^{\alpha-1}$ for $a,b\geq0$, Jensen's inequality, the triangle inequality, and $(\alpha-1)\beta = \alpha$ give
    \begin{equation*}
        \EOf{\abs{X_j}^\beta} \leq \EOf{\abs{\ol{Y_j}{\mbar}^{\alpha-1}-\ol{Y_j\pr}{\mbar}^{\alpha-1}}^\beta} \leq \EOf{\ol{Y_j}{Y_j\pr}^\alpha} = 2\VmsOf{\alpha}{Y_j} \leq 2V^{\max}_\alpha
    \eqfs
    \end{equation*}
    The variables $(X_j)_{j\in\mc B_t}$ are independent and centered, and $\beta>2$, so the inequality
    \begin{equation*}
        \EOf{\abs{\textstyle\sum_{j}X_j}^\beta}^{2/\beta}\leq(\beta-1)\sum_j\EOf{\abs{X_j}^\beta}^{2/\beta}
        \eqcm
    \end{equation*}
    a special case of \cite[Theorem~2.1]{Rio2009MomentInequalities-JTP}, gives
    \begin{equation*}
        \normof{S^{\mc B_t}}_\beta \leq (\beta-1)^{1/2}\left(\sum_{j\in\mc B_t}\normof{X_j}_\beta^{2}\right)^{1/2} \leq c_\alpha(\#\mc B_t)^{1/2}\brOf{V^{\max}_\alpha}^{1/\beta}
    \eqfs
    \end{equation*}
    By Minkowski's inequality, the Cauchy--Schwarz inequality, and $\sum_{t = 1}^\chi\#\mc B_t = n$,
    \begin{equation*}
        \normOf{\sum_{j = 1}^nX_j}_\beta \leq \sum_{t = 1}^\chi\normof{S^{\mc B_t}}_\beta \leq c_\alpha\brOf{V^{\max}_\alpha}^{1/\beta}\sum_{t = 1}^\chi(\#\mc B_t)^{1/2} \leq c_\alpha\brOf{V^{\max}_\alpha}^{1/\beta}(\chi n)^{1/2}
    \eqfs
    \end{equation*}
    Raising this to the power $\beta$ and dividing by $n^\beta$ proves \eqref{eq:power:locdep:Xbar}.
\end{proof}

\section{An \texorpdfstring{$\ms{AR}(1)$}{AR(1)} comparison of the dependence bounds}\label{sec:aux:ar1}

Let $(\xi_t)_{t\in\Z}$ be iid, either uniform on $\{-1,+1\}$ or standard Gaussian, let $\rho\in(0,1)$, and consider the stationary process
\begin{equation*}
    Y_t:=\sum_{j=0}^{\infty}\rho^j\xi_{t-j}
    \eqcm\quad
    Y_{t+1}=\rho Y_t+\xi_{t+1}
    \eqcm\quad
    \sigma^2:=\VOf{Y_t}=\frac1{1-\rho^2}
    \eqfs
\end{equation*}
Here $\mc Q=\R$, $\mbar=0$, and $\mhat=\overline Y_n:=n^{-1}\sum_{t=1}^nY_t$.
In both models, independence of the innovations gives $\Covof{Y_i,Y_k}=\sigma^2\rho^{\lvert i-k\rvert}$ and therefore
\begin{equation}\label{eq:2fmh:decl:limits:variance}
    \EOf{\abs{\mhat-\mbar}^2}
    =\frac{\sigma^2}{n^2}\br{n+2\sum_{h=1}^{n-1}(n-h)\rho^h}
    \leq\frac1{(1-\rho)^2n}
    \eqfs
\end{equation}
In fact, $n\Eof{\lvert \mhat-\mbar\rvert^2}\to(1-\rho)^{-2}$.
Although the two innovation laws give the same exact error, \cref{thm:2fmh:decl,thm:2fmh:dependent} behave differently for them.

\subsection{Rademacher noise}\label{ssec:aux:ar1:rademacher}

Assume that the innovations are uniform on $\{-1,+1\}$ and that $\rho<1/2$.

\subsubsection{Transport sensitivity}\label{sssec:aux:ar1:decl}
The difficulty with transport sensitivity comes from the past: each observation determines its predecessors through an expanding map.
Set $M:=(1-\rho)^{-1}$ and $a:=(1-2\rho)/(1-\rho)>0$.
Since $\lvert Y_t\rvert\leq M$ and $\lvert \rho Y_{t-1}\rvert\leq\rho M<1$, we have $\operatorname{sgn}(Y_t)=\xi_t$ and $Y_t\in[-M,-a]\cup[a,M]$, and the past is recovered recursively by
\begin{equation*}
    T(y):=\frac{y-\operatorname{sgn}(y)}{\rho}
    \eqcm\quad
    Y_{i-h}=T^h(Y_i)
    \eqcm\quad h\geq1
    \eqfs
\end{equation*}

Let $\mu$ denote the stationary law and $S$ its support.
Since $T$ is continuous on $[-M,-a]\cup[a,M]$ and $T(Y_t)=Y_{t-1}\stackrel d=Y_t$, we have $T(S)\subseteq S$.
First suppose that $\rho\leq1/3$. On each sign branch, $T$ has slope $\rho^{-1}$. For $y,y\pr\in S$ on opposite branches, $\lvert y-y\pr\rvert\geq2a$ and $\lvert T(y)-T(y\pr)\rvert\leq2M$, so
\begin{equation*}
    \frac{\abs{T(y)-T(y\pr)}}{\abs{y-y\pr}}
    \leq\frac{M}{a}
    =\frac1{1-2\rho}
    \leq\frac1\rho
    \eqfs
\end{equation*}
Thus $T$ is $\rho^{-1}$-Lipschitz on $S$.
As $T(S)\subseteq S$, iteration gives
\begin{equation}\label{eq:2fmh:decl:limits:inverse}
    \abs{T^h(y)-T^h(y\pr)}\leq\rho^{-h}\abs{y-y\pr}
    \eqcm\quad y,y\pr\in S
    \eqcm\quad h\geq1
    \eqfs
\end{equation}

Given $Y_i=y$ and $Y_i\pr=y\pr$, couple the futures using the same innovations and the pasts by their deterministic reconstructions.
This defines a measurable, symmetric coupling of the entire conditional samples, with forward differences $\rho^h\lvert y-y\pr\rvert$ and backward differences bounded by \eqref{eq:2fmh:decl:limits:inverse}. Thus \cref{def:dep:transport} holds at exponent $1$ with $b_{ik}:=\rho^{k-i}$, $i,k\in\nnset n$.
\Cref{thm:2fmh:decl} gives
\begin{equation*}
    \EOf{\abs{\mhat-\mbar}^2}
    \leq A_n
    :=\frac{\sigma^2}{n^2}\br{n+\sum_{h=1}^{n-1}(n-h)\br{\rho^h+\rho^{-h}}}
    \eqfs
\end{equation*}
This bound diverges as $n\to\infty$.
For general $\rho\in(0,1/2)$, the problem is unavoidable within this theorem: conditional pasts are point masses, and, as the law of $Y_i$ is atomless, pairs $y\neq y\pr$ with the same first $h$ signs have positive probability; on them $\lvert T^h(y)-T^h(y\pr)\rvert=\rho^{-h}\lvert y-y\pr\rvert$, which forces $b_{i,i-h}\geq\rho^{-h}$ for every admissible coupling.
Thus, for every admissible matrix $\mo b$, the bound of \cref{thm:2fmh:decl} is at least $\sigma^2b_{n1}/n^2\geq\sigma^2\rho^{-(n-1)}/n^2$ and diverges, and every profile admissible in \cref{cor:2fmh:decl:profile} satisfies $b(h)\geq\rho^{-h}$ for $h\leq n-1$.

The forward dynamics nevertheless contract.
Starting from $y,y\pr$ and using common innovations gives a difference of $\rho^h\lvert y-y\pr\rvert$ after $h$ steps.
For every $p>0$, this yields geometric moment contraction in the sense of \cite{WuShao2004IteratedRandom-JAP}, with $r=\rho^p$.
The forward $\tau$-coefficients also decay geometrically.
Indeed, $Y_{i+h}=\rho^hY_i+\sum_{j=0}^{h-1}\rho^j\xi_{i+h-j}$, where the sum is independent of $(Y_t)_{t\leq i}$. Replacing $Y_i$ by an independent copy $Y_i\pr$ and keeping this sum unchanged couples the conditional law given the past with the marginal law. Hence, with the coefficients of \cite{DedeckerPrieur2005Dependence-PTRF}, $\tau(\sigma(Y_t:t\leq i),Y_{i+h})\leq\rho^h\Eof{\lvert Y_i-Y_i\pr\rvert}$.

\subsubsection{The coupling bound}\label{sssec:aux:ar1:general}

We now minimize the two bounds of \cref{thm:2fmh:dependent} over all exchangeable copies with $i$-independence.
Let $(\widetilde\xi_t)_{t\in\Z}$ be an independent copy of the innovations, and let $\widetilde Y_t:= \sum_{j\geq0}\rho^j\widetilde\xi_{t-j}$.
For each $i$, replace $\xi_t$ with $\widetilde\xi_t$ for $t\leq i$ and retain $\xi_t$ for $t>i$.
Swapping the iid pasts preserves the law, and the new $i$-th observation is independent of $\mo Y$, giving an exchangeable copy $\mo Y^i$ with $i$-independence.
Setting $Z_t:=Y_t-\widetilde Y_t$, we have
\begin{equation*}
    Y_k-Y_k^i=
    \begin{cases}
        Z_k, & k\leq i,\\
        \rho^{k-i}Z_i, & k>i.
    \end{cases}
\end{equation*}

We first compute the mixed moments in \eqref{eq:2fmh:result_final}.
Since $Z_{t+1}=\rho Z_t+\xi_{t+1}-\widetilde\xi_{t+1}$, where $\xi_{t+1}-\widetilde\xi_{t+1}$ is independent of $(Z_s)_{s\leq t}$, and $\lvert \rho Z_t\rvert\leq2\rho M<2$, averaging over the next pair of innovations gives $\Eof{\lvert Z_{t+1}\rvert\mid Z_s,\,s\leq t}=1+\frac\rho2\lvert Z_t\rvert$.
Stationarity and iteration then yield
\begin{equation*}
    d_\rho:=\EOf{\abs{Z_0}}=\frac1{1-\rho/2}
    \eqcm\quad
    f_h:=\EOf{\abs{Z_0Z_h}}
    =d_\rho^2+\br{2\sigma^2-d_\rho^2}\br{\frac\rho2}^h
    \eqfs
\end{equation*}
Therefore, \eqref{eq:2fmh:result_final} becomes
\begin{equation*}
    \EOf{\abs{\mhat-\mbar}^2}
    \leq C_n
    :=\frac{\sigma^2}{n}
    +\frac1{2n^2}\sum_{h=1}^{n-1}(n-h)\br{2\sigma^2\rho^h+f_h}
    \to \frac{d_\rho^2}{4}>0
    \eqfs
\end{equation*}
Here the limit is taken as $n\to\infty$ for fixed $\rho$.
We now show that no admissible copies give a smaller coordinatewise bound.
Fix $i$.
For every such copy, $Y_i^i\perp\mo Y$ and, by exchangeability, $Y_i\perp\mo Y^i$.

For $k<i$, $\mo Y^i\stackrel d=\mo Y$ gives $Y_k^i=T^{i-k}(Y_i^i)$ almost surely, so the law of $(Y_i-Y_i^i,Y_k-Y_k^i)$ is determined by the law $\mu\otimes\mu$ of $(Y_i,Y_i^i)$ and does not depend on the copy.

For $k>i$, $\Eof{Y_k\mid Y_i,Y_i^i}=\rho^{k-i}Y_i$ as $Y_i^i\perp\mo Y$, and $\Eof{Y_k^i\mid Y_i,Y_i^i}=\rho^{k-i}Y_i^i$ as $Y_i\perp\mo Y^i$.
Then the conditional Jensen inequality bounds the mixed moment below by $\rho^{k-i}\Eof{(Y_i-Y_i^i)^2}=2\sigma^2\rho^{k-i}$.
Our copies attain this lower bound by sharing the future innovations.
Thus $C_n$ is the smallest possible coordinatewise bound.

The intermediate bound \eqref{eq:2fmh:result_intermediate} retains cancellation.
Set $\zeta_j:=\xi_j-\widetilde\xi_j$, $q_n:=\sum_{k=1}^n\rho^k$, and $w_{j,n}:=\sum_{k=j}^n\rho^{k-j}$.
Then
\begin{equation*}
    H_{i,n}:=n(\mhat-\mhat^i)
    =q_nZ_0+\sum_{j=1}^iw_{j,n}\zeta_j
    \eqcm\quad
    J_n:=\frac1{2n^2}\sum_{i=1}^n\EOf{\abs{Z_iH_{i,n}}}
    \eqfs
\end{equation*}
The resulting bound satisfies
\begin{equation*}
\begin{aligned}
    \EOf{\abs{\mhat-\mbar}^2}\leq J_n\eqcm \quad
    \frac{a}{2n^2}\sum_{i=1}^n\sqrt{i-1}
    \leq J_n
    \leq\frac{\sigma}{(1-\rho)n^2}\sum_{i=1}^n\sqrt{i+\rho^2\sigma^2}
    \eqfs
\end{aligned}
\end{equation*}
For the upper estimate, independence of $Z_0,\zeta_1,\dots,\zeta_i$ gives
\begin{equation*}
    \EOf{H_{i,n}^2}=2\sigma^2q_n^2+2\sum_{j=1}^iw_{j,n}^2\leq\frac{2(\rho^2\sigma^2+i)}{(1-\rho)^2}
    \eqcm
\end{equation*}
and the Cauchy--Schwarz inequality and $\Eof{Z_i^2}=2\sigma^2$ apply.

For the lower estimate, define $H_{i,n}=R_{i,n}+w_{i,n}\zeta_i$, where $R_{i,n}$ is independent of $\zeta_i$.
On $\{\zeta_i\neq0\}$, which has probability $1/2$, we have $\lvert Z_i\rvert\geq2a$.
Averaging over $\zeta_i=\pm2$ gives $\Eof{\lvert Z_iH_{i,n}\rvert}\geq a\Eof{\lvert R_{i,n}\rvert}$.
Writing each $\zeta_j=\xi_j-\widetilde\xi_j$ as a sum of two independent Rademacher variables, $R_{i,n}$ is an $L^2$-convergent series of independent Rademacher variables with deterministic coefficients. As $\Eof{R_{i,n}^2}\geq2(i-1)$, the Khinchin inequality with Szarek's optimal constant \cite[Theorem~1]{Szarek1976Khinchin-StudiaMath} yields $\Eof{\lvert R_{i,n}\rvert}\geq\Eof{R_{i,n}^2}^{1/2}/\sqrt2\geq\sqrt{i-1}$.
Summing over $i$ proves the lower bound on $J_n$.

It remains to show that our copies minimize the intermediate bound.
Fix $i$, and define $g(y):=\Eof{\sum_{k=1}^nY_k\mid Y_i=y}$.
For any admissible copy, the independence relations $Y_i^i\perp\mo Y$ and $Y_i\perp\mo Y^i$ imply $\Eof{\sum_{k=1}^n(Y_k-Y_k^i)\mid Y_i,Y_i^i}=g(Y_i)-g(Y_i^i)$.
The conditional Jensen inequality therefore bounds the $i$-th summand of \eqref{eq:2fmh:result_intermediate} below by
\begin{equation*}
    \frac1{2n^2}\EOf{\abs{Y_i-Y_i^i}\,\abs{g(Y_i)-g(Y_i^i)}}
    \eqfs
\end{equation*}
This lower bound depends only on $\mu\otimes\mu$.
Our copies attain it, since the shared future innovations cancel and $\sum_{k=1}^n(Y_k-Y_k^i)=g(Y_i)-g(Y_i^i)$.
Thus $J_n$ is the smallest possible intermediate bound.
For fixed $\rho$, it has order $n^{-1/2}$, whereas the exact risk in \eqref{eq:2fmh:decl:limits:variance} has order $n^{-1}$.

\subsection{Gaussian noise}\label{ssec:aux:ar1:gaussian}

Assume now that the innovations are iid $N(0,1)$; here every $\rho\in(0,1)$ is allowed.
For $h\geq1$, the backward conditional law is
\begin{equation*}
    \mc L(Y_{i-h}\mid Y_i=y)
    =N\brOf{\rho^h y,\sigma^2(1-\rho^{2h})}
    \eqfs
\end{equation*}
Gaussian regression gives $b_{ik}=\rho^{\lvert i-k\rvert}$ in both directions.
Explicitly, choose $Y_i^i\stackrel d=Y_i$ independently of $\mo Y$ and set $Y_k^i:=Y_k-\rho^{\lvert i-k\rvert}(Y_i-Y_i^i)$ for $k\in\nnset n$.
By \cref{prp:2fmh:gaussian-1d}, this is an exchangeable copy with $i$-independence.
All coordinate differences are nonnegative multiples of $Y_i-Y_i^i$, so both \eqref{eq:2fmh:result_intermediate} and \eqref{eq:2fmh:result_final}, as well as the bound of \cref{thm:2fmh:decl}, equal the exact variance for every $n$.
In particular, the regression coupling gives
\begin{equation*}
    \frac1{2n^2}\sum_{i,k=1}^n\EOf{\abs{Y_i-Y_i^i}\,\abs{Y_k-Y_k^i}}
    =\frac{\sigma^2}{n^2}\sum_{i,k=1}^n\rho^{\abs{i-k}}
    =\EOf{\abs{\mhat-\mbar}^2}
    \eqfs
\end{equation*}
\cref{cor:2fmh:decl:profile}\,(ii) gives the bound $\frac{1+\rho}{1-\rho}\cdot\frac{\sigma^2}{n}=\frac1{(1-\rho)^2n}$, which has the correct asymptotic constant.

\subsection{Gaussian noise with negative parameter}\label{ssec:aux:ar1:negative}

We now take $\rho\in(-1,0)$ and iid standard Gaussian innovations.
The moving-average representation at the beginning of \cref{sec:aux:ar1} remains valid, with $\sigma^2=(1-\rho^2)^{-1}$ and $\Covof{Y_i,Y_k}=\sigma^2\rho^{\lvert i-k\rvert}$.
The covariances alternate in sign, so the coordinatewise bound \eqref{eq:2fmh:result_final} is strictly larger than $\Vof{\overline Y_n}$ for $n\geq2$ and every exchangeable copy with $i$-independence (\cref{sssec:2fmh:illu:gauss}).
The intermediate bound \eqref{eq:2fmh:result_intermediate}, however, can retain the cancellation.
The exact variance is
\begin{equation*}
    \VOf{\overline Y_n}
    =\frac{\sigma^2}{n^2}\br{n+2\sum_{h=1}^{n-1}(n-h)\rho^h}
    \sim\frac{\sigma^2}{n}\frac{1+\rho}{1-\rho}
    \eqfs
\end{equation*}
For the Gaussian copies of \cref{prp:2fmh:gaussian-1d}, the intermediate and coordinatewise bounds are, by \eqref{eq:2fmh:illu:gaussian-1d-sharp},
\begin{equation*}
    \tilde J_n
    :=\frac{\sigma^2}{n^2}\sum_{i=1}^n\abs{\sum_{k=1}^n\rho^{\abs{i-k}}}
    \eqcm\quad
    \tilde C_n
    :=\frac{\sigma^2}{n^2}\br{n+2\sum_{h=1}^{n-1}(n-h)\abs{\rho}^h}
    \sim\frac{\sigma^2}{n}\frac{1+\abs{\rho}}{1-\abs{\rho}}
    \eqcm
\end{equation*}
respectively.
Thus $\Vof{\overline Y_n}\leq\tilde J_n\leq\tilde C_n$.
The transport-sensitivity bound of \cref{thm:2fmh:decl} also equals $\tilde C_n$, using the coefficients of \cref{prp:dep:gaussian}.
All asymptotic statements here are as $n\to\infty$ for fixed $\rho$.

To compare $\tilde J_n$ with the exact variance, consider the row sums
\begin{equation*}
    r_i:=\sum_{k=1}^n\rho^{\abs{i-k}}
    \eqcm\quad\text{so that}\quad
    \VOf{\overline Y_n}=\frac{\sigma^2}{n^2}\sum_{i=1}^n r_i
    \quad\text{and}\quad
    \tilde J_n=\frac{\sigma^2}{n^2}\sum_{i=1}^n\abs{r_i}
    \eqfs
\end{equation*}
In particular, the two quantities agree whenever all row sums are nonnegative.
With $P_j:=\sum_{h=1}^j\rho^h$, we have $r_i=1+P_{i-1}+P_{n-i}$, and $P_0=0$ and $P_j-\rho=\rho^2(1-\rho^{j-1})/(1-\rho)\geq0$ for $j\geq1$; hence $r_i\geq1+2\rho$.
For $\rho\in[-1/2,0)$, this gives $\tilde J_n=\Vof{\overline Y_n}$ for every $n$.
For $\rho\in(-1,-1/2)$, some row sums may be negative.
However, $\lvert r_i-(1+\rho)/(1-\rho)\rvert\leq(\lvert \rho\rvert^i+\lvert \rho\rvert^{n-i+1})/(1-\rho)$ and $(1+\rho)/(1-\rho)>0$, so negative row sums can occur only within a distance of the two ends bounded in terms of $\rho$.
Furthermore, $r_i\geq1+2\rho>-1$. Then,  
\begin{equation*}
    \tilde J_n-\VOf{\overline Y_n}
    =\frac{2\sigma^2}{n^2}\sum_{i:r_i<0}\abs{r_i}
    =\landauO(n^{-2}).
\end{equation*}
Thus the intermediate bound is exact for $\rho\in[-1/2,0)$ and asymptotically exact for every fixed $\rho\in(-1,0)$.

\end{document}